\newcommand{\ov}{\overline}
\newcommand{\ra}{\right\rangle}
\newcommand{\lr}[2]{\left\langle #1,#2\ra}
\newcommand{\ro}[1]
{\overset{#1}{\blacklozenge}}
\newcommand{\F}[2]{\mathfrak{F}_{#1}^{#2}}
\newcommand{\mf}[1]{\mathfrak{#1}}
\newcommand{\mc}[1]{\mathcal{#1}}
\newcommand{\ms}[1]{\mathsf{#1}}
\newcommand{\cc}[2]{\mathcal{C}_{#1}
\left(#2\right)}
\newcommand{\up}{\upharpoonright}
\newcommand{\ep}{\varepsilon} 
\newcommand{\cu}{\overset{K}{=}}
\newcommand{\n}{\mathfrak{N}}
\newcommand{\ze}{\mathbf{0}} 
\newcommand{\un}{\mathbf{1}}
\newcommand{\p}{\mathcal{P}}
\newcommand{\A}{\mathcal{A}}
\newcommand{\B}{\mathcal{B}}
\newcommand{\N}{\mathbb{N}}
\newcommand{\K}{\mathbb{K}}
\newcommand{\R}{\mathbb {R}}
\newcommand{\C}{\mathbb {C}}
\newtheorem{theorem}{Theorem}
\newtheorem{corollary}[theorem]{Corollary}
\newtheorem{definition}[theorem]{Definition}
\newtheorem{lemma}[theorem]{Lemma}
\newtheorem{proposition}[theorem]{Proposition}
\theoremstyle{definition}
\newtheorem{claim}[theorem]{Claim}
\newtheorem{remark}[theorem]{Remark}
\newtheorem{assumptions}[theorem]{Assumptions}
\newtheorem{convention}[theorem]{Convention}
\newtheorem{notation}[theorem]{Notation}
\numberwithin{section}{chapter}
\numberwithin{theorem}{section}
\numberwithin{equation}{section}
\begin{document}

\title
{Use of bundles of locally convex
spaces
in problems of convergence
of semigroups of 
operators
defined on different Banach
spaces.
Applications to spectral
stability problems.}
\author{Benedetto Silvestri}
\date{\today}
\keywords{Bundles of locally convex spaces, One-parameter Semigroups, Spectrum and Resolvent}
\subjclass[2010]{55R25, 46E40, 46E10; 47A10, 47D06}
\begin{abstract}
In this work we   construct certain general bundles $<\mathfrak{M},\rho,X>$ 
and $<\mathfrak{B},\eta,X>$ of Hausdorff locally convex spaces associated with a given 
Banach bundle $<\mathfrak{E},\pi,X>$. Then we   present conditions ensuring the existence of 
bounded sections $\mathcal{U}\in \Gamma^{x_{\infty}}(\rho)$ and $\mathcal{P}\in \Gamma^{x_{\infty}}(\eta)$ 
both continuous at a point $x_{\infty}\in X$, such that $\mathcal{U}(x)$ 
is a $C_{0}-$semigroup of contractions on $\mathfrak{E}_{x}$ and $\mathcal{P}(x)$ 
is a spectral projector of the infinitesimal generator of the semigroup $\mathcal{U}(x)$, for every $x\in X$.
\end{abstract}

\maketitle

\flushbottom
\tableofcontents
\chapter{Main Structures and statement of the problem}
\label{05250734}
\section*{Introduction}
\label{intro}
This work consists of three parts of which the present represents the first one.
We construct
certain general bundles
$\langle\mathfrak{M},\rho,X\rangle$
and
$\langle\mathfrak{B},\eta,X\rangle$
of
Hausdorff locally convex spaces
associated with
a given
Banach bundle
$\langle\mathfrak{E},\pi,X\rangle$.
Then we present
conditions
ensuring
the 
existence
of
bounded
sections
$\mathcal{U}\in
\Gamma^{x_{\infty}}(\rho)$
and
$\mathcal{P}\in
\Gamma^{x_{\infty}}(\eta)$
both
continuous
at a point
$x_{\infty}\in X$,
such that
$\mathcal{U}(x)$ is 
a
$C_{0}-$semigroup
of contractions
on 
$\mathfrak{E}_{x}$
and
$\mathcal{P}(x)$
is a spectral projector
of the infinitesimal
generator
of the semigroup
$\mathcal{U}(x)$,
for every $x\in X$.
\par 
Here
$\mf{W}
\coloneqq
\lr{\mf{M}}{\rho,X}$
and
$\lr{\mf{B}}{\eta,X}$
are
special kind
of
bundles
of
Hausdorff locally
convex spaces
(bundle of $\Omega-$spaces \cite{gie})
while
$\mf{V}
\coloneqq
\lr{\mf{E}}{\pi,X}$
is 
a 
suitable
Banach 
bundle
such that
the common
base space
$X$
is a completely regular topological space
and the filter of neighbourhoods of $x_{\infty}$ 
admits a countable basis\footnote{in particular $X$ a metric space and $x_{\infty}$ any point of $X$.}.
Moreover
for all $x\in X$
the
stalk 
$\mf{M}_{x}
\coloneqq
\overset{-1}{\rho}(x)$
is a topological
subspace
of the space
$\cc{c}{\R^{+},\mc{L}_{S_{x}}(\mf{E}_{x})}$
with the topology of
compact convergence,
of all 
continuous maps defined
on $\R^{+}$
and
with values in 
$\mc{L}_{S_{x}}(\mf{E}_{x})$,
and
the
stalk
$\mf{B}_{x}
\coloneqq
\overset{-1}{\eta}(x)$
is a topological
subspace
of 
$\mc{L}_{S_{x}}(\mf{E}_{x})$.
Here
$\mf{E}_{x}
\coloneqq
\overset{-1}{\pi}(x)$,
while
$\mc{L}_{S_{x}}(\mf{E}_{x})$,
is
the space, 
of all 
linear
bounded 
maps
on
$\mf{E}_{x}$
with the topology of
uniform convergence
over the subsets
of
$S_{x}\subset 
Bounded(\mf{E}_{x})$
which 
depends,
for all $x\in X$, 
on 
the same 
subspace $\mc{E}\subseteq\Gamma(\pi)$.
Here
$\rho:\mf{M}\to X$,
$\eta:\mf{B}\to X$,
and
$\pi:\mf{E}\to X$
are 
the projection maps
of the respecive bundles,
$\Gamma^{x_{\infty}}(\rho)$
is the set of all bounded sections of $\mf{W}$ continuous at $x_{\infty}$
with respect to the topology on the bundle space $\mf{M}$ and 
$\Gamma(\pi)$ is the set of all bounded continuous sections of $\mf{V}$.
\par
\emph
{An essential factor 
is that
the continuity
at $x_{\infty}$
of
$\mc{U}$ 
and
$\mc{P}$ 
derives
by 
a
sort
of continuity
at the same point
of the section
$\mc{T}$
of the 
graphs
of the
infinitesimal
generators
of the semigroups in the range of
$\mc{U}$},
where
the sort of continuity 
has to be understood in 
the following sense.
For every $x\in X$
let 
$\mc{T}(x)$
be
the graph
of  
the infinitesimal 
generator
$T_{x}$
of the semigroup
$\mc{U}(x)$,
then
\begin{equation}
\label{19240703}
\begin{cases}
\mc{T}(x_{\infty})
=
\{
\phi(x_{\infty})
\,\vert\,
\phi
\in
\Phi
\}
\\
\Phi
\subseteq
\Gamma^{x_{\infty}}(\pi_{\ms{E}^{\oplus}})
\\
(\forall x\in X)
(\forall \phi\in\Phi)
(\phi(x)\in\mc{T}(x)),
\end{cases}
\end{equation}
where
$\Gamma^{x_{\infty}}(\pi_{\ms{E}^{\oplus}})$
is the set of all bounded 
sections
of the 
direct 
sum
of 
bundles
$\mf{V}\oplus\mf{V}$
which are
continuous
at $x_{\infty}$.
\par
Hence
for any
$v\in Dom(T_{x_{\infty}})$
there exists
a bounded
section
$\phi$
of
$\mf{V}\oplus\mf{V}$
such that
\begin{equation}
\label{16490703}
\begin{cases}
(v,T_{x_{\infty}}v)
=
\lim_{x\to x_{\infty}}
(\phi_{1}(x),\phi_{2}(x))
\\
(\phi_{1}(x),\phi_{2}(x))
\in Graph(T_{x}),
\forall x\in X-\{x_{\infty}\},
\end{cases}
\end{equation}
where the limit 
is with respect to the topology
on the bundle
space of
$\mf{V}\oplus\mf{V}$\footnote{
Later we shall see that
the topology 
on the bundle space of
$\mf{V}\oplus\mf{V}$
will be constructed
in order
to ensure that
the 
limit in \eqref{16490703}
is equivalent to say that
$v=
\lim_{x\to x_{\infty}}
\phi_{1}(x)$
and
$T_{x_{\infty}}v=
\lim_{x\to x_{\infty}}
\phi_{2}(x)$,
both limits
with respect to the topology
on the bundle space
$\mf{E}$.}.
\par 
\emph
{The main strategy
for obtaining the continuity at $x_{\infty}$
of $\mc{U}$ and $\mc{P}$,
it is to correlate
the topologies on $\mf{M}$ and $\mf{B}$,
with the topology on $\mf{E}$.}
Thus it is clear 
that the 
construction
of 
the right
structures 
has a prominent
role.
\par
It is well-known 
the relative
freedom of choice
of the 
topology on
the
bundle
space 
of 
any 
bundle of $\Omega-$spaces.
More exactly fixed a suitable linear space say $G$ of bounded sections 
there exists always a topology on the bundle space such that 
all the maps in $G$ are continuous. Moreover if $X$ is compact 
one can find a topology such that $G$ is the whole space of bounded continuous sections
\cite[Thm. $5.9$]{gie}.
This freedom of choice
allows the construction
of examples of the above-mentioned correlations of topologies.
\par 
From the following simple result Cor. \ref{28111707}
and without entering in the definition
of the topology of a bundle of $\Omega-$space, 
we can recognize the power of determining 
the right set $\Gamma(\zeta)$
of continuous sections of a general 
bundle $\lr{\mf{Q}}{\zeta,X}$
of $\Omega-$space.
Let
$f\in\prod_{x\in X}^{b}\mf{Q}_{x}$
be
any
bounded
section
and
$x_{\infty}\in X$
such that
there exists
a section
$\sigma\in\Gamma(\zeta)$
such that
$\sigma(x_{\infty})=f(x_{\infty})$.
Then
\begin{equation}
\label{21040503}
f\in\Gamma^{x_{\infty}}(\zeta)
\Leftrightarrow
(\forall j\in J)
(\lim_{z\to x_{\infty}}
\nu_{j}^{z}(f(z)-\sigma(z))=0),
\end{equation}
where $J$ is a set
such that
$\{\nu_{j}^{z}\,\vert\, j\in J\}$
is a directed fundamental 
set of seminorms
of the locally convex space
$\mf{Q}_{z}\coloneqq\overset{-1}{\zeta}(z)$
for all $z\in X$.
About 
the problem
of establishing
if
there are continuous bounded sections
intersecting 
$f$
in $x_{\infty}$,
we can use an important result
of the theory of Banach bundles,
stating that any Banach bundle
over a locally
compact base space
is full, namely
for any point of the bundle 
space there exists a section
passing on it.
While for more general bundles
of $\Omega-$spaces
we can use the above described freedom.
\par 
\emph
{The criterium we used for
determining the correlations
between
$\mf{M}$ 
(resp.$\mf{B}$)
and
$\mf{E}$ 
is that 
of extending
to a general bundle of $\Omega-$spaces
two properties of the topology
of the space $\cc{c}{Y,\mc{L}_{s}(Z)}$}.
\par
Here
$Z$ is a normed space,
$S$
is a set of bounded subsets
of $Z$,
$\mc{L}_{s}(Z)$
is 
the space
of all linear continuous
maps on $Z$
with the pointwise
topology, 
finally
$\cc{c}{Y,\mc{L}_{s}(Z)}$
is the space of all continuous
maps
on a topological space $Y$
with values in 
$\mc{L}_{s}(Z)$
with the topology
of uniform convergence 
over the compact subsets
of $Y$.
\par
In order to simplify the notation
we here shall consider $Z$ 
as a Banach space
and take $\mc{L}_{S}(Z)=B_{s}(Z)$,
i.e. the space of all bounded linear 
operators on $Z$ with the strong operator
topology.
\par
Let $X$ be a compact space
\begin{equation*}
\begin{aligned}
\mc{M}
\coloneqq
&
\{
F\in\cc{b}{X,\cc{c}{Y,B_{s}(Z)}}
\,\vert\,
(\forall K\in Comp(Y))
\\
&
(C(F,K)\coloneqq
\sup_{(x,s)\in X\times K}
\|F(x)(s)\|_{B(Z)}<\infty)
\}
\\
\ms{M}_{x}
&
\coloneqq\ov{\{
F(x)\,\vert\, F\in\mc{M}
\}}
\end{aligned}
\end{equation*}
Let $\mf{V}\coloneqq\lr{\mf{E}}{\pi,X}$ denote 
the trivial bundle with constant stalk 
$Z$
so
$\Gamma(\pi)
\simeq\cc{b}{X,Z}$,
set
\begin{equation}
\label{18252606}
\begin{cases}
\A_{x}
\coloneqq\{\mu_{(v,x)}^{K}
\,\vert\,
K\in Comp(Y),
v\in\Gamma(\pi)
\},
\\
\mu_{(v,x)}^{K}:
\ms{M}_{x}\ni G\mapsto
\sup_{s\in K}
\|G(s)v(x)\|,
\\
\ms{M}
\coloneqq\{
\lr{\ms{M}_{x}}{\A_{x}}
\}_{x\in X}.
\end{cases}
\end{equation}
Then
by using 
Lemma \ref{22312406}
and
\cite[Thm. $5.9$]{gie}
we can construct
a bundle of $\Omega-$spaces
say
$\mf{V}(\ms{M},\mc{M})$
whose
stalk
at $x$
is the locally convex space
$\lr{\ms{M}_{x}}{\A_{x}}$
and
whose 
space of bounded continuous
sections
$\Gamma(\pi_{\ms{M}})$
is such that
$\Gamma(\pi_{\ms{M}})\simeq\mc{M}$.
\par
Let
$f\in\prod_{x\in\ X}\ms{M}_{x}$
be
such that
$(\forall K\in Comp(Y))
(\sup_{(x,s)\in X\times K}
\|f(x)(s)\|_{B(Z)}<\infty)$
then according to \textbf{Thm. \ref{22372406}} we obtain that
$(1)
\Leftrightarrow
(2)
\Leftrightarrow
(3)$
with
\begin{enumerate}
\item
$(\forall K\in Comp(Y))
(\forall v\in\Gamma(\pi))$
\begin{equation*}
(\lim_{x\to x_{\infty}}
\sup_{s\in K}
\|
f(x)(s)v(x)
-
f(x_{\infty})(s)v(x)
\|
=0);
\end{equation*}
\item
$f\in\Gamma^{x_{\infty}}(\pi_{\ms{M}})$;
\item
$f:X\to\cc{c}{Y,B_{s}(Z)}$
continuous at $x_{\infty}$.
\end{enumerate}
Moreover
if 
$Y$ is locally compact
for all $t\in Y$
\begin{equation}
\label{17150703}
\Gamma(\pi_{\ms{M}})_{t}
\bullet
\Gamma(\pi)
\subseteq
\Gamma(\pi).
\end{equation}
\emph
{
Therefore we constructed
two bundles
$\mf{V}$ 
and
$\mf{V}(\ms{M},\mc{M})$
whose topologies
are 
$(I)$
stalkwise 
related
by 
$\{\A_{x}\}_{x\in X}$
in \eqref{18252606}
and
for which
hold 
$(1)\Leftrightarrow(2)$
and
$(II)$
globally related by
\eqref{17150703}.
Finally
$\Gamma^{x_{\infty}}(\pi_{\ms{M}})$
coincides
with
the subset
of all maps
$f:X\to\cc{c}{Y,B_{s}(Z)}$
continuous at $x_{\infty}$
such that
$(\forall K\in Comp(Y))
(\sup_{(x,s)\in X\times K}
\|f(x)(s)\|_{B(Z)}<\infty)$.}
The extension at general bundles
of the property
$(I)$
leads to the concept
of 
\textbf{$\left(\Theta,\mc{E}\right)-$structure},
provided in 
\textbf{Def. \ref{10282712}} see Lemma \ref{15482712},
while the generalization of the property
$(II)$
leads to the concept of compatible
$\left(\Theta,\mc{E}\right)-$structure,
given in Def. \ref{10282712}.
\par
A similar and more important
global correlation
between 
$\mf{M}$ 
and 
$\mf{E}$,
this time
for the case in which 
the topology
on
each stalk
$\mf{M}_{x}$
is that of
the pointwise
convergence
instead
of the compact convergence,
is that
encoded in 
\eqref{18112502}
in
the definition
of 
invariant
$\left(\Theta,\mc{E},\mu\right)-$structures
provided in 
Def. \ref{17161902}.
This closes the 
discussion
about
the relationship
between 
the topologies
on
$\mf{M}$
and 
$\mf{E}$,
in particular
between 
those 
on
$\mf{B}$
and 
$\mf{E}$
\footnote{
Indeed it is sufficient
to take
$Y=\{pt\}$
i.e. one point space.}
\par
Briefly we recall what here
has to be understood as 
a classical stability problem
in order to understand how
to generalize it through the language of bundles.
The classical
stability
problem
could be so described.
Fixed a Banach space
$Z$ 
find 
a 
sequence
$\{S_{n}:D_{n}\subseteq Z\to Z\}$
of possibly unbounded linear operators
in $Z$ 
and 
a 
sequence
$\{P_{n}\}\subset B(Z)$
where
$P_{n}$
is a spectral projector
of $S_{n}$
for $n\in\N$,
such that
\begin{description}
\item[$(A)$]
whenever there exists
an
operator
$S:D\subset Z\to Z$
such that
$S=\lim_{n\to\infty}S_{n}$
with respect to 
a suitable topology
or in any other generalized sense,
\item[$(B)$]
then
there exists
a spectral
projector
$P\in B(Z)$
of $S$
such that
$P=\lim_{n\to\infty}P_{n}$
with respect to the strong operator
topology.
\end{description}
Here
a spectral projector
of an operator $S$
in a Banach space
is a continuous
projector
associated with
a closed 
$S-$invariant
subspace
$Z_{0}$
such that
$
\sigma(S\up Z_{0})
\subset
\sigma(S)
$,
where
$\sigma(T)$
is the spectrum of the operator
$T$.
\par 
In \cite[Ch $IV$]{kato}
one finds many stability theorems
in which the limit in $(A)$ has to be understood
with respect to the metric
induced by the so called gap between
the corresponding closed graphs.

\par
Additional stability theorems,
even for 
operators
defined
in different
spaces, are available.
They have been obtained by using
the concept
of 
\emph
{Transition Operators}
introduced by Victor I. Burenkov,
see for expample
\cite{bl1},
\cite{bl2}
and
\cite{bll}.
Instead to their stability theorems
Massimo Lanza de Cristoforis
and 
Pier Domenico Lamberti
employed functional analytic approaches,
see
for examples
\cite{l1},
\cite{l2},
\cite{ll}.
\par 
If we try
to generalize
the 
classical
stability
problem
to the case
in which
$Z$
is replaced
by
any sequence
$\{Z_{n}\}$
of
Banach spaces
and
$S_{n}$
is defined
in $Z_{n}$
for all $n$,
then we would face
the following difficulty.
How can we adapt
the  definition
of the gap
given by Kato
to the case
of a sequence
of
different
spaces?
More in general in which 
sense has to be understood
the convergence of operators defined in different
spaces.
\par 
A first step
toward the
generalization 
to the case of different spaces
of the classical stability 
problem 
is the following
result
Thomas G. Kurtz \cite{kurtz}.
\begin{theorem}
[$2.1.$ of \cite{kurtz}]
\label{16250603}
For 
each $n$, let $U_{n}(t)$ be a strongly
continuous contraction semigroup
defined on $L_{n}$ 
with the infinitesimal
operator $A_{n}$.
Let
$A=ex-\lim_{n\to\infty}A_{n}$.
Then there exists a strongly continuous
semigroup $U(t)$ on $L$
such that
$\lim_{n\to\infty}U_{n}(t)Q_{n}f=U(t)f$
for all $f\in L$ and $t\in\R^{+}$
if and only if
the domain $D(A)$ is dense
and
the range
$R(\lambda_{0}-A)$ of $\lambda_{0}-A$
is dense in $L$ for some $\lambda_{0}>0$.
If the above conditions hold 
$A$ is the infinitesimal generator
of $U$ and
we have
\begin{equation}
\label{19100603}
\lim_{n\to\infty}\sup_{0\leq s\leq t}
\|
U_{n}(s)Q_{n}f-
Q_{n}U(s)f
\|_{n}=0,
\end{equation}
for every $f\in L$
and $t\in\R^{+}$.
\end{theorem}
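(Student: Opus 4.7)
I would split the equivalence and then deduce the uniform estimate \eqref{19100603} from resolvent convergence via a Trotter--Kato telescoping identity.

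\emph{Necessity.} Assume the pointwise limit $U(t)f\doteqdot\lim_{n}U_{n}(t)Q_{n}f$ exists for every $f\in L$ and $t\in\R^{+}$ (in the sense that $\|U_{n}(t)Q_{n}f-Q_{n}U(t)f\|_{n}\to 0$). First I would verify that $U$ is a $C_{0}-$contraction semigroup on $L$: the contraction bound passes to the limit from $\|U_{n}(t)Q_{n}f\|_{n}\le\|f\|$; the semigroup identity follows via an $\varepsilon/3$ argument using $\|U_{n}(t)\|_{n}\le 1$; strong continuity follows from the uniform bound combined with the semigroup law. Let $B$ denote the generator of $U$; Hille--Yosida gives $D(B)$ dense and $R(\lambda-B)=L$ for every $\lambda>0$. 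The key step is $A\subseteq B$: for $f\in D(A)$ with ex-approximating sequence $(f_{n},A_{n}f_{n})\to(f,Af)$, integrating the identity $U_{n}(t)f_{n}-f_{n}=\int_{0}^{t}U_{n}(s)A_{n}f_{n}\,ds$ and passing to the limit via dominated convergence (supported by uniform boundedness of $U_{n}$) yields $Bf=Af$. Since by definition the ex-limit captures every such convergent pair, one obtains $A=B$, so $D(A)$ is dense and $R(\lambda_{0}-A)=L$ for every $\lambda_{0}>0$.

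\emph{Sufficiency.} Assume $D(A)$ dense and $R(\lambda_{0}-A)$ dense. The uniform resolvent bound $\|(\lambda_{0}-A_{n})^{-1}\|_{n}\le 1/\lambda_{0}$ allows me to define $R(\lambda_{0})g\doteqdot\lim_{n}(\lambda_{0}-A_{n})^{-1}Q_{n}g$ on $R(\lambda_{0}-A)$: writing $g=(\lambda_{0}-A)f$ with ex-approximating $f_{n}$, the identity
\begin{equation*}
(\lambda_{0}-A_{n})^{-1}Q_{n}g-f_{n}=(\lambda_{0}-A_{n})^{-1}\bigl[Q_{n}g-(\lambda_{0}-A_{n})f_{n}\bigr]
\end{equation*}
together with $(\lambda_{0}-A_{n})f_{n}\to Q_{n}g$ in the extended sense shows that the limit exists and equals $f$. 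The contraction estimate $\|R(\lambda_{0})\|\le 1/\lambda_{0}$ then extends $R(\lambda_{0})$ to all of $L$; this operator is the resolvent at $\lambda_{0}$ of the closure $\bar{A}$, and varying $\lambda$ by a Neumann series gives a contractive resolvent family on $(0,\infty)$. Hille--Yosida produces the $C_{0}-$contraction semigroup $U$ generated by $\bar{A}$.

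\emph{Uniform estimate and main obstacle.} For $f\in D(\bar{A})$, apply the telescoping identity
\begin{equation*}
U_{n}(t)\psi_{n}(0)-\psi_{n}(t)=\int_{0}^{t}U_{n}(t-s)\bigl[\psi_{n}'(s)-A_{n}\psi_{n}(s)\bigr]\,ds
\end{equation*}
with the canonical choice $\psi_{n}(s)\doteqdot R(\lambda_{0},A_{n})Q_{n}(\lambda_{0}-\bar{A})U(s)f$, which lies in $D(A_{n})$ and satisfies $\psi_{n}(s)\to U(s)f$ by the resolvent convergence of the sufficiency step. The commutativity of $R(\lambda_{0},\bar{A})$ with $U(s)$ and the continuity of $s\mapsto U(s)(\lambda_{0}-\bar{A})f$ on $[0,t]$ promote this pointwise convergence to uniform convergence on $[0,t]$, and \eqref{19100603} follows on $D(\bar{A})$; density of $D(\bar{A})$ and $\|U_{n}(s)Q_{n}\|_{n}\le 1$ extend it to all $f\in L$. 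The main obstacle is precisely this uniform-in-$s$ upgrade: an arbitrary ex-approximation would not suffice, and the whole approach hinges on exploiting the resolvent identity to produce an approximation that is jointly controlled in the time parameter $s$.
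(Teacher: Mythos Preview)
The paper does not contain a proof of this theorem. It is quoted verbatim in the Introduction as Theorem~2.1 of \cite{kurtz} to motivate the paper's bundle-theoretic generalization (Theorem~\ref{17301812b}). Later, in the ``Kurtz Bundle Construction'' section, Corollary~\ref{12560202} shows that under extra hypotheses (\eqref{15052601}, \eqref{11442701}, \eqref{17202701}) the paper's Main Theorem applied to the Kurtz bundle $\mf{V}(\mb{L},\mc{E}(L))$ recovers the conclusion \eqref{19100603}, but that is a \emph{consequence} of the general machinery, not an independent proof of the cited theorem. The proof of Theorem~\ref{17301812b} itself does use the same circle of ideas you outline (Laplace transform of the semigroup to get resolvents, Hille--Yosida, then \cite[Lemma~(2.11)]{kurtz} to upgrade pointwise-in-$s$ to uniform-on-compacta), so your sufficiency and uniform-estimate strategy is squarely in line with what the paper imports from \cite{kurtz}.

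One genuine gap in your sketch: in the necessity direction you assert ``since by definition the ex-limit captures every such convergent pair, one obtains $A=B$''. You only established $A\subseteq B$; the definition of the ex-limit does not by itself give $B\subseteq A$. What is missing is the resolvent step: from $U_{n}(t)Q_{n}f\to U(t)f$ one obtains, via the Laplace transform and dominated convergence, $(\lambda-A_{n})^{-1}Q_{n}h\to(\lambda-B)^{-1}h$ for every $h\in L$ and $\lambda>0$. Then for $g\in D(B)$ set $f_{n}\doteqdot(\lambda-A_{n})^{-1}Q_{n}(\lambda-B)g$; one checks $f_{n}\to g$ and $A_{n}f_{n}=\lambda f_{n}-Q_{n}(\lambda-B)g\to Bg$, so $(g,Bg)\in\mathrm{Graph}(A)$ and hence $B\subseteq A$. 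Without this, your conclusion that $D(A)$ is dense and $R(\lambda_{0}-A)$ is dense does not follow from $A\subseteq B$ alone.
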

Here
$\lr{L}{\|\cdot\|}$
is a Banach space,
$\{\lr{L_{n}}{\|\cdot\|_{n}}\}_{n\in\N}$
is a sequence of Banach spaces,
$\{Q_{n}\in B(L,L_{n})\}_{n\in\N}$
such that
$\lim_{n\to\infty}\|Q_{n}f\|_{n}=\|f\|$
for all $f\in L$.
Let
$f\in L$
and
$\{f_{n}\}_{n\in\N}$
such that
$f_{n}\in L_{n}$ for every $n\in\N$,
thus he set
\footnote{
Notice the strong
similarity
of \eqref{16590603}
with \eqref{21040503}.}
\begin{equation}
\label{16590603}
f=\lim_{n\to\infty}f_{n}
\Leftrightarrow
\lim_{n\to\infty}
\|f_{n}-Q_{n}f\|_{n}=0.
\end{equation}
Moreover
if
$A_{n}:Dom(A_{n})\subseteq L_{n}\to L_{n}$
he defined
\begin{equation}
\label{16500603}
\begin{cases}
Graph(ex-lim_{n\to\infty}A_{n})
\coloneqq
\{
\lim_{n\in\N}
s_{0}(n)
\,\vert\,
s_{0}\in\Phi_{0}
\}
\\
\begin{aligned}
&
\Phi_{0}
\coloneqq
\{
(f_{n},A_{n}f_{n})_{n\in\N}
\in
(Z\times Z)^{\N}
\,\vert\,
\\
&
(\forall n\in\N)
(f_{n}\in Dom(A_{n}))
\wedge
(\exists\,
\lim_{n\in\N}(
f_{n},A_{n}f_{n}))
\},
\end{aligned}
\end{cases}
\tag{Gr}
\end{equation}
where 
$(f,g)=\lim_{n\in\N}
(f_{n},A_{n}f_{n}))
$
iff
$f=\lim_{n\in\N}f_{n}$
and
$g=\lim_{n\in\N}A_{n}f_{n}$
and all these limits
are those
defined in \eqref{16590603}.
Whenever
$Graph(ex-lim_{n\to\infty}A_{n})$
is a graph in $L$ Kurtz denoted by
$ex-lim_{n\to\infty}A_{n}$
the corresponding operator
in $L$.
\par
The Kurtz's approach did not make
use of the bundle theory,
and, 
except when 
imposing
stronger
assumptions, 
it 
cannot be 
implemented in terms of bundles of $\Omega-$spaces.
\par
What follows results
fundamental
for understanding the 
strategy behind this work.
\eqref{21040503}
essentially generalizes
\eqref{16590603}.
More importantly
\emph
{if 
the topology
on $\mf{M}$
and
that
on
$\mf{E}$
are related 
by a 
$\left(\Theta,\mc{E}\right)-$structure
(for a very simple model see \eqref{21310603})
then the convergence
\eqref{21040503}
essentially generalizes
the convergence 
\eqref{19100603}
of the sequence of semigroups
$\{U_{n}\}_{n\in\N}$
to the semigroup
$U$}.\footnote{
Indeed 
if we set
assume
that there exists for every $n\in\N$
$S_{n}\in B(L_{n},L)$
such that
$S_{n} Q_{n}= Id$
then
\eqref{19100603}
would become
\begin{equation}
\label{17520603}
(\forall t\in\R^{+})
(\forall f\in L)
(\lim_{n\to\infty}\sup_{0\leq s\leq t}
\|
(U_{n}(s)
-
Q_{n}U(s)S_{n})
Q_{n}f
\|_{n}=0).
\end{equation}
Moreover
let
$\lr{\mf{M}}{\rho,X}$
and 
$\lr{\mf{E}}{\pi,X}$
be
set as
in the beginning
and assume
that
$\{\nu_{(K,v)}^{z}
\,\vert\,(K,v)\in Comp(Y),v\in\mc{E}
\}$
is 
a fundamental set of seminorms
on $\mf{M}_{z}$ for every $z\in X$,
where
$\mc{E}\subseteq\Gamma(\pi)$.
Finally
assume that
for all
$K\in Comp(Y)$, $v\in\mc{E}$
and for all
$z\in X$
and
$f^{z}\in\mf{M}_{z}$
\begin{equation}
\label{21310603}
\nu_{(K,v)}^{z}(f^{z})
\coloneqq
\sup_{s\in K}
\|
f^{z}(s)
v(z)
\|_{z}.
\end{equation}
Thus
\eqref{21040503}
would read:
if there exists $\sigma\in\Gamma(\rho)$
such that
$\sigma(x_{\infty})=F(x_{\infty})$
then
\begin{equation}
\label{17510603}
F\in\Gamma^{x_{\infty}}(\rho)
\Leftrightarrow
(\forall K\in Comp(Y))
(\forall v\in\mc{E})
(\lim_{z\to x_{\infty}}
\sup_{s\in K}
\|(F(z)-\sigma(z))v(z)\|_{z}
=0).
\end{equation}
Therefore 
by setting 
$X$ the Alexandroff compactification
of $\N$, $x_{\infty}=\infty$
and for all 
$n\in\N$
\begin{equation}
\label{22270603}
\begin{cases}
\mf{E}_{n}
\coloneqq
L_{n},\,
\mf{E}_{\infty}\coloneqq L
\\
\mf{M}_{n}\coloneqq
\cc{c}{\R^{+},B_{s}(L_{n})}
\\
\mf{M}_{\infty}\coloneqq
\cc{c}{\R^{+},B_{s}(L)}
\\
\mc{E}
\coloneqq
\left\{
Qf
\,\vert\, f\in L
\right\},
\end{cases}
\end{equation}
if there exist conditions 
under which we can obtain that
\begin{equation}
\label{21420603}
\begin{cases}
\left\{
Qf
\,\vert\, f\in L
\right\}
\subseteq
\Gamma(\pi)
\\
\left\{
QVS
\,\vert\,
V\in\ms{U}(L)
\right\}
\subseteq
\Gamma(\rho),
\end{cases}
\end{equation}
where
$
(Qf)(n)
\coloneqq
Q_{n}f
$,
$(Qf)(\infty)
\coloneqq
f$,
while
$(QVS)(n)
\coloneqq
Q_{n}V S_{n}$,
$(QVS)(\infty)
\coloneqq
V$,
for all $n\in\N$
and
$\ms{U}(L)$,
is the set of all 
$C_{0}-$semigroup
on
$L$,
then
by
\eqref{17510603}
and
\eqref{17520603}
follows
that
$$
\mc{U}
\in
\Gamma^{\infty}(\rho),
$$
where
$\mc{U}(n)\coloneqq U_{n}$
and 
$\mc{U}(\infty)\coloneqq U$.}
\par
We used the word ``essentially'' 
due to the difficulty
to build a couple
of Kurtz' bundles, namely
two
bundles 
of $\Omega-$spaces
$\lr{\mf{E}}{\pi,X}$
and
$\lr{\mf{M}}{\rho,X}$
such that
$X$ is 
the Alexandroff compactification
of $\N$
and
\eqref{22270603},
\eqref{21420603} 
hold.
In any case it is possible under 
strong assumptions, see 
Section \ref{17572301}.
Despite the difficulty of constructing Kurtz's bundles,
since the above remark we opted to investigate to which extent
the Kurtz's Thm. \ref{16250603}
can be extended in the framework
of bundles of $\Omega-$spaces,
by using the concept of $\left(\Theta,\mc{E}\right)-$structure.
\par
It is now clear that, 
in the way of extending the Kurtz's Theorem,
we replace the 
sequence of Banach 
spaces
$\{L_{n}\}_{n\in\N\cup\{\infty\}}$
where
$L_{\infty}\coloneqq L$,
with 
a 
Banach
bundle
$\mf{E}$,
while we replace the sequence 
$\{\cc{c}{\R^{+},B_{s}(L_{n)}}
\}_{n\in\N\cup\{\infty\}}$
by the bundle
of $\Omega-$spaces $\mf{M}$.
Hence 
the Kurtz' convergences
\eqref{19100603}
and
\eqref{16590603}
will be replaced by the 
\emph
{convergences of sections
on the bundles spaces
$\mf{M}$
and
$\mf{E}$}
respectively.
In this view definition
\eqref{16500603}
has to be replaced
by that
of
Pre-Graph section
Def.
\ref{16161212bis}
(essentially
\eqref{19240703}),
while
the case in which
$Graph(ex-lim_{n\to\infty}A_{n})$
is a graph in $L$
with 
that
of
Graph section
Def.
\ref{12432110bis}.
Hence it arises as a natural question
which topology
has to be selected
for
the
bundle space
of
$\mf{V}\oplus\mf{V}$.
\par
An essential tool
used in the definition
of 
$Graph(ex-lim_{n\to\infty}A_{n})$
in \eqref{16500603}
is that of convergence
of a sequence $(f_{n},A_{n}f_{n})$
in the direct sum
of the spaces
$L_{n}\oplus L_{n}$,
given by construction
as the convergence
of both the sequences in
$L_{n}$
in the meaning of
\eqref{16590603}.
\par
It is exactly this factorization the property
which we want to preserve when selecting the 
right topology
on the bundle space
of
$\mf{V}\oplus\mf{V}$.
\par
It is a well-known result
the
solution of this problem
in the special case of Banach bundles.
We  generalize this result for a finite
direct sum of general bundles of $\Omega-$spaces,
by constructing
in
\textbf{Thm. \ref{16322110}}
a directed family of seminorms
on the direct sum of
Hausdorff locally convex spaces
that generates the product topology.
\par
This result along with 
\textbf{Lemma \ref{19420111}}
allow
to define the direct sum of
(full) bundles
of $\Omega-$spaces as given in
Def.
\ref{14442410}
\par
\emph
{The result that the topology
on each stalk is the product topology, encoded in \eqref{18260603},
the choice provided in
\eqref{18270603}
of a set that will become a subset of bounded continuous sections
of the direct sum of bundles
and the general
convergence
criterium
in \eqref{21040503},
allow to show the
claimed factorization
property
in 
\textbf{Cor. \ref{17571212}}.
Namely any continuous map from $X$ at values in 
the direct sum $\bigoplus_{i=1}^{n}\mf{E}_{i}$ of bundles 
is continuous at a point 
if and only if all its $n$ components 
are continuous at the same point.} 
\par
In \textbf{Thm. \ref{17301812b}}
we resolve the claim of extending the Kurtz's result to the setting of bundles of $\Omega-$spaces.
More exacly we construct an element of the set
$\Delta_{\Theta}\lr{\mf{V},\mf{W}}
{\mc{E},X,\R^{+}}$ \textbf{Def. \ref{19490412bis}}.
Roughly and limited to singletons we have that the singleton 
$\{\lr{\mc{T}}{x_{\infty},\Phi}\}$ belongs to 
$\Delta_{\Theta}\lr{\mf{V},\mf{W}}
{\mc{E},X,\R^{+}}$
iff
$\mc{T}(x)$
is
the graph 
of  
the infinitesimal 
generator
$T_{x}$
of a
$C_{0}-$semigroup
$\mc{U}(x)$
on $\mf{E}_{x}$,
for all $x\in X$,
\eqref{19240703} holds true
and
\begin{equation}
\label{05211905}
\mc{U}\in\Gamma^{x_{\infty}}(\rho).
\end{equation}
Thus, according to the discussed way of extending the Kurtz' theorem,
to find
an
element
in the
set
$\Delta_{\Theta}\lr{\mf{V},\mf{W}}{\mc{E},X,\R^{+}}$
means
to find
an extension
of Thm. 
\ref{16250603}.
\par
Finally let us outline how the main result of the entire work
\textbf{Thm. \ref{13020103}}
extends the classical stability problem at operators defined in different spaces. 
It provides
the 
existence
of
an
element
$\lr{\mc{T}}{\Phi,x_{\infty}}$
whose singleton 
belongs to the intersection of the set
$\Delta_{\Theta}\lr{\mf{V},\mf{W}}{\mc{E},X,\R^{+}}$
with the set
$\Delta
\lr{\mf{V},\mf{D}}
{\Theta,\mc{E}}$
which ammounts to what follows.
There exists $\mc{U}$ satisfying \eqref{05211905} 
and there exists a section
\begin{equation}
\label{17321002}
\mc{P}
\in
\Gamma^{x_{\infty}}(\eta),
\end{equation}
satisfying \eqref{20020803} with $T_{x}$ 
the infinitesimal 
generator
of the 
$C_{0}-$semigroup
$\mc{U}(x)$
for all $x\in X$.
Actually the result is stronger since it establishes that 
$\mc{P}(x)$ is a \emph{spectral projector} of $T_{x}$ for all $x\in X$.
\par
Roughly speaking and limited to singletons we have what follows,
see \textbf{Def. \ref{15312011bis}} for the precise and general definition.
Given a
$\left(\Theta,\mc{E}\right)-$structure
$\lr{\mf{V},\mf{D}}{X,\{pt\}}$
and denoted
$\mf{D}
\coloneqq
\lr{\mf{B}}{\eta,X}$,
we have that the singleton of $\lr{\mc{T}}{\Phi,x_{\infty}}$ belongs to 
$\Delta
\lr{\mf{V},\mf{D}}
{\Theta,\mc{E}}$
iff
for all $x\in X$
the set
$\mc{T}(x)$
is a graph
in $\mf{E}_{x}$,
\eqref{19240703} holds true
and there exists 
$\mc{P}
\in
\Gamma^{x_{\infty}}(\eta)$
such that
$\mc{P}(x)$
is a projector 
on $\mf{E}_{x}$
for all $x\in X$
and 
\begin{equation}
\label{20020803}
\mc{P}(x)
T_{x}
\subseteq
T_{x}
\mc{P}(x),
\end{equation}
where
$T_{x}$ is the 
operator
in $\mf{E}_{x}$
whose
graph
is
$\mc{T}(x)$.
\par
In others words
$\lr{\mc{T}}{\Phi,x_{\infty}}
\in
\Delta
\lr{\mf{V},\mf{D}}
{\Theta,\mc{E}}$
iff
$\mc{T}$
is a 
section
of graphs
in $\mf{E}$
continuous
at $x_{\infty}$
in the sense of
\eqref{16490703}
and such that
there exists
a 
section
$\mc{P}$
of 
projectors
on $\mf{E}$
continuous
at $x_{\infty}$
such that
\emph
{$\mc{P}$
commutes
with
$\mc{T}$}
in the meaning
of
\eqref{20020803}.
\par
Notice that \eqref{20020803}
is satisfied by any element of the resolution of the identity of 
a spectral operator \cite[Def. 18.2.1]{ds}.
Moreover whenever
$T_{x}$
is the infinitesimal generator
of a
$C_{0}-$semigroup
$\mc{W}_{T}(x)$
of contractions
on $\mf{E}_{x}$,
the most important
case in this work,
it results
that
\eqref{20020803}
is the property
satisfied
by all
the 
spectral projectors
of
the form
$$
\mc{P}(x)
\coloneqq
-
\frac{1}{2\pi i}
\int_{\Gamma}
R(-T_{x};\zeta)\,
d\zeta,
$$
where
$R(-T_{x};\zeta)$
is the resolvent
map
of the 
operator
$-T_{x}$
and $\Gamma$
is a suitable 
closed
curve 
on the complex plane.
Hence
we can consider
the commutation
in
\eqref{20020803}
as the 
defining
property
of what we here consider
as 
the
interesting
bundle
$\mc{P}$
of projectors
associated with
$\mc{T}$.
Therefore
\emph
{as \eqref{05211905} represents the extension of the Kurtz's theorem 
so \eqref{17321002} 
realizes our initial claim to extend in the framework of bundles of $\Omega-$spaces 
the classical stability problem. 
Moreover the two solutions $\mc{U}$ and $\mc{P}$ are correlated 
since $\mc{P}(x)$ is a spectral projector of the infinitesimal generator $T_{x}$
of the semigroup $\mc{U}(x)$ for all $x\in X$, in particular \eqref{20020803} holds true.}
\clearpage
The
main results of this work are the
following ones
\begin{enumerate}
\item
Construction of a suitable
directed fundamental set
of seminorms of the
topological
direct
sum of a finite family of
Hausdorff locally convex spaces,
and construction of 
$\mc{E}^{\oplus}$
satisfying
$FM(3)-FM(4)$
with respect
to 
$\ms{E}^{\oplus}$
(Thm. \ref{16322110} and Lemma \ref{19420111});
\item
Factorization property of the convergence in any direct sum of bundles of $\Omega-$spaces
(Cor. \ref{17571212});
\item
Characterization
of 
sections
of
$\mf{W}$
continuous
at a point
when
$\lr{\mf{V},\mf{W}}{X,Y}$
is
a
$\left(\Theta,\mc{E}\right)-$structure,
(Lemma \ref{15482712});
\item
Construction
of a
$\left(\Theta,\mc{E}\right)-$structure
$\lr{\mf{V},\mf{W}}{X,Y}$
and characterization
of a subset
of $\Gamma^{x_{\infty}}(\rho)$
when
$\mf{V}$
is trivial,
(Thm. \ref{22372406});
\item
Construction
of an element in the
set
$\Delta_{\Theta}\lr{\mf{V},\mf{W}}
{\mc{E},X,\R^{+}}$,
(\textbf{Thm. \ref{17301812b}},
Cor. \ref{21343012}
\item
Conditions yielding the 
bounded equicontinuity of which in
hypothesis $(ii)$
of
Thm. \ref{17301812b}
(Cor. \ref{21343012});
\item
Conditions yielding the 
\eqref{18470109}
(Prp.\ref{18390901});
\item
The technical
Lemma \ref{11011501}
and
Thm. \ref{15251401};
\item
\textbf{Thm. \ref{15332203}}
Cor. \ref{15111901}
and 
Cor. \ref{18491004};
\item
$\mc{K}-$Uniform Convergence
\textbf{Thm. \ref{10581004}};
\item
Consequence
of
being
an
$\lr{\nu,\eta}{E,Z,T}$
invariant set
$V$
with respect to
$\mc{F}$
(Prp. \ref{18310302});
\item
Construction
of
a set
$\Delta_{\Theta}\lr{\mf{V},\mf{D},\mf{W}}
{\mc{E},X,\R^{+}}$
by using
an
$\lr{\nu,\eta}{\mf{G},K(\Gamma),\R^{+}}$
invariant set
$V$
with respect to
$\{\ov{F}_{T}\}$
(Cor. \ref{13511102});
\item
A bundle
version
of the Lebesgue theorem
for
a
$\mu-$related
couple
$\lr{\mf{V}}{\mf{Z}}$
(Thm. \ref{15101701});
\item
Technical 
Lemma \ref{12151902}
and 
Lemma \ref{14452602};
\item
Cor. \ref{15262502};
\item
Construction of
a section of spectral
projectors
continuous at a point,
given
a section of semigroups
continuous at
the same point
(Cor. \ref{21152602});
\item
The Main result of the entire work
namely the construction
of
an element in the set
$\Delta\lr{\mf{V},\mf{D}}{\Theta,\mc{E}}$
(\textbf{Thm. \ref{13020103}}).
\end{enumerate}
The main structures defined in this work
are the following ones
\begin{enumerate}
\item
Direct sum of full bundles of
$\Omega-$spaces
(Def. \ref{14442410});
\item
(Invariant)
$\left(\Theta,\mc{E}\right)-$structure
$\lr{\mf{V},\mf{W}}{X,Y}$,
(Def. \ref{10282712});
\item
Graph section
$\lr{\mc{T}}{x_{\infty},\Phi}$,
(Def. \ref{12432110bis});
\item
$\Delta\lr{\mf{V},\mf{D}}
{\Theta,\mc{E}}$,
(Def. \ref{15312011bis});
\item
$\Delta_{\Theta}\lr{\mf{V},\mf{W}}
{\mc{E},X,\R^{+}}$,
(Def. \ref{19490412bis});
\item
$\Delta_{\Theta}\lr{\mf{V},\mf{D},\mf{W}}
{\mc{E},X,\R^{+}}$;
(Def. \ref{18550612bis});
\item
$\lr{\mf{V},\mf{W}}{X,\R^{+}}$
with the 
Laplace duality property,
(Def. \ref{16401812b});
\item
$\ms{U}-$Spaces
(Def. \ref{14302503});
\item
The locally convex space
$\mf{G}$
(Def. \ref{10221801});
\item
$\lr{\nu,\eta}{E,Z,T}$
invariant set
$V$
with respect to
$\mc{F}$
(Def. \ref{20030202});
\item
$\mu-$related
couple
$\lr{\mf{V}}{\mf{Z}}$
(Def. \ref{15492502});
\item
(Invariant)
$\left(\Theta,\mc{E},\mu\right)-$
structure
$\lr{\mf{V},\mf{Q}}{X,Y}$
(Def. \ref{17161902});
\item
$\left(\Theta,\mc{E}\right)-$
structure
$\lr{\mf{V},\mf{V}(\ms{M},\Gamma(\xi))}{X,Y}$
underlying a
$\left(\Theta,\mc{E},\mu\right)-$
structure
$\lr{\mf{V},\mf{Q}}{X,Y}$
(Def. \ref{18072802}).
\end{enumerate}
\section{Notation}
\label{notat}
For any two sets $X,Y$ we let $Y^{X}$ denote the set of maps defined on $X$ and at values in $Y$.
Let $Graph(X\times Y)$ denote the set of subsets of $X\times Y$ which are graphs,
while for any map $f$ let $Graph(f)$ denote its graph.
If $\mc{B}$ is a base of a filter on $X$, we let $\mf{F}_{\mc{B}}^{X}$ denote the filter on $X$
generated by the base $\mc{B}$.
If $S$
is any set
then
$\p_{\omega}(S)$
denotes the 
set
of all 
finite subsets
of $S$.
If $\tau$ is any topology on $X$ and $x\in X$, then 
$\mc{I}_{x}^{\tau}$ denotes the filter of neighbourhoods of $x$ 
of the topological space $\lr{X}{\tau}$.
Let u.s.c. mean upper semicontinuous.
All vector spaces are assumed to be over
$\K\in\{\R,\C\}$,
Hlcs stands for 
Hausdorff locally convex spaces.
We say
that
$
\ms{V}
\coloneqq
\{\lr{V_{x}}{\A_{x}}\}_{x\in X}
$
is a 
\emph{nice}
family
of Hlcs
if
$\{V_{x}\}_{x\in X}$
is a family
of Hlcs
and there exists a set $J$ for which
$\forall x\in X$ the set
$
\A_{x}
\coloneqq
\{
\mu_{j}^{x}
\}_{j\in J}
$
is a 
directed
\footnote{
I.e.
$
(\forall j_{1},j_{2}\in J)
(\exists\,j\in J)
(\mu_{j_{1}}^{x},\mu_{j_{1}}^{x}
\leq
\mu_{j}^{x})
$
with
the 
order
relation
of
pointwise
order
on
$
\R^{V_{x}}
$.
}
family of seminorms
on
$V_{x}$ generating the 
locally convex topology on it.
For any 
family
of seminorms
$K$
on a vector space $V$
we call
the 
directed
family
of 
seminorms
associated with
$K$
the 
set
$
\{
\sup F
\,\vert\,
F\in\p_{\omega}(\Gamma)
\}
$
with
the 
order
relation
of
pointwise
order
on
$\R^{V}$.
fss stands for ``fundamental set of seminorms''. 
Given
two locally convex spaces (lcs)
$E$
and
$F$
we denote
by
$\mc{L}(E,F)$
the linear
space of all 
linear and continuous maps
on $E$ with values in $F$,
and set
$
\mc{L}(E)
\coloneqq
\mc{L}(E,E)
$,
moreover
let
$\Pr(E)
\coloneqq
\{P\in\mc{L}(E)\vert P\circ P =P\}$
denote
the set of all continuous projectors on $E$.
Let
$S$
be a 
set of
bounded
subsets
of a lcs
$E$,
thus
$
\mc{L}_{S}(E)
$
denotes
the 
lcs
whose 
underlining
linear
space
is
$\mc{L}(E)$
and whose
locally convex topology
is that
of 
uniform convergence
over
the subsets
in $S$.
When $E$ is a normed space
and $S$ is the set of all finite
parts of $E$, then
$\mc{L}_{S}(E)$ 
will be denoted
by $B_{s}(E)$,
while
$B(E)$
denotes
$\mc{L}(E)$ 
with the usual 
norm topology.
Let
$\{E_{i}\}_{i\in I}$
a 
family of lcs. 
Then
we 
denote
by
$\tau_{0}$,
$\tau_{b}$,
$\tau_{l}$,
$
\tau_{\mf{l}}
$
the 
topology
on
$\bigoplus_{i\in I}E_{i}$
induced 
by the product 
topology
on
$\prod_{i\in I}E_{i}$,
that
induced 
by the box
topology
on
$\prod_{i\in I}E_{i}$
(see \cite{jar}),
the 
direct sum
topology,
Ch. $4$, $\S 3$ of
\cite{jar}
and
the 
lc-direct sum topology
Ch. $6$, $\S 6$ of
\cite{jar}
respectively.
\par
Let
$X,Y$
be 
two 
topological spaces
then $Comp(X)$ is the set of all compact subsets of $X$,
while $\cc{}{X,Y}$
is
the 
set
of 
all continuous maps
on $X$ valued in $Y$,
while
$\cc{c}{X,Y}$
is the topological 
space
of 
all continuous maps
on $X$ valued in $Y$
with the topology of
uniform convergence
over the compact subsets
of $Y$.
If $Y$ is a uniform
space
then
$\mc{C}^{b}(X,Y)$
is the space
of all bounded
maps
in
$\cc{}{X,Y}$,
while
$\mc{C}_{c}^{b}(X,Y)
=
\cc{c}{X,Y}\cap
\mc{C}^{b}(X,Y)$.
If 
$E$ is a lcs then
$\cc{c}{X,E}$
is a lcs,
while
if
$E$ is a Hlcs 
and
$Comp(X)$
is a covering
of $X$,
for example if $X$
is a locally compact space,
then
$\cc{c}{X,E}$
is a Hlcs.
Let 
$Y$ be a locally compact space,
$\mu\in Radon(Y)$
and $E\in Hlcs$,
then
$\mf{L}_{1}(Y,E,\mu)$
denotes the linear space
of all scalarly essentially
$\mu-$integrable maps
$f:Y\to E$
such that
its integral
belongs to $E$,
see 
\cite[Ch. $6$]{IntBourb},
while
$Meas(Y,E,\mu)$
denotes
the linear space of all 
$\mu-$measurable maps
$f:Y\to E$.
Let $E$ be a topological vector space,
and $\lr{\mc{L}(E)}{\tau}$ the 
topological vector space whose underlying linear space is 
$\mc{L}(E)$ provided by the topology $\tau$. 
Thus
$\ms{U}(\lr{\mc{L}(E)}{\tau})$
is the set of all
continuous 
semigroup 
morphisms
defined on $\R^{+}$
and with values
in $\lr{\mc{L}(E)}{\tau}$. 
Moreover if $\|\cdot\|$
is any seminorm
on $\mc{L}(E)$
(not necessarly continuous
with respect to $\tau$)
we set
$\ms{U}_{\|\cdot\|}
(\lr{\mc{L}(E)}{\tau})$
as
the 
subset
of
all $U\in\ms{U}(\lr{\mc{L}(E)}{\tau})$
such that
$\|U(s)\|\leq 1$, for all $s\in\R^{+}$.
Let
$\ms{U}_{is}
(\lr{\mc{L}(E)}{\tau})$
be 
the 
subset
of all
$U\in\ms{U}(\lr{\mc{L}(E)}{\tau})$
such that there exists a fundamental set of seminorms $K$ 
on $E$ such that
$U(s)$ is an isometry
with respect to any
element in $K$,
for all $s\in\R^{+}$.
We use throughout this work
the notation of \cite{gie}
and often when referring to
Banach bundles
those of \cite{fell}.
In particular
$
\lr{\lr{\mf{E}}{\tau}}{p,X,\n}
$
or simply
$
\lr{\mf{E}}{p,X}
$,
whenever 
$\tau$
and
$\n$
are 
known,
is 
a bundle
of $\Omega-$spaces
($1.5.$ of \cite{gie}),
where
we denote
by
$\tau$
the topology
on $\mf{E}$
while
with
$
\n
\coloneqq
\{
\nu_{j}\,\vert\,
j\in J
\}
$
the directed
set of 
seminorms
on
$\mf{E}$
($1.3.$ of \cite{gie}).
Thus
we 
set
$
\n_{x}
\coloneqq
\{
\nu_{j}^{x}
\,\vert\,
j\in J
\}
$
with
$\nu_{j}^{x}
\coloneqq
\nu_{j}\up\mf{E}_{x}$
and
$\mf{E}_{x}
\coloneqq
\overset{-1}{p}(x)$,
for all $x\in X$ and $j\in J$.
Moreover
for any $U\subseteq X$
we call $\Gamma_{U}(p)$
the space of bounded continuous sections
of
$\lr{\lr{\mf{E}}{\tau}}{p,X,\n}$
on $U$
defined by 
$$
\Gamma_{U}(p)
\coloneqq
\cc{}{U,\mf{E}}
\bigcap
\prod_{x\in U}^{b}
\lr{\mf{E}_{x}}{\n_{x}}
$$
where
$$
\prod_{x\in U}^{b}
\lr{\mf{E}_{x}}{\n_{x}}
\coloneqq
\bigl\{
\sigma\in
\prod_{x\in U}
\mf{E}_{x}
\,\vert\,
(\forall j\in J)
(\sup_{x\in U}
\nu_{j}^{x}
(\sigma(x))
<\infty)
\bigr\}.
$$
Let $U\subseteq X$
and $x\in U$
set
$$
\Gamma_{U}^{x}(p)
\coloneqq
\bigl\{
f\in
\prod_{x\in U}^{b}
\lr{\mf{E}_{x}}{\n_{x}}
\,\vert\,
f\text{ is continuous at $x$}
\bigr\}.
$$
So
$\Gamma_{U}(p)
=
\bigcap_{x\in U}
\Gamma_{U}^{x}(p)$.
We set
$\Gamma(p)
\coloneqq
\Gamma_{X}(p)$
and 
$\Gamma^{x}(p)
\coloneqq
\Gamma_{X}^{x}(p)$
for any $x\in X$.
The definition of 
trivial
bundle
of $\Omega-$spaces
is given
in
$1.8.$ of \cite{gie}.
Whenever we mention the properties $FM(3),FM(4)$ 
we always mean those provided in 
\cite[$\S 5$]{gie} and recalled in Def. \ref{17471910s}.
If 
$
\mf{A}
\coloneqq
\lr{\lr{\mf{B}}{\tau}}
{\xi,X,\n}
$
is 
a 
bundle of $\Omega-$spaces,
$x\in X$
and
$Q,S$ are subsets of
$\prod_{z\in X}\mf{B}_{z}$,
we set
\begin{equation}
\label{15012602}
\begin{aligned}
Q_{S}^{x}
&\coloneqq
\{
H\in Q\,\vert\,
(\exists\,F\in S)
(H(x)=F(x))
\},
\\
Q_{\diamond}^{x}
&\coloneqq
Q_{\Gamma(\xi)}^{x},
\\
\Gamma_{S}^{x}(\xi)
&\coloneqq
(\Gamma^{x}(\xi))_{S}^{x},
\\
\Gamma_{\diamond}^{x}(\xi)
&\coloneqq
(\Gamma^{x}(\xi))_{\diamond}^{x}.
\end{aligned}
\end{equation}
\section{Continuous sections of bundles of $\Omega-$spaces}
In this section we provide simple but helpful results concerning 
convergence in bundles of $\Omega-$spaces and more specifically 
characterizations of the continuity of sections at a certain point.
\begin{proposition}
\label{28111555}
Let
$\mf{V}
=
\lr{\lr{\mf{E}}{\tau}}{\pi,X,\n}$
be
a bundle
of $\Omega-$spaces
where
$
\n
\coloneqq
\{
\nu_{j}\,\vert\,
j\in J
\}
$.
Moreover
let
$b\in\mf{E}$
and
$\{b_{\alpha}\}_{\alpha\in D}$
a net in $\mf{E}$.
Then
$(1)
\Leftarrow
(2)
\Leftarrow
(3)
\Leftrightarrow
(4)$
where
\begin{enumerate}
\item
$\lim_{\alpha\in D}b_{\alpha}=b$;
\item
$
(\exists\,U\in Op(X)\,\vert\, U\ni\pi(b))
(\exists\,\sigma\in\Gamma_{U}(\pi))
(\sigma\circ\pi(b)=b)
$
such that
$ 
\lim_{\alpha\in D}\pi(b_{\alpha})
=\pi(b)
$
and
$
(\forall j\in J)
(\lim_{\alpha\in D}
\nu_{j}(b_{\alpha}-\sigma(\pi(b_{\alpha})))
=0)
$;
\item
$
(\exists\,U'\in Op(X)\,\vert\, U'\ni\pi(b))
(\exists\,\sigma'\in\Gamma_{U}(\pi)\,\vert\,
\sigma'\circ\pi(b)=b)
$
and
$
(\forall U\in Op(X)\,\vert\, U\ni\pi(b))
(\forall\sigma\in\Gamma_{U}(\pi)\,\vert\,
\sigma\circ\pi(b)=b)
$
we have
$ 
\lim_{\alpha\in D}\pi(b_{\alpha})
=\pi(b)
$
and
$
(\forall j\in J)
(\lim_{\alpha\in D}
\nu_{j}(b_{\alpha}-\sigma(\pi(b_{\alpha})))
=0)
$;
\item
$
(\exists\,U'\in Op(X)\,\vert\, U'\ni\pi(b))
(\exists\,\sigma'\in\Gamma_{U}(\pi))
(\sigma'\circ\pi(b)=b)
$
and
$\lim_{\alpha\in D}b_{\alpha}=b$.
\end{enumerate}
Moreover
if
$\mf{V}$
is locally full
then
$(1)\Leftrightarrow(4)$.
\end{proposition}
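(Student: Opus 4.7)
\textbf{Plan of proof for Proposition \ref{28111555}.}

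The only topological input needed is the description of the basic neighbourhoods of a point $b\in\mf{E}$ given in Definition \ref{17471910Ba}, namely the tubes $T_{\mb{E}}(U,\sigma,\ep,j)$ with $\sigma\in\mc{E}$, together with the fact that each seminorm $\nu_{j}:\mf{E}\to\R$ is upper semicontinuous (built into the definition of bundle of $\Omega$-spaces in \S 1.5 of \cite{gie}) and that addition in the bundle is continuous. The plan is to establish the chain $(4)\Rightarrow(3)\Rightarrow(2)\Rightarrow(1)$ together with the trivial $(3)\Rightarrow(2)$, and finally to read off $(1)\Rightarrow(4)$ in the locally full case.

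For $(2)\Rightarrow(1)$, fix a basic neighbourhood $T_{\mb{E}}(U,\sigma,\ep,j)$ of $b$, so $\pi(b)\in U$ and $\delta\doteqdot\ep-\nu_{j}^{\pi(b)}(b-\sigma(\pi(b)))>0$. Using the local section $\sigma'\in\Gamma_{U'}(\pi)$ with $\sigma'(\pi(b))=b$ provided by $(2)$, decompose on $U\cap U'$
\[
b_{\alpha}-\sigma(\pi(b_{\alpha}))
=
\bigl(b_{\alpha}-\sigma'(\pi(b_{\alpha}))\bigr)
+
\bigl((\sigma'-\sigma)(\pi(b_{\alpha}))\bigr).
\]
The first summand has $\nu_{j}$-value eventually $<\delta/2$ by hypothesis $(2)$. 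For the second, $\sigma'-\sigma$ is a continuous local section and $\nu_{j}$ is u.s.c. on $\mf{E}$, so $x\mapsto\nu_{j}^{x}((\sigma'-\sigma)(x))$ is u.s.c. at $\pi(b)$, where it equals $\nu_{j}^{\pi(b)}(b-\sigma(\pi(b)))=\ep-\delta$; hence eventually this term is $<\ep-\delta/2$. Since $\pi(b_{\alpha})\to\pi(b)\in U\cap U'$, eventually $b_{\alpha}\in T_{\mb{E}}(U,\sigma,\ep,j)$, giving $(1)$.

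The implication $(3)\Rightarrow(2)$ is immediate by taking for $(U,\sigma)$ in $(2)$ the data $(U',\sigma')$ asserted to exist in $(3)$. For $(4)\Rightarrow(3)$, the existence part is provided by $(4)$, and $\pi(b_{\alpha})\to\pi(b)$ by continuity of $\pi$. For any local section $\sigma\in\Gamma_{U}(\pi)$ with $\sigma(\pi(b))=b$, the map
\[
\Phi:\pi^{-1}(U)\ni b'\longmapsto b'-\sigma(\pi(b'))\in\mf{E}
\]
is continuous (composition of the continuous maps $\pi$, $\sigma$, and bundle subtraction), and $\Phi(b)=0$. Thus $\nu_{j}\circ\Phi$ is u.s.c. at $b$ with value $0$, so from $b_{\alpha}\to b$ (and $b_{\alpha}$ eventually in $\pi^{-1}(U)$) we obtain $\nu_{j}(b_{\alpha}-\sigma(\pi(b_{\alpha})))\to 0$ for each $j\in J$, proving $(3)$.

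Finally, if $\mf{V}$ is locally full then every $b\in\mf{E}$ admits a local section $\sigma'$ through it, so $(1)$ trivially upgrades to $(4)$, yielding $(1)\Leftrightarrow(4)$. The only delicate step is $(2)\Rightarrow(1)$: the neighbourhood-base sections $\sigma$ come from the global set $\mc{E}$ while $\sigma'$ is only local, so one must work on the overlap $U\cap U'$ and combine the quantitative convergence from $(2)$ with the purely qualitative u.s.c. of $\nu_{j}$ through the splitting above. Once this is handled, the rest is a direct consequence of the continuity of $\pi$ and the u.s.c. of the seminorms on $\mf{E}$.
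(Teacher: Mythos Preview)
Your argument is sound, but there is a misattribution of the topological input that makes your $(2)\Rightarrow(1)$ step more laborious than necessary. Proposition~\ref{28111555} is stated for an \emph{arbitrary} bundle of $\Omega$-spaces, not one of the form $\mf{V}(\mb{E},\mc{E})$, so Definition~\ref{17471910Ba} does not apply; the correct input is axiom $1.5.\,\mathrm{VII}$ of \cite{gie}, which says that for \emph{any} local section $\sigma\in\Gamma_U(\pi)$ through $b$ the tubes $T^{loc}(V,\sigma,\ep,j)$ already form a neighbourhood basis of $b$. This is exactly how the paper proceeds: it simply observes that $(2)$ places $b_\alpha$ eventually in each such tube around the given $\sigma$, and invokes $1.5.\,\mathrm{VII}$. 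With the correct basis your triangle-inequality splitting becomes superfluous (take $\sigma=\sigma'$); it would only be genuinely needed if the basic tubes were restricted to sections from a generating set $\mc{E}$, which is not the hypothesis here.

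For $(4)\Rightarrow(3)$ the paper again cites $1.5.\,\mathrm{VII}$ directly, whereas you unpack the same content via the continuity of $b'\mapsto b'-\sigma(\pi(b'))$ together with the upper semicontinuity of $\nu_j$. This is a legitimate and self-contained alternative; it trades a one-line citation for an explicit argument, which has the merit of showing exactly which bundle axioms are doing the work (continuity of the fibered subtraction and u.s.c.\ of the seminorms). The remaining implications $(3)\Rightarrow(2)$, $(3)\Rightarrow(4)$ and the locally full case are handled identically in both approaches.
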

\begin{proof}
Clearly $(3)\Rightarrow(2)$.
$(2)$
is equivalent
to say that
$
(\exists\,U\in Op(X)\,\vert\, U\ni\pi(b))
(\exists\,\sigma\in\Gamma_{U}(\pi))
(\sigma\circ\pi(b)=b)
$
such that
$
(\forall V\in Op(X)\,\vert\, 
\pi(b)\in
V\subseteq U)
(\exists\,\alpha(V)\in D)
(\forall\alpha\geq\alpha(V))
(\pi(b_{\alpha})
\in V)
$
and
$
(\forall j\in J)
(\forall\ep>0)
(\exists\,\alpha(V)\in D)
(\forall\alpha\geq\alpha(j,\ep))
(\nu_{j}(b_{\alpha}-\sigma(\pi(b_{\alpha})))
<\ep)
$.
Set
$
\alpha(V,j,\ep)\in D
$
such that
$
\alpha(V,j,\ep)
\geq
\alpha(V),
\alpha(j,\ep)
$
which there exists
$D$ being directed,
thus
we have
$
(\forall V\in Op(X)\,\vert\, 
\pi(b)\in
V\subseteq U)
(\forall j\in J)
(\forall\ep>0)
(\exists\,\alpha(V,j,\ep)\in D)
$
such that
$(\forall\alpha\geq\alpha(V,j,\ep))
(\nu_{j}(b_{\alpha}-\sigma(\pi(b_{\alpha})))
<\ep)$
and
$\pi(b_{\alpha})
\in V$.
Thus
$(1)$
follows
by
applying $1.5.\,VII$
of \cite{gie}.
Finally
by
applying $1.5.\,VII$
of \cite{gie}
$(4)$
(respectively
$(1)$
if
$\mf{V}$
is locally full)
is equivalent
to
$
(\exists\,U'\in Op(X)\,\vert\, U'\ni\pi(b))
(\exists\,\sigma'\in\Gamma_{U}(\pi))
(\sigma'\circ\pi(b)=b)
$
and
$
(\forall\sigma\in\Gamma_{U}(\pi)\,\vert\,
\sigma\circ\pi(b)=b)
(\forall j\in J)
(\forall\ep>0)
(\forall V\in Op(X)\,\vert\, 
\pi(b)\in
V\subseteq U)
(\exists\,\ov{\alpha}\in D)
(\forall\alpha\geq\ov{\alpha})
$
we have
$
\pi(b_{\alpha})
\in V
$
and
$
\nu_{j}(b_{\alpha}-\sigma(\pi(b_{\alpha})))
<\ep
$
which is
$(3)$.
\end{proof}
\begin{theorem}
\label{15380512}
Let
$\mf{V}=
\lr{\lr{\mf{E}}{\tau}}{\pi,X,\n}$
be
a bundle
of $\Omega-$spaces,
$W\subseteq X$
and 
indicate
$
\n=
\{
\nu_{j}
\,\vert\,
j\in J
\}
$.
Moreover
let
$f\in\mf{E}^{W}$,
$x_{\infty}\in W$.
Then
$
(1)
\Leftarrow
(2)
\Leftrightarrow
(3)
\Leftarrow
(4)
\Leftrightarrow
(5)
\Leftrightarrow
(6)
$
where
\begin{enumerate}
\item
$f$ is continuous in $x_{\infty}$;
\item
$
(\exists\,U\in Op(X)\,\vert\, U\ni x_{\infty})
(\exists\,\sigma\in\Gamma_{U}(\pi))
(\sigma(x_{\infty})
=f(x_{\infty}))$
such that
$
\nu_{j}\circ(f-\sigma\circ\pi\circ f):
W\cap U\to\R
$
and
$\pi\circ f:W\to X$
are 
continuous
in
$x_{\infty}$
for all
$j\in J$;
\item
$\pi\circ f:W\to X$
is
continuous
in
$x_{\infty}$
and
$
(\exists\,U\in Op(X)\,\vert\, U\ni x_{\infty})
(\exists\,\sigma\in\Gamma_{U}(\pi))
(\sigma(x_{\infty})
=f(x_{\infty}))$
such that
$$
(\forall j\in J)
(\lim_{
y\to x_{\infty},y\in W\cap U}
\nu_{j}(f(y)-\sigma
\circ\pi\circ f(y))=0);
$$
\item
$
(\exists\,U'\in Op(X)\,\vert\, U'\ni x_{\infty})
(\exists\,\sigma'\in\Gamma_{U}(\pi))
(\sigma'(x_{\infty})
=f(x_{\infty}))$
and
$
(\forall U\in Op(X)\,\vert\, U\ni x_{\infty})
(\forall\sigma\in\Gamma_{U}(\pi)
\,\vert\,
\sigma(x_{\infty})
=f(x_{\infty}))$
we have
$
\nu_{j}\circ(f-\sigma):
W\cap U\to\R
$
and
$\pi\circ f:W\to X$
are 
continuous
in
$x_{\infty}$
for all
$j\in J$;
\item
$\pi\circ f:W\to X$
is
continuous
in
$x_{\infty}$
and
$
(\exists\,U'\in Op(X)\,\vert\, U'\ni x_{\infty})
(\exists\,\sigma'\in\Gamma_{U}(\pi))
(\sigma'(x_{\infty})
=f(x_{\infty}))$
and
$
(\forall U\in Op(X)\,\vert\, U\ni x_{\infty})
(\forall\sigma\in\Gamma_{U}(\pi)
\,\vert\,
\sigma(x_{\infty})
=f(x_{\infty}))$
we have
$$
(\forall j\in J)
(\lim_{
y\to x_{\infty},y\in W\cap U}
\nu_{j}(f(y)-\sigma\circ\pi\circ f(y))=0);
$$
\item
$
(\exists\,U'\in Op(X)\,\vert\, U'\ni x_{\infty})
(\exists\,\sigma'\in\Gamma_{U}(\pi))
(\sigma'(x_{\infty})
=f(x_{\infty}))$
and
$f$ is continuous at
$x_{\infty}$.
\end{enumerate}
Moreover
if 
$\mf{V}$
is locally full
then
$(1)\Leftrightarrow(6)$
and if
it is full we can choose
$U=X$ 
and
$U'=X$. 
\end{theorem}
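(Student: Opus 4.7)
The plan is to reduce the statement to Proposition \ref{28111555} via the standard net-theoretic description of continuity. For any net $(y_\alpha)_{\alpha \in D}$ in $W$ with $y_\alpha \to x_\infty$, I would set $b := f(x_\infty)$ and $b_\alpha := f(y_\alpha)$; then $f$ is continuous at $x_\infty$ iff $\lim_\alpha b_\alpha = b$ in $\mathfrak{E}$ for every such approaching net. Under this translation, the clauses ``$\pi \circ f$ continuous at $x_\infty$'' and ``$\nu_j \circ (f - \sigma \circ \pi \circ f) \to 0$ at $x_\infty$'' become exactly the clauses ``$\lim \pi(b_\alpha) = \pi(b)$'' and ``$\lim \nu_j(b_\alpha - \sigma(\pi(b_\alpha))) = 0$'' appearing in Proposition \ref{28111555}.

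First I would establish the pure reformulations $(2) \Leftrightarrow (3)$ and $(4) \Leftrightarrow (5)$. Since $\sigma(x_\infty) = f(x_\infty)$, the function $\nu_j \circ (f - \sigma \circ \pi \circ f)$ takes the value $\nu_j(0) = 0$ at $x_\infty$, so its continuity there is equivalent to its limit at $x_\infty$ being zero. The implication $(4) \Rightarrow (3)$ is then immediate, since (4) quantifies universally over the pairs $(U, \sigma)$ passing through $f(x_\infty)$ while still requiring the existence of at least one such pair, which is precisely the content of (3).

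Next I would derive the principal implications by applying Proposition \ref{28111555} to an arbitrary net $(y_\alpha)$ with $y_\alpha \to x_\infty$ in $W$. For $(2) \Rightarrow (1)$: clause (2) of the theorem supplies, for every such net, exactly the hypotheses of clause (2) of the proposition, whose implication $(2) \Rightarrow (1)$ yields $b_\alpha \to b$; since this holds for every approaching net, $f$ is continuous at $x_\infty$. For $(6) \Rightarrow (5)$: clause (6) gives a section $\sigma'$ through $f(x_\infty)$ together with continuity of $f$ at $x_\infty$, so for any approaching net $b_\alpha \to b$, and hence the hypotheses of clause (4) of the proposition are met; its implication $(4) \Rightarrow (3)$ then delivers the universal seminorm statement of (5).

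For the final assertions, if $\mathfrak{V}$ is locally full then through every point of $\mathfrak{E}$ there exists a local section, so the existential clause of (6) is automatic once (1) is granted, giving $(1) \Rightarrow (6)$ and closing the cycle. If $\mathfrak{V}$ is full, every point lies on a \emph{global} section, and I could take $U = U' = X$ throughout. There is no serious obstacle here; the proof is almost mechanical once Proposition \ref{28111555} is in hand. The only subtlety worth tracking is the careful bookkeeping between the existential forms (2)--(3) and the universal forms (4)--(5) of the section condition, which mirrors the analogous gap between clauses (2) and (3) of Proposition \ref{28111555} and introduces no new argument beyond what was already carried out there.
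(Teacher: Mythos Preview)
Your proposal is correct and follows essentially the same route as the paper's own proof: both reduce the statement to Proposition~\ref{28111555} by passing to nets $b_\alpha = f(y_\alpha)$, $b = f(x_\infty)$, then read off each implication from the corresponding one in that proposition, handling the locally full and full cases at the end exactly as you describe. The only cosmetic difference is that you spell out the reformulations $(2)\Leftrightarrow(3)$ and $(4)\Leftrightarrow(5)$ explicitly, whereas the paper leaves them implicit.
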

\begin{proof}
$(1)$
is equivalent
to say that
for each
net
$
\{x_{\alpha}\}_{\alpha\in D}
\subset
W
$
such that
$\lim_{\alpha\in D}x_{\alpha}= x_{\infty}$
in $W$,
we have
$\lim_{\alpha\in D}f(x_{\alpha})
=f( x_{\infty})$
in $\mf{E}$.
Similarly
$(2)$
is equivalent
to say
that
for each
net
$
\{x_{\alpha}\}_{\alpha\in D}
\subset
W
$
such that
$\lim_{\alpha\in D}x_{\alpha}= x_{\infty}$
in $W$,
we have
$
\lim_{\alpha\in D}\pi\circ f(x_{\alpha})
=
\pi\circ f(x_{\infty})
$
and
$
(\forall j\in J)
(\lim_{\alpha\in D}
\nu_{j}\circ(f-\sigma\circ\pi\circ f)
(x_{\alpha})
=
\nu_{j}\circ(f-\sigma\circ\pi\circ f)
(x_{\infty}))
$.
Thus
$(1)
\Leftarrow
(2)$
follows
by the corresponding
one
in
Prp.
\ref{28111555}
with the positions
$
(\forall\alpha\in D)
(b_{\alpha}\coloneqq f(x_{\alpha}))
$
and
$b\coloneqq f(x_{\infty})$.
Similarly
$(1)\Leftarrow(5)$
follows
by
$(1)\Leftarrow(3)$
of
Prp.
\ref{28111555}.
Finally
$(5)\Rightarrow(6)$
follows by 
$(5)\Rightarrow(1)$,
while
if $(6)$ is true then
$\pi\circ f$ is continuous
at $x_{\infty}$
indeed $\pi$ is continuous,
then $(5)$
follows by the implication
$(4)\Rightarrow(3)$
of
Prp.
\ref{28111555}
with the positions
$
(\forall\alpha\in D)
(b_{\alpha}\coloneqq f(x_{\alpha}))
$
and
$b\coloneqq f(x_{\infty})$.
\end{proof}
\begin{corollary}
\label{28111707}
Let
$\mf{V}
=
\lr{\lr{\mf{E}}{\tau}}{\pi,X,\n}$
be
a bundle
of $\Omega-$spaces,
$W\subseteq X$
and indicate
$
\n=
\{
\nu_{j}\,\vert\,
j\in J
\}
$.
Moreover
let
$f\in\prod_{x\in W}\mf{E}_{x}$
and
$x_{\infty}\in W$.
Then
$
(1)
\Leftarrow
(2)
\Leftrightarrow
(3)
\Leftarrow
(4)
\Leftrightarrow
(5)
\Leftrightarrow
(6)
$
where
\begin{enumerate}
\item
$f$ is continuous in $x_{\infty}$;
\item
$(\exists\,U\in Op(X)\,\vert\, U\ni x_{\infty})
(\exists\,\sigma\in\Gamma_{U}(\pi))
(\sigma(x_{\infty})
=f(x_{\infty}))$
such that
$
\nu_{j}\circ(f-\sigma):
W\cap U\to\R
$
is
continuous
in
$x_{\infty}$
for all
$j\in J$;
\item
$(\exists\,U\in Op(X)\,\vert\, U\ni x_{\infty})
(\exists\,\sigma\in\Gamma_{U}(\pi))
(\sigma(x_{\infty})
=f(x_{\infty}))$
such that
$$
(\forall j\in J)
(\lim_{
y\to x_{\infty},y\in W\cap U}
\nu_{j}(f(y)-\sigma(y))=0);
$$
\item
$(\exists\,U'\in Op(X)\,\vert\, U'\ni x_{\infty})
(\exists\,\sigma'\in\Gamma_{U}(\pi))
(\sigma'(x_{\infty})
=f(x_{\infty}))$
and
$
(\forall U\in Op(X)\,\vert\, U\ni x_{\infty})
(\forall\sigma\in\Gamma_{U}(\pi)
\,\vert\,
\sigma(x_{\infty})
=f(x_{\infty}))$
we have
that
$
\nu_{j}\circ(f-\sigma):
W\cap U\to\R
$
is
continuous
in
$x_{\infty}$
for all
$j\in J$;
\item
$(\exists\,U'\in Op(X)\,\vert\, U'\ni x_{\infty})
(\exists\,\sigma'\in\Gamma_{U}(\pi))
(\sigma'(x_{\infty})
=f(x_{\infty}))$
and
$
(\forall U\in Op(X)\,\vert\, U\ni x_{\infty})
(\forall\sigma\in\Gamma_{U}(\pi)
\,\vert\,
\sigma(x_{\infty})
=f(x_{\infty}))$
we have
$$
(\forall j\in J)
(\lim_{
y\to x_{\infty},y\in W\cap U}
\nu_{j}(f(y)-\sigma(y))=0).
$$
\item
$(\exists\,U'\in Op(X)\,\vert\, U'\ni x_{\infty})
(\exists\,\sigma'\in\Gamma_{U}(\pi))
(\sigma'(x_{\infty})
=f(x_{\infty}))$
and
$f$ is continuous at $x_{\infty}$
\end{enumerate}
If
$\mf{V}$
is locally full
then
$(1)
\Leftrightarrow
(6)$
and
if
it
is
full
we can choose
$U=X$ and
$U'=X$.
\end{corollary}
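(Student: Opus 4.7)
The plan is to derive the corollary as a direct specialization of Theorem \ref{15380512}. By hypothesis $f\in\prod_{x\in W}\mf{E}_{x}$, so $f(x)\in\overset{-1}{\pi}(x)$ for every $x\in W$, hence $\pi\circ f=\mathrm{id}_{W}$. Consequently $\pi\circ f$ is continuous at every point of $W$ (a fortiori at $x_{\infty}$), and for any section $\sigma$ defined on an open neighbourhood $U$ of $x_{\infty}$ one has $\sigma\circ\pi\circ f=\sigma$ on $W\cap U$. Under these identifications each of the six conditions of Corollary \ref{28111707} is exactly the corresponding condition of Theorem \ref{15380512} with the automatically satisfied clause ``$\pi\circ f$ continuous at $x_{\infty}$'' suppressed and $\sigma\circ\pi\circ f$ replaced by $\sigma$.

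The implications $(1)\Leftarrow(2)\Leftrightarrow(3)\Leftarrow(4)\Leftrightarrow(5)$ are then immediate from the corresponding chain in Theorem \ref{15380512}, as are the final assertions: under local fullness $(1)\Leftrightarrow(6)$, and under fullness one may choose $U=X$ and $U'=X$. What still needs attention is the additional equivalence $(5)\Leftrightarrow(6)$, which in the corollary is asserted without any fullness hypothesis.

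For $(5)\Rightarrow(6)$, the chain just transferred already gives $(5)\Rightarrow(1)$, so $f$ is continuous at $x_{\infty}$; combined with the existence clause already present in $(5)$ this yields $(6)$. For the converse $(6)\Rightarrow(5)$, the continuity of $f$ at $x_{\infty}$ together with the existence of $\sigma'$ places us in position to apply the implication $(4)\Rightarrow(3)$ of Proposition \ref{28111555}, taking $b\doteqdot f(x_{\infty})$ and, for an arbitrary net $x_{\alpha}\to x_{\infty}$ in $W$, $b_{\alpha}\doteqdot f(x_{\alpha})$. This delivers, for every section $\sigma$ defined on an open neighbourhood of $x_{\infty}$ with $\sigma(x_{\infty})=f(x_{\infty})$ and for every $j\in J$, the convergence $\nu_{j}(f(y)-\sigma(y))\to 0$ as $y\to x_{\infty}$ in $W\cap U$, which is condition $(5)$.

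The only mild obstacle is the clause-by-clause bookkeeping needed to verify that the substitutions $\pi\circ f=\mathrm{id}_{W}$ and $\sigma\circ\pi\circ f=\sigma$ reduce each numbered statement of Theorem \ref{15380512} to the corresponding statement of the corollary, and to confirm that no step in the proof of the theorem secretly used a non-trivial feature of $\pi\circ f$. Once this is checked, no new idea is required beyond the observation that $f$ being a selection trivialises the $\pi\circ f$ continuity condition.
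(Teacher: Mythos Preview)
Your proof is correct and follows the same approach as the paper, which simply writes ``By Theorem \ref{15380512} and $\pi\circ f=Id$.'' Your additional care with the equivalence $(5)\Leftrightarrow(6)$ is well placed, since this equivalence appears in the corollary's chain but not in the theorem's stated chain; however, the paper already establishes $(5)\Leftrightarrow(6)$ within the \emph{proof} of Theorem \ref{15380512} (using exactly the same appeal to Proposition \ref{28111555} that you give), so your argument and the paper's are identical in substance.
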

\begin{proof}
By Thm. \ref{15380512}
and 
$\pi\circ f=Id$.
\end{proof}
\begin{proposition}
\label{16572003}
Let
$\mf{V}$
be
full
and
such that
there exists
a linear
space $E$
such that
for all
$x\in X$
there exists
a linear subspace
$E_{x}\subseteq E$
such that
$\mf{E}_{x}
=
\{x\}\times E_{x}$,
and that
\footnote{
An example 
is when
$\mf{V}$
is the trivial bundle.}
$$
\{
\mf{t}_{v}:X\ni x
\mapsto(x,v)\in\mf{E}_{x}
\,\vert\, v\in
\bigcap_{x\in X}
E_{x}
\}
\subset
\Gamma(\pi),
$$
If
$f_{0}\in\prod_{x\in X}E_{x}$
and
$f\in\prod_{x\in X}\mf{E}_{x}$
such that
$f(x)=(x,f_{0}(x))$
for all $x\in X$
and
$f_{0}(x_{\infty})\in
\bigcap_{x\in X}
E_{x}$,
then
$(1)
\Leftrightarrow(2)
\Leftrightarrow(3)$,
where
\begin{enumerate}
\item
$f$ is continuous at $x_{\infty}$
\item
$(\exists\,U\in Op(X)\,\vert\, U\ni x_{\infty})
(\exists\,\sigma\in\cc{b}{U,E})
(\sigma(x_{\infty})
=f(x_{\infty}))$
such that
for all $j\in J$
$$
\lim_{z\to x_{\infty},z\in W\cap U}
\nu_{j}^{z}(f(z)-\sigma(z))=0;
$$
\item
for all $j\in J$
$$
\lim_{z\to x_{\infty},z\in W\cap U}
\nu_{j}^{z}((z,f_{0}(z))-
(z,f(x_{\infty})))=0.
$$
\end{enumerate}
\end{proposition}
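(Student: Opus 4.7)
The plan is to view condition $(3)$ as condition $(2)$ evaluated at a canonically available constant section, and then close the loop $(1)\Leftrightarrow(2)$ via Corollary \ref{28111707}.

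First I would produce the canonical section. Because $f_{0}(x_{\infty})\in\bigcap_{x\in X}E_{x}$, the displayed inclusion in the hypothesis yields that the map
\begin{equation*}
\tau:X\ni x\mapsto (x,f_{0}(x_{\infty}))\in\mf{E}_{x}
\end{equation*}
belongs to $\Gamma(\pi)$. Since moreover $\tau(x_{\infty})=(x_{\infty},f_{0}(x_{\infty}))=f(x_{\infty})$, we have a globally defined, bounded continuous section of $\pi$ passing through the point $f(x_{\infty})$. Under the identification of an $E$-valued bounded continuous map $\sigma$ on $U$ (with $\sigma(z)\in E_{z}$) with the section $z\mapsto(z,\sigma(z))$, the constant map $x\mapsto f_{0}(x_{\infty})$ is the object in $\cc{b}{U,E}$ corresponding to $\tau$; under the same identification, $f(z)-\sigma(z)$ is read as $(z,f_{0}(z)-\sigma(z))$, so that $\nu_{j}^{z}(f(z)-\sigma(z))$ makes sense stalkwise.

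With this in place, $(3)\Rightarrow(2)$ is immediate: take $U=X$ and $\sigma=\tau$. The inclusion $\tau\in\Gamma(\pi)$ shows $\sigma$ is a legitimate choice, $\tau(x_{\infty})=f(x_{\infty})$ is the required matching condition, and the limit in $(2)$ is literally the limit displayed in $(3)$.

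For $(1)\Leftrightarrow(2)$ I would appeal to Corollary \ref{28111707}. The hypotheses of Proposition \ref{16572003} include that $\mf{V}$ is full, so the final clause of that corollary applies: we may take $U=X$ and $(1)\Leftrightarrow(6)$, which via the chain $(6)\Leftrightarrow(5)\Leftrightarrow(4)\Leftrightarrow(3)\Leftrightarrow(2)$ in the corollary gives $(1)\Leftrightarrow(2)$ of the present proposition. The existence of a witnessing section is supplied by $\tau$, so no side condition is left unchecked.

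Finally, for $(2)\Rightarrow(3)$, I would use the ``for all $\sigma$'' clauses $(4)$--$(5)$ of Corollary \ref{28111707}. Assuming $(2)$, the corollary gives $(1)$, and then --- invoking the full version of the corollary once more --- the limit in its condition $(3)$ holds for \emph{every} section in $\Gamma_{U}(\pi)$ coinciding with $f$ at $x_{\infty}$. Specialising this universal statement to the constant section $\tau$ constructed above produces exactly the limit in $(3)$.

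The only real obstacle is the notational one: interpreting $\sigma\in\cc{b}{U,E}$ correctly as a bounded continuous $E$-valued map corresponding, via the stalk identification $\mf{E}_{x}=\{x\}\times E_{x}$, to a section in $\Gamma_{U}(\pi)$. Once this identification is fixed and the constant section $\tau$ is exhibited, the proof is a clean, essentially tautological, reduction to Corollary \ref{28111707}.
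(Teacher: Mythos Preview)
Your proof is correct and follows essentially the same approach as the paper: construct the constant section $\tau:x\mapsto(x,f_{0}(x_{\infty}))\in\Gamma(\pi)$, read $(3)$ as $(2)$ specialised to $\tau$, and invoke Corollary~\ref{28111707} together with fullness for $(1)\Leftrightarrow(2)$. The only cosmetic difference is in $(2)\Rightarrow(3)$: the paper writes the triangle inequality $\nu_{j}^{z}(f(z)-\tau(z))\le\nu_{j}^{z}(f(z)-\sigma(z))+\nu_{j}^{z}(\sigma(z)-\tau(z))$ and kills the second term via Corollary~\ref{28111707} applied to the continuous section $\tau$, whereas you route through $(1)$ and the ``for all $\sigma$'' clause $(5)$ of that corollary --- but these are the same argument unpacked differently.
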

\begin{proof}
By
Cor.
\ref{28111707}
$(1)\Leftrightarrow(2)$.
Let
$(3)$
hold
then $(2)$ is true
by setting
$\sigma=\mf{t}_{f(x_{\infty})}\up U$.
Let
$(2)$
hold
then
$
\nu_{j}^{z}((z,f_{0}(z))-
(z,f(x_{\infty})))
\leq
\nu_{j}^{z}((z,f_{0}(z))-
\sigma(z))
+
\nu_{j}^{z}(\sigma(z)-
\mf{t}_{f(x_{\infty})}(z))$,
thus
$(3)$
follows
by
$(2)$
and 
by Cor. \ref{28111707}
applied to the continuous
map
$\mf{t}_{f(x_{\infty})}\up U$.
\end{proof}
\begin{corollary}
\label{21492812}
Let
$\mf{V}
\coloneqq
\lr{\lr{\mf{E}}{\tau}}{\pi,X,\n}$
be
a 
bundle
of $\Omega-$spaces,
$W\subseteq X$
and indicate
$
\n=
\{
\nu_{j}\,\vert\,
j\in J
\}
$.
Moreover
let
$f,g\in\prod_{x\in W}\mf{E}_{x}$
and
$x_{\infty}\in W$.
Then
if
$\mf{V}$
locally
full
or
$\nu_{j}$
is continuous
$\forall j\in J$,
then
$
(1)
\rightarrow
(2)
$
where
\begin{enumerate}
\item
$f(x_{\infty})=g(x_{\infty})$
and
$f$ and $g$
are
continuous in $x_{\infty}$;
\item
$
(\exists\,U\in Op(X)\,\vert\,
x_{\infty}\in U)
$
such that
$$
(\forall j\in J)
(\lim_{
y\to x_{\infty},y\in W\cap U}
\nu_{j}(f(y)-g(y))=0).
$$
\end{enumerate}
Moreover
if 
$\mf{V}$
is full 
we can choose
$U=X$.
\end{corollary}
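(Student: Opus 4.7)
The plan is to reduce the problem to Corollary \ref{28111707} applied to the difference selection $h \doteqdot f - g \in \prod_{x \in W} \mf{E}_x$, exploiting the facts that (a) the vector operations on the total space of a bundle of $\Omega$-spaces are jointly continuous, so that continuity of $f$ and $g$ at $x_\infty$ transfers to continuity of $h$ at $x_\infty$, and (b) $h(x_\infty) = 0 = \ze(x_\infty)$, where $\ze \colon X \to \mf{E}$, $x \mapsto 0_x$, is the globally defined zero section. This reformulation replaces the task of comparing $f$ with $g$ by the task of comparing $h$ with the canonical zero section, eliminating the need to juggle two separately chosen local sections and a middle triangle-inequality term of the form $\nu_j(\sigma_f(y) - \sigma_g(y))$.

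I would then treat the two alternative hypotheses in turn. In the locally full case, continuity of $h$ at $x_\infty$ together with the existence of the global section $\ze$ through $h(x_\infty) = 0$ places us in condition (1) of Corollary \ref{28111707} applied to $h$. The equivalence $(1) \Leftrightarrow (5)$ of that Corollary, which is precisely what local fullness buys, then allows us to instantiate the universal clause ``$\forall \sigma$'' with $\sigma = \ze$ and ``$\forall U$'' with any open neighborhood of $x_\infty$, yielding $\lim_{y \to x_\infty,\, y \in W \cap U} \nu_j(f(y) - g(y)) = \lim_{y \to x_\infty,\, y \in W \cap U} \nu_j(h(y) - \ze(y)) = 0$ for every $j \in J$. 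This is precisely statement (2); and since $\ze$ is already a global section, when $\mf{V}$ is in fact full one may take $U = X$.

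In the alternative case where every $\nu_j$ is continuous on $\mf{E}$, sections play no role at all: joint continuity of subtraction on the bundle space yields continuity of $h$ at $x_\infty$, and the continuity of $\nu_j$ then yields continuity of the real-valued composition $\nu_j \circ h \colon W \to \R$ at $x_\infty$, whose value there equals $\nu_j(0) = 0$. This delivers (2) with $U = X$ directly, without appeal to Corollary \ref{28111707}.

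The only step I would scrutinize is the passage from continuity of $f$ and $g$ at $x_\infty$ to continuity of $f - g$ at the same point; this rests on the standard fact, built into the Gierz framework underlying the paper, that the bundle operations are continuous with respect to the pullback topology on $\mf{E} \oplus \mf{E}$. Once that is granted, each case is a direct invocation of Corollary \ref{28111707} (Case 1) or of the composition-of-limits principle (Case 2), and no further estimation is needed.
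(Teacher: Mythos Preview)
Your proposal is correct, but it takes a different route from the paper's own proof. The paper applies the implication $(1)\Rightarrow(5)$ of Corollary~\ref{28111707} separately to $f$ and to $g$: local fullness supplies a single local section $\sigma\in\Gamma_{U}(\pi)$ through the common value $f(x_{\infty})=g(x_{\infty})$, giving $\lim\nu_{j}(f-\sigma)=0$ and $\lim\nu_{j}(g-\sigma)=0$, and a triangle inequality $\nu_{j}(f-g)\leq\nu_{j}(f-\sigma)+\nu_{j}(g-\sigma)$ finishes. Your approach instead forms $h=f-g$, observes $h(x_{\infty})=\ze(x_{\infty})$, and applies Corollary~\ref{28111707} once to $h$ against the global zero section. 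This eliminates the intermediate $\sigma$ and the triangle-inequality step, and as a side benefit yields $U=X$ already in the locally full case (since $\ze$ is global), which is marginally sharper than what the statement claims. The cost is the appeal to joint continuity of fiberwise subtraction on $\mf{E}\times_{X}\mf{E}$; this is indeed built into Gierz's axioms~1.5 (continuity of addition and scalar multiplication), so the step you flag as the one to scrutinize is sound.
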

\begin{proof}
The statement is trivial
in the case of continuiuty
of all the $\nu_{j}$.
Whereas
if
$\mf{V}$
is
locally full
by
$(1)\rightarrow(5)$
of
Cor. \ref{28111707}
we have
$(\exists\,U\in Op(X))
(\exists\,\sigma\in\Gamma_{U}(\pi))
(\sigma(x_{\infty})
=f(x_{\infty})
=g(x_{\infty}))$
such that
$$
(\forall j\in J)
(\lim_{y\to x_{\infty},y\in W\cap U}
\nu_{j}(f(y)-\sigma(y))
=
\lim_{
y\to x_{\infty},y\in W\cap U}
\nu_{j}(g(y)-\sigma(y))
=0).
$$
Therefore
$$
\lim_{y\to x_{\infty},y\in W\cap U}
\nu_{j}(f(y)-g(y))
\leq
\lim_{y\to x_{\infty},y\in W\cap U}
\nu_{j}(f(y)-\sigma(y))
+
\lim_{y\to x_{\infty},y\in W\cap U}
\nu_{j}(g(y)-\sigma(y))
=0.
$$
\end{proof}
\begin{corollary}
\label{281117010}
Let
$
\lr{\lr{\mf{E}}{\tau}}{\pi,X,\n}
$
be
a bundle
of $\Omega-$spaces,
$W\in Op(X)$
and indicate
$
\n=
\{
\nu_{j}\,\vert\,
j\in J
\}
$.
Moreover
let
$f\in\prod_{x\in W}^{b}\mf{E}_{x}$.
Then
$
(1)
\Leftarrow
(2)
\Leftrightarrow
(3)
\Leftarrow
(4)
\Leftrightarrow
(5)
$
where
\begin{enumerate}
\item
$f\in\Gamma_{W}(\pi)$;
\item
$$
(\forall x\in\ W)
(\exists\,U_{x}\in Op(X)\,\vert\, U_{x}\ni x)
(\exists\,\sigma_{x}\in\Gamma_{U_{x}}(\pi))
(\sigma_{x}(x)
=f(x))
$$
such that
$
\nu_{j}
\circ(f-\sigma_{x})
$
is
continuous
in
$x$,
$\forall j\in J$;
\item
$$
(\forall x\in\ W)
(\exists\,U_{x}\in Op(X)\,\vert\, U_{x}\ni x)
(\exists\,\sigma_{x}\in\Gamma_{U_{x}}(\pi))
(\sigma_{x}(x)
=f(x))
$$
such that
$
(\forall j\in J)
(\lim_{
y\to x,y\in W\cap U_{x}}
\nu_{j}(f(y)-\sigma_{x}(y))
=
0)$;
\item
$$
(\forall x\in\ W)
(\exists\,U_{x}'\in Op(X)\,\vert\, U_{x}'\ni x)
(\exists\,\sigma_{x}'\in\Gamma_{U_{x}}(\pi))
(\sigma_{x}'(x)
=f(x))
$$
and
$$
(\forall\,U_{x}\in Op(X)\,\vert\, U_{x}\ni x)
(\forall\sigma_{x}\in\Gamma_{U_{x}}(\pi)
\,\vert\,
\sigma_{x}(x)
=f(x))
$$
we have
that
$
\nu_{j}
\circ(f-\sigma_{x})
$
is
continuous
in
$x$
for all
$x\in W$
and
$j\in J$;
\item
$$
(\forall x\in\ W)
(\exists\,U_{x}'\in Op(X)\,\vert\, U_{x}'\ni x)
(\exists\,\sigma_{x}'\in\Gamma_{U_{x}}(\pi))
(\sigma_{x}'(x)
=f(x))
$$
and
$$
(\forall x\in\ W)
(\forall\,U_{x}\in Op(X)\,\vert\, U_{x}\ni x)
(\forall\sigma_{x}\in\Gamma_{U_{x}}(\pi)
\,\vert\,
\sigma_{x}(x)
=f(x))
$$
we have
$(\forall j\in J)
(\lim_{
y\to x,y\in W\cap U_{x}}
\nu_{j}(f(y)-\sigma_{x}(y))
=
0)$.
\end{enumerate}
\end{corollary}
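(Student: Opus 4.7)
The plan is to reduce the entire statement to a pointwise application of Corollary \ref{28111707}, with the role of $x_{\infty}$ there played, one at a time, by each point $x \in W$ here.

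First I would observe that for any fixed $x \in W$, the ``at $x$'' instances of conditions $(2)$, $(3)$, $(4)$, $(5)$ of the present corollary coincide verbatim with conditions $(2)$, $(3)$, $(4)$, $(5)$ of Corollary \ref{28111707} taken with $x_{\infty} \doteqdot x$; the present conditions are thus simply the conjunctions over $x \in W$ of the corresponding conditions there. Moreover, since the hypothesis $f \in \prod_{x \in W}^{b} \mf{E}_{x}$ already supplies the boundedness built into the definition of $\Gamma_{W}(\pi)$, condition $(1)$ here is equivalent to $f$ being continuous at every $x \in W$, i.e.\ to condition $(1)$ of Corollary \ref{28111707} holding at every $x \in W$.

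The four implications to be proved, $(1) \Leftarrow (2)$, $(2) \Leftrightarrow (3)$, $(3) \Leftarrow (4)$, $(4) \Leftrightarrow (5)$, then follow directly by applying the corresponding implications of Corollary \ref{28111707} at each point $x \in W$. For instance, for $(1) \Leftarrow (2)$: assuming $(2)$, then for each $x \in W$ the ``at $x$'' instance yields, via $(1) \Leftarrow (2)$ of Corollary \ref{28111707}, continuity of $f$ at $x$; combined with the assumed boundedness this gives $f \in \Gamma_{W}(\pi)$. The other three implications transfer in the same routine manner, because the witnessing neighborhoods $U_{x}$ and sections $\sigma_{x}$ at distinct points $x$ are chosen independently and need not be glued to one another.

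There is no real obstacle. The only point meriting a brief remark is the use of $W \in Op(X)$: this ensures that the open neighborhoods $U_{x} \in Op(X)$ appearing in Corollary \ref{28111707} meet $W$ in open neighborhoods of $x$ in $W$, so that the local conditions ``$\lim_{y \to x,\, y \in W \cap U_{x}}$'' and ``continuous at $x$'' transcribed from that corollary align exactly with the statements as formulated here.
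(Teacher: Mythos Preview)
Your proposal is correct and follows exactly the paper's own approach: the paper's proof reads simply ``By Corollary \ref{28111707}'', and you have merely spelled out that pointwise reduction in more detail.
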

\begin{proof}
By Cor.
\ref{28111707}.
\end{proof}
\section{Direct Sum  of Bundles of $\Omega-$spaces}
\label{directsum}
The aim of this section is to extend in Def. \ref{14442410}
the standard construction of direct sum of Banach bundles
to bundles of $\Omega-$spaces. 
In order to do this in Thm. \ref{16322110}
we find a suitable directed set of seminorms inducing the product topology
on the direct sum of a finite family of locally convex spaces.
Then since Lemma \ref{19420111} we can apply the general construction given in Def. \ref{17471910Ba}
to the objects defined in Def. \ref{12422110}.
Finally the factorization property of the convergence in any direct sum of bundles of $\Omega-$spaces
presented in Cor. \ref{17571212}, shows that our definition extends the product topology and  
more in general it extends the usual definition of direct sum of Banach bundles.
\begin{theorem}
\label{16322110}
Let
$\{\lr{E_{i}}{\nu_{i}}_{i=1}\}^{n}$
be a family of lcs where
$\nu_{i}=\{\nu_{i,l_{i}}\,\vert\, l_{i}\in L_{i}\}$
is a 
fundamental
directed
set
of 
seminorms of $E_{i}$.
Let us set
for all $i=1,...,n$,
$l_{i}\in L_{i}$
and
$
\rho\in
\prod_{i=1}^{n}
L_{i}
$
$$
\begin{cases}
\hat{\nu}_{i,l_{i}}
\coloneqq
\nu_{i,l_{i}}
\circ
\Pr_{i}
\\
\hat{\mu_{\rho}}
\coloneqq
\sum_{k=1}^{n}
\hat{\nu}_{k,\rho_{k}},
\end{cases}
$$
where
$
\Pr_{i}:\prod_{k=1}^{n}E_{k}
\ni
x
\mapsto
x_{i}
\in E_{i}
$.
\par
Then
$
\hat{\mu}
\coloneqq
\{\hat{\mu}_{\rho}
\,\vert\,\rho\in\prod_{i=1}^{n}L_{i}\}
$
is 
a
directed
set
of 
seminorms on
$
\bigoplus_{i=1}^{n}
E_{i}
$.
Moreover 
by
setting
$$
\begin{cases}
\B(\ze)
\coloneqq
\{
W_{\ep}^{\rho}
\,\vert\,
\ep,\rho\in\prod_{i=1}^{n}
L_{i}
\}
\\
W_{\ep}^{\rho}
\coloneqq
\{
x\in
\bigoplus_{i=1}^{n}
E_{i}
\,\vert\,
\hat{\mu}_{\rho}(x)
<\ep
\},
\end{cases}
$$
we have
that
$\B(\ze)$
is a base of the filter of the neighbourhoods of $\ze$
with respect to the unique locally convex topology $\tau$
on $\bigoplus_{i=1}^{n}E_{i}$ generated by $\hat{\mu}$.
In other words
$$
\F{\B(\ze)}{\bigoplus_{i=1}^{n}E_{i}}
=
\mc{I}_{\ze}^{\tau}.
$$
Finally we have
$
\tau
=
\tau_{0}
=
\tau_{b}
=
\tau_{l}
=
\tau_{\mf{l}}
$.
\end{theorem}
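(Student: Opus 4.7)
The plan is first to verify the routine algebraic points about $\hat{\mu}$, then to identify the topology $\tau$ it generates with the product topology $\tau_0$, and finally to observe that in the finite case all four topologies listed in Definition \ref{18111755} agree.

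First I would check that each $\hat{\mu}_\rho$ is a seminorm: since every $\Pr_i$ is linear and each $\nu_{i,l_i}$ is a seminorm, each $\hat{\nu}_{i,l_i}=\nu_{i,l_i}\circ\Pr_i$ is a seminorm, and a finite sum of seminorms is again a seminorm. For directedness, given $\rho,\rho'\in\prod_{i=1}^{n}L_i$, I use the fact that each $\nu_i$ is directed to pick $\rho''(i)\in L_i$ with $\nu_{i,\rho(i)},\nu_{i,\rho'(i)}\leq\nu_{i,\rho''(i)}$; then $\hat{\mu}_{\rho},\hat{\mu}_{\rho'}\leq\hat{\mu}_{\rho''}$ pointwise, so $\hat{\mu}$ is directed. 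From here $\B(\ze)$ is a filter base exactly because $\hat{\mu}$ is directed: given $W_{\ep_1}^{\rho_1}$ and $W_{\ep_2}^{\rho_2}$, choose $\rho''$ dominating $\rho_1,\rho_2$ and $\ep=\min(\ep_1,\ep_2)$; then $W_{\ep}^{\rho''}\subseteq W_{\ep_1}^{\rho_1}\cap W_{\ep_2}^{\rho_2}$.

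Next I would show that the filter generated by $\B(\ze)$ is the neighborhood filter $\mc{I}_{\ze}^{\tau}$ of the locally convex topology generated by the directed family $\hat{\mu}$: this is the standard fact that, for a directed family of seminorms, a base of neighborhoods of zero is given by the single-seminorm sublevel sets $\{x\mid \hat{\mu}_{\rho}(x)<\ep\}$. Uniqueness of $\tau$ follows from the Hausdorff criterion together with standard locally convex theory.

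The main content is the equality $\tau=\tau_0=\tau_b=\tau_l=\tau_{\mf{l}}$. Because the index set is finite, $\bigoplus_{i=1}^{n}E_i$ coincides with $\prod_{i=1}^{n}E_i$ as a vector space, and the product topology equals the box topology (the only difference disappears with finitely many factors). To check $\tau=\tau_0$ I would argue both inclusions directly: a basic product neighborhood $\{x\mid \nu_{i,l_i}(x_i)<\ep_i,\ i=1,\dots,n\}$ contains $W_{\ep}^{\rho}$ with $\rho(i)=l_i$ and $\ep=\min_i\ep_i$, since $\hat{\mu}_{\rho}(x)<\ep$ forces each summand $\nu_{i,l_i}(x_i)<\ep\leq\ep_i$; conversely $W_{\ep}^{\rho}$ contains the product neighborhood $\{x\mid \nu_{i,\rho(i)}(x_i)<\ep/n,\ i=1,\dots,n\}$ by the triangle (sum) inequality defining $\hat{\mu}_{\rho}$. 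Finally, for finite index sets the locally convex direct sum topology and the direct sum topology both coincide with the product topology (as is standard, e.g., Ch.~$4,\S 3$ of \cite{jar}), giving $\tau_l=\tau_{\mf{l}}=\tau_0$ and hence the full string of equalities.

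The only step that requires real care — and is thus the main obstacle — is the comparison $\tau=\tau_0$, specifically choosing the correct factor $\ep/n$ in the second inclusion so that the sum of $n$ seminorm values stays below $\ep$; all other arguments are either routine verifications of the seminorm axioms or applications of the fact that in the finite-index setting the four standard topologies on the algebraic direct sum collapse to one.
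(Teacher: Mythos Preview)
Your proposal is correct and follows essentially the same strategy as the paper's proof: verify directedness of $\hat{\mu}$, identify the generated topology with $\tau_0$ via a comparison of neighborhood bases, and invoke \cite{jar} for the coincidence of the four standard topologies in the finite-index case. The one minor difference is that the paper inserts an intermediate family $\{\hat{\nu}_{i,l_i}\}$ and first shows this family generates $\tau_0$ before comparing it with $\hat{\mu}$ via Bourbaki's seminorm comparison criterion, whereas you compare $\B(\ze)$ directly with basic product neighborhoods using the $\ep/n$ trick; your route is slightly more streamlined but carries the same content.
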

\begin{proof}
Only in this proof
we set
$I\coloneqq\{1,...,n\}$,
$L\coloneqq\prod_{i\in I}L_{i}$
and
$
E^{\oplus}
\coloneqq
\bigoplus_{i=1}^{n}
E_{i}
$.
Due to the fact that
$n<\infty$
we know that 
$\prod_{i=1}^{n}E_{i}=E^{\oplus}$
so by
\cite{jar}
$\S 4.3.$
the 
set
$
\{\prod_{i=1}^{n}U_{i}\,\vert\, U_{i}\in\mf{U}_{i}\}
$
is a $\ze-$basis for the box topology
on $E^{\oplus}$
if $\mf{U}_{i}$ 
is a $\ze-$basis for the topology
on $E_{i}$.
Moreover $\nu_{i}$ is directed so by $II.3$ 
of \cite{BourTVS}
we can choose
\begin{equation}
\label{17442110}
\begin{aligned}
\mf{U}_{i}
&=
\{
V(\nu_{i,l_{i}},\ep)
\,\vert\,
\ep>0,
l_{i}\in L_{i}
\},
\\
V(\nu_{i,l_{i}},\ep)
&\coloneqq
\{x_{i}\in E_{i}\,\vert\,\nu_{i,l_{i}}(x_{i})<\ep\}.
\end{aligned}
\end{equation}
Thus
if we set
\begin{equation}
\label{18052110}
\begin{cases}
\B_{1}(\ze)
\coloneqq
\{
U_{\eta}^{\rho}\,\vert\,\eta\in(\R_{0}^{+})^{n},
\rho\in L
\},
\\
U_{\eta}^{\rho}
\coloneqq
\{
x\in E^{\oplus}
\,\vert\,
(\forall i\in I)
(\hat{\nu}_{i,\rho_{i}}(x)<\eta_{i})
\};
\end{cases}
\end{equation}
then
$\B_{1}(\ze)$
is a $\ze-$basis for the topology
$\tau_{0}$.
Moreover
$
U_{\ep}^{\rho}
=
\bigcap_{i=1}^{n}
V(\hat{\nu}_{i,\rho_{i}}\eta_{i})
$
so
if we set
$$
\mc{G}(\ze)
\coloneqq
\bigl\{
\bigcap_{s\in M}
V(\hat{\nu}_{s}\ep_{M}(s))
\,\vert\,
M\in
\p_{\omega}
\bigl(\bigcup_{i\in I}\{i\}\times L_{i}\bigr),\,
\ep_{M}:M\to\R_{0}^{+}
\bigr\},
$$
then
by \eqref{18052110}
$
\B_{1}(\ze)
\subseteq
\mc{G}(\ze)
$.
Moreover
by applying
$II.3$
of \cite{BourTVS},
$\mc{G}(\ze)$
is a basis of a filter 
thus
$$
\F{\B_{1}(\ze)}{E^{\oplus}}
\subseteq
\F{\mc{G}(\ze)}{E^{\oplus}}.
$$
Now for all
$M
\in
\p_{\omega}
\left(\bigcup_{i\in I}\{i\}\times L_{i}\right)
$
we have
$
M=\bigcup_{i\in I}M_{i}
$
with
$
M_{i}\coloneqq
M\cap(\{i\}\times L_{i})
=
\{i\}\times Q_{i}
$
for some
$Q_{i}\in\p_{\omega}(L_{i})$.
Hence
$
\forall M
\in
\p_{\omega}
\left(\bigcup_{i\in I}\{i\}\times L_{i}\right)
$
and $\forall\ep_{M}:M\to\R_{0}^{+}$
$$
\begin{aligned}
T\coloneqq
\bigcap_{s\in M}
V(\hat{\nu}_{s},\ep_{M}(s))
&
=
\bigcap_{i\in I}
\bigcap_{s\in M_{i}}
V(\hat{\nu}_{s},\ep_{M}(s))
\\
&
=
\bigcap_{i\in I}
\bigcap_{l_{i}\in Q_{i}}
\{
x\in E^{\oplus}
\,\vert\,
x_{i}\in V(\nu_{i,l_{i}},\ep_{M}(i,l_{i}))
\\
&
=
\bigcap_{i\in I}
\bigl\{
x\in E^{\oplus}
\,\vert\,
x_{i}\in 
\bigcap_{l_{i}\in Q_{i}}
V(\nu_{i,l_{i}},\ep_{M}(i,l_{i}))
\bigr\}.
\end{aligned}
$$
Moreover we know
that
$\mf{U}_{i}$
is a basis of a filter
on $E_{i}$ thus
for any $i\in I$
there exists
$\lambda_{i}>0$
and
$k_{i}\in L_{i}$
such that
$$
V(\nu_{i,k_{i}},\lambda_{i})
\subseteq
\bigcap_{l_{i}\in Q_{i}}
V(\nu_{i,l_{i}},\ep_{M}(i,l_{i})),
$$
hence
$$
\begin{aligned}
\mc{G}(\ze)
\ni
T
&
\supseteq
\bigcap_{i\in I}
\{
x\in E^{\oplus}
\,\vert\,
x_{i}\in 
V(\nu_{i,k_{i}},\lambda_{i})
\}
\\
&
=
\bigcap_{i\in I}
V(\hat{\nu}_{i,k_{i}},\lambda_{i})
\in
\B_{1}(\ze).
\end{aligned}
$$
Therefore
by a well-known property of filters
$
\F{\mc{G}(\ze)}{E^{\oplus}}
\subseteq
\F{\B_{1}(\ze)}{E^{\oplus}}
$
then
\begin{equation}
\label{18062110}
\F{\mc{G}(\ze)}{E^{\oplus}}
=
\F{\B_{1}(\ze)}{E^{\oplus}}.
\end{equation}
By applying
$II.3$
of \cite{BourTVS}
we know
that 
$\F{\mc{G}(\ze)}{E^{\oplus}}$
is the $\ze-$neighbourhood's
filter with respect to the
locally convex topology 
generated by the family of seminorms
$
\{\nu_{s}\,\vert\, 
s\in
\bigcup_{i\in I}\{i\}\times L_{i}
\}
$
thus
by
\eqref{18052110}
and
\eqref{18062110}
\begin{equation}
\label{18112110}
\bigl
\{\nu_{s}\,\vert\, 
s\in
\bigcup_{i\in I}\{i\}\times L_{i}
\bigr\}
\text{ is a fss for $\tau_{0}$}.
\end{equation}
\par
Now
$\hat{\mu}$
is a set of seminorms on $E^{\oplus}$.
Let
$\rho^{1},\rho^{2}\in L$
then 
by the hypothesis that
$\nu_{i}$
is directed,
for all $i\in I$
there exists
$\rho_{i}\in L_{i}$
such that
$\rho_{i}\geq\rho^{1},\rho^{2}$
thus
$\hat{\mu}_{\rho}
\geq
\hat{\mu}_{\rho^{1}},
\hat{\mu}_{\rho^{2}}
$,
hence
$\hat{\mu}$
is directed.
Therefore
setting
$$
\begin{cases}
\B(\ze)
\coloneqq
\{
W_{\ep}^{\rho}
\,\vert\,
\ep>0,
\rho\in L
\}
\\
W_{\ep}^{\rho}
\coloneqq
\{
x\in E^{\oplus}
\,\vert\,
\hat{\mu}_{\rho}(x)
<\ep
\}
\end{cases}
$$
by 
applying
$II.3$
of \cite{BourTVS}
\begin{equation}
\label{18242110}
\B(\ze)
\text{ is the $\ze-$basis
for the topology
generated by
$\hat{\mu}$.
}
\end{equation}
Now
$
(\forall(k,l_{k})\in 
\bigcup_{i\in I}\{i\}\times L_{i})
(\exists\,\rho\in L)
(\hat{\nu}_{k,l_{k}}\leq a\hat{\mu}_{\rho})
$
indeed 
keep any $\rho$
s.t. $\rho(k)=l_{k}$.
While
$
(\forall\rho\in I)
(m\in\N)
(\exists\,s_{1},...,s_{m}\in
\bigcup_{i\in I}\{i\}\times L_{i})
(\exists\,a>0)
(\hat{\mu}_{\rho}\leq a
\sup_{r}\hat{\nu}_{s_{r}})
$
indeed
it is sufficient to set
$m=n$,
$
a=n
$
and
$
s_{i}
=
(i,\rho_{i})
$
for all $i\in I$.
Therefore
by applying
Cor.
$1$
$II.7$
of
\cite{BourTVS}
and
by
\eqref{18242110}
and
\eqref{18112110}
we have that
$\hat{\mu}$
is a directed fss
for the topology
$\tau_{0}$
hence
the part of the statement
concerning 
$\tau_{0}$ follows.
By
Prop.
$2$,
$\S 3$,
Ch $4$
of
\cite{jar}
we know
that
$\tau_{0}
=
\tau_{b}
=
\tau_{l}
$.
Finally
$
\tau_{\mf{l}}
=
\tau_{l}
$
by the fact that
$
\tau_{\mf{l}}
$
is the finest 
locally convex
topology
among those
which are
coarser
than
$\tau_{l}$,
$\S 6$,
Ch $6$
of
\cite{jar},
and the just now shown
fact that
$\tau_{l}$
is 
locally convex
being
equal
to $\tau_{0}$
which 
is generated by
$\hat{\mu}$.
\end{proof}
\begin{notation}
In the remaining of the present section \ref{directsum} 
we let 
$\{\mf{V}_{i}\}_{i=1}^{n}$
be a family of \textbf{full}
bundles of $\Omega-$spaces.
Here
$\mf{V}_{i}
=
\lr{\lr{\mf{E}_{i}}{\tau_{i}}}
{\pi_{i},X,\n_{i}}$,
$\n_{i}
=
\{
\nu_{i,l_{i}}
\,\vert\,
l_{i}\in L_{i}
\}
$
moreover 
$
\n_{i}^{x}
\coloneqq
\{
\nu_{i,l_{i}}^{x}
\,\vert\,
l_{i}\in L_{i}
\}
$,
with 
$\nu_{i,l_{i}}^{x}
\coloneqq
\nu_{i,l_{i}}\up\left(\mf{E}_{i}\right)_{x}$
and 
$
\left(\mf{E}_{i}\right)_{x}
\coloneqq
\overset{-1}{\pi_{i}}(x)$
for all
$i=1,...,n$ and $x\in X$.
\end{notation}
\begin{definition}
\label{12422110}
Define
\begin{enumerate}
\item
$\ms{E}_{x}^{\oplus}
\coloneqq
\bigoplus_{i=1}^{n}\left(\mf{E}_{i}\right)_{x}
$;
\item
$
\mf{n}_{x}^{\oplus}
\coloneqq
\{
\hat{\mu}_{\rho}^{x}
\,\vert\,
\rho
\in
\prod_{i=1}^{n}
L_{i}
\}
$,
where
\begin{equation}
\label{18260603}
\hat{\mu}_{\rho}^{x}
=
\sum_{i=1}^{n}
\hat{\nu}_{i,\rho_{i}}^{x};
\end{equation}
\item
$\ms{E}^{\oplus}
\coloneqq
\left\{
\lr{\ms{E}_{x}^{\oplus}}
{\mf{n}_{x}^{\oplus}}
\right\}_{x\in X}$;
\item
$\mc{E}^{\oplus}$
is the 
linear subspace
of 
$
\prod_{x\in X}
\ms{E}_{x}^{\oplus}
$
generated
by the following
set
\begin{equation}
\label{18270603}
\bigcup_{i=1}^{n}
\tilde{\Gamma}(\pi_{i}).
\end{equation}
\end{enumerate}
Here
$
\Pr_{i}^{x}:
\ms{E}_{x}^{\oplus}
\ni
x
\mapsto
x(i)
\in
\left(\mf{E}_{i}\right)_{x}
$
while
$
\hat{\nu}_{i,\rho_{i}}^{x}
=
\nu_{i,\rho_{i}}^{x}
\circ
\Pr_{i}^{x}
$
and
$
I_{i}^{x}:
\left(\mf{E}_{i}\right)_{x}
\to
\ms{E}_{x}^{\oplus}
$
is the canonical inclusion,
i.e.
$
\Pr_{j}^{x}
\circ
I_{i}^{x}
=
\delta_{i,j}
\,
Id^{x}
$,
finally
$
\tilde{\Gamma}(\pi_{i})
\coloneqq
\{
\tilde{f}
\,\vert\,
f\in
\Gamma(\pi_{i})
\}
$,
with
$
\tilde{f}(x)
\coloneqq
I_{i}^{x}
(f(x))
$.
\end{definition}
Notice that
$
\{
\lr{
\left(\mf{E}_{i}\right)_{x}}{\n_{i}^{x}}
\}_{i=1}^{n}
$
for all $x\in X$
is a family of 
Hlcs
where
$\n_{i}^{x}$
is a 
directed family of
seminorms defining
the topology on
$
\left(\mf{E}_{i}\right)_{x}
$,
for all 
$i=1,\dots,n$.
\begin{lemma}
\label{19420111}
$\mc{E}^{\oplus}$
satisfies
$FM(3)-FM(4)$
with respect
to 
$\ms{E}^{\oplus}$.
\end{lemma}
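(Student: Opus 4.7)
The plan is to reduce both axioms $FM(3)$ and $FM(4)$ to the corresponding properties of the individual bundles $\mf{V}_i$, exploiting the linearity of the canonical inclusions $I_i^x$ and the fact that $\Gamma(\pi_i)$ is itself a linear space.

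First I would establish a normal form for elements of $\mc{E}^{\oplus}$. By definition any $f\in\mc{E}^{\oplus}$ is a finite linear combination $f=\sum_{k}c_{k}\tilde{g}_{k}$ with $g_{k}\in\Gamma(\pi_{i_{k}})$. Grouping the terms according to the bundle index and using that $I_{i}^{x}$ is linear (so $\tilde{\cdot}$ is linear on $\Gamma(\pi_{i})$), I can rewrite
\begin{equation*}
f=\sum_{i=1}^{n}\tilde{h}_{i},\qquad h_{i}\doteqdot\sum_{k\,:\,i_{k}=i}c_{k}g_{k}\in\Gamma(\pi_{i}).
\end{equation*}
This normal form is the only nontrivial structural step; everything else will be a routine check.

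Next I would verify that $\mc{E}^{\oplus}\subseteq\prod_{x\in X}^{\infty}\lr{\mb{E}_{x}^{\oplus}}{\mf{n}_{x}^{\oplus}}$. For arbitrary $\rho\in\prod_{i}L_{i}$ and $f=\sum_{i}\tilde{h}_{i}$ in normal form, using $\Pr_{j}^{x}\circ I_{i}^{x}=\delta_{i,j}Id^{x}$ one has $\hat{\nu}_{i,\rho_{i}}^{x}(f(x))=\nu_{i,\rho_{i}}^{x}(h_{i}(x))$, hence
\begin{equation*}
\sup_{x\in X}\hat{\mu}_{\rho}^{x}(f(x))\;=\;\sup_{x\in X}\sum_{i=1}^{n}\nu_{i,\rho_{i}}^{x}(h_{i}(x))\;\leq\;\sum_{i=1}^{n}\sup_{x\in X}\nu_{i,\rho_{i}}^{x}(h_{i}(x))\;<\;\infty,
\end{equation*}
since $h_{i}\in\Gamma(\pi_{i})\subseteq\prod_{x\in X}^{b}\lr{(\mf{E}_{i})_{x}}{\n_{i}^{x}}$. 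The same identity gives $FM(4)$ immediately: $x\mapsto\hat{\mu}_{\rho}^{x}(f(x))$ is a finite sum of the maps $x\mapsto\nu_{i,\rho_{i}}^{x}(h_{i}(x))$, each of which is u.s.c. by the axiom $FM(4)$ applied to $\mf{V}_{i}$ and the section $h_{i}$; a finite sum of u.s.c. real-valued functions is u.s.c.

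Finally, for $FM(3)$ I would fix $x\in X$ and $v\in\mb{E}_{x}^{\oplus}$. Write $v=\sum_{i=1}^{n}I_{i}^{x}(v_{i})$ with $v_{i}\in(\mf{E}_{i})_{x}$. By $FM(3)$ for $\mf{V}_{i}$ there exist $g_{i}^{\alpha}\in\Gamma(\pi_{i})$ with $g_{i}^{\alpha}(x)\to v_{i}$ in $(\mf{E}_{i})_{x}$. Setting $f^{\alpha}\doteqdot\sum_{i=1}^{n}\tilde{g}_{i}^{\alpha}\in\mc{E}^{\oplus}$, one has $f^{\alpha}(x)=\sum_{i}I_{i}^{x}(g_{i}^{\alpha}(x))$; since each $I_{i}^{x}$ is continuous from $(\mf{E}_{i})_{x}$ into $\mb{E}_{x}^{\oplus}$ with the product topology (which by Remark \ref{22550110} and Theorem \ref{16322110} coincides with the topology generated by $\mf{n}_{x}^{\oplus}$), the net $f^{\alpha}(x)$ converges to $v$. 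Hence $\{f(x)\mid f\in\mc{E}^{\oplus}\}$ is dense in $\mb{E}_{x}^{\oplus}$.

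I do not anticipate a serious obstacle; the only point requiring care is the reduction to the normal form $f=\sum_{i}\tilde{h}_{i}$, which relies on the linearity of $\Gamma(\pi_{i})$ and of the canonical inclusions. All other steps are direct applications of $FM(3)$, $FM(4)$ for the summand bundles together with Theorem \ref{16322110} to identify the topology on $\mb{E}_{x}^{\oplus}$ generated by $\mf{n}_{x}^{\oplus}$ with the product topology.
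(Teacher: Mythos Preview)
Your proof is correct and follows essentially the same approach as the paper: density via componentwise approximation through the continuous inclusions $I_i^x$, and upper semicontinuity by reducing $\hat{\mu}_\rho^x(f(x))$ to a finite sum of the u.s.c.\ maps $x\mapsto\nu_{i,\rho_i}^x(h_i(x))$. Your explicit normal form $f=\sum_i \tilde{h}_i$ with $h_i\in\Gamma(\pi_i)$ is a clean device that the paper uses only implicitly in its $FM(4)$ step, and you also verify the boundedness condition $\mc{E}^{\oplus}\subseteq\prod_{x}^{\infty}\mb{E}_x^{\oplus}$, which the paper omits.
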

\begin{proof}
$
I_{i}^{x}
$
is a bijective
map
onto
its
range
whose
inverse
is
$
\Pr_{i}^{x}
\up
Range(I_{i}^{x})
$.
Moreover
by definition
of the product 
topology
$\Pr_{i}^{x}$
is continuous
with respect 
to the 
topology
$\tau_{0}^{i}$
on
$Range(I_{i}^{x})$
induced by 
$\tau_{0}$
\cite[Ch.1]{BourGT},
while
$I_{i}^{x}$
is continuous
with respect 
to
$\tau_{0}^{i}$
by 
\cite[$\S$ 4.3 Pr.1]{jar}
and the definition
of 
$\tau_{l}$.
Hence
by 
Thm. 
\ref{16322110}
$
I_{i}^{x}
$
is 
an isomorphism
of the
tvs's
$
\lr{\left(\mf{E}_{i}\right)_{x}}{\n_{i}^{x}}
$
and
$
I_{i}^{x}
\left(
\left(\mf{E}_{i}\right)_{x}
\right)
$
as subspace
of
$
\lr{\ms{E}_{x}^{\oplus}}
{\mf{n}_{x}^{\oplus}}
$.
Since
\cite[1.5.III]{gie}
and
\cite[1.6.viii]{gie}\footnote{
which ensures that
the locally convex topology
on
$
\left(\mf{E}_{i}\right)_{x}
$
generated by the set of seminorms
$\n_{i}^{x}$
is exactly
the topology
induced
on
it
by the topology
$\tau_{i}$
on $\mf{E}_{i}$,
for all $i$
and $x\in X$.}
we deduce that
$
\{
\sigma(x)\,\vert\,
\sigma\in\Gamma(\pi_{i})
\}
$
is dense
in
$
\lr{\left(\mf{E}_{i}\right)_{x}}{\n_{i}^{x}}
$.
Therefore
$\forall i=1,...,n$
and
$\forall x\in X$
\begin{equation}
\label{22170111}
\{
I_{i}^{x}(\sigma(x))\,\vert\,
\sigma\in\Gamma(\pi_{i})
\}
\text{
is dense
in
$
I_{i}^{x}
\left(
\left(\mf{E}_{i}\right)_{x}
\right)
$.
}
\end{equation}
where
$
I_{i}^{x}
\left(
\left(\mf{E}_{i}\right)_{x}
\right)
$
has to be intended 
as topological vector
subspace
of
$
\lr{\ms{E}_{x}^{\oplus}}
{\mf{n}_{x}^{\oplus}}
$.
So 
by the continuity
of the 
sum on
$
\lr{\ms{E}_{x}^{\oplus}}
{\mf{n}_{x}^{\oplus}}
$
and
the fact
that
$\ms{E}_{x}^{\oplus}$
is generated
as linear
space
by 
the set
$
\bigcup_{i=1}^{n}
I_{i}^{x}
\left(
\left(
\mf{E}_{i}\right)_{x}\right)
$
we can state 
$\forall x\in X$
that
\begin{equation}
\label{22050111}
\{
F(x)
\,\vert\,
F
\in
\mc{E}^{\oplus}
\}
\text{ 
is dense in 
$
\lr{\ms{E}_{x}^{\oplus}}
{\mf{n}_{x}^{\oplus}}$.
}
\end{equation}
Namely
by
\eqref{22170111}
$$
(\forall v\in\mf{E}^{\oplus})
(\forall i=1,...,n)
(\exists\,
\{
\sigma_{\alpha_{i}}
\}_{\alpha_{i}\in D_{i}}
\text{ net }
\subset
\Gamma(\pi_{i})
)
$$
such that
$$
\begin{aligned}
v
&
=
\sum_{i=1}^{n}I_{i}^{x}(\Pr_{i}^{x}(v))
=
\sum_{i=1}^{n}
\lim_{\alpha_{i}\in D_{i}}
I_{i}^{x}(\sigma_{\alpha_{i}}(x))
\\
&
=
\sum_{i=1}^{n}
\lim_{\alpha\in D}w_{\alpha}^{i}(x)
=
\lim_{\alpha\in D}
\sum_{i=1}^{n}
w_{\alpha}^{i}(x)
\\
&
=
\lim_{\alpha\in D}
\sum_{i=1}^{n}
I_{i}^{x}(\sigma_{\alpha(i)}(x)),
\end{aligned}
$$
where
$
D
\coloneqq
\prod_{i=1}^{n}
D_{i}
$
while
$
w_{\alpha}^{i}(x)
\coloneqq
I_{i}^{x}(\sigma_{\alpha(i)}(x))
$
for all
$\alpha\in D$.
Moreover
$\forall\alpha\in D$
$$
\bigl(
X\ni x\mapsto
\sum_{i=1}^{n}
I_{i}^{x}(\sigma_{\alpha(i)}(x))
\bigr)
\in
\mc{E}^{\oplus}
$$
then
\eqref{22050111}
and
$FM(3)$
follow.
\par
Finally
$FM(4)$
follows
by 
\cite[1.6.iii]{gie}
applied 
to
any $\sigma_{i}\in\Gamma(\pi_{i})$
for
all
$i=1,...,n$ 
indeed
$\forall\sigma_{i}\in\Gamma(\pi_{i})$
$$
\hat{
\nu_{i,\rho_{i}}^{x}
}
(\tilde{\sigma}_{i}(x))
=
\nu_{i,\rho_{i}}^{x}
\circ
\Pr_{i}^{x}
\circ
I_{i}^{x}
\circ
\sigma_{i}(x)
=
\nu_{i,\rho_{i}}^{x}
\circ
\sigma_{i}(x).
$$
\end{proof}
Now we are able to extend to bundles of $\Omega-$spaces,
the standard construction of direct sum of Banach bundles. 
Namely by Thm. \ref{16322110} we know that
$\mf{n}_{x}^{\oplus}$
is a directed set of seminorms
on $\ms{E}_{x}^{\oplus}$
inducing on
$\ms{E}_{x}^{\oplus}$
the product topology,
thus since Lemma \ref{19420111} we can apply Def. \ref{17471910Ba} 
and set the following
\begin{definition}
\label{14442410}
We call bundle direct sum
of the family $\{\mf{V}_{i}\}_{i=1}^{n}$
the following bundle of $\Omega-$spaces
\begin{equation*}
\bigoplus_{i=1}^{n}
\mf{V}_{i}
\coloneqq
\mf{V}(\ms{E}^{\oplus},\mc{E}^{\oplus}).
\end{equation*}
\end{definition}
\begin{remark}
\label{21262110}
By 
Def.
\ref{17471910Ba}
and Def.
\ref{14442410}
$$
\bigoplus_{i=1}^{n}
\mf{V}_{i}
=
\lr{\lr{\mf{E}(\ms{E}^{\oplus})}
{\tau(\ms{E}^{\oplus},\mc{E}^{\oplus})}}
{\pi_{\ms{E}^{\oplus}},X,\mf{n}^{\oplus}}
$$
where
\begin{enumerate}
\item
$
\mf{E}(\ms{E}^{\oplus})
\coloneqq
\bigcup_{x\in X}
\{x\}
\times
\ms{E}_{x}^{\oplus}
$,
$
\pi_{\ms{E}^{\oplus}}:
\mf{E}(\ms{E}^{\oplus})
\ni
(x,v)
\mapsto x
\in X
$.
\item
$
\mf{n}^{\oplus}
=
\{
\hat{\mu}_{\rho}:
\,\vert\,
\rho
\in
\prod_{i=1}^{n}
L_{i}
\}
$,
with
$
\hat{\mu}_{\rho}:
\mf{E}(\ms{E}^{\oplus})
\ni
(x,v)
\mapsto
\hat{\mu}_{\rho}^{x}(v)
$;
\item
$\tau(\ms{E}^{\oplus},\mc{E}^{\oplus})$
is
the topology
on
$\mf{E}(\ms{E}^{\oplus})$
such that
for all
$
(x,v)\in
\mf{E}(\ms{E}^{\oplus})
$
$$
\mc{I}_{(x,v)}^{\mf{E}(\ms{E}^{\oplus})}
\coloneqq
\F{\B^{\oplus}((x,v))}{\mf{E}(\ms{E}^{\oplus})}.
$$
Here we recall that
$\F{\B^{\oplus}((x,v))}{\mf{E}(\ms{E}^{\oplus})}$
is the filter 
on
$\mf{E}(\ms{E}^{\oplus})$
generated
by the following
base of filters 
\begin{alignat*}{1}
\B^{\oplus}((x,v))
\coloneqq
\bigl\{
T_{\ms{E}^{\oplus}}(U,\sigma,\ep,\rho)
&
\,\vert\,
U\in Open(X),
\sigma\in\mc{E}^{\oplus},
\ep>0,
\rho
\in
\prod_{i=1}^{n}
L_{i}
\\
&
\,\vert\,
x\in U,
\hat{\mu}_{\rho}^{x}(v-\sigma(x))
<\ep
\bigr\},
\end{alignat*}
where
$$
T_{\ms{E}^{\oplus}}(U,\sigma,\ep,\rho)
\coloneqq
\left\{
(y,w)\in\mf{E}(\ms{E}^{\oplus})
\,\vert\,
y\in U,
\hat{\mu}_{\rho}^{y}(w-\sigma(y))
<\ep
\right\}.
$$
\end{enumerate}
\end{remark}
In what follows we state the factorization property of convergence
which proves that our construction of bundle direct sum of a 
family of bundles of $\Omega-$spaces,
extends the standard definition provided in the Banach bundle case.
\begin{corollary}
\label{17571212}
Let $f:X\to\mf{E}(\ms{E}^{\oplus})$ and $x\in X$.
Thus
$f$ is continuous in $x$
if and only if
$f_{0}^{i}:X\to\mf{E}_{i}$
is continuous
in $x$
for all
$i=1,...,n$,
where
$
f_{0}:X\to
\bigcup_{z\in X}
\ms{E}_{z}^{\oplus}
$
such that
$\forall z\in X$
$
f(z)=(z,f_{0}(z))
$
and
$$
f_{0}^{i}(z)
\coloneqq
\Pr_{i}^{\pi_{\ms{E}^{\oplus}}
(f(z))}
\circ
f_{0}(z).
$$
In particular
$f\in\Gamma(\pi_{\ms{E}^{\oplus}})$
if and only if
$\left(
X\ni z\mapsto
\Pr_{i}^{z}\circ f_{0}(z)
\in(\mf{E}_{i})_{z}
\right)
\in\Gamma(\pi_{i})$,
for all
$i=1,...,n$.
\end{corollary}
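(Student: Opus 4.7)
The plan is to apply Theorem \ref{15380512} separately to $f$ as a map into $\mf{E}(\mb{E}^{\oplus})$ and to each component $f_{0}^{i}$ as a map into $\mf{E}_{i}$, and to translate the two resulting seminorm characterizations into one another using (i) the explicit decomposition $\hat{\mu}_{\rho}^{y}=\sum_{i=1}^{n}\nu_{i,\rho_{i}}^{y}\circ\Pr_{i}^{y}$ from Theorem \ref{16322110} and Remark \ref{22550110}, and (ii) the construction of the section space $\mc{E}^{\oplus}$ in Definition \ref{12422110} as the linear span of $\bigcup_{i=1}^{n}\tilde{\Gamma}(\pi_{i})$.

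First I would apply Theorem \ref{15380512} to $f:X\to\mf{E}(\mb{E}^{\oplus})$ at $x$. Since $\pi_{\mb{E}^{\oplus}}\circ f=\mathrm{Id}$ is automatically continuous, the theorem reduces continuity of $f$ at $x$ to the existence of a neighbourhood $U$ of $x$ and a section $\sigma\in\Gamma_{U}(\pi_{\mb{E}^{\oplus}})$ with $\sigma(x)=f(x)$ such that $\lim_{y\to x,\,y\in U}\hat{\mu}_{\rho}^{y}(f_{0}(y)-\sigma_{0}(y))=0$ for every $\rho\in\prod_{i=1}^{n}L_{i}$, where $\sigma(z)=(z,\sigma_{0}(z))$. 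By the definition of $\mc{E}^{\oplus}$, one may take $\sigma=\sum_{i=1}^{n}\tilde{\sigma}_{i}$ with $\sigma_{i}\in\Gamma(\pi_{i})$; then $\sigma(x)=f(x)$ is equivalent, via $\Pr_{i}^{z}\circ I_{j}^{z}=\delta_{ij}\,\mathrm{Id}^{z}$, to $\sigma_{i}(x)=f_{0}^{i}(x)$ for each $i$, and a direct computation yields
\[
\hat{\mu}_{\rho}^{y}\bigl(f_{0}(y)-\sigma_{0}(y)\bigr)
=\sum_{i=1}^{n}\nu_{i,\rho_{i}}^{y}\bigl(f_{0}^{i}(y)-\sigma_{i}(y)\bigr).
\]

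Because the summands are non-negative, the aggregate limit vanishes for every $\rho$ if and only if $\nu_{i,\rho_{i}}^{y}(f_{0}^{i}(y)-\sigma_{i}(y))\to 0$ as $y\to x$ for every $i$ and every $\rho_{i}\in L_{i}$. Applying Theorem \ref{15380512} in the other direction to each bundle $\mf{V}_{i}$ with local section $\sigma_{i}$ through $f_{0}^{i}(x)$, this individual condition is precisely the characterization of continuity of $f_{0}^{i}$ at $x$. Assembling, witnesses $\sigma$ for continuity of $f$ and collections $\{\sigma_{i}\}$ of witnesses for continuity of the $f_{0}^{i}$ correspond to one another through the decomposition $\sigma=\sum_{i}\tilde{\sigma}_{i}$, so the two characterizations are equivalent. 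The "in particular" part, concerning $\Gamma(\pi_{\mb{E}^{\oplus}})$, then follows by quantifying $x$ over all of $X$.

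The main obstacle will be the bookkeeping around the local sections and the canonical identifications. On the one hand, one has to verify that the witness section $\sigma$ delivered by Theorem \ref{15380512} for $f$ may indeed be chosen of the form $\sum_{i}\tilde{\sigma}_{i}$ with $\sigma_{i}\in\Gamma(\pi_{i})$, so that its projections are sections of the $\pi_{i}$; this is built into the construction of $\mc{E}^{\oplus}$ via Remark \ref{17150312} and the canonical isomorphism between $\mc{E}^{\oplus}$ and a subspace of $\Gamma(\pi_{\mb{E}^{\oplus}})$. On the other hand, the reverse assembly $\sigma=\sum_{i}\tilde{\sigma}_{i}\in\mc{E}^{\oplus}\subset\Gamma(\pi_{\mb{E}^{\oplus}})$ requires checking that this indeed yields a local section with $\sigma(x)=f(x)$, which follows directly from the decomposition $f_{0}(x)=\sum_{i}I_{i}^{x}(f_{0}^{i}(x))$. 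Once these identifications are in place, the claim reduces to the algebraic identity for $\hat{\mu}_{\rho}^{y}$ and the non-negativity of the constituent seminorms.
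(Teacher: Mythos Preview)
Your approach is essentially the same as the paper's: the paper's proof is the single line ``By $(1)\Leftrightarrow(5)$ in Theorem~\ref{15380512} applied to $\bigcup_{i=1}^{n}\tilde{\Gamma}(\pi_{i})$,'' and your write-up unpacks precisely this, using the seminorm decomposition $\hat{\mu}_{\rho}^{y}=\sum_{i}\nu_{i,\rho_{i}}^{y}\circ\Pr_{i}^{y}$ and the fact that sections in $\mc{E}^{\oplus}$ split as $\sum_{i}\tilde{\sigma}_{i}$ with $\sigma_{i}\in\Gamma(\pi_{i})$. Your explicit handling of the witness-section bookkeeping (matching $\sigma\leftrightarrow\{\sigma_{i}\}$) is exactly what the paper's one-line invocation is abbreviating.
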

\begin{proof}
Since the definition of $\mc{E}^{\oplus}$, the request that all the bundles 
in the family
$\{\mf{V}_{i}\}_{i=1}^{n}$
are full and the fact that 
$\mc{E}^{\oplus}$ is linearly isomorphic to a subspace of $\Gamma(\pi_{\ms{E}^{\oplus}})$
we obtain that, 
when applied to the bundle direct sum of the family $\{\mf{V}_{i}\}_{i=1}^{n}$,
the first part of $(6)$ in Thm. \ref{15380512}
is satisfied by global sections
belonging to $\mc{E}^{\oplus}$.
Therefore the statement follows since 
$(5)
\Leftrightarrow
(6)$
in
Thm. 
\ref{15380512}.
\end{proof}
\begin{convention}
\label{16392601}
By construction
we have that
$\Gamma(\pi_{\ms{E}^{\oplus}})
\subset
\prod_{x\in X}
\{x\}\times
\ms{E}_{x}^{\oplus}$.
In what follows, except 
contrary mention,
we convein
to consider 
with abuse of language
in the obvious manner
$$
\Gamma(\pi_{\ms{E}^{\oplus}})
\subset
\prod_{x\in X}
\bigoplus_{i=1}^{n}\left(\mf{E}_{i}\right)_{x}.
$$
Similarly
for
$\Gamma^{x}(\pi_{\ms{E}^{\oplus}})$
for any $x\in X$.
Moreover
in the case
in which
for any $i=1,...,n$
we have
$\mf{V}_{i}
=\mf{V}(\ms{E_{i}},\mc{E_{i}})$,
with obvious
meaning of the symbols
we consider
$$
\Gamma(\pi_{\ms{E}^{\oplus}})
\subset
\prod_{x\in X}
\bigoplus_{i=1}^{n}\left(\ms{E}_{i}\right)_{x}.
$$
\end{convention}
\section{$\left(\Theta,\mc{E}\right)-$structure}
\label{01121953}
In Def. \ref{10282712} we define the concept of $\left(\Theta,\mc{E}\right)-$structure.
In Lemma \ref{15482712} and Cor. \ref{11121102}
we characterize basic properties of this structure.
In Thm. \ref{22372406} we construct the $\left(\Theta,\mc{E}\right)-$structure
described in Introduction and provide a set of continuous sections
which serves as a model to build the general definition. 
Finally in Prp. \ref{20492003} we provide a characterization of continuous sections
related to a suitable $\left(\Theta,\mc{E}\right)-$structure.
In order to construct the structure provided in Def. \ref{10282712} we need 
a sequence of steps starting with the following
\begin{definition}
\label{17471910}
$\lr{X}{\ms{E},\mc{S}}$
is a 
map system
if
\begin{enumerate}
\item
$X$
is a set;
\item
$
\ms{E}
=
\{
\lr{\ms{E}_{x}}{\n_{x}}
\}_{x\in X}
$
is
a nice
family of 
Hlcs
with
$
\n_{x}
\coloneqq
\{\nu^{x}_{j}\,\vert\, j\in J\}
$
for all $x\in X$;
\item
$
(\exists L\ne\emptyset)
(\mc{S}
=
\{S_{x}\}_{x\in X})
$
where
$
S_{x}
\coloneqq
\{
B_{l}^{x}
\,\vert\,
l\in L
\}
\subseteq
Bounded(\ms{E}_{x})
$
and
$
\bigcup_{l\in L}
B_{l}^{x}
$
is total
in 
$\ms{E}_{x}$
for all $x\in X$.
\end{enumerate}
\end{definition}
\begin{definition}
\label{17471910A}
We say that
$
\ms{M}
$
is 
a
map pre-bundle relative to 
$\lr{X,Y}{\ms{E},\mc{S}}$
if
\begin{enumerate}
\item
$\lr{X}{\ms{E},\mc{S}}$
is a map system;
\item
$
\ms{M}
=
\{
\lr{\ms{M}_{x}}{\mf{R}_{x}}
\}_{x\in X}
$
is a nice
family of Hlcs;
\item
$Y$ is a Hausdorff
topological space
and
$\forall x\in X$
\begin{equation*}
\begin{aligned}
\ms{M}_{x}
&
\subseteq
\cc{}{Y,\mc{L}_{S_{x}}(\ms{E}_{x})};
\\
\mf{R}_{x}
&
=
\bigl
\{
\sup_{(K,j,l)\in\mc{O}}
q_{(K,j,l)}^{x}
\up
\ms{M}_{x}
\,\vert\,
\mc{O}\in\p_{\omega}\left(Comp(Y)
\times J\times L\right)
\bigr\}.
\end{aligned}
\end{equation*}
\end{enumerate}
Here we recall that
$\p_{\omega}(A)$
is the set of all finite
parts of the set $A$,
$\mc{L}_{S_{x}}(\ms{E}_{x})$,
for all $x\in X$,
is the 
$lcs$
of all continuous linear maps
$\mc{L}(\ms{E}_{x})$
on $\ms{E}_{x}$
with the topology
of uniform
convergence
over
the 
sets
in $S_{x}$,
hence
its
topology
is generated by the following
set of 
seminorms
\begin{equation}
\label{21532606}
\bigl\{
p_{j,l}^{x}:
\mc{L}(\ms{E}_{x})
\ni
\phi
\mapsto
\sup_{v\in B_{l}^{x}}
\nu_{j}^{x}(\phi(v))
\,\vert\,
l\in L,
j\in J
\bigr
\}.
\end{equation}
Thus 
by
the totality hypothesis
and
by
\cite[Prop. $3$, $III.15$]{BourTVS}
$\mc{L}_{S_{x}}(\ms{E}_{x})$
is Hausdorff.
Finally
for all
$(K,j,l)
\in
Comp(Y)
\times
J
\times
L$
we set
\begin{equation}
\label{21552606}
q_{(K,j,l)}^{x}:
\cc{c}{Y,\mc{L}_{S_{x}}(\ms{E}_{x})}
\ni
f
\mapsto
\sup_{t\in K}
p_{j,l}^{x}
(f(t)).
\end{equation}
\end{definition}
\begin{remark}
By the fact that
$\{t\}$ is compact for all $t\in Y$
we have that
$\bigcup_{K\in Comp(Y)}K=Y$
thus by
the shown fact that
$\mc{L}_{S_{x}}(\ms{E}_{x})$
is Hausdorff
we deduce by 
\cite[Prp. $(1)$, $\S 1.2$, Ch $10$]{BourGT}
that
$\cc{c}{Y,\mc{L}_{S_{x}}(\ms{E}_{x})}$
is Hausdorff.
Moreover
by
\cite[Def. $(1)$, $\S 1.1$, Ch $10$]{BourGT}
and by the fact that
\eqref{21532606}
is a fss
on
$\mc{L}_{S_{x}}(\ms{E}_{x})$,
we can deduce that
$\left\{
\sup_{(K,j,l)\in\mc{O}}
q_{(K,j,l)}^{x}
\,\vert\,
\mc{O}
\in\p_{\omega}
\left(
Comp(Y)
\times
J
\times
L
\right)
\right\}$
is a directed fss
on
$\cc{c}{Y,\mc{L}_{S_{x}}(\ms{E}_{x})}$.
Hence
$\lr{\ms{M}_{x}}{\mf{R}_{x}}$
is a topological vector
subspace
of
$\cc{c}{Y,\mc{L}_{S_{x}}(\ms{E}_{x})}$
so it is 
Hausdorff,
hence
by the construction
of $\mf{R}_{x}$
we 
can state that
$\{
\lr{\ms{M}_{x}}{\mf{R}_{x}}
\}_{x\in X}$
is a nice
family of Hlcs
in agreement with request 
$(2)$
in Def.
\ref{17471910A}.
\end{remark}
Next we provide the explicit form of $\mf{V}(\ms{M},\mc{M})$.
\begin{remark}
\label{17471910B}
Let
$
\ms{M}
=
\{
\lr{\ms{M}_{x}}{\mf{R}_{x}}
\}_{x\in X}
$
be
a
map 
pre-bundle
relative
to
$
\lr{X,Y}
{\ms{E}=\lr{\ms{E}_{x}}{\n_{x}}_{x\in X},
\mc{S}}
$,
moreover
let
$\mc{M}$
satisfy
$FM(3)-FM(4)$
with respect
to 
$
\ms{M}
$.
Let
us denote
$
\n_{x}
=
\{
\nu_{j}^{x}
\,\vert\,
j\in J
\}
$
for all
$x\in X$
and
use
the notation
in Def.
\ref{17471910A}.
Thus
for the 
bundle
$\mf{V}(\ms{M},\mc{M})$
generated
by the couple
$\lr{\ms{M}}{\mc{M}}$
we have
\begin{enumerate}
\item
$
\mf{V}(\ms{M},\mc{M})
=
\lr{\lr{\mf{E}(\ms{M})}
{\tau(\ms{M},\mc{M})}
}{\pi_{\ms{M}},X,\mf{R}}
$;
\item
$
\mf{E}(\ms{M})
\coloneqq
\bigcup_{x\in X}
\{x\}
\times
\ms{M}_{x}
$,
$
\pi_{\ms{M}}
:\mf{E}(\ms{M})
\ni
(x,f)
\mapsto x
\in X
$;
\item
$
\mf{R}
=
\left\{
\sup_{(K,j,l)\in\mc{O}}
q_{(K,j,l)}
\,\vert\,
\mc{O}
\in\p_{\omega}
\left(
Comp(Y)
\times
J
\times
L
\right)
\right\}$,
with
$
q_{(K,j,l)}:
\mf{E}(\ms{M})
\ni
(x,f)
\mapsto
q_{(K,j,l)}^{x}(f)$;
\item
$\tau(\ms{M},\mc{M})$
is the topology
on
$
\mf{E}(\ms{M})
$
such that
for all
$(x,f)\in\mf{E}(\ms{M})$
$$
\mc{I}_{(x,f)}^{\mf{E}(\ms{M})}
\coloneqq
\F{\B_{\ms{M}}((x,f))}{\mf{E}(\ms{M})}
$$
is the neighbourhood's
filter
of
$(x,f)$
with respect to it.
Here
$\F{\B_{\ms{M}}((x,f))}{\mf{E}(\ms{M})}$
is the filter
on
$\mf{E}(\ms{M})$
generated
by the following
filter's base
\begin{alignat*}{1}
&
\B_{\ms{M}}((x,f))
\coloneqq
\{
T_{\ms{M}}
\left(
U,\sigma,\ep,
\mc{O}
\right)
\,\vert\,
U\in Open(X),
\sigma\in\mc{M},
\ep>0,\\
&
\mc{O}
\in\p_{\omega}
\left(
Comp(Y)
\times
J
\times
L
\right)
\,\vert\,
x\in U,
\sup_{(K,j,l)\in\mc{O}}
q_{(K,j,l)}^{x}
(f-\sigma(x))
<\ep
\},
\end{alignat*}
where
$
\forall
U\in Open(X),
\sigma\in\mc{M},
\ep>0
$
and
$\forall\mc{O}
\in\p_{\omega}
\left(
Comp(Y)
\times
J
\times
L
\right)$
$$
T_{\ms{M}}
\left(
U,\sigma,\ep,
\mc{O}
\right)
\coloneqq
\bigl
\{
(y,g)\in\mf{E}(\ms{M})
\,\vert\,
y\in U,
\sup_{(K,j,l)\in\mc{O}}
q_{(K,j,l)}^{y}
(g-\sigma(y))
<\ep
\bigr
\}.
$$
\end{enumerate}
\end{remark}
\begin{remark}
\label{18070512}
Let
$
\ms{M}
=
\{
\lr{\ms{M}_{x}}{\mf{R}_{x}}
\}_{x\in X}
$
be
a
map 
pre-bundle
relative
to
$
\lr{X,Y}
{\ms{E}=\lr{\ms{E}_{x}}{\n_{x}}_{x\in X},
\mc{S}}
$,
moreover
let
$\mc{M}$
satisfy
$FM(3)-FM(4)$
with respect
to 
$
\ms{M}
$.
Thus by 
Rmk.
\ref{15412610}
$
\forall
U\in Open(X),
\sigma\in\mc{M},
\ep>0
$
and
$\forall
\mc{O}\in\p_{\omega}\left(Comp(Y)
\times J\times L\right)$
$$
T_{\ms{M}}
(U,\sigma,\ep,
\mc{O})
=
\bigcup_{y\in U}
B_{\ms{M}_{y},
\mc{O},\ep}(\sigma(y))
$$
where
for all 
$
s
\in
\ms{M}_{y}
$
$$
B_{\ms{M}_{y},
\mc{O},\ep}(s)
\coloneqq
\bigl
\{
(y,f)\in\mf{E}(\ms{M})_{y}
\,\vert\,
\sup_{(K,j,l)\in\mc{O}}
q_{(K,j,l)}^{y}
\left(f-s\right)
<\ep
\bigr
\}.
$$
\end{remark}
By applying 
Rmk. \ref{17150312}
we have the following
\begin{remark}
\label{16422010}
Let
$\ms{M}$
be
a
map 
pre-bundle
relative
to
$
\lr{X,Y}{\ms{E},\mc{S}}
$,
moreover
let
$\mc{M}$
satisfy
$FM(3)-FM(4)$
with respect
to 
$\ms{M}$.
Thus
\begin{enumerate}
\item
$\mf{V}(\ms{M},\mc{M})$
is 
a bundle
of $\Omega-$spaces;
\item
with the notation
of Def.
\ref{17471910B}
$\mf{V}(\ms{M},\mc{M})$
is such that
\begin{enumerate}
\item
$\lr{\mf{E}(\ms{M})_{x}}
{\tau(\ms{M},\mc{M})}$
as topological vector space
is isomorphic
to
$\lr{\ms{M}_{x}}{\mf{R}_{x}}$
for all
$x\in X$;
\item
$\mc{M}$
is canonically
isomorphic to a linear subspace
of 
$\Gamma(\pi_{\ms{M}})$
and
if 
$X$ is compact and $\mc{M}$ is a function module, then
$
\mc{M}
\simeq
\Gamma(\pi_{\ms{M}})
$.
\end{enumerate}
\end{enumerate}
\end{remark}
In Def. \ref{10282712}
we generalize the topology of uniform convergence
to bundles 
$\lr{\mf{M}}{\rho,X}$
of 
$\Omega-$spaces,
where
$\{\mf{M}_{x}\}_{x\in X}$
is 
a
map pre-bundle relative 
to 
$\lr{X,Y}{\{\mf{E}_{x}\}_{x\in X},\mc{S}}$
and
$\lr{\mf{E}}{\pi,X}$
is 
a
bundle 
of 
$\Omega-$spaces.
The aim is to correlate
the topology 
on
$\mf{M}$
with 
that 
on
$\mf{E}$
in order to extend
the 
correlation
established 
in the introduction
for the trivial
bundle case.
\begin{definition}
\label{15531102}
$$
(\bullet):
\prod_{x\in X}(\mf{E}_{x})^{\mf{E}_{x}}
\times
\prod_{x\in X}\mf{E}_{x}
\to
\prod_{x\in X}\mf{E}_{x}
$$
such that
for all 
$F\in
\prod_{x\in X}(\mf{E}_{x})^{\mf{E}_{x}},
v\in\prod_{x\in X}\mf{E}_{x}$
we have
$$
(F\bullet w)(x)
\coloneqq
F(x)(w(x)).
$$
\end{definition}
\begin{definition}
[\textbf{
$\left(\Theta,\mc{E}\right)-$structures}]
\label{10282712}
We say
that
$\lr{\mf{V},\mf{W}}{X,Y}$
is
a
$\left(\Theta,\mc{E}\right)-$structure
if
\begin{enumerate}
\item
$
\mf{V}
\coloneqq
\lr{\lr{\mf{E}}{\tau}}
{\pi,X,\n}
$
is a 
bundle of $\Omega-$spaces;
\item
$\mc{E}\subseteq\Gamma(\pi)$;
\item
$\Theta\subseteq
\prod_{x\in X}
Bounded(\mf{E}_{x})$;
\item
$\forall B\in\Theta$
\begin{enumerate}
\item
$\ms{D}(B,\mc{E})\ne\emptyset$;
\item
$\bigcup_{B\in\Theta}\mc{B}_{B}^{x}$
is total
in $\mf{E}_{x}$
for all $x\in X$;
\end{enumerate}
\item
$\mf{W}
\coloneqq
\lr{\lr{\mf{M}}{\gamma}}{\rho,X,\mf{R}}$
is a
bundle of $\Omega-$spaces
such that
$\{\lr{\mf{M}_{x}}{\mf{R}_{x}}\}_{x\in X}$
is 
a
map pre-bundle relative 
to 
$\lr{X,Y}{\{\lr{\mf{E}_{x}}{\n_{x}}
\}_{x\in X},\mc{S}}$.
\end{enumerate}
Here
$\mc{S}
\coloneqq
\{S_{x}\}_{x\in X}$
and
$(\forall B\in\Theta)(\forall x\in X)$
\begin{equation}
\label{11232712}
\boxed{
\begin{cases}
\ms{D}(B,\mc{E})
\coloneqq
\mc{E}
\cap
\left(\prod_{x\in X}B_{x}\right)
\\
\mc{B}_{B}^{x}
\coloneqq
\{v(x)\,\vert\, v\in\ms{D}(B,\mc{E})\}
\}
\\
S_{x}
\coloneqq
\{\mc{B}_{B}^{x}
\,\vert\, B\in\Theta\}.
\end{cases}
}
\end{equation}
Moreover
$\lr{\mf{V},\mf{W}}{X,Y}$
is
an
invariant $\left(\Theta,\mc{E}\right)-$structure
if it is
a
$\left(\Theta,\mc{E}\right)-$structure
such that
\begin{equation}
\label{18112502pre}
\bigl
\{
F\in\prod_{z\in X}^{b}
\mf{M}_{z}
\,\vert\,
(\forall t\in Y)
(F_{t}
\bullet
\mc{E}(\Theta)
\subseteq
\Gamma(\pi))
\bigr
\}
=
\Gamma(\rho).
\end{equation}
Finally
$\lr{\mf{V},\mf{W}}{X,Y}$
is
a
\emph
{compatible 
$\left(\Theta,\mc{E}\right)-$structure}
if
it is a
$\left(\Theta,\mc{E}\right)-$structure
such that
for all $t\in Y$
\begin{equation}
\label{18011202}
\Gamma(\rho)_{t}
\bullet
\mc{E}(\Theta)
\subseteq
\Gamma(\pi).
\end{equation}
Here
$$
\mc{E}(\Theta)
\coloneqq
\bigcup_{B\in\Theta}
\ms{D}(B,\mc{E}),
$$
and
$S_{t}
\coloneqq
\{
F_{t}\,\vert\,
F\in S
\}$
and
$F_{t}
\in\prod_{x\in X}
\mc{L}(\mf{E}_{x})$
such that
$F_{t}(x)\coloneqq F(x)(t)$,
for all
$S\subseteq\prod_{x\in X}
\mc{L}(\mf{E}_{x})^{Y}$
$t\in Y$,
and
$F\in S$.
\end{definition}
\begin{remark}
\label{22442606}
Let
$\lr{\mf{V},\mf{W}}{X,Y}$
be
a
$\left(\Theta,\mc{E}\right)-$structure.
Then for all $x\in X$
\begin{equation}
\label{15073006}
\mf{R}_{x}=
\{
\sup_{(K,j,B)\in O}
q_{(K,j,B)}^{x}\up\mf{M}_{x}
\,\vert\,
O\in\p_{\omega}
(Comp(Y)\times J\times \Theta)
\}
\end{equation}
where
by using the notation of Def. \ref{10282712}
we set
$\n=\{\nu_{j}^{x}\,\vert\, j\in J\}$
and
for all
$K\in Comp(Y)$,$
j\in J$,
$B\in\Theta$
\begin{equation}
\label{22542906}
q_{(K,j,B)}^{x}:
\cc{c}{Y,\mc{L}_{S_{x}}(\mf{E}_{x})}
\ni f_{x}
\mapsto
\sup_{t\in K}\sup_{v\in\ms{D}(B,\mc{E})}
\nu_{j}^{x}\left(f_{x}(t)v(x)\right).
\end{equation}
\end{remark}
\begin{remark}
\label{21022406}
Let
$\mf{V}
\coloneqq
\lr{\lr{\mf{E}}{\tau}}
{\pi,X,\n}$
be
a
bundle, 
$
\ms{M}
=
\{
\lr{\ms{M}_{x}}{\mf{R}_{x}}
\}_{x\in X}
$
a
map 
pre-bundle
relative
to
$\lr{X,Y}{\{\lr{\mf{E}_{x}}{\n_{x}}
\}_{x\in X},\mc{S}}$
and
$\mc{M}$
satisfy
$FM(3)-FM(4)$
with respect
to 
$
\ms{M}
$.
Then
Rmk. \ref{17471910B}
allows us to
construct
$\mf{W}$
satisfying
the condition
$(5)$ 
in Def. \ref{10282712}.
\end{remark}
The following 
characterization of
$\mc{U}\in\Gamma_{U}^{x_{\infty}}(\rho)$
will be basic in the sequel.
\begin{lemma}
\label{15482712}
Let
$\lr{\mf{V},\mf{W}}{X,Y}$
be
a
$\left(\Theta,\mc{E}\right)-$structure,
$x_{\infty}\in W\subseteq X$
and
$\mc{U}\in\prod_{x\in W}^{b}\mf{M}_{x}$.
By using the notation in
Def.
\ref{10282712}
we have
$
(1)
\Leftarrow
(2)
\Leftarrow
(3)
\Leftrightarrow
(4)
$
moreover
if
$\mf{W}$
is locally
full
$(1)
\Leftrightarrow
(2)
\Leftrightarrow
(3)
\Leftrightarrow
(4)
$,
finally
if 
$\mf{W}$
is full
we can choose
$U=X$
in $(2)$
and
$U'=X$
in
$(3)$
and 
$(4)$.
Here
\begin{enumerate}
\item
$\mc{U}\in\Gamma_{W}^{x_{\infty}}(\rho)$;
\item
$
(\exists\,U\in Op(X)\,\vert\, U\ni x_{\infty})
(\exists\,F\in\Gamma_{U}(\rho))
(F(x_{\infty})
=\mc{U}(x_{\infty}))
$
such that
$
(\forall j\in J)
(\forall K\in Comp(Y))
(\forall B\in\Theta)
$
\begin{equation}
\label{02022912}
\boxed{
\lim_{z\to x_{\infty},z\in W\cap U}
\sup_{t\in K}
\sup_{v\in\ms{D}(B,\mc{E})}
\nu_{j}\left(\mc{U}(z)(t)v(z)-F(z)(t)v(z)
\right)=0;}
\end{equation}
\item
$
(\exists\,U'\in Op(X)\,\vert\, U'\ni x_{\infty})
(\exists\,F\in\Gamma_{U'}(\rho))
(F(x_{\infty})
=\mc{U}(x_{\infty}))
$
and
$
(\forall U\in Op(X)\,\vert\, U\ni x_{\infty})
(\forall\,F\in\Gamma_{U}(\rho)
\,\vert\,
F(x_{\infty})
=\mc{U}(x_{\infty}))
$
we have 
\eqref{02022912}
$(\forall j\in J)
(\forall K\in Comp(Y))
(\forall B\in\Theta)$;
\item
$
(\exists\,U'\in Op(X)\,\vert\, U'\ni x_{\infty})
(\exists\,F\in\Gamma_{U'}(\rho))
(F(x_{\infty})
=\mc{U}(x_{\infty}))
$
and
$\mc{U}\in\Gamma_{W}^{x_{\infty}}(\rho)$.
\end{enumerate}
\end{lemma}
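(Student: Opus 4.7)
The plan is to derive the lemma as a direct application of Corollary \ref{28111707} to the bundle $\mf{W}$, taking $f = \mc{U}$, followed by an unpacking of the explicit seminorms on the stalks of $\mf{W}$ supplied by Remark \ref{22442606}.

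The essential structural input is formula \eqref{15073006}: a fundamental directed set of seminorms on $\mf{M}_x$ is indexed by finite subsets $O \in \p_\omega(Comp(Y) \times J \times \Theta)$, each seminorm being $\sup_{(K,j,B) \in O} q^x_{(K,j,B)}$ with $q^x_{(K,j,B)}$ as in \eqref{22542906}. Applied to $\mf{W}$, Corollary \ref{28111707} immediately yields the implication chain $(1) \Leftarrow (2) \Leftrightarrow (3) \Leftarrow (4) \Leftrightarrow (5) \Leftrightarrow (6)$ of that corollary, together with the equivalence $(1) \Leftrightarrow (6)$ under local fullness and the option $U = U' = X$ under fullness. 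In this instantiation the convergence statements take the form
\begin{equation*}
\lim_{z \to x_{\infty},\, z \in W \cap U} \sup_{(K,j,B) \in O} q^{z}_{(K,j,B)}\bigl(\mc{U}(z) - F(z)\bigr) = 0, \qquad \forall O \in \p_\omega(Comp(Y) \times J \times \Theta).
\end{equation*}

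The core technical point is to show that this family of convergences is equivalent to \eqref{02022912} holding for every single triple $(K,j,B) \in Comp(Y) \times J \times \Theta$. The forward direction is obtained by specializing $O = \{(K,j,B)\}$; the reverse direction follows because $O$ is finite, so a uniform $\varepsilon$-bound on each of the finitely many quantities $q^{z}_{(K,j,B)}(\mc{U}(z) - F(z))$ yields the same $\varepsilon$-bound on their supremum. Unpacking definition \eqref{22542906} then identifies $q^{z}_{(K,j,B)}(\mc{U}(z) - F(z))$ with exactly the expression inside \eqref{02022912}.

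With this translation in place, condition (3) of Corollary \ref{28111707} (the section-with-limit form) matches our (2), its condition (5) matches our (3), and its condition (6) matches our (4); the full implication pattern, together with the locally full and full refinements, transfers verbatim. No substantive obstacle is expected: the remaining work is purely bookkeeping, namely verifying that the existential/universal quantifiers over $(U, F) \in \Gamma_U(\rho)$ correspond exactly to those over $(U, \sigma) \in \Gamma_U(\rho)$ in the corollary, and that the pointwise constraint $F(x_\infty) = \mc{U}(x_\infty)$ is precisely the $\sigma(x_\infty) = f(x_\infty)$ hypothesis used there.
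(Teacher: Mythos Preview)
Your proposal is correct and follows essentially the same route as the paper: the paper's proof reads in its entirety ``Apply Corollary \ref{28111707} to Definition \ref{17471910A},'' which is precisely what you do (with Remark \ref{22442606} playing the role of specializing the seminorms of Definition \ref{17471910A} to the $(\Theta,\mc{E})$-structure). Your explicit unpacking of the finite-sup-to-single-triple reduction is a helpful elaboration that the paper leaves implicit.
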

\begin{proof}
Since Cor. \ref{28111707}
and 
Def. \ref{17471910A}.
\end{proof}
\begin{corollary}
\label{11121102}
Let us assume the hypotheses
of Lemma \ref{15482712}
and that
$\mf{W}$
is
full.
Moreover
let
$B\in\Theta$
and
$v\in
\ms{D}(B,\mc{E})$.
Then
$(1)
\Rightarrow
(2)$,
where
\begin{enumerate}
\item
$\mc{U}\in\Gamma_{W}^{x_{\infty}}(\rho)$
and
$\exists\,F\in\Gamma(\rho)$
such that
$F(x_{\infty})
=\mc{U}(x_{\infty})$
and
$(\forall t\in Y)
(F(\cdot)(t)
\bullet 
v\in\Gamma(\pi))$;
\item
$(\forall t\in X)
(\mc{U}(\cdot)(t)\bullet v\in
\Gamma_{W}^{x_{\infty}}(\pi))$.
\end{enumerate}
\end{corollary}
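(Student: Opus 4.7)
The idea is to convert the stalk-wise convergence of $\mc{U}$ to $F$ (provided by Lemma \ref{15482712}) into the convergence of the ``evaluated'' selections $\mc{U}(\cdot)(t)\bullet v$ to the section $F(\cdot)(t)\bullet v\in\Gamma(\pi)$, and then to invoke Corollary \ref{28111707} to conclude continuity at $x_{\infty}$ in the bundle $\mf{V}$. Boundedness will come from unpacking the definition of the seminorms of $\mf{W}$ at the single compact point $\{t\}\subset Y$.

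\textbf{Step 1 (Apply Lemma \ref{15482712}).} Since $\mf{W}$ is full, the hypothesis $\mc{U}\in\Gamma_{W}^{x_{\infty}}(\rho)$ together with the existence of $F\in\Gamma(\rho)$ satisfying $F(x_{\infty})=\mc{U}(x_{\infty})$ puts us in situation $(4)$ of Lemma \ref{15482712} with $U'=X$. Consequently \eqref{02022912} holds with this $F$ for every $j\in J$, every $K\in Comp(Y)$ and every $B\in\Theta$. Specialising to the compact singleton $K=\{t\}$ and restricting the supremum over $\mb{D}(B,\mc{E})$ to the particular element $v$, we get
\begin{equation*}
\lim_{z\to x_{\infty},\,z\in W}
\nu_{j}\bigl(\mc{U}(z)(t)v(z)-F(z)(t)v(z)\bigr)=0,\qquad\forall j\in J.
\end{equation*}

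\textbf{Step 2 (Continuity at $x_{\infty}$).} Set $\sigma\doteqdot F(\cdot)(t)\bullet v$; by hypothesis $\sigma\in\Gamma(\pi)$, and evaluated at $x_{\infty}$ we have $\sigma(x_{\infty})=F(x_{\infty})(t)v(x_{\infty})=\mc{U}(x_{\infty})(t)v(x_{\infty})=(\mc{U}(\cdot)(t)\bullet v)(x_{\infty})$. The limit obtained in Step 1 is then exactly the convergence condition in item $(3)$ of Corollary \ref{28111707} (taking $U=X$), applied to the selection $f=\mc{U}(\cdot)(t)\bullet v$ on $W$. Hence $\mc{U}(\cdot)(t)\bullet v$ is continuous at $x_{\infty}$.

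\textbf{Step 3 (Boundedness).} Because $\mc{U}\in\prod_{z\in W}^{b}\mf{M}_{z}$, the explicit description \eqref{15073006} of $\mf{R}_{z}$ in Remark \ref{22442606} gives, for any $j\in J$,
\begin{equation*}
\sup_{z\in W}q_{(\{t\},j,B)}^{z}(\mc{U}(z))
=\sup_{z\in W}\sup_{v'\in\mb{D}(B,\mc{E})}
\nu_{j}^{z}\bigl(\mc{U}(z)(t)v'(z)\bigr)<\infty,
\end{equation*}
and dropping the inner supremum down to the single element $v\in\mb{D}(B,\mc{E})$ yields $\sup_{z\in W}\nu_{j}^{z}(\mc{U}(z)(t)v(z))<\infty$. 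Together with Step 2 this shows $\mc{U}(\cdot)(t)\bullet v\in\Gamma_{W}^{x_{\infty}}(\pi)$ for every $t\in Y$, which is the claim.

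\textbf{Main obstacle.} There is no real obstacle; the whole proof is an extraction from Lemma \ref{15482712} by specialising the compact set to a singleton and choosing a canonical section $F(\cdot)(t)\bullet v$ of $\pi$. The only point requiring care is making sure fullness of $\mf{W}$ is invoked so that the global $F\in\Gamma(\rho)$ (rather than a merely local section on some neighbourhood of $x_{\infty}$) can be fed into Lemma \ref{15482712}; this is precisely why the hypothesis ``$\mf{W}$ full'' is needed here.
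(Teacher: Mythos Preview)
Your proof is correct and follows essentially the same route as the paper: specialise \eqref{02022912} to the compact $\{t\}$ and the single element $v\in\mb{D}(B,\mc{E})$, then invoke $(3)\Rightarrow(1)$ of Corollary~\ref{28111707} with $\sigma=F(\cdot)(t)\bullet v\in\Gamma(\pi)$. Your Step~3 treating boundedness explicitly is actually more complete than the paper's argument, which leaves that point implicit. One small remark: your route via $(4)\Rightarrow(3)$ of Lemma~\ref{15482712} holds unconditionally, so fullness of $\mf{W}$ is not really needed for your argument (it is needed in the paper's formulation because there the implication $(1)\Rightarrow(3)$ is used instead); your closing comment on the role of fullness is therefore slightly off, though harmless.
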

\begin{proof}
By the position
$(1)$
and
by the implication
$(1)
\Rightarrow
(3)$
of Lemma \ref{15482712}
and by
the fact that
the union of all compact subsets
of $Y$ is $Y$, being locally compact,
we deduce that
$(\exists\,F\in\Gamma(\rho))
(F(x_{\infty})
=\mc{U}(x_{\infty}))$
such that
$(\forall j\in J)(\forall t\in Y)
(\forall B\in\Theta)$
and $\forall v\in\ms{D}(B,\mc{E})$
\begin{equation*}
\begin{cases}
\lim_{z\to x_{\infty},z\in W}
\nu_{j}
\left(
\mc{U}(z)(t)v(z)-F(z)(t)v(z)
\right)
=0,
\\
F(\cdot)(t)\bullet v
\in\Gamma(\pi).
\end{cases}
\end{equation*}
Thus the statement follows
by implication
$(3)\Rightarrow(1)$ 
of Cor. \ref{28111707}.
\end{proof}
Let us conclude this section
with two results 
constructing 
a
$\left(\Theta,\mc{E}\right)-$structure
and
describing
$\Gamma^{x_{\infty}}(\rho)$
when
$\mf{V}$
is trivial.
\begin{lemma}
\label{22312406}
Let $Z$ be a normed space
$X,Y$ be two topological spaces.
Set for all $x\in X$ 
and $v\in\cc{b}{X,Z}$
\begin{equation*}
\begin{cases}
\begin{aligned}
\mc{M}
\coloneqq
&
\{
F\in\cc{b}{X,\cc{c}{Y,\mc{L}_{s}(Z)}}
\,\vert\,
(\forall K\in Comp(Y))
\\
&
(C(F,K)\coloneqq
\sup_{(x,s)\in X\times K}
\|F(x)(s)\|_{B(Z)}<\infty)
\},
\end{aligned}
\\
\ms{M}_{x}
\coloneqq\ov{\{
F(x)\,\vert\, F\in\mc{M}
\}},
\\
\mu_{(v,x)}^{K}:
\ms{M}_{x}\ni G\mapsto
\sup_{s\in K}
\|G(s)v(x)\|,
\\
\A_{x}
\coloneqq\{\mu_{(w,x)}^{K}
\,\vert\,
K\in Comp(Y),
w\in\cc{b}{X,Z}
\},
\\
\ms{M}
\coloneqq\{
\lr{\ms{M}_{x}}{\A_{x}}\}_{x\in X}.
\end{cases}
\end{equation*}
closure in $\cc{c}{Y,B_{s}(Z)}$.
Then
$\mc{M}$ satisfies $FM3-FM4$ with respect
to $\ms{M}$
\end{lemma}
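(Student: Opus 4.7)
The plan is to verify the two axioms $FM(3)$ and $FM(4)$ of Definition \ref{17471910s} separately. The axiom $FM(3)$ is immediate by construction, since by definition $\mb{M}_{x}$ is the closure in $\cc{c}{Y,B_{s}(Z)}$ of $\{F(x) \mid F \in \mc{M}\}$, and the seminorms $\mu_{(w,x)}^{K}$ induce on $\mb{M}_{x}$ precisely the subspace topology from $\cc{c}{Y,B_{s}(Z)}$, because each $\mu_{(w,x)}^{K}$ extends to a continuous seminorm on the ambient space $G \mapsto \sup_{s\in K}\|G(s)w(x)\|$, and seminorms of this form generate the compact-open/strong-operator topology. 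Hence $\{F(x) \mid F \in \mc{M}\}$ is tautologically dense in $\mb{M}_{x}$. Passing, if necessary, to the directed associated family of seminorms as in Remark \ref{18111744} does not affect this density.

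For $FM(4)$, I would fix arbitrary $F \in \mc{M}$, $K \in Comp(Y)$ and $w \in \cc{b}{X,Z}$, and show that the map
\[
\varphi : X \ni x \mapsto \mu_{(w,x)}^{K}(F(x)) = \sup_{s \in K}\|F(x)(s)\,w(x)\|
\]
is continuous at every $x_{0} \in X$, which is strictly stronger than upper semicontinuity. The key step is the standard add-and-subtract estimate
\[
\|F(x)(s) w(x) - F(x_{0})(s) w(x_{0})\|
\leq \|F(x)(s)\|\cdot\|w(x)-w(x_{0})\| + \|(F(x)(s)-F(x_{0})(s))\,w(x_{0})\|,
\]
which, after taking $\sup_{s \in K}$ and applying the reverse triangle inequality to $\varphi$, yields
\[
|\varphi(x)-\varphi(x_{0})| \leq C(F,K)\,\|w(x)-w(x_{0})\| + \sup_{s \in K}\|(F(x)(s)-F(x_{0})(s))\,w(x_{0})\|.
\]

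The first summand on the right vanishes as $x \to x_{0}$ by continuity of $w$ combined with the hypothesis $C(F,K) < \infty$, which is precisely the defining condition for $F \in \mc{M}$. The second summand vanishes because $F : X \to \cc{c}{Y,\mc{L}_{s}(Z)}$ is continuous at $x_{0}$, and a neighbourhood basis of the origin in $\cc{c}{Y,\mc{L}_{s}(Z)}$ consists of sets of the form $\{G \mid \sup_{s \in K'}\|G(s)z\| < \varepsilon\}$ for $K' \in Comp(Y)$ and $z \in Z$; applying this with $K' = K$ and $z = w(x_{0})$ delivers the required uniform convergence on $K$. No real obstacle is anticipated: the whole argument reduces to a two-variable continuity estimate which decouples the variation of $F$ (controlled by the compact-open topology with strong-operator target) from the variation of $w$ (controlled by the uniform boundedness $C(F,K)$ of $F$ on $X \times K$).
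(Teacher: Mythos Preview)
Your approach is essentially the same as the paper's: both rely on the add-and-subtract decomposition splitting $F(x)(s)w(x)$ into a term controlled by $C(F,K)\|w(x)-w(x_{0})\|$ and a term controlled by the continuity of $F$ into $\cc{c}{Y,B_{s}(Z)}$. You package the estimate two-sidedly via the reverse triangle inequality and obtain full continuity of $\varphi$, whereas the paper uses the one-sided bound $\mu_{(v,x)}^{K}(F(x)) \le C\|v(x)-v(x_{0})\| + \sup_{s\in K}\|F(x)(s)v(x_{0})\|$ and passes through $\varlimsup$ to conclude only upper semicontinuity; your route is slightly cleaner and avoids the Bourbaki citations.

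One omission: the definition of ``$\mc{M}$ satisfies $FM(3)$--$FM(4)$'' also requires $\mc{M}\subseteq\prod_{x\in X}^{\infty}\lr{\mb{M}_{x}}{\A_{x}}$, i.e.\ $\sup_{x\in X}\mu_{(w,x)}^{K}(F(x))<\infty$ for each $F,K,w$. The paper records the one-line check $\sup_{x}\mu_{(w,x)}^{K}(F(x))\le C(F,K)\sup_{x}\|w(x)\|<\infty$; you should include it.
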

\begin{proof}
$FM(3)$ is true by construction,
let $v\in\cc{b}{X,Z}$,
$K\in Comp(Y)$,
$F\in\mc{M}$,
then
$$
\sup_{x\in X}
\mu_{(v,x)}^{K}(F(x))
\leq
\sup_{(x,s)\in X\times K}
\|F(x)(s)\|_{B(Z)}
\sup_{x\in X}
\|v(x)\|
<\infty.
$$
For all $x,x_{0}\in X$
\begin{equation}
\label{15472506}
\mu_{(v,x)}^{K}(F(x))
\leq
C
\|v(x)-v(x_{0})\|
+
\sup_{s\in K}
\|F(x)(s)v(x_{0})\|,
\end{equation}
where
$C
\coloneqq
\sup_{(x,s)\in X\times K}
\|F(x)(s)\|_{B(Z)}$.
Moreover
the map
$\cc{c}{Y,B_{s}(Z)}
\ni
f
\mapsto
\sup_{s\in K}\|f(s)w\|
\in
\R^{+}$,
for all $w\in Z$
is a continuous seminorm,
hence by the continuity of $F$
also the map
$X\ni x\mapsto 
\sup_{s\in K}\|F(x)(s)w\|
\in
\R^{+}$
is continuous.
So
by \eqref{15472506}
$\varlimsup_{x\to x_{0}}
\mu_{(v,x)}^{K}(F(x))
\leq
\sup_{s\in K}
\|F(x_{0})(s)v(x_{0})\|
=
\mu_{(v,x_{0})}^{K}(F(x_{0}))$,
and by 
\cite[$(15)$, $\S 5.6$]{BourGT}
we have
$$
\varlimsup_{x\to x_{0}}
\mu_{(v,x)}^{K}(F(x))
=
\mu_{(v,x_{0})}^{K}(F(x_{0})).
$$
Therefore
by 
\cite[$(13)$, $\S 5.6$]{BourGT},
\cite[Prp. $3$, $\S 6.2$]{BourGT},
and
the fact that 
any map $g$
is $u.s.c.$ 
at a point 
iff $-g$ 
is $l.s.c.$,
we can state that
$X\ni x\mapsto \mu_{(v,x)}^{K}(F(x))$ 
is 
$u.s.c.$ 
at $x_{0}$ for all $x_{0}\in X$, 
hence 
it is $u.s.c.$,
which is the $FM(4)$ condition.
\end{proof}
\begin{remark}
\label{05210948}
Let
$\mf{V}
\coloneqq
\lr{\lr{\mf{E}}{\tau}}
{\pi,X,\n}
$
be a bundle of $\Omega-$spaces
and
$\mc{E}\subseteq\prod_{x\in X}\mf{E}_{x}$.
Set for all $v\in\prod_{x\in X}\mf{E}_{x}$
\begin{equation*}
\begin{cases}
B_{v}:X\ni x
\mapsto
\{v(x)\},
\\
\Theta
\coloneqq
\left\{
B_{w}
\,\vert\,
w\in\mc{E}
\right\}
\end{cases}
\end{equation*}
Thus
$\Theta\subset
\prod_{x\in X}
Bounded(\mf{E}_{x})$
and
$\forall v\in\mc{E}$
\begin{equation*}
\mc{E}
\cap
\prod_{x\in X}
B_{v}(x)
=
\{v\}.
\end{equation*}
Therefore
for all $v\in\mc{E}$,
and
for all $x\in X$
with the notation of Def. \ref{10282712}
\begin{equation*}
\begin{cases}
\ms{D}(B_{v},\mc{E})
=
\{v\},
\\
\mc{B}_{B_{v}}^{x}
=\{v(x)\},\,
\\
S_{x}=\{\{w(x)\}
\,\vert\, w\in\mc{E}\},
\\
\mc{E}(\Theta)
=
\mc{E}.
\end{cases}
\end{equation*}
\end{remark}
By Lemma \ref{22312406}
and 
Def. \ref{17471910Ba}
we can construct
the
bundle
$\mf{V}(\ms{M},\mc{M})$
generated
by
the couple
$\lr{\ms{M}}{\mc{M}}$.
In 
the following result
we construct
a $\left(\Theta,\mc{E}\right)-$structure
and describe a subset
of $\Gamma^{x_{\infty}}(\rho)$.
\begin{theorem}
\label{22372406}
Let 
us assume the notation
and hypotheses of Lemma \ref{22312406},
let
$\mf{V}$
be the trivial
Banach bundle
with constant stalk $Z$
and set
$\Theta
\coloneqq
\left\{
B_{v}
\,\vert\,
v\in\cc{b}{X,Z}
\right\}$.
Then
\begin{enumerate}
\item
$\lr{\mf{V},\mf{V}(\ms{M},\mc{M})}
{X,Y}$
is a 
$(\Theta,\cc{b}{X,Z})-$
structure,
moreover
if $X$ is compact 
and $Y$ is locally compact
then
it is compatible;
\item
Let
$f\in\prod_{x\in\ X}\ms{M}_{x}$
be
such that
$\sup_{(x,s)\in X\times K}
\|f(x)(s)\|_{B(Z)}<\infty$
for all
$K\in Comp(Y)$
then
$
(a)
\Leftrightarrow
(b)
\Leftrightarrow
(c)
\Leftrightarrow
(d)$,
where
\begin{enumerate}
\item
$f\in\Gamma^{x_{\infty}}(\pi_{\ms{M}})$;
\item
$(\forall K\in Comp(Y))
(\forall v\in\cc{b}{X,Z})$
\begin{equation*}
\lim_{x\to x_{\infty}}
\sup_{s\in K}
\|
f(x)(s)v(x)
-
f(x_{\infty})(s)v(x)
\|
=0
\end{equation*}
\item
$f:X\to\cc{c}{Y,B_{s}(Z)}$
continuous at $x_{\infty}$;
\item
$(\forall K\in Comp(Y))
(\forall w\in Z)$
\begin{equation*}
\lim_{x\to x_{\infty}}
\sup_{s\in K}
\|
f(x)(s)w
-
f(x_{\infty})(s)w
\|
=0.
\end{equation*}
\end{enumerate}
\end{enumerate}
\end{theorem}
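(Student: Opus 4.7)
I verify the five requirements of Definition \ref{10282712}. The trivial Banach bundle $\mf{V}$ with constant stalk $Z$ is a bundle of $\Omega$-spaces with $\Gamma(\pi)\simeq\cc{b}{X,Z}=\mc{E}$. Setting $B_v(x)\doteqdot\{v(x)\}$ (singleton, hence bounded) gives $\mb{D}(B_v,\mc{E})=\{v\}$, so $\mc{B}_{B_v}^x=\{v(x)\}$ and $\bigcup_v\mc{B}_{B_v}^x=Z$ (via constant $v$) is total. Lemma \ref{22312406} and Remark \ref{16422010} produce the bundle $\mf{V}(\mb{M},\mc{M})$; for the remaining clause, note that with this $\Theta$ the seminorm \eqref{22542906} collapses to $q_{(K,\|\cdot\|,B_v)}^{x}(f)=\sup_{t\in K}\|f(t)v(x)\|=\mu_{(v,x)}^{K}(f)$, i.e.\ $\mc{L}_{S_x}(Z)=B_s(Z)$ and the directed family $\mf{R}_x$ of Definition \ref{17471910A} matches the one generated from $\A_x$ in Lemma \ref{22312406}, so $\{\lr{\mb{M}_x}{\mf{R}_x}\}$ is a map pre-bundle. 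For compatibility, when $X$ is compact Remark \ref{16422010} gives $\Gamma(\rho)\simeq\mc{M}$, and for $F\in\mc{M}$, $v\in\cc{b}{X,Z}$, $t\in Y$, a standard triangle argument using strong continuity of $x\mapsto F(x)(t)$ (evaluation at $t$ is continuous $\cc{c}{Y,B_s(Z)}\to B_s(Z)$), the uniform bound $C(F,\{t\})<\infty$, and continuity of $v$ shows $x\mapsto F(x)(t)v(x)\in\cc{b}{X,Z}=\Gamma(\pi)$.

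\textbf{Part (2).} The equivalences (b)$\Leftrightarrow$(c)$\Leftrightarrow$(d) are elementary. (c)$\Leftrightarrow$(d) is the very definition of the compact-open topology on $\cc{c}{Y,B_s(Z)}$. For (b)$\Leftrightarrow$(d), the direction (b)$\Rightarrow$(d) follows from $v\equiv w$; the reverse uses the insertion of $v(x_\infty)$:
\begin{equation*}
\sup_{s\in K}\|(f(x)-f(x_\infty))(s)v(x)\|\leq\sup_{s\in K}\|(f(x)-f(x_\infty))(s)v(x_\infty)\|+2C\|v(x)-v(x_\infty)\|,
\end{equation*}
with $C\doteqdot\sup_{(z,s)\in X\times K}\|f(z)(s)\|<\infty$, and then (d) together with continuity of $v$.

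The substance is (a)$\Leftrightarrow$(c). For (a)$\Rightarrow$(c), I fix $(K,w,\epsilon)$, take $v\equiv w$, and use the density clause $FM(3)$ (Lemma \ref{22312406}) to pick $\sigma\in\mc{M}$ with $q^{x_\infty}_{(K,\|\cdot\|,B_v)}(\sigma(x_\infty)-f(x_\infty))<\epsilon/3$; the tube $T_{\mb{M}}(X,\sigma,\epsilon/3,\{(K,\|\cdot\|,B_v)\})$ is then a basic neighborhood of $(x_\infty,f(x_\infty))$, so (a) yields $U\ni x_\infty$ on which $\sup_{s\in K}\|(f(x)-\sigma(x))(s)w\|<\epsilon/3$, and a triangle through $\sigma(x),\sigma(x_\infty)$ (using continuity of $\sigma\in\cc{b}{X,\cc{c}{Y,B_s(Z)}}$) delivers (c). For (c)$\Rightarrow$(a), given a basic tube $T_{\mb{M}}(U_0,\sigma,\epsilon,\mc{O})$ of $(x_\infty,f(x_\infty))$, set $\eta\doteqdot\sup_{\mc{O}}q^{x_\infty}(f(x_\infty)-\sigma(x_\infty))<\epsilon$ and split
\begin{equation*}
q^x(f(x)-\sigma(x))\leq q^x(f(x)-f(x_\infty))+q^x(f(x_\infty)-\sigma(x_\infty))+q^x(\sigma(x_\infty)-\sigma(x)).
\end{equation*}

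The main obstacle is the middle term: the seminorm $q^x_{(K,v)}$ evaluates using $v(x)$, not $v(x_\infty)$, so $q^x(f(x_\infty)-\sigma(x_\infty))$ is not literally the controlled quantity $\eta$. The bridging inequality is $q^x(D)\leq q^{x_\infty}(D)+\sup_{s\in K}\|D(s)\|\cdot\|v(x)-v(x_\infty)\|$ for $D=f(x_\infty)-\sigma(x_\infty)$, whose operator-norm factor is finite thanks to the hypothesis on $f$ and the boundedness $C(\sigma,K)<\infty$ built into $\sigma\in\mc{M}$. The outer two terms of the split vanish as $x\to x_\infty$ by (c)$\Leftrightarrow$(b) applied to $f$ and by continuity of $\sigma$ into $\cc{c}{Y,B_s(Z)}$, using the same bridging identity. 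Hence $\limsup_{x\to x_\infty} q^x(f(x)-\sigma(x))\leq\eta<\epsilon$ and $f(x)$ enters the tube on a neighborhood. The boundedness requirement implicit in $\Gamma^{x_\infty}(\pi_{\mb{M}})$ is automatic since $q^z_{(K,v)}(f(z))\leq C\|v\|_\infty$ for every choice of $(K,v)$.
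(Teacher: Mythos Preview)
Your argument is correct. Part~(1) is handled essentially as in the paper. In Part~(2) you take a genuinely different route for the equivalence with (a): the paper observes that the \emph{constant} selection $H(x)\doteqdot f(x_\infty)$ lies in $\mc{M}\subseteq\Gamma(\pi_{\mb{M}})$ (it is trivially continuous and satisfies $\sup_{(x,s)\in X\times K}\|H(x)(s)\|=\sup_{s\in K}\|f(x_\infty)(s)\|<\infty$), and then applies Lemma~\ref{15482712} with $F=H$ to get $(a)\Leftrightarrow(b)$ in one stroke. You instead work directly at the level of tube neighbourhoods, using $FM(3)$-density to approximate $f(x_\infty)$ by some $\sigma\in\mc{M}$ and then a three-term split together with the bridging inequality $q^x_{(K,v)}(D)\leq q^{x_\infty}_{(K,v)}(D)+\sup_{s\in K}\|D(s)\|\,\|v(x)-v(x_\infty)\|$ to control the $x$-dependence of the seminorms. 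This is sound---the operator-norm factor is finite by the hypothesis on $f$ and by $C(\sigma,K)<\infty$ for $\sigma\in\mc{M}$---but it is more laborious: the paper's constant-section trick bypasses both the density approximation and the bridging inequality entirely, since for $F=H$ the limit condition of Lemma~\ref{15482712} \emph{is} (b) on the nose. Your approach has the modest advantage of being self-contained (no appeal to the general characterisation lemma), while the paper's exploits the available machinery to give a one-line proof of the core equivalence.
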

\begin{proof}
By Rmk. \ref{21022406} and 
Lemma \ref{22312406}
we have that $(5)$ of Def. \ref{10282712}
follows.
$\Gamma(\pi)\simeq\cc{b}{X,Z}$
hence
by Rmk. \ref{05210948}
the other requests of Def. \ref{10282712} follow.
Thus the first sentence of
statement $(1)$.
If $X$ is compact 
by 
Lemma \ref{22312406}
and
Rmk. \ref{17150312}
follows
that 
$\mc{M}\simeq\Gamma(\pi_{\ms{M}})$,
moreover by Rmk. \ref{05210948}
we have 
$\mc{E}(\Theta)=\mc{E}$
and finally
$\mc{E}\doteq
\Gamma(\pi)\simeq\cc{b}{X,Z}
$.
Hence 
the second sentence of statement
$(1)$ follows if we
show that
$\mc{M}_{t}
\bullet
\cc{b}{X,Z}
\subseteq
\cc{b}{X,Z}$.
To this end fix 
$v\in\cc{b}{X,Z}$,
$F\in\mc{M}$,
$s\in Y$
and $K_{s}$ a compact neighbourhood
of $s$, which there exists by the hypothesis
that $Y$ is locally compact.
Then we have for all $x,x_{0}\in X$
\begin{equation}
\begin{aligned}
\label{16102506}
&
\|
F(x)(s)v(x)-F(x_{\infty})(s)v(x_{0})
\|
\leq
\\
&
C(F,K_{s})
\|v(x)-v(x_{0})\|
+
\|\left(F(x)(s)-
F(x_{0})(s)
\right)v(x_{0})\|
\end{aligned}
\end{equation}
By considering
that
$F\in\cc{b}{X,\cc{c}{Y,B_{s}(Z)}}$
and that
$s\in K_{s}$
we have that
$\lim_{x\to x_{0}}
\|(F(x)(s)-F(x_{0})(s))v(x_{0})\|
=0$.
Hence
by \eqref{16102506}
we deduce that
$F_{s}\bullet v$
is continuous at $x_{0}$,
so continuous on $X$, in particular
$X$ being compact it is also
$\|\cdot\|_{Z}-$bounded.
Thus
$F_{s}\bullet v\in\cc{b}{X,Z}$
and the second sentence of the statement
follows.
\par
Fix
$f\in\prod_{x\in\ X}\ms{M}_{x}$
such that
$(\forall K\in Comp(Y))
(C(f,K)\coloneqq
\sup_{(x,s)\in X\times K}
\|f(x)(s)\|_{B(Z)}<\infty)$.
$(a)\Leftrightarrow(b)$
follows by Lemma \ref{15482712},
the fact that
$\mc{M}\subseteq\Gamma(\pi_{\ms{M}})$
by Rmk. \ref{17150312},
and
by 
$(H:X\ni x\mapsto f(x_{\infty})\in
\cc{c}{Y,B_{s}(Z)})\in\mc{M}$,
indeed
$H$
it is 
bounded and continuous
being constant, moreover
$\sup_{(x,s)\in X\times K}
\|H(x)(s)\|_{B(Z)}
=
\sup_{s\in K}
\|f(x_{\infty})(s)\|_{B(Z)}
<\infty$,
for all $K\in Comp(Y)$.
$(b)\Rightarrow(d)$
follows by the fact that
$(X\ni x\mapsto w\in Z)\in\cc{b}{X,Z}$,
and
$(c)\Leftrightarrow(d)$
is trivial.
For all $K\in Comp(Y)$,
$x\in X$ and $s\in K$
\begin{equation*}
\begin{aligned}
\|
(f(x)(s)
-
f(x_{\infty})(s))
v(x)
\|
&
\leq
\\
\|
f(x)(s)v(x)
-
f(x_{\infty})(s)v(x_{\infty})
\|
+
\|
f(x_{\infty})(s)v(x_{\infty})
-
f(x_{\infty})(s)v(x)
\|
&
\leq
\\
\|f(x)(s)(v(x)-v(x_{\infty}))\|
+
\|(f(x)(s)-f(x_{\infty})(s))v(x_{\infty})\|
+
\|f(x_{\infty})(s)(v(x_{\infty})-v(x))\|
&
\leq
\\
\left(
\|f(x)(s)\|
+
\|f(x_{\infty})(s)\|
\right)
\|v(x_{\infty})-v(x)\|
+
\|(f(x)(s)-f(x_{\infty})(s))v(x_{\infty})\|
&
\leq
\\
2C(f,K)\|v(x_{\infty})-v(x)\|
+
\|(f(x)(s)-f(x_{\infty})(s))v(x_{\infty})\|.
\end{aligned}
\end{equation*}
Hence 
$(d)$ 
implies
$(b)$.
\end{proof}
\begin{definition}
\label{21031238}
Let
$\lr{\mf{V},\mf{W}}{X,Y}$
be
a
$\left(\Theta,\mc{E}\right)-$structure,
$Y_{0}\subset Y$
and
$\mc{V}\in\prod_{x\in X}\mf{M}_{x}$.
We say that
$\mc{V}$ is equicontinuous
on $Y_{0}$
iff
$(\forall j\in J)
(\exists a>0)
(\exists\,j_{1}\in J)
(\forall z\in X)
(\forall v_{z}\in\mf{E}_{z})
$
\begin{equation}
\label{15422103}
\sup_{t\in Y_{0}}
\nu_{j}\left(\mc{V}(z)(t)v_{z}\right)
\leq
a
\nu_{j_{1}}(v_{z}).
\end{equation}
$\mc{V}$ is 
equicontinuous
iff
it is equicontinuous
on $Y$.
$\mc{V}$ is 
pointwise
equicontinuous
iff 
it is equicontinuous
on every point
of
$Y$
and
compactly
equicontinuous
iff 
it is equicontinuous
on every compact of
$Y$.
\end{definition}
Note that
in case $\mf{V}$ is trivial
with costant stalk $E$
then
$\mc{V}$ is equicontinuous
on $Y_{0}$
if and only if
it is equicontinuous
in the standard sense\footnote{See for instance \cite[Def $1$, $\S 2.1$, Ch. $10$]{BourGT}.}
the following set of maps
$\{
\mc{V}_{0}(z)(t)
\in\mc{L}(E)
\,\vert\,
(z,t)\in X\times Y_{0}
\}$,
where
$\mc{V}_{0}
\in
\left(\mc{L}(E)^{Y}
\right)^{X}$
such that
$\mc{V}(z)=(z,\mc{V}_{0}(z))$
for all $z\in X$.
\begin{proposition}
\label{20492003}
Let
$\mf{V}$
be trivial with 
costant stalk
$E$,
$A^{0}\in Bounded(E)$,
$x_{\infty}\in X$
and
\begin{equation}
\label{16062103}
\begin{cases}
\mc{E}_{0}
\subseteq
\cc{b}{X,E}
\\
\mc{E}_{0}
\text{ equicontinuous 
set
at $x_{\infty}$}
\\
\{(X\ni x\mapsto a\in E)\,\vert\, a\in A^{0}\}
\subset
\mc{E}_{0}.
\end{cases}
\end{equation}
Moreover let
$\lr{\mf{V},\mf{W}}{X,Y}$
be
a
$\left(\Theta,\mc{E}\right)-$structure
such that
for all $x\in X$
$$
\mf{M}_{x}
=
\cc{c}{Y,\mc{L}_{S_{x}}(\{x\}\times E)}.
$$
and
$$
\begin{cases}
\mc{E}
=\prod_{x\in X}
\{x\}\times\mc{E}_{0}
\\
\Theta
=
\{
B_{A^{0}}
\}
\end{cases}
$$
where
$B_{A^{0}}(x)
\coloneqq
\{x\}\times A^{0}$,
then
\begin{equation}
\label{23412003}
\begin{cases}
S_{x}
=
\{x\}\times A^{0},\forall x\in X
\\
\mf{M}_{x}
\simeq
\{x\}\times 
\cc{c}{Y,\mc{L}_{A^{0}}(E)}.
\\
\mf{M}
=
\bigcup_{x\in X}
\mf{M}_{x}
\simeq
\bigcup_{x\in X}
\{x\}
\times
\cc{c}{Y,\mc{L}_{A^{0}}(E)}
\\
\prod_{x\in X}
\mf{M}_{x}
\simeq
\prod_{x\in X}
\{x\}
\times
\cc{c}{Y,\mc{L}_{A^{0}}(E)}
\simeq
\cc{c}{Y,\mc{L}_{A^{0}}(E)}^{X}.
\end{cases}
\end{equation}
If
$\mf{W}$ is full
and 
$$
\{
X\ni x\mapsto
\mf{t}_{f}(x)
=(x,f)\in\mf{M}_{x}
\,\vert\, 
f
\in
\cc{c}{Y,\mc{L}_{A^{0}}(E)}
\}
\subset
\Gamma(\rho),
$$
then
for all
$\mc{V}\in\prod_{x\in X}^{b}\mf{M}_{x}$,
$(1)\Rightarrow(2)$
and
$(3)\Leftrightarrow(4)$,
where
\begin{enumerate}
\item
$\mc{V}\in\Gamma^{x_{\infty}}(\rho)$
\item
$\mc{V}_{0}
\in\cc{}{X,\cc{c}{Y,\mc{L}_{A^{0}}(E)}}$,
\item
$\mc{V}$ 
is
compactly
equicontinuous
and
$\mc{V}\in\Gamma^{x_{\infty}}(\rho)$
\item
$\mc{V}$
is
compactly
equicontinuous
and
$\mc{V}_{0}
\in\cc{}{X,\cc{c}{Y,\mc{L}_{A^{0}}(E)}}$.
\end{enumerate}
Here
in $(2)-(4)$
we consider
the
isomorphism
$\prod_{x\in X}
\mf{M}_{x}
\simeq
\cc{c}{Y,\mc{L}_{A^{0}}(E)}^{X}$,
and
set
$\mc{V}_{0}
\in
\cc{c}{Y,\mc{L}_{A^{0}}(E)}^{X}$
such that
$\mc{V}(x)=(x,\mc{V}_{0}(x))$
for all $x\in X$.
\end{proposition}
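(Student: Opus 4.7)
The plan is to first establish the set-theoretic identifications in \eqref{23412003} by unwinding Definition \ref{10282712}, and then apply Lemma \ref{15482712} together with the fullness hypothesis on $\mf{W}$ and the presence in $\Gamma(\rho)$ of all constant-in-$x$ sections $\tau_{f}$. The crucial structural feature is that, for the chosen $\Theta=\{B_{A^0}\}$ and $\mc{E}=\prod_{z\in X}\{z\}\times\mc{E}_0$, every element of $\mb{D}(B_{A^0},\mc{E})$ takes values in $A^0$ \emph{and} every element of $A^0$ is achieved by a constant section in $\mb{D}(B_{A^0},\mc{E})$; this two-sided control is exactly what lets us interchange the supremum over $v\in\mb{D}$ with the supremum over $a\in A^0$ inside the criterion provided by Lemma \ref{15482712}.

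For \eqref{23412003}: a section $v\in\mb{D}(B_{A^0},\mc{E})$ has the form $v(z)=(z,f(z))$ with $f\in\mc{E}_0$ and $f(z)\in A^0$ for every $z\in X$. By the third line of \eqref{16062103} the constant map $f\equiv a$ lies in $\mc{E}_0$ for each $a\in A^0$, hence $\mc{B}_{B_{A^0}}^x=\{x\}\times A^0$ and therefore $S_x=\{\{x\}\times A^0\}$. The canonical isomorphism $\{x\}\times E\simeq E$ transports $\mc{L}_{S_x}(\{x\}\times E)$ onto $\mc{L}_{A^0}(E)$, giving $\mf{M}_x\simeq\{x\}\times\cc{c}{Y,\mc{L}_{A^0}(E)}$; the remaining identifications in \eqref{23412003} are then formal.

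For $(1)\Rightarrow(2)$: supposing $\mc{V}\in\Gamma^{x_\infty}(\rho)$, the hypothesis $\tau_{\mc{V}_0(x_\infty)}\in\Gamma(\rho)$ and fullness of $\mf{W}$ allow one to apply Lemma \ref{15482712} with $F\doteqdot\tau_{\mc{V}_0(x_\infty)}$ and $U=X$, yielding for every $K\in Comp(Y)$
\begin{equation*}
\lim_{z\to x_\infty}\sup_{t\in K}\sup_{v\in\mb{D}(B_{A^0},\mc{E})}\|(\mc{V}_0(z)(t)-\mc{V}_0(x_\infty)(t))\,v_0(z)\|=0,
\end{equation*}
where $v_0(z)$ is the $E$-component of $v(z)$. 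Restricting the inner supremum to the constant sections $v_a\doteqdot(z\mapsto(z,a))$ with $a\in A^0$, which lie in $\mb{D}(B_{A^0},\mc{E})$, reduces this to
\begin{equation*}
\lim_{z\to x_\infty}\sup_{t\in K}\sup_{a\in A^0}\|(\mc{V}_0(z)(t)-\mc{V}_0(x_\infty)(t))\,a\|=0,
\end{equation*}
which is precisely continuity of $\mc{V}_0$ at $x_\infty$ viewed as a map into $\cc{c}{Y,\mc{L}_{A^0}(E)}$.

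For $(3)\Leftrightarrow(4)$: direction $(3)\Rightarrow(4)$ follows from the previous step, since compact equicontinuity of $\mc{V}$ transfers verbatim to $\mc{V}_0$ through \eqref{23412003}. For $(4)\Rightarrow(3)$, one uses that $v_0(z)\in A^0$ for every $v\in\mb{D}(B_{A^0},\mc{E})$ to obtain the reverse bound
\begin{equation*}
\sup_{v\in\mb{D}(B_{A^0},\mc{E})}\|(\mc{V}_0(z)(t)-\mc{V}_0(x_\infty)(t))\,v_0(z)\|\leq\sup_{a\in A^0}\|(\mc{V}_0(z)(t)-\mc{V}_0(x_\infty)(t))\,a\|,
\end{equation*}
and conclude via Lemma \ref{15482712} applied backwards with the same $F=\tau_{\mc{V}_0(x_\infty)}$. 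The main technical obstacle I anticipate is bookkeeping the identification $\{x\}\times E\simeq E$ consistently so that the seminorm $\nu_j$ on the trivial Banach bundle $\mf{V}$ matches the operator seminorm of $\mc{L}_{A^0}(E)$, and clarifying where exactly compact equicontinuity enters: it is what keeps the two-sided bound between $\sup_{v\in\mb{D}}$ and $\sup_{a\in A^0}$ uniform in $z$ near $x_\infty$, making the extra hypothesis interchangeable between $(3)$ and $(4)$.
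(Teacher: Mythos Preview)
Your argument is correct, and for the direction $(4)\Rightarrow(3)$ it is actually cleaner than the paper's. Both proofs start from the same criterion (the paper obtains it via Proposition~\ref{16572003}, which is the specialization of your Lemma~\ref{15482712} step with $F=\tau_{\mc{V}_0(x_\infty)}$), and both handle $(1)\Rightarrow(2)$ by restricting to constant sections. For $(4)\Rightarrow(3)$, however, the paper writes the three-term decomposition
\[
(\mc{V}_0(z)(t)-\mc{V}_0(x_\infty)(t))\,v_0(z)=\mc{V}_0(z)(t)\bigl(v_0(z)-v_0(x_\infty)\bigr)+(\mc{V}_0(z)(t)-\mc{V}_0(x_\infty)(t))\,v_0(x_\infty)+\mc{V}_0(x_\infty)(t)\bigl(v_0(x_\infty)-v_0(z)\bigr),
\]
bounds the outer terms using compact equicontinuity of $\mc{V}$ together with the equicontinuity hypothesis on $\mc{E}_0$ at $x_\infty$ (which forces $\sup_{v_0\in\mc{E}_0}\nu_j(v_0(z)-v_0(x_\infty))\to 0$), and controls the middle term via $(4)$. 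Your route bypasses this entirely: since $\{v_0(z):v\in\mb{D}(B_{A^0},\mc{E})\}=A^0$ for \emph{every} $z$, the two suprema $\sup_{v\in\mb{D}}$ and $\sup_{a\in A^0}$ coincide pointwise in $(z,t)$, so the criterion in Lemma~\ref{15482712} is literally the continuity statement~$(2)$. In particular your argument yields $(1)\Leftrightarrow(2)$, stronger than the stated $(1)\Rightarrow(2)$.

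This also answers your closing worry: the two-sided bound between $\sup_{v\in\mb{D}}$ and $\sup_{a\in A^0}$ is a pointwise set equality and needs neither compact equicontinuity of $\mc{V}$ nor equicontinuity of $\mc{E}_0$. In your argument the compact equicontinuity hypothesis in $(3)$ and $(4)$ is merely carried along unchanged under the identification $\mc{V}\leftrightarrow\mc{V}_0$; it does no work. The paper's decomposition genuinely uses both extra hypotheses, so what your approach buys is a proof that those hypotheses are superfluous for this proposition.
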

\begin{proof}
For all $x\in X$
by \eqref{11232712}
$\mc{B}_{B_{A^{0}}}^{x}
=
\{(x,v_{0}(x))
\,\vert\,
v_{0}\in\mc{E}_{0},
v_{0}(X)\subseteq A^{0}\}
$
so
$\mc{B}_{B_{A^{0}}}^{x}\subseteq A^{0}$.
Moreover
by construction
$(X\ni x\mapsto a\in E)
\in\mc{E}_{0}$
for all
$a\in A^{0}$,
thus
$\mc{B}_{B_{A^{0}}}^{x}= A^{0}$.
Thus
the first equality in
\eqref{23412003}
follows,
the others are trivial.
By Prp.
\ref{16572003}
\begin{equation}
\label{15102103}
(1)
\Leftrightarrow
\lim_{z\to x_{\infty}}
\sup_{t\in K}
\sup_{v_{0}\in
\mc{E}_{0}
\cap B_{A^{0}}
}
\nu_{j}\left((
\mc{V}_{0}(z)(t)-
\mc{V}_{0}(x_{\infty})(t))v_{0}(z)\right)
=0.
\end{equation}
Moreover 
by construction we deduce
that
$\{(X\ni x\mapsto a\in E)\,\vert\, a\in A^{0}\}
\subset
\mc{E}_{0}
\cap B_{A^{0}}$,
so
$(2)$
follows
by $(1)$
and \eqref{15102103}.
Let $v_{0}\in\mc{E}_{0}$
then for all
$z\in X$ and $t\in Y$
\begin{alignat}{1}
\label{15352103}
(\mc{V}(z)(t)-
\mc{V}(x_{\infty})(t))
v_{0}(z)
&=
\mc{V}(z)(t)(v_{0}(z)-v_{0}(x_{\infty}))
+
\notag\\
(\mc{V}(z)(t)-
\mc{V}(x_{\infty})(t))
v_{0}(x_{\infty})
&
+
\mc{V}(x_{\infty})(t)
(v_{0}(z)-v_{0}(x_{\infty})).
\end{alignat}
Moreover
by 
the hypothesis of equicontinuity
at $x_{\infty}$
of the set
$\mc{E}_{0}$,
for all $j\in J$
\begin{equation}
\label{15542103}
\lim_{z\to x_{\infty}}
\sup_{v_{0}\in\mc{E}_{0}}
\nu_{j}(v_{0}(z)-v_{0}(x_{\infty}))
=0.
\end{equation}
By
\eqref{15352103}
and
\eqref{15422103}
for all $j\in J$
there exists $j_{1}\in J$
and $a>0$
such that
for all $z\in X$
\begin{alignat}{1}
\label{15562103}
&
\sup_{t\in K}
\sup_{v_{0}\in
\mc{E}_{0}
\cap B_{A^{0}}
}
\nu_{j}\left((
\mc{V}_{0}(z)(t)-
\mc{V}_{0}(x_{\infty})(t))v_{0}(z)\right)
\leq
\notag\\
&
2a
\sup_{v_{0}\in
\mc{E}_{0}
\cap B_{A^{0}}
}
\nu_{j_{1}}
\left(v_{0}(z)-v_{0}(x_{\infty})\right)
+
\notag\\
&
\sup_{t\in K}
\sup_{v_{0}\in
\mc{E}_{0}
\cap B_{A^{0}}
}
\nu_{j}\left(
\mc{V}(z)(t)-
\mc{V}(x_{\infty})(t)\right)
v_{0}(x_{\infty}).
\end{alignat}
Therefore
by 
\eqref{15562103},
\eqref{15542103}
and by $(4)$
follows
$$
\lim_{z\to x_{\infty}}
\sup_{t\in K}
\sup_{v_{0}\in
\mc{E}_{0}
\cap B_{A^{0}}
}
\nu_{j}\left((
\mc{V}_{0}(z)(t)-
\mc{V}_{0}(x_{\infty})(t))v_{0}(z)\right)
=0.
$$
Hence
$(1)$
follows
by
\eqref{15102103}.
\end{proof}
\section{Main Claim}
In this section we state in a precise way the claims outlined in Introduction.
The main Claim \ref{19520412bis} which essentially establishes the existence of $\mc{T}$ and $\mc{P}$
satisfying \eqref{19240703}, \eqref{17321002} and \eqref{20020803}. 
The auxiliary Claim \ref{20090412bis} which provides $\mc{U}$ satisfying 
\eqref{05211905}
and  then Claim \ref{20290412bis}.
Prp. \ref{20340412bis} provides the main properties of those realizations of the main claim
obtained combining realizations of the two auxiliary ones.
We anticipate that 
Thm. \ref{13020103}
resolves the main claim in this fashion.
In what follows when dealing with bundle direct sums we use 
the notation provided in Rmk. \ref{21262110}.
\begin{definition}
\label{12432110bis}
Let 
$\mf{V}_{i}
\coloneqq
\lr{\lr{\mf{E}_{i}}{\tau_{i}}}
{\pi_{i},X,\n_{i}}
$
be a full bundle of $\Omega-$spaces
for any $i=1,2$.
Then we call set of graph sections relative to $\mf{V}_{1}$ and $\mf{V}_{2}$
the set $\ms{Gr}(\mf{V}_{1},\mf{V}_{2})$
of the elements $\lr{\mc{T}}{x_{\infty},\Phi}$
such that 
\begin{enumerate}
\item
$
\mc{T}
\in
\prod_{x\in X}
Graph(\left(\mf{E}_{1}\right)_{x}
\times
\left(\mf{E}_{2}\right)_{x})
$;
\item
$x_{\infty}\in X$;
\item
$\Phi$
is a linear subspace of 
$\Gamma^{x_{\infty}}(\pi_{\ms{E}^{\oplus}})$;
\item
$(\forall x\in X)
(\forall \phi\in\Phi)
(\phi(x)\in\mc{T}(x))$
\item
Asymptotic Graph
\begin{equation}
\label{19350512bis}
\boxed{
\left\{
\phi(x_{\infty})
\,\vert\,
\phi
\in
\Phi
\right\}
=
\mc{T}(x_{\infty}).
}
\end{equation}
\end{enumerate}
\end{definition}
\begin{definition}
\label{16161212bis}
Let 
$
\mf{V}_{i}
\coloneqq
\lr{\lr{\mf{E}_{i}}{\tau_{i}}}
{\pi_{i},X,\n_{i}}
$
be a full bundle of $\Omega-$spaces for any $i=1,2$.
Then we call set of pregraph sections relative to $\mf{V}_{1}$ and $\mf{V}_{2}$
the set $Pregraph\left(\mf{V}_{1},\mf{V}_{2}\right)$
of the elements $\lr{\mc{T}_{0}}{x_{\infty},\Phi}$
such that 
\begin{enumerate}
\item
$x_{\infty}\in X$;
\item
$
\mc{T}_{0}
\in
\prod_{x\in X-\{x_{\infty}\}}
Graph(\left(\mf{E}_{1}\right)_{x}
\times
\left(\mf{E}_{2}\right)_{x})
$;
\item
$\Phi$
is a linear subspace of 
$\Gamma^{x_{\infty}}(\pi_{\ms{E}^{\oplus}})$;
\item
$(\forall x\in X-\{x_{\infty}\})
(\forall \phi\in\Phi)
(\phi(x)\in\mc{T}_{0}(x))$.
\end{enumerate}
\end{definition}
We shall see in 
Lemma \ref{13001512b}
that 
it is possible to
construct
from
any suitable 
pregraph
section
$
\lr{\mc{T}_{0}}{x_{\infty},\Phi}
$
a corresponding
graph
section
$\lr{\mc{T}}{x_{\infty},\Phi}$
such that $\mc{T}$ extends $\mc{T}_{0}$,
while
$\mc{T}(x_{\infty})$
is defined by 
\eqref{19350512bis}.
To this end 
it is sufficient
to show that
$\mc{T}(x_{\infty})
\in
Graph((\mf{E}_{1})_{x_{\infty}}
\times(\mf{E}_{2})_{x_{\infty}})$.
\begin{remark}
\label{23112110bis}
The request that any
$\phi\in\Phi$ 
is a section
continuous in $x_{\infty}$
implies
that
$$
\begin{cases}
\left\{
\lim_{z\to x_{\infty}}
\phi(z)
\,\vert\,
\phi\in\Phi
\right\}
=
\mc{T}(x_{\infty})
\in 
Graph(\left(\mf{E}_{1}\right)_{x_{\infty}}
\times
\left(\mf{E}_{2}\right)_{x_{\infty}})
\\
\text{with}
\\
\phi(z)\in
\mc{T}(z)
\in 
Graph(\left(\mf{E}_{1}\right)_{z}
\times
\left(\mf{E}_{2}\right)_{z}),\,
\forall z\in X-\{x_{\infty}\},
\end{cases}
$$
which
justifies
the name
of 
asymptotic graph 
given
to
\eqref{19350512bis}.
Moreover
by
setting
$X\ni z\mapsto
\phi_{i}(z)
\coloneqq
\Pr_{i}^{z}(\phi(z))$
we have
by 
Cor.
\ref{17571212}
for all $i=1,2$
\begin{equation}
\begin{cases}
\label{09382412bis}
\left\{
\lim_{z\to x_{\infty}}
\phi_{i}(z)
\,\vert\,
\phi\in\Phi
\right\}
=
\Pr_{i}^{x_{\infty}}(\mc{T}(x_{\infty}))
\\
\text{with}
\\
\phi(z)\in
\mc{T}(z)
\in 
Graph(\left(\mf{E}_{1}\right)_{z}
\times
\left(\mf{E}_{2}\right)_{z}),\,
\forall z\in X-\{x_{\infty}\}.
\end{cases}
\end{equation}
Finally
for 
$i=1,2$
by 
Cor.
\ref{17571212}
and
Cor.
\ref{28111707}
we have
$(1_{i})
\Leftrightarrow
(2_{i})$
\begin{description}
\item[$(1_{i})$]
$(\exists\,
\sigma\in\Gamma(\pi))
(\sigma(x_{\infty})
=\phi_{i}(x_{\infty}))$
such that
$$
(\forall j\in J)
(\lim_{z\to x_{\infty}}
\nu_{j}(\phi_{i}(z)-\sigma(z))=0);
$$
\item[$(2_{i})$]
$(\forall\sigma\in\Gamma(\pi)
\,\vert\,
\sigma(x_{\infty})
=\phi_{i}(x_{\infty}))$
we have
$$
(\forall j\in J)
(\lim_{
z\to x_{\infty}}
\nu_{j}(\phi_{i}(z)-\sigma(z))=0).
$$
\end{description}
\end{remark}
\begin{definition}
\label{15312011bis}
Let $\lr{\mf{V},\mf{D}}{X,\{pt\}}$ be
a $\left(\Theta,\mc{E}\right)-$structure
such that $\mf{V}$ is full and let us denote
$\mf{D}
\coloneqq
\lr{\lr{\mf{B}}{\gamma}}
{\eta,X,\mf{L}}$.
Thus
$\Omega\,\in
\Delta
\lr{\mf{V},\mf{D}}
{\Theta,\mc{E}}$
if
\begin{enumerate}
\item
$
\Omega
\subseteq
\ms{Gr}(\mf{V},\mf{V})
$;
\item
Section of projectors associated with 
$\lr{\mc{T}}{x_{\infty},\Phi}$:
$
\forall
\lr{\mc{T}}{x_{\infty},\Phi}
\in
\Omega
$
\begin{equation}
\label{18270303}
\boxed{
(\exists\,
\mc{P}
\in
\Gamma^{x_{\infty}}(\eta)
\cap
\prod_{x\in X}
\Pr(\mf{E}_{x}))
\left(\forall x\in X\right)
\left(\mc{P}(x)
T_{x}
\subseteq
T_{x}
\mc{P}(x)\right).
}
\end{equation}
\end{enumerate}
Here
$
T_{x}:
D_{x}
\subseteq
\mf{E}_{x}
\to
\mf{E}_{x}
$
is the map
such that
$\mc{T}(x)=Graph(T_{x})$,
for all
$x\in X$.
\end{definition}
\begin{claim}
[MAIN]
\label{19520412bis}
Under the assumptions
in Def.
\ref{15312011bis},
possibly
with
$\lr{\mf{V},\mf{D}}{X,\{pt\}}$
invariant,
find elements
in the set
$$
\Delta\lr{\mf{V},\mf{D}}
{\Theta,\mc{E}}.
$$
\end{claim}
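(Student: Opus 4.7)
The plan is to build the pair $\lr{\mc{T}}{x_{\infty},\Phi}\in\Omega$ together with its projector selection $\mc{P}$ in the special case where each $T_{x}$ generates a $C_{0}$-semigroup of contractions on $\mf{E}_{x}$, exactly as foreshadowed in the introduction. Accordingly, I would upgrade the data: augment $\lr{\mf{V},\mf{D}}{X,\{pt\}}$ with a companion $\left(\Theta,\mc{E}\right)$-structure $\lr{\mf{V},\mf{W}}{X,\R^{+}}$ (of the type discussed around \eqref{21310603}--\eqref{21420603}) possessing the Laplace duality property \eqref{Laplace}, and assume the bounded equicontinuity hypothesis appearing in the planned Theorem \ref{17301812b}. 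The target is then to produce, inside this enriched setting, first a graph section $\lr{\mc{T}}{x_{\infty},\Phi}$ arising from semigroup infinitesimal generators, and second a spectral projector selection $\mc{P}$ which is continuous at $x_{\infty}$ and satisfies the commutation \eqref{20020803}.

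First I would invoke the extension of Kurtz's theorem (Theorem \ref{17301812b}) to produce a selection $\mc{W}_{T}\in\Gamma^{x_{\infty}}(\rho)$ of $C_{0}$-semigroups of contractions, whose stalkwise infinitesimal generators $T_{x}$ have graphs $\mc{T}(x)$ satisfying the asymptotic graph condition \eqref{19350512bis} for some linear $\Phi\subseteq\Gamma^{x_{\infty}}(\pi_{\mb{E}^{\oplus}})$; this directly yields $\lr{\mc{T}}{x_{\infty},\Phi}\in Graph\left(\left(\mf{V},\mf{V}\right)\right)$. Next, for each $x\in X$ I would define the candidate projector via the Riesz--Dunford contour integral
\begin{equation*}
\mc{P}(x)\doteqdot-\frac{1}{2\pi i}\int_{\Gamma}R(-T_{x};\zeta)\,d\zeta,
\end{equation*}
with $\Gamma$ a closed rectifiable curve in the resolvent set of $-T_{x}$ enclosing a spectral component uniformly in $x$. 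The fact that $\mc{P}(x)\in\Pr(\mf{E}_{x})$ and that $\mc{P}(x)T_{x}\subseteq T_{x}\mc{P}(x)$ is then classical, following from the resolvent identity, so the algebraic part of \eqref{18270303} will be automatic.

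The essential obstacle is the continuity assertion $\mc{P}\in\Gamma^{x_{\infty}}(\eta)$. My strategy would be to decode it as an invariance problem through the Laplace duality machinery. By the Laplace duality property, the resolvent $R(-T_{x};\lambda)$ coincides with $\mf{L}(\mc{W}_{T})(x)(\lambda)$, so $\mc{P}$ is itself obtained from $\mc{W}_{T}$ through an integral transform with respect to a suitable (complex) Laplace-type measure along $\Gamma$. I would then pass to the underlying $\left(\Theta,\mc{E},\mu\right)$-structure $\lr{\mf{V},\mf{V}(\mb{M}^{\mu},\Gamma(\rho))}{X,Y}$, exploiting \eqref{17501003} to move $\mc{W}_{T}$ into $\Gamma^{x_{\infty}}(\pi_{\mb{M}^{\mu}})$, and apply the bundle version of the Lebesgue dominated convergence theorem (Theorem \ref{15101701}) for the $\mu$-related couple $\lr{\mf{V}}{\mf{Z}}$ to conclude
\begin{equation*}
\mc{P}\bullet\Gamma^{x_{\infty}}(\pi)\subseteq\Gamma^{x_{\infty}}(\pi).
\end{equation*}
This is the content encoded in implication \eqref{17401003}. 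Finally I would appeal to the localization \eqref{17481003loc} of the invariance \eqref{18112502pre}, which under the assumption that $\lr{\mf{V},\mf{D}}{X,\{pt\}}$ is invariant converts the above pointwise-invariance of $\Gamma^{x_{\infty}}(\pi)$ under $\mc{P}$ into the sought membership $\mc{P}\in\Gamma^{x_{\infty}}(\eta)$.

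The hard part is precisely this transfer of continuity: the stalkwise functional calculus produces $\mc{P}(x)$ as a resolvent integral, but the resolvent is a \emph{strong} limit whose behaviour across stalks is a priori unrelated to the topology of $\mf{D}$. Controlling it requires (a) uniformity of the contour $\Gamma$ across a neighbourhood of $x_{\infty}$ in the spectral picture of $-T_{x}$, (b) the dominated-convergence statement of Theorem \ref{15101701} to commute the integral with the bundle limit $x\to x_{\infty}$, and (c) the invariance property of the $\left(\Theta,\mc{E},\mu\right)$-structure to promote the resulting action on $\Gamma^{x_{\infty}}(\pi)$ to actual continuity of $\mc{P}$ as a section of $\eta$. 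Once these three ingredients are in place, the pair $\lr{\mc{T}}{x_{\infty},\Phi}$ together with $\mc{P}$ will be an element of $\Delta\lr{\mf{V},\mf{D}}{\Theta,\mc{E}}$, proving the claim under the above hypotheses (and yielding, under a suitable model for the bounded-equicontinuity assumption, the companion Corollary \ref{21343012}).
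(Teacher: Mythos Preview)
Your proposal is correct and follows essentially the same route as the paper: the claim is resolved via Theorem \ref{13020103}, whose proof proceeds exactly as you outline—Theorem \ref{17301812b} yields $\mc{W}_{T}\in\Gamma^{x_{\infty}}(\rho)$, Proposition \ref{23332802} transfers this to the underlying $(\Theta,\mc{E},\nu)$-structure, Corollary \ref{21152602} (together with the bundle Lebesgue Theorem \ref{15101701}) gives $\mc{P}\bullet\Gamma_{\mc{E}(\Theta)}^{x_{\infty}}(\pi)\subseteq\Gamma^{x_{\infty}}(\pi)$, and the invariance Lemma \ref{14452602} promotes this to $\mc{P}\in\Gamma^{x_{\infty}}(\eta)$. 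The paper also packages the reduction to the two sub-claims explicitly in Proposition \ref{20340412bis}, which is the abstract version of your decomposition into ``build the semigroup selection'' and ``build the projector selection from it.''
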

\begin{definition}
\label{19490412bis}
Let
$\lr{\mf{V},\mf{W}}{X,\R^{+}}$
be
a
$\left(\Theta,\mc{E}\right)-$structure.
Let us denote
$\mf{V}\coloneqq
\lr{\lr{\mf{E}}{\tau}}
{\pi,X,\n}$
and
$\mf{W}\coloneqq
\lr{\lr{\mf{M}}{\gamma}}{\rho,X,\mf{R}}$.
We require that $\mf{V}$ is full,
$\{\mf{E}_{x}\}_{x\in X}$
is a 
family 
of sequentially complete
Hlcs
and
$\ms{U}(\mc{L}_{S_{x}}(\mf{E}_{x}))
\subset
\mf{M}_{x}$
for all $x\in X$.
Then
$\Omega\in
\Delta_{\Theta}\lr{\mf{V},\mf{W}}
{\mc{E},X,\R^{+}}$
iff
\begin{enumerate}
\item
$
\Omega
\subseteq
\ms{Gr}(\mf{V},\mf{V})$;
\item
Section of semigroups associated with 
$\lr{\mc{T}}{x_{\infty},\Phi}$:
$\forall
\lr{\mc{T}}{x_{\infty},\Phi}
\in
\Omega
$
$$
\boxed{
\exists\,
\mc{U}_{\lr{\mc{T}}{x_{\infty},\Phi}}
\in
\Gamma^{x_{\infty}}(\rho)
}
$$
such that
$\forall x\in X$
\begin{enumerate}
\item
$
\mc{U}_{\lr{\mc{T}}{x_{\infty},\Phi}}(x)
$
is an equicontinuous
$(C_{0})-$semigroup
on
$\mf{E}_{x}$;
\item
$
(\forall x\in X)
(\mc{T}(x)=Graph(R_{x}))
$.
\end{enumerate}
\end{enumerate}
Here
$R_{x}$
is the infinitesimal generator
of the semigroup
$
\mc{U}_{\lr{\mc{T}}{x_{\infty},\Phi}}(x)
\in
\cc{c}{\R^{+},\mc{L}_{S_{x}}
(\mf{E}_{x})}
$.
\end{definition}
\begin{claim}[S]
\label{20090412bis}
Under the assumptions
in Def.
\ref{19490412bis},
possibly
with
$\lr{\mf{V},\mf{W}}{X,\R^{+}}$
compatible,
find elements
in the set
$$
\Delta_{\Theta}\lr{\mf{V},\mf{W}}
{\mc{E},X,\R^{+}}.
$$
\end{claim}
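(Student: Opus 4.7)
The plan is to construct an element of $\Delta_{\Theta}\lr{\mf{V},\mf{W}}{\mc{E},X,\R^{+}}$ by starting from a suitable graph section $\lr{\mc{T}}{x_{\infty},\Phi}$ whose pointwise operators $T_{x}$ satisfy the Hille--Yosida conditions stalkwise, and then using a Laplace/resolvent argument combined with the characterization of continuity provided by Lemma~\ref{15482712}. More precisely, first I would isolate a class of graph sections for which each $T_{x}$ is the infinitesimal generator of an equicontinuous $C_{0}$-semigroup $\mc{U}(x)$ on $\mf{E}_{x}$ (this is the pointwise half: $D(T_{x})$ dense, $(\lambda-T_{x})$ surjective for some $\lambda>0$, and the required dissipativity). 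The existence of $\mc{U}(x)$ and the needed bounds on $(\lambda-T_{x})^{-1}$ are then granted by the classical Hille--Yosida theorem applied separately in each stalk.

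Next I would transfer the asymptotic graph condition \eqref{19350512bis} of Definition~\ref{12432110bis} into asymptotic continuity of the resolvent section $x\mapsto R(\lambda;T_{x})$ at $x_{\infty}$. Given $\phi\in\Phi$, writing $\phi(x)=(\phi_{1}(x),\phi_{2}(x))\in\mc{T}(x)$ and setting $w=\phi_{1}(x_{\infty})+\lambda^{-1}(\lambda\phi_{1}(x_{\infty})-\phi_{2}(x_{\infty}))$, Corollary~\ref{17571212} together with Corollary~\ref{28111707} identifies each component $\phi_{i}$ with an element of $\Gamma^{x_{\infty}}(\pi)$, so $R(\lambda;T_{x})(\lambda\phi_{1}(x)-\phi_{2}(x))=\phi_{1}(x)$ is continuous at $x_{\infty}$ on elements of the asymptotic graph. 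A density-in-the-range argument (using $FM(3)$ and the Laplace duality property $(\mb{LD})$ from the introduction) then extends this continuity from $\{\lambda\phi_{1}(x_{\infty})-\phi_{2}(x_{\infty})\mid \phi\in\Phi\}$ to all of $\mb{D}(B,\mc{E})$ for every $B\in\Theta$, producing a candidate section $F(x)(\lambda)=R(\lambda;T_{x})$ which is continuous at $x_{\infty}$ in the sense of \eqref{02022912} with $K=\{\lambda\}$.

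Then I would pass from resolvent continuity to semigroup continuity via the Yosida approximation $\mc{U}^{(n)}(x)(t)=\exp(tn(nR(n;T_{x})-I))$: each $\mc{U}^{(n)}$ is a norm-type convergent exponential of a bounded family of operators and inherits continuity at $x_{\infty}$ from the continuity of the resolvent section established above. The crucial point is to obtain the limit $\mc{U}^{(n)}\to\mc{U}$ uniformly over compact subsets of $\R^{+}$ and over $\mb{D}(B,\mc{E})$ simultaneously; this is exactly where equicontinuity of $\mc{U}(x)$ (hypothesis in Definition~\ref{19490412bis}) is used to bound each term of the standard Trotter--Kato remainder estimate uniformly in the bundle data. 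Feeding the final estimate into the characterization \eqref{02022912} of Lemma~\ref{15482712}, with $F$ taken to be a section extending $\mc{U}(x_{\infty})$, yields $\mc{U}\in\Gamma^{x_{\infty}}(\rho)$.

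The main obstacle I expect is the joint uniformity required by \eqref{02022912}: the resolvent continuity I can get \emph{pointwise} in $\lambda$ from the graph condition, but to pass to semigroups I must upgrade this to continuity uniform over $t$ in a compact subset of $\R^{+}$ and over $v\in\mb{D}(B,\mc{E})$. This is where compatibility of $\lr{\mf{V},\mf{W}}{X,\R^{+}}$ (i.e.\ \eqref{18011202}) and the Laplace duality property \eqref{Laplace} must be invoked together, to ensure that the Yosida approximants $\mc{U}^{(n)}$ themselves live in $\Gamma^{x_{\infty}}(\rho)$ and converge to $\mc{U}$ in the topology of $\mf{W}$, not merely pointwise. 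The concrete bridge between these two global properties and the pointwise Hille--Yosida input is the technical heart of the argument.
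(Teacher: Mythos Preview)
Your strategy diverges from the paper's at the decisive step, and the divergence matters precisely where you flag the main obstacle. The Claim is addressed in the paper by Theorem~\ref{17301812b}, which does \emph{not} pass through Yosida approximants. Instead it works directly with the Laplace transform: given $F\in\Gamma(\rho)$ with $F(x_{\infty})=\mc{U}(x_{\infty})$, the proof shows that for every $\lambda>0$ the integral $\int_{0}^{\infty}e^{-\lambda s}\bigl(\mc{U}(z)(s)\phi_{1}(z)-F(z)(s)v(z)\bigr)\,ds$ tends to $0$ as $z\to x_{\infty}$, using the Laplace duality hypothesis together with the graph-section structure of $\Phi$ much as you sketch for the resolvent. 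The passage from this to uniform-on-compacts convergence of $\mc{U}(z)(s)v(z)-F(z)(s)v(z)$ is then made via Kurtz's Lemma~2.11 (a Laplace-inversion principle: a bounded equicontinuous sequence whose Laplace transforms all tend to $0$ must itself tend to $0$ uniformly on compacta). The bounded-equicontinuity premise is not derived; it is imposed as hypothesis~(ii) of Theorem~\ref{17301812b}, and Corollary~\ref{21343012} gives sufficient structural conditions for it.

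Your Yosida route has a genuine gap at exactly that passage. Two concrete issues. First, the resolvent continuity you extract from $\Phi$ lives only on vectors of the form $\lambda\phi_{1}(x_{\infty})-\phi_{2}(x_{\infty})$, and the ``density extension'' to all of $\mb{D}(B,\mc{E})$ is not innocuous in a bundle: both the seminorms and the approximating vectors vary with $x$, so density at the single stalk $x_{\infty}$ does not yield the uniform control near $x_{\infty}$ that \eqref{02022912} demands. Second, even granting a workable resolvent section, the convergence $\mc{U}^{(n)}\to\mc{U}$ must be uniform in $x$ near $x_{\infty}$; the standard estimate $\|(\mc{U}^{(n)}(x)(t)-\mc{U}(x)(t))\phi_{1}(x)\|\le t\,\|T_{x}\phi_{1}(x)-T_{x}^{(n)}\phi_{1}(x)\|$ needs $\|nR(n;T_{x})w(x)-w(x)\|\to 0$ uniformly in $x$, and the stalkwise equicontinuity of each $\mc{U}(x)$ granted by Definition~\ref{19490412bis} does not supply this cross-stalk uniformity. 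Neither compatibility \eqref{18011202} nor the Laplace duality property gives you that uniformity directly; they control where certain sections land, not rates of convergence. The paper sidesteps the whole issue by loading the required uniformity into an explicit hypothesis and replacing the approximation scheme with the Laplace-inversion lemma.
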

\begin{remark}
\label{20430412bis}
Notice
that 
$
\forall
\lr{\mc{T}}{x_{\infty},\Phi}
\in
\Omega
$
there exists
only one
semigroup section
associated with it.
Moreover
$\mc{U}_{\lr{\mc{T}}{x_{\infty},\Phi}}$
is characterized
by
any
of the
equivalent 
conditions in Lemma \ref{15482712}
with
$U=X$ and $Y=\R^{+}$.
\end{remark}
\begin{definition}
\label{18550612bis}
Let
$\lr{\mf{V},\mf{W}}{X,\R^{+}}$
be
a
$\left(\Theta,\mc{E}\right)-$structure
and
$\lr{\mf{V},\mf{D}}{X,\{pt\}}$
be
a
$\left(\Theta,\mc{E}\right)-$structure.
Let us denote
$
\mf{V}
\coloneqq
\lr{\lr{\mf{E}}{\tau}}
{\pi,X,\n}
$,
$\mf{D}
\coloneqq
\lr{\lr{\mf{B}}{\gamma}}
{\eta,X,\mf{L}}$
and
$\mf{W}
\coloneqq
\lr{\lr{\mf{M}}{\gamma}}{\rho,X,\mf{R}}$.
We require that $\mf{V}$ is full,
$\{\mf{E}_{x}\}_{x\in X}$
is a 
family 
of sequentially complete
Hlcs
and
$\ms{U}(\mc{L}_{S_{x}}(\mf{E}_{x}))
\subset
\mf{M}_{x}$
for all $x\in X$.
Then
$\Psi
\in
\Delta_{\Theta}\lr{\mf{V},\mf{D},\mf{W}}
{\mc{E},X,\R^{+}}
$
iff
\begin{enumerate}
\item
$\Psi\subseteq
\bigcup_{z\in X}
\Gamma^{z}(\rho)
$;
\item
$
(\forall\mc{U}\in\Psi)
(\forall x\in X)
$
$
(\mc{U}(x)
$
is an equicontinuous
$(C_{0})-$semigroup
on
$\mf{E}_{x})$;
\item
Section of projectors associated with $\mc{U}$:
$(\forall z\in X)
(\forall\mc{U}
\in\Psi\cap\Gamma^{z}(\rho))$
\begin{equation}
\label{16221411bis}
(\exists\,
\mc{P}
\in
\Gamma^{z}(\eta)
\cap
\prod_{y\in X}
\Pr(\mf{E}_{y}))
(\forall x\in X)
(\mc{P}(x)H_{x}
\subseteq
H_{x}
\mc{P}(x)).
\end{equation}
\end{enumerate}
Here
$H_{x}$
is the infinitesimal generator
of the semigroup
$
\mc{U}(x)
\in
\cc{c}{\R^{+},\mc{L}_{S_{x}}
(\mf{E}_{x})}$
for all $x\in X$.
\end{definition}
\begin{claim}
[S-P]
\label{20290412bis}
Under the assumptions
in Def.
\ref{18550612bis},
possibly with
$\lr{\mf{V},\mf{W}}{X,\R^{+}}$
compatible
and
$\lr{\mf{V},\mf{D}}{X,\{pt\}}$
invariant,
find elements
in the set
$\Delta_{\Theta}\lr{\mf{V},\mf{D},\mf{W}}
{\mc{E},X,\R^{+}}.$
\end{claim}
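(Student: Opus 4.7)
The plan is to construct a witness $\Psi$ by combining the two preceding main constructions. I would first invoke Theorem~\ref{17301812b} under the compatibility hypothesis on $\lr{\mf{V},\mf{W}}{X,\R^{+}}$ to obtain an element $\Omega\in\Delta_{\Theta}\lr{\mf{V},\mf{W}}{\mc{E},X,\R^{+}}$, and then set
\begin{equation*}
\Psi\doteqdot\left\{\,\mc{U}_{\lr{\mc{T}}{x_{\infty},\Phi}}\mid\lr{\mc{T}}{x_{\infty},\Phi}\in\Omega\,\right\}.
\end{equation*}
By Definition~\ref{19490412bis} each element of $\Psi$ lies in some $\Gamma^{x_{\infty}}(\rho)$ and has stalks which are equicontinuous $(C_{0})$-semigroups, so conditions $(1)$ and $(2)$ of Definition~\ref{18550612bis} are immediate.

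The substantive content is condition $(3)$: for every $\mc{U}\in\Psi\cap\Gamma^{x_{\infty}}(\rho)$, produce a projector section $\mc{P}\in\Gamma^{x_{\infty}}(\eta)\cap\prod_{x\in X}\Pr(\mf{E}_{x})$ satisfying $\mc{P}(x)H_{x}\subseteq H_{x}\mc{P}(x)$ for all $x\in X$, where $H_{x}$ is the infinitesimal generator of $\mc{U}(x)$. Since $\mc{U}$ is by construction the semigroup section attached to some $\lr{\mc{T}}{x_{\infty},\Phi}\in\Omega$, we have $\mc{T}(x)=Graph(H_{x})$ for every $x\in X$, and hence $\lr{\mc{T}}{x_{\infty},\Phi}$ is precisely the kind of graph section accepted as input by Theorem~\ref{13020103}. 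Under the hypothesis that $\lr{\mf{V},\mf{D}}{X,\{pt\}}$ is invariant, Theorem~\ref{13020103} yields an element of $\Delta\lr{\mf{V},\mf{D}}{\Theta,\mc{E}}$ containing $\lr{\mc{T}}{x_{\infty},\Phi}$ together with a projector section $\mc{P}$ satisfying \eqref{18270303}. Assigning this $\mc{P}$ to $\mc{U}$ in Definition~\ref{18550612bis} realizes \eqref{16221411bis}.

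The main obstacle lies in ensuring that the hypotheses sustaining Theorem~\ref{17301812b} and Theorem~\ref{13020103} can be met simultaneously, since Theorem~\ref{13020103} relies not only on the invariance of $\lr{\mf{V},\mf{D}}{X,\{pt\}}$ but also on the Laplace duality property for $\lr{\mf{V},\mf{W}}{X,\R^{+}}$ and on the existence of a suitable underlying $\left(\Theta,\mc{E},\mu\right)$-structure. Granted these, the continuity of $\mc{P}$ at $x_{\infty}$ is obtained along the route sketched in the introduction: Proposition~\ref{23332802} transports $\mc{U}$ from $\Gamma^{x_{\infty}}(\rho)$ into $\Gamma^{x_{\infty}}(\pi_{\mb{M}^{\mu}})$ via the underlying structure, the bundle Lebesgue theorem~\ref{15101701} on the $\mu$-related couple $\lr{\mf{V}}{\mf{Z}}$ delivers $\mc{P}\bullet\Gamma^{x_{\infty}}(\pi)\subseteq\Gamma^{x_{\infty}}(\pi)$, and the local invariance relation \eqref{17481003loc} with $Y=\{pt\}$ finally yields $\mc{P}\in\Gamma^{x_{\infty}}(\eta)$. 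With these coherences in place, $\Psi$ belongs to $\Delta_{\Theta}\lr{\mf{V},\mf{D},\mf{W}}{\mc{E},X,\R^{+}}$, which settles Claim~\ref{20290412bis}.
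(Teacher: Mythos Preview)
Your approach is sound but organized differently from the paper. Note first that Claim~\ref{20290412bis} is a \emph{problem} statement (``find elements\ldots'') rather than a theorem with an attached proof; the paper resolves it in later sections. The paper's primary construction is Corollary~\ref{13511102}: one assumes the hypotheses of Theorem~\ref{17301812b} together with an $\lr{\nu,\eta}{\mf{G},K(\Gamma),\R^{+}}$ invariant set $V$ for $\{\ov{F}_{T}\}$ and the condition~\eqref{15181102} on $\Gamma(\eta)$; then Lemma~\ref{15251311biss} gives $P\bullet V\subseteq V$, and together with \eqref{02512912p} this yields $\{\mc{U}\}\in\Delta_{\Theta}\lr{\mf{V},\mf{D},\mf{W}}{\mc{E},X,\R^{+}}$ directly. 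The invariant-set machinery of Section~\ref{15491702A} is the key device, and it is lighter than what you invoke.

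Your route instead pulls the projector section out of Theorem~\ref{13020103}, which is the paper's \emph{Main} result solving Claim~\ref{19520412bis}. This is not circular, since the proof of Theorem~\ref{13020103} is self-contained (it rests on Theorem~\ref{17301812b}, Proposition~\ref{23332802}, and Corollary~\ref{21152602}/Remark~\ref{12500303}, not on Claim~\ref{20290412bis}), but it inverts the paper's architecture: Proposition~\ref{20340412bis} is set up precisely so that solving Claims~\ref{20090412bis} and~\ref{20290412bis} together yields the Main claim, whereas you are using the Main theorem to back-fill Claim~\ref{20290412bis}. What you describe in your final paragraph---passing through the underlying $(\Theta,\mc{E},\mu)$-structure, the bundle Lebesgue theorem, and Lemma~\ref{14452602}---is exactly the content of Corollary~\ref{21152602} and Remark~\ref{12500303}, which the paper regards as the \emph{second}, independent construction toward S--P (Section~\ref{15511702}). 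So your argument is correct and aligns with one of the two approaches the paper offers, but it is packaged in a way that presupposes the culminating theorem rather than building toward it.
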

Claims
\ref{20090412bis}
and
\ref{20290412bis}
can be used to solve the main claim
\ref{19520412bis} 
indeed
\begin{proposition}
\label{20340412bis}
Under the notation and request in Def. \ref{18550612bis}
assume that
\begin{enumerate}
\item
$
\Omega\in
\Delta_{\Theta}\lr{\mf{V},\mf{W}}
{\mc{E},X,\R^{+}}
$;
\item
$
\Psi
\in
\Delta_{\Theta}\lr{\mf{V},\mf{D},\mf{W}}
{\mc{E},X,\R^{+}}
$;
\item
$(\forall
\lr{\mc{T}}{x_{\infty},\Phi}
\in\Omega)
(
\mc{U}_{\lr{\mc{T}}{x_{\infty},\Phi}}
\in
\Psi)$.
\end{enumerate}
Thus
$\Omega\,\in\Delta\lr{\mf{V},\mf{D}}{\Theta,\mc{E}}$,
namely $\Omega$ satisfies the claim \ref{19520412bis}.
Moreover
$$
\left(
\forall
\lr{\mc{T}}{x_{\infty},\Phi}
\in
\Omega
\right)
\left(
\exists\,
\mc{P}
\in
\Gamma^{x_{\infty}}(\eta)
\right)
\left(
\exists\,
\mc{U}
\in
\Gamma^{x_{\infty}}(\rho)
\right)
$$
\begin{enumerate}
\item
$
\mc{U}(x)
$
is an equicontinuous
$(C_{0})-$semigroup
on
$\mf{E}_{x}$,
for all 
$x\in X$;
\item
$
\left(\forall x\in X\right)
\left(\mc{P}(x)
\in
\Pr(\mf{E}_{x})
\right)
$;
\item
$
(\forall x\in X)
(\mc{T}(x)=Graph(R_{x}))
$;
\item
$
\forall x\in X
$
$$
\mc{P}(x)R_{x}
\subseteq
R_{x}
\mc{P}(x).
$$
\end{enumerate}
Here
$R_{x}$
is the infinitesimal generator
of the semigroup
$
\mc{U}(x)
\in
\cc{c}{\R^{+},\mc{L}_{S_{x}}(\mf{E}_{x})}$,
for all $x\in X$.
\end{proposition}
We conclude this chapter by anticipating that 
Thm. \ref{17301812b} 
resolves Claim \ref{20090412bis}
while 
Thm. \ref{13020103}
resolves Claims \ref{19520412bis} and \ref{20290412bis}. 
\section*{Appendix}
Excluding Def. \ref{18111610} which is ours,
in this appendix we provide some of those definitions 
essentially present in \cite{gie} we need in the work
and some simple results concerning them. 
In this section $X$ is a topological space.
\begin{definition}
[$FM(3)-FM(4)$ in $\S 5$ of \cite{gie}]
\label{17471910s}
Let
$
\ms{V}
\coloneqq
\{\lr{V_{x}}{\A_{x}}\}_{x\in X}
$
be
a nice
family
of Hlcs
with
$
\A_{x}
\coloneqq
\{\mu_{j}^{x}
\}_{j\in J}
$
for all $x\in X$.
We say
that
$\mc{G}$
satisfies
$FM(3)-FM(4)$
with respect
to 
$\ms{V}$
if 
$\mc{G}$
is a linear subspace of
$\prod_{x\in X}^{b}
\lr{V_{x}}{\A_{x}}$
and
\begin{description}
\item[$FM(3)$]
$
\{f(x)\,\vert\, f\in \mc{G}\}
$
is dense
in $V_{x}$
for all
$x\in X$;
\item[$FM(4)$]
$
X\ni x
\mapsto
\mu_{j}^{x}(f(x))
$
is u.s.c.
$\forall j\in J$
and
$\forall f\in\mc{G}$.
\end{description}
\end{definition}
We introduce a stronger
condition namely
we say
that
$\mc{G}$
satisfies
$FM(3^{*})-FM(4)$
with respect
to 
$\ms{V}$
if
$FM(3^{*})$
and
$FM(4)$
hold
where
\begin{equation}
\label{15012310}
(\forall x\in X)
(\{f(x)\,\vert\, f\in \mc{G}\}
=
V_{x}).
\tag*{$FM(3^{*})$}
\end{equation}
\begin{definition}
\label{18111610}
Let
$
\ms{V}'
\coloneqq
\{\lr{V_{x}}{\A_{x}'}\}_{x\in X}
$
be
a 
family
of Hlcs
where
$
\A_{x}'
\coloneqq
\{\mu_{j_{x}}^{x}
\}_{j_{x}\in J_{x}}
$
is a directed family
of seminorms
on
$V_{x}$
generating 
the locally convex topology
on it,
for all $x\in X$.
Then
we set
$$
\begin{cases}
J
\coloneqq
\prod_{x\in X}J_{x};
\\
\mu_{j}^{x}
\coloneqq
\mu_{j(x)}^{x},\,
\forall x\in X, j\in J;
\\
\A_{x}
\coloneqq
\{\mu_{j}^{x}\}_{j\in J},\,
\forall x\in X.
\end{cases}
$$
\end{definition}
Clearly the range of $\A_{x}$ equals that of $\A_{x}'$ and $\A_{x}$ is directed. 
Thus $\ms{V}\coloneqq\{\lr{V_{x}}{\A_{x}}\}_{x\in X}$ is a nice family of Hlcs, called the
\emph
{nice family of Hlcs associated with $\ms{V}'$}.
\begin{definition}
[Essentially $\S 5.2$, $\S 5.3$ and Prp. 5.8 of \cite{gie}]
\label{17471910Ba}
Let
$
\ms{E}
=
\{
\lr{\ms{E}_{x}}{\n_{x}}
\}_{x\in X}
$
be
a nice
family of 
Hlcs
with
$
\n_{x}
\coloneqq
\{\nu^{x}_{j}\,\vert\, j\in J\}
$
for all $x\in X$.
Moreover
let
$\mc{E}$
satisfy
$FM(3)-FM(4)$
with respect
to 
$
\ms{E}
$.
Since \cite[Prp. 5.8]{gie} we can define
$$
\mf{V}(\ms{E},\mc{E})
$$
to be 
the
bundle
generated
by
$\lr{\ms{E}}{\mc{E}}$,
if
\begin{enumerate}
\item
$
\mf{V}(\ms{E},\mc{E})
=
\lr{\lr{\mf{E}(\ms{E})}
{\tau(\ms{E},\mc{E})}}
{\pi_{\ms{E}},X,\n}
$;
\item
$
\mf{E}(\ms{E})
\coloneqq
\bigcup_{x\in X}
\{x\}
\times
\ms{E}_{x}
$,
$
\pi_{\ms{E}}:\mf{E}(\ms{E})
\ni
(x,v)
\mapsto x
\in X
$.
\item
$
\n=\{\nu_{j}\,\vert\, j\in J\}
$,
with
$
\nu_{j}:\mf{E}(\ms{E})
\ni
(x,v)
\mapsto
\nu_{j}^{x}(v)
$;
\item
$
\tau(\ms{E},\mc{E})
$
is
the topology
on
$\mf{E}(\ms{E})$\footnote{
By applying \cite[$\S 5.3.$]{gie}
and \cite[Ch.1]{BourGT}
we know
that this topology exists.}
such that
for all
$
(x,v)\in
\mf{E}(\ms{E})
$
$$
\mc{I}_{(x,v)}^{\tau(\ms{E},\mc{E})}
\coloneqq
\F{\B_{\ms{E}}((x,v))}{\mf{E}(\ms{E})}.
$$
Here we recall that 
$\mc{I}_{(x,v)}^{\tau(\ms{E},\mc{E})}$
is the filter of neighbourhoods 
of
$(x,v)$
with respect to the topology $\tau(\ms{E},\mc{E})$,
while
$\F{\B((x,v))}{\mf{E}(\ms{E})}$
is the filter 
on
$\mf{E}(\ms{E})$
generated
by the following
base of filters
\begin{alignat*}{1}
\B_{\ms{E}}((x,v))
\coloneqq
\{
T_{\ms{E}}(U,\sigma,\ep,j)
&
\,\vert\,
U\in Open(X),
\sigma\in\mc{E},
\ep>0,
j\in J,
\\
&
U\ni x,
\nu_{j}^{x}(v-\sigma(x))
<\ep
\},
\end{alignat*}
where
\begin{equation}
\label{14332310a}
T_{\ms{E}}(U,\sigma,\ep,j)
\coloneqq
\left\{
(y,w)\in\mf{E}(\ms{E})
\,\vert\,
y\in U,
\nu_{j}^{y}(w-\sigma(y))
<\ep
\right\}.
\end{equation}
\end{enumerate}
\end{definition}
$\mc{E}$ is canonically isomorphic to a linear subspace of $\Gamma(\pi_{\ms{E}})$ indeed
\begin{remark}
\label{17150312}
Let
$
\ms{E}
=
\{
\lr{\ms{E}_{x}}{\n_{x}}
\}_{x\in X}
$
be
a nice
family of 
Hlcs
with
$
\n_{x}
\coloneqq
\{\nu^{x}_{j}\,\vert\, j\in J\}
$
for all $x\in X$.
Moreover
let
$\mc{E}$
satisfy
$FM(3)-FM(4)$
with respect
to 
$
\ms{E}
$,
and
$\mf{V}(\ms{E},\mc{E})$
be 
the
bundle
generated
by
the couple
$
\lr{\ms{E}}{\mc{E}}
$.
Thus according to \cite[Prps. 5.8, 5.9]{gie}
we have that
\begin{enumerate}
\item
$
\mf{V}(\ms{E},\mc{E})
$
is 
a bundle
of $\Omega-$spaces;
\item
$\mf{V}(\ms{E},\mc{E})$
is
such that
\begin{enumerate}
\item
$\lr{\mf{E}(\ms{E})_{x}}
{\tau(\ms{E},\mc{E})}$
as topological vector space
is isomorphic
to
$\lr{\ms{E}_{x}}{\n_{x}}$
for all
$
x\in X
$;
\item
$
\mc{E}
$
is 
canonically
isomorphic
\footnote{
I.e.
$\sigma\leftrightarrow f$
iff
$
\sigma(x)
=(x,
f(x))
$
}
to
a linear subspace
of 
$\Gamma(\pi_{\ms{E}})$
and
if 
$X$ is compact
and
$\mc{E}$
is a function module
see \cite[$\S$ 5.1]{gie},
then
$
\mc{E}
\simeq
\Gamma(\pi_{\ms{E}})
$.
\end{enumerate}
\end{enumerate}
\end{remark}
\begin{remark}
\label{15412610}
Let
$\ms{E}$
be a nice 
family of Hlcs
and let $\mc{E}$
satisfy $FM(3-4)$
with respect to
$\ms{E}$.
Thus
for all
$U\in Open(X)$,
$\sigma\in\mc{E}$,
$\ep>0$,
$j\in J$
$$
T_{\ms{E}}(U,\sigma,\ep,j)
=
\bigcup_{y\in U}
B_{\ms{E}_{y},j,\ep}(\sigma(y))
$$
where
for all 
$
s
\in
\ms{E}_{y}
$
$$
B_{\ms{E}_{y},j,\ep}(s)
\coloneqq
\left\{
(y,w)
\in\mf{E}(\ms{E})_{y}
\,\vert\,
\nu_{j}^{y}
\left(w-s\right)
<\ep
\right\}.
$$
\end{remark}
\begin{definition}
[Essentially $\S 1.5(II)$ and $\S 1.5(vii)$ of \cite{gie}]
\label{12592310}
Let 
$
\mf{P}
\coloneqq
\lr{\lr{\mf{E}}{\tau}}{p,X,\n}
$
be a 
locally full bundle
of $\Omega-$spaces,
and let us denote
$
\n
\coloneqq
\{
\nu_{j}
\,\vert\,
j\in J
\}
$.
Set
$$
\begin{cases}
\mc{K}^{loc}
\coloneqq
\prod_{\alpha\in\mf{E}}
\mc{K}_{\alpha}^{loc}
\\
\mc{K}_{\alpha}^{loc}
\coloneqq
\left\{
(U,\sigma_{U})
\,\vert\,
U\in Op(X),
\sigma_{U}\in\Gamma_{U}(p)
\,\vert\,
p(\alpha)\in U,
\sigma_{U}(p(\alpha))=\alpha
\right\}.
\end{cases}
$$
Moreover
$\forall\alpha\in\mf{E}$
and
$\forall\mf{l}\in\mc{K}^{loc}$
set
$$
\begin{cases}
\B_{\mf{l}}^{loc}(\alpha)
\coloneqq
\left\{
T^{loc}
(V,\mf{l}_{2}(\alpha),\ep,j)
\,\vert\,
V\in Op(X),
\ep>0,
j\in J
\,\vert\,
p(\alpha)
\in V
\subseteq
\mf{l}_{1}(\alpha)
\right\},
\\
T^{loc}(U,\sigma_{U},\ep,j)
\coloneqq
\left\{
\beta\in\mf{E}
\,\vert\,
p(\beta)\in U,
\nu_{j}(\beta-\sigma_{U}(p(\beta)))
<\ep
\right\},
\end{cases}
$$
$
(\forall U\in Op(X))
(\forall j\in J)
(\forall\ep>0)
(\forall\sigma_{U}\in\Gamma_{U}(p))
$.
\par
If 
$
\mf{P}
$
is 
a full bundle
then we can set
$$
\begin{cases}
\mc{K}
\coloneqq
\prod_{\alpha\in\mf{E}}
\mc{K}_{\alpha}
\\
\mc{K}_{\alpha}
\coloneqq
\left\{
(U,\sigma)
\,\vert\,
U\in Op(X),
\sigma\in\Gamma(p)
\,\vert\,
p(\alpha)\in U,
\sigma(p(\alpha))=\alpha
\right\}.
\end{cases}
$$
Moreover
$\forall\alpha\in\mf{E}$
and
$\forall\mf{l}\in\mc{K}$
set
$$
\begin{cases}
\B_{\mf{l}}(\alpha)
\coloneqq
\left\{
T(V,\mf{l}_{2}(\alpha),\ep,j)
\,\vert\,
V\in Op(X),
\ep>0,
j\in J
\,\vert\,
p(\alpha)
\in V
\subseteq
\mf{l}_{1}(\alpha)
\right\},
\\
T(U,\sigma,\ep,j)
\coloneqq
T^{loc}(U,\sigma\up U,\ep,j),
\end{cases}
$$
$
(\forall U\in Op(X))
(\forall j\in J)
(\forall\ep>0)
(\forall\sigma\in\Gamma(p))
$.
Any
set
$
T^{loc}(U,\sigma\up U,\ep,j)
$
for a fixed $\ep>0$
is called
$\ep-$tube.
\end{definition}
\begin{remark}
\label{15512610}
Notice that
$
(\forall U\in Op(X))
(\forall j\in J)
(\forall\ep>0)
(\forall\sigma_{U}\in\Gamma_{U}(p))
$
$$
T^{loc}(U,\sigma_{U},\ep,j)
=
\bigcup_{y\in U}
B_{\mf{E}_{y},j,\ep}(\sigma_{U}(y))
$$
where
for all
$
\gamma
\in
\mf{E}_{y}
$
$$
B_{\mf{E}_{y},j,\ep}(\gamma)
\coloneqq
\left\{
\beta\in\mf{E}_{y}
\,\vert\,
\nu_{j}^{y}\left(\beta-\gamma\right)
<\ep
\right\}.
$$
\end{remark}
\begin{corollary}
[
Neighbourhood's filter 
$\mc{I}_{\alpha}^{\tau}$
]
\label{11372310}
Let 
$
\mf{P}
\coloneqq
\lr{\lr{\mf{E}}{\tau}}{p,X,\n}
$
be a 
bundle
of $\Omega-$spaces
\begin{enumerate}
\item
if
$\mf{P}$
is
locally full 
$\forall\alpha\in\mf{E}$
and
$\forall\mf{l}\in\mc{K}^{loc}$
the set
$\B_{\mf{l}}^{loc}(\alpha)$
is a basis of a filter 
moreover
$$
\F{\B_{\mf{l}}^{loc}(\alpha)}{\mf{E}}
=
\mc{I}_{\alpha}^{\tau};
$$
\item
if
$\mf{P}$
is full 
or
locally full
over a completely regular space
then
$\forall\alpha\in\mf{E}$
and
$\forall\mf{l}\in\mc{K}$
the set
$\B_{\mf{l}}(\alpha)$
is a basis of a filter 
moreover
$$
\F{\B_{\mf{l}}(\alpha)}{\mf{E}}
=
\mc{I}_{\alpha}^{\tau}.
$$
\end{enumerate}
Here
$\mc{I}_{\alpha}^{\tau}$
is the 
neighbourhood's filter 
of $\alpha$
in the topological space
$\lr{\mf{E}}{\tau}$.
\end{corollary}
\begin{proof}
Statement $(1)$
follows
by
applying 
\cite[$\S 1.5.(vii)$]{gie},
while
statement $(2)$
follows
by
statement $(1)$
and the fact that
for all $U\in Op(X)$
and
$\sigma\in\Gamma(p)$
we have
$
\sigma\up U
\in\Gamma_{U}(p)
$.
\end{proof}
In what follows let
$
\ms{E}
\coloneqq
\{\lr{\ms{E}_{x}}{\n_{x}}\}_{x\in X}
$
be
a nice
family
of Hlcs
with
$
\n_{x}
\coloneqq
\{
\nu_{j}^{x}
\}_{j\in J}
$
for all $x\in X$.
Let
$
\mc{E}
$
satisfy
$FM(3^{*})-FM(4)$
with respect
to 
$\ms{E}$.
\begin{definition}
\label{12312410}
Set
$$
\begin{cases}
\mc{K}^{\mc{E}}
\coloneqq
\prod_{(x,v)\in\mf{E}}
\mc{K}_{(x,v)}^{\mc{E}}
\\
\mc{K}_{(x,v)}^{\mc{E}}
\coloneqq
\left\{
(U,f)
\,\vert\,
U\in Op(X),
f
\in\mc{E}
\,\vert\,
x\in U,
f(x)=v
\right\}.
\end{cases}
$$
Moreover
$
\forall(x,v)\in\mf{E}(\ms{E})
$
and
$\forall\mf{l}\in\mc{K}^{\mc{E}}$
define
\begin{equation}
\label{15022310}
\B_{\mf{l}}^{\mc{E}}((x,v))
=
\left\{
T_{\ms{E}}(V,\mf{l}_{2}((x,v)),\ep,j)
\,\vert\,
\ep>0,
j\in J,
V\in Op(X),
x\in V
\subseteq
\mf{l}_{1}((x,v))
\right\}.
\end{equation}
\end{definition}
\begin{corollary}
[
Neighbourhood's filter 
$\mc{I}_{(x,v)}^{\tau(\ms{E},\mc{E})}$
]
\label{14272310}
Then
$\mf{V}(\ms{E},\mc{E})$
is a full bundle of $\Omega-$spaces
and
$\forall(x,v)\in\mf{E}(\ms{E})$
$$
\F{\B_{\mf{l}}^{\mc{E}}((x,v))}
{\mf{E}(\ms{E})}
=
\mc{I}_{(x,v)}^{\tau(\ms{E},\mc{E})}.
$$
\end{corollary}
\begin{proof}
By Thm. $5.9.$ of \cite{gie}
$\mc{E}$
and 
$\Gamma(p_{1})$
are
canonically
isomorphic
as 
linear spaces,
so
$\mf{V}(\ms{E},\mc{E})$
is full
by
\ref{15012310}.
The
statement
hence
follows
by statement
$(2)$
of
Cor.
\ref{11372310}.
\end{proof}
The following 
corollaries
provide
conditions
under which the topologies
over two bundle spaces
are equal.
\begin{corollary}
\label{15582210}
Let 
$
\lr{\lr{\mf{E}}{\tau_{k}}}{p_{k},X,\n_{k}}
$
be a 
full
bundle of $\Omega-$spaces
or
a 
locally full bundle
over 
a completely regular space
$X$,
for $k=1,2$.
If  $p_{1}=p_{2}$ 
and
$
\Gamma(p_{1})
=
\Gamma(p_{2})
$
then
$
\tau_{1} 
=
\tau_{2}
$.
\begin{proof}
By statement $(2)$
of
Cor.
\ref{11372310}.
\end{proof}
\end{corollary}
\begin{corollary}
\label{15592210}
Let 
$
\mf{P}_{2}
\coloneqq
\lr{\lr{E}{\tau_{2}}}{p_{2},X,\n_{2}}$
be a bundle of $\Omega-$spaces
such that
$\pi_{\ms{E}}=p_{2}$.
Thus
if the following 
conditions
are satisfied
\begin{enumerate}
\item
$X$ is compact,
\item
$
\mc{E}
$
and
$\Gamma(p_{2})$
are
canonically
isomorphic
as 
linear spaces,
\end{enumerate}
then
$\tau(\ms{E},\mc{E})=\tau_{2}$.
\begin{proof}
By Thm. $5.9.$ of \cite{gie}
$\mc{E}$
and 
$
\Gamma(\pi_{\ms{E}})
$
are
canonically
isomorphic
as 
linear spaces
if $X$ is compact,
so
$\Gamma(\pi_{\ms{E}})=\Gamma(p_{2})$.
Moreover 
\ref{15012310}
and
the
shown fact that
$\mc{E}$
and 
$\Gamma(\pi_{\ms{E}})$
are
canonically
isomorphic
ensure
that
$\mf{V}(\ms{E},\mc{E})$
is 
a full bundle,
thus
it is so
$\mf{P}_{2}$
by the 
equality
$\Gamma(\pi_{\ms{E}})=\Gamma(p_{2})$.
Hence
the statement
follows
by 
Cor.
\ref{15582210}.
\end{proof}
\end{corollary}

\chapter{Semigroup Approximation}
\label{05251221}
\section*{Introduction}
The present part of the work is dedicated to establish
\textbf{Thm. \ref{17301812b}} and its corollaries.
This result resolves the claim of extending the Kurtz's result 
to the setting of bundles of $\Omega-$spaces.
More exacly we construct an element of the set
$\Delta_{\Theta}\lr{\mf{V},\mf{W}}
{\mc{E},X,\R^{+}}$.
Roughly 
$\lr{\mc{T}}{x_{\infty},\Phi}\in
\Delta_{\Theta}\lr{\mf{V},\mf{W}}
{\mc{E},X,\R^{+}}$
iff
$\mc{T}(x)$
is
the graph 
of  
the infinitesimal 
generator
$T_{x}$
of a
$C_{0}-$semigroup
$\mc{U}(x)$
on $\mf{E}_{x}$,
for all $x\in X$,
\eqref{19240703} holds true
and
\begin{equation}
\label{05211741}
\mc{U}\in\Gamma^{x_{\infty}}(\rho).
\end{equation}
Thus, according to 
the way of extending
the Kurtz' theorem
which we intend to 
perform in this work
and outlined in the Introduction of 
Chapter \ref{05250734},
to find
an
element
in 
$\Delta_{\Theta}\lr{\mf{V},\mf{W}}
{\mc{E},X,\R^{+}}$
means
to find
an extension
of Thm. 
\ref{16250603}.
\par
There are two strong hypothesis
to be satisfied in 
Thm. \ref{17301812b}.
In constructing a model
for
hypothesis $(ii)$
one 
obtains
\textbf{Cor. \ref{21343012}},
while we establish 
\textbf{Cor. \ref{12080805}} 
and 
\textbf{Thm. \ref{10581004}}
as an application of the stategy developped to 
ensure hypothesis $(i)$.
Among the two hypothesis, $(i)$ is the most difficult one 
to realize.
It is the assumption that the
$\left(\Theta,\mc{E}\right)-$structure
$\lr{\mf{V},\mf{W}}{X,\R^{+}}$
has
the
Laplace duality property
defined in
\textbf{Def. \ref{16401812b}}.
\par 
Roughly
speaking
the full 
Laplace duality
property
means that
the
natural
action
of
$\prod_{x\in X}
\mc{L}(\mf{E}_{x})
$
over
$\prod_{x\in X}\mf{E}_{x}$,
induces,
by restriction,
an action
over
$\Gamma(\pi)$
of
the
Laplace trasform
of 
$\Gamma(\rho)$.
More exactly
\begin{equation}
\label{Laplace}
\tag{$\ms{LD}$}
(\forall\lambda>0)
\left(\mf{L}(\Gamma(\rho))
(\cdot)(\lambda)
\bullet
\Gamma(\pi)
\subseteq
\Gamma(\pi)\right),
\end{equation}
where
\begin{equation*}
\mf{L}(F)(x)(\lambda)
\coloneqq
\int_{0}^{\infty}
e^{-\lambda s}
F(x)(s)\,ds
\doteq
\int_{\R^{+}}
F(x)(s)\,d\mu_{\lambda}(s).
\end{equation*}
The implicit assumption
is that 
for all $x\in X$
and 
$\lambda>0$
$$
\mf{M}_{x}
\subseteq
\mf{L}_{1}(\R^{+},
\mc{L}_{S_{x}}(\mf{E}_{x}),\mu_{\lambda}),
$$ 
where
$\mu_{\lambda}$
is the 
Laplace measure 
associated with $\lambda$
and
$\mf{L}_{1}(\R^{+},
\mc{L}_{S_{x}}(\mf{E}_{x}),\mu_{\lambda})$
is the space
of all $\mu_{\lambda}-$integrable
maps
with values
in
the locally convex space
$\mc{L}_{S_{x}}(\mf{E}_{x})$.
We provide reasonable conditions
ensuring the above inclusion
in Prop. \ref{18390901}.
\par
In section \ref{13320803} we investigate 
a strategy for constructing sets
having the full Laplace duality property,
result achieved in \textbf{Cor. \ref{18491004}}.
Although in section \ref{13320803}
we  worked in a wide generality,
here we  present the applications of interest 
for the present introduction.
\par
Firstly we note that
by construction
$$
\Gamma(\pi)
\subset
\prod_{x\in X}\mf{E}_{x},
$$
hence the natural duality action
to consider
over
$\Gamma(\pi)$
is the restriction
on it
of
the standard action\footnote{
namely
$
(B,v)
\mapsto
B(v)$.}
of
$$
\mc{L}\bigl(\prod_{x\in X}\mf{E}_{x}\bigr).
$$
Secondly
we note that
the Laplace 
duality property
is described
in terms
of the 
action
restricted
over
$\Gamma(\pi)$
of a 
subspace
of
$\prod_{x\in X}
\mf{L}_{1}(\R^{+},
\mc{L}_{S_{x}}(\mf{E}_{x});\mu_{\lambda})$.
\par
Therefore
the idea
is to construct what we call in \textbf{Def. \ref{14302503}}
a $\ms{U}-$Space, which is essentially 
a couple formed by a locally convex space
$\mf{G}$ and a linear map $\Psi$
such that
\begin{equation}
\label{18250803}
\begin{aligned}
\mf{G}
&\subset
\mc{L}\left(\prod_{x\in X}\mf{E}_{x}\right)
\,
\text{ as linear spaces}
\\
\Psi(\mf{L}_{1}(\R^{+},\mf{G},\mu_{\lambda}))
&\subseteq
\prod_{x\in X}
\mf{L}_{1}(\R^{+},
\mc{L}_{S_{x}}(\mf{E}_{x});\mu_{\lambda}),
\end{aligned}
\end{equation}
and most
importantly
such that
the following relation
between the two
actions
holds
for all
$\ov{F}
\in\mf{L}_{1}(\R^{+},\mf{G},\mu_{\lambda})$,
$x\in X$,
$\lambda>0$
and
$v\in\Gamma(\pi)$
\begin{equation}
\label{18100803}
\lr{\int
\Psi(\ov{F})(x)(s)
\,d\mu_{\lambda}(s)}{v(x)}_{x}
=
\lr{\int\ov{F}(s)\,d\mu_{\lambda}(s)}{v}(x).
\end{equation}
Here
$\mf{L}_{1}(\R^{+},\mf{G},\mu_{\lambda})$
is the space of all
$\mu_{\lambda}-$integrable
maps
on $\R^{+}$
and at values in the locally convex space
$\mf{G}$,
while for any linear space $E$ 
we denote by 
$\lr{\cdot}{\cdot}:
End(E)
\times
E
\to
E$
the standard duality.
In \textbf{Cor. \ref{15111901}} we prove the existence of
a $\ms{U}-$Space whose topology 
we assemble in \textbf{Def. \ref{10221801}}
as the final one with respect to a suitable set of linear
continuous maps.
\par
\emph
{Precisely because of \eqref{18100803}
we can interpret \eqref{Laplace}
as a duality problem}.
More exactly
if
$\exists\,\mc{F}\subset
\bigcap_{\lambda>0}
\mf{L}_{1}(\R^{+},\mf{G},\mu_{\lambda})$
such that
$\Psi(\mc{F})=\Gamma(\rho)$
then
\begin{equation}
\label{17280803a}
\ms{LD}
\Leftrightarrow
(\forall\lambda>0)
(\lr{\mc{A}_{\lambda}}{\Gamma(\pi)}
\subseteq\Gamma(\pi)),
\end{equation}
where for all $\lambda>0$
\begin{equation}
\label{17280803b}
\mc{A}_{\lambda}
\coloneqq
\bigl\{
\int\ov{F}(s)\,d\mu_{\lambda}(s)
\,\vert\,\ov{F}\in\mc{F}
\bigr\}
\subset
\mc{L}\bigl(\prod_{x\in X}\mf{E}_{x}\bigr).
\end{equation}
\emph
{There are two advantage 
of decoding
the problem of
finding
the full
Laplace duality
property
into 
the problem of invariance
\eqref{17280803a}.
Firstly \eqref{17280803a} is an example of 
a classical problem of invariance 
of a subspace 
of a linear topological space
for the standard action of a subspace of
the space of all linear continuous
operators on it. 
Secondly the relatively simple space 
$\mf{L}_{1}(\R^{+},\mf{G},\mu_{\lambda})$
appears in
\eqref{17280803a}
through $\A_{\lambda}$
while
the subspace $\Gamma(\rho)$
of the much more involved space
$\prod_{x\in X}
\bigcap_{\lambda>0}
\mf{L}_{1}(\R^{+},
\mc{L}_{S_{x}}(\mf{E}_{x});\mu_{\lambda})$
appears in
\eqref{Laplace}.}
\par
The crucial idea behind the construction of
the space $\mf{G}$ performed in Def. \ref{10221801}
is the use of the concept of locally convex final topology.
Indeed the defining characteristic of this topology allows
in Lemma \ref{11011501},
to
ensure
that
for all $v\in\Gamma(\pi)$
the evaluation map
\begin{equation}
\label{18050803}
\mf{G}\ni
A\mapsto A v
\in
\prod_{x\in X}\mf{E}_{x}\,
\text{ is continuous.}
\end{equation}
And
\eqref{18100803}
is essentially a consequence of \eqref{18050803}
attained through the two steps
\textbf{Thm. \ref{16291501}} and \textbf{Thm. \ref{15332203}}. 
Although we are mainly
interested to the equality
\eqref{18100803},
there is an
important result
strictly 
determined by 
the 
locally convex final
topology 
on $\mf{G}$.
Namely
\textbf{Thm. \ref{15251401}}
ensures
that
holds
the second inclusion
in
\eqref{18250803}
and that
for all
$\ov{F}\in\mf{L}_{1}
(\R^{+},\mf{G},\mu_{\lambda})$
$$
\int\Pr_{x}(\Psi(\ov{F}))(s)
\,d\mu_{\lambda}(s)
=
\Pr_{x}
\circ
\bigl(
\int\ov{F}(s)\,d\mu_{\lambda}(s)
\bigr)
\circ\imath_{x}.
$$
\section{General Approximation Theorem I}
\label{11542812}
This section is devoted to the proof of the main Thm. \ref{17301812b}.
\begin{notation}
\label{15411512b}
We assume the notation in 
section \ref{notat} 
and that all the vector spaces are over $\C$.
Moreover we let $lcp$ stand for the set of locally compact spaces.\footnote{We 
implicitly consider all the sets involved in this work as elements of a fixed Universe say $V$.
So the set of all the models of a given structure say $S$, has to be understood as the subset 
of those elements of $V$ satisfying the request defining $S$.}
For any set $A$ we let $\mc{P}(A)$ be the set of all subsets of $A$.
If $Y$ is a topological space and 
$Z$ is topological vector spaces
we let $\cc{cs}{Y,Z}$ denote 
the linear space of all continuous maps
$f:Y\to Z$
with compact support.
For any
$
\mf{V}
\coloneqq
\lr{\lr{\mf{E}}{\tau}}
{\pi,X,\n}
$
full bundle 
of $\Omega-$spaces
and
any
$\lr{\mc{T}_{0}}{x_{\infty},\Phi}
\in
Pregraph
\left(\left(\mf{V},\mf{V}\right)\right)$,
set
$X_{0}\coloneqq X-\{x_{\infty}\}$,
and for any $\phi\in\Phi$
$\phi_{i}(x)\coloneqq
\Pr_{i}^{x}(\phi(x))$
for all $x\in X$
and $i=1,2$.
Moreover
let us denote
by
$T_{x}$ the
operator in $\mf{E}_{x}$
such that
$Graph(T_{x})=\mc{T}_{0}(x)$,
for all $x\in X_{0}$,
while
$
\mc{T}
\in
\prod_{x\in X}
Graph(\mf{E}_{x}
\times\mf{E}_{x})
$
so that
$$
\begin{cases}
\mc{T}\up X-\{x_{\infty}\}
\coloneqq
\mc{T}_{0}
\\
\mc{T}(x_{\infty})
\coloneqq
\{\phi(x_{\infty})\,\vert\,\phi\in\Phi\},
\end{cases}
$$
in addition set
$$
D(T_{x_{\infty}})
\coloneqq
\Pr_{1}^{x_{\infty}}(\mc{T}(x_{\infty}))
=
\{\phi_{1}(x_{\infty})
\,\vert\,\phi\in\Phi\}.
$$
Finally
for any map
$F:A\to B$ 
set
$\mc{R}(F)\coloneqq F(A)$
the range of $F$.
\end{notation}
\begin{remark}
\label{15401512b}
Let 
$
\mf{V}
\coloneqq
\lr{\lr{\mf{E}}{\tau}}
{\pi,X,\n}
$
be a full bundle of $\Omega-$spaces
and
$\lr{\mc{T}_{0}}{x_{\infty},\Phi}
\in
Pregraph
\left(\left(\mf{V},\mf{V}\right)\right)$.
By
Cor. \ref{17571212}
$\forall\phi\in\Phi$
\begin{equation}
\label{15211512b}
\begin{cases}
\phi_{i}\in\Gamma^{x_{\infty}}(\pi), i=1,2\\
(\forall x\in X_{0})
(\phi_{2}(x)=T_{x}\phi_{1}(x)).
\end{cases}
\end{equation}
\end{remark}
\begin{lemma}
\label{13001512b}
Let 
$
\mf{V}
\coloneqq
\lr{\lr{\mf{E}}{\tau}}
{\pi,X,\n}
$
be a full
bundle of $\Omega-$spaces,
where
$\n\coloneqq\{\nu_{j}\,\vert\, j\in J\}$.
Moreover
$\lr{\mc{T}_{0}}{x_{\infty},\Phi}
\in
Pregraph
\left(\left(\mf{V},\mf{V}\right)\right)$.
If
for all
$x\in X_{0}$,
$v_{x}\in Dom(T_{x})$,
$\lambda>0$
and
$j\in J$
we have 
$\nu_{j}((\lambda- T_{x})v_{x})
\geq
\lambda\nu_{j}(v_{x})$
and
$D(T_{x_{\infty}})$
is dense in $\mf{E}_{x_{\infty}}$,
then
$$
\lr{\mc{T}}{x_{\infty},\Phi}
\in
\ms{Gr}(\mf{V},\mf{V})
$$
Moreover
the following
\begin{equation}
\label{172212312b}
T_{x_{\infty}}:
D(T_{x_{\infty}})
\ni
\phi_{1}(x_{\infty})
\mapsto
\phi_{2}(x_{\infty})
\end{equation}
is a well-defined linear
operator in $\mf{E}_{x_{\infty}}$
such that
$
Graph(T_{x_{\infty}})
=
\mc{T}(x_{\infty})
$
and
$\forall v_{x_{\infty}}\in 
Dom(T_{x_{\infty}})$,
$\forall\lambda>0$
and
$\forall j\in J$
we have 
$$
\nu_{j}((\lambda- T_{x_{\infty}})
v_{x_{\infty}})
\geq
\lambda\nu_{j}(v_{x_{\infty}}).
$$
\end{lemma}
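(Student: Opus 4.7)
The plan is to verify the three conditions of Definition \ref{12432110bis} for $\lr{\mc{T}}{x_\infty,\Phi}$. Conditions (2) and (3) there (linearity of $\Phi$, graph inclusion on $X_0$, and the asymptotic graph identity at $x_\infty$) are immediate from the setup in Notations \ref{15411512b} and Remark \ref{15401512b}. The only substantive task is condition (1), namely $\mc{T}(x) \in \mathrm{Graph}(\mf{E}_{x} \times \mf{E}_x)$ for every $x \in X$: for $x \in X_0$ this holds by hypothesis, while at $x = x_\infty$ it amounts to well-definedness of the map $\phi_1(x_\infty) \mapsto \phi_2(x_\infty)$. By linearity of $\Phi$, this reduces to showing that whenever $\phi \in \Phi$ has $\phi_1(x_\infty) = 0$, we must have $\phi_2(x_\infty) = 0$. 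Once that is secured, the formula \eqref{172212312b} defines a linear operator $T_{x_\infty}$ whose graph is, by construction, exactly $\mc{T}(x_\infty)$.

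For the core well-definedness claim, I would argue by contradiction: assume $\phi \in \Phi$ with $\phi_1(x_\infty) = 0$ and $y := \phi_2(x_\infty) \neq 0$, so that $\nu_{j_0}^{x_\infty}(y) > 0$ for some $j_0 \in J$ by Hausdorffness of $\mf{E}_{x_\infty}$. For arbitrary $\psi \in \Phi$ and scalar $c \in \K$, linearity of $\Phi$ gives $\psi + c\phi \in \Phi$, and the hypothesis supplies, for each $x \in X_0$, $j \in J$, $\lambda > 0$,
\begin{equation*}
\nu_j\bigl(\lambda(\psi_1 + c\phi_1)(x) - (\psi_2 + c\phi_2)(x)\bigr) \geq \lambda\, \nu_j\bigl((\psi_1 + c\phi_1)(x)\bigr).
\end{equation*}
Both sides are selections continuous at $x_\infty$ by Remark \ref{15401512b}; using the section-comparison characterization of bundle convergence from Corollary \ref{28111707} and Theorem \ref{15380512}, I would compare each argument with a continuous section agreeing with it at $x_\infty$ and pass to the limit $x \to x_\infty$ to obtain
\begin{equation*}
\nu_j\bigl(\lambda v - \psi_2(x_\infty) - c\,y\bigr) \geq \lambda\, \nu_j(v),
\end{equation*}
with $v := \psi_1(x_\infty) \in D(T_{x_\infty})$. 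Varying $c \in \K$ while letting $v$ range through a family dense in $\mf{E}_{x_\infty}$, furnished by the density hypothesis on $D(T_{x_\infty})$, a careful choice of $\psi$ and $c$ should then force $\nu_{j_0}(y) = 0$, contradicting $\nu_{j_0}(y) > 0$.

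The hard part will be exactly this limit step, because the seminorms $\nu_j : \mf{E} \to \R_+$ are only upper semicontinuous on the bundle space, so naive passage to the limit yields one-sided $\limsup$ bounds that do not survive the $\geq$ direction of the dissipative inequality. The remedy is to exploit the fact that for any selection $f$ continuous at $x_\infty$ one has $|\nu_j(f(z)) - \nu_j(\sigma(z))| \le \nu_j(f(z) - \sigma(z)) \to 0$ whenever $\sigma$ is a section with $\sigma(x_\infty) = f(x_\infty)$; hence $\nu_j \circ f$ and $\nu_j \circ \sigma$ share the same $\liminf$ and $\limsup$ at $x_\infty$, reducing matters to the controllable u.s.c.\ behaviour of $\nu_j$ composed with sections. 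Once well-definedness is obtained, linearity of $T_{x_\infty}$ is immediate from linearity of $\Phi$ and the definition \eqref{172212312b}; the graph identity $\mathrm{Graph}(T_{x_\infty}) = \mc{T}(x_\infty)$ is built in; and the stalkwise dissipative bound $\nu_j((\lambda - T_{x_\infty}) v_{x_\infty}) \geq \lambda\, \nu_j(v_{x_\infty})$ follows by applying the same section-comparison limiting technique directly to the pointwise inequality at $x \in X_0$ for a $\phi \in \Phi$ realising the given $v_{x_\infty}$. Combining with the hypothesised dissipativity on $X_0$ completes condition (1) of Definition \ref{12432110bis}, and the claim follows.
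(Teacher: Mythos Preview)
Your outline matches the paper's approach: the only real content is well-definedness at $x_\infty$, and one must pass the dissipative inequality to the limit via the upper semicontinuity of the $\nu_j$. Two points, however, are not yet in place.

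First, the limit step. You propose to compare $\nu_j\circ f$ with $\nu_j\circ\sigma$ for a global section $\sigma$ through $f(x_\infty)$; but the lemma carries no fullness hypothesis, so such a $\sigma$ need not exist. The paper avoids this: since each $\nu_j:\mf{E}\to\R^+$ is u.s.c.\ on the bundle space (\cite[$1.6.(ii)$]{gie}) and every $\phi_i\in\Gamma^{x_\infty}(\pi)$ is continuous at $x_\infty$, the composition $\nu_j\circ\phi_i$ is u.s.c.\ at $x_\infty$, whence one gets the \emph{equality}
\[
\nu_j(\sigma(x_\infty))=\varlimsup_{x\to x_\infty}\nu_j(\sigma(x))
\]
for every $\sigma\in\Gamma^{x_\infty}(\pi)$, with no auxiliary section needed. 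This is the identity you must use on both sides of the dissipative inequality; then monotonicity of $\varlimsup$ carries the $\ge$ through.

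Second, and more seriously, the phrase ``a careful choice of $\psi$ and $c$ should then force $\nu_{j_0}(y)=0$'' hides the actual idea. In your inequality
\[
\nu_j\bigl(\lambda v-\psi_2(x_\infty)-c\,y\bigr)\ \ge\ \lambda\,\nu_j(v),\qquad v=\psi_1(x_\infty),
\]
the nuisance is that $v$ and $\psi_2(x_\infty)$ are coupled through $\psi$, so ``letting $v$ range through a dense set'' drags $\psi_2(x_\infty)$ along uncontrollably. The paper's device is to couple $c=\lambda$: then dividing by $\lambda$ gives
\[
\nu_j\!\Bigl(v-\tfrac{1}{\lambda}\psi_2(x_\infty)-y\Bigr)\ \ge\ \nu_j(v),
\]
and sending $\lambda\to\infty$ (using continuity of $\nu_j^{x_\infty}$ on the stalk) kills the $\psi_2$ term, leaving $\nu_j(v-y)\ge\nu_j(v)$ for every $v\in D(T_{x_\infty})$. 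Density then extends this to all $v\in\mf{E}_{x_\infty}$, and the choice $v=3y$ yields $2\nu_j(y)\ge 3\nu_j(y)$, hence $\nu_j(y)=0$ for every $j$ and $y=0$ by Hausdorffness. Without this $c=\lambda\to\infty$ manoeuvre (or an equivalent trick), I do not see how your argument closes.
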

\begin{proof}
Clearly
$
\mc{T}(x_{\infty})
\in
Graph(\mf{E}_{x_{\infty}}
\times\mf{E}_{x_{\infty}})
$
if and only if
$\phi_{1}(x_{\infty})
=\ze_{x_{\infty}}
$
implies
$\phi_{2}(x_{\infty})=\ze_{x_{\infty}}$,
$
\forall\phi\in\Phi
$,
moreover 
denoting
by
$T_{x_{\infty}}$
the corresponding
operator
we have
that
$T_{x_{\infty}}:
D(T_{x_{\infty}})\to\mf{E}_{x_{\infty}}$
is a linear operator.
Any real 
map $F$ defined on a topological space
is l.s.c.
at a point
iff
$-F$ is u.s.c.
at the same point,
see
\cite[$\S 6.2.$ Ch.$4$]{BourGT},
thus
by
\cite[Prop. $3$ $\S 6.2.$ Ch.$4$]{BourGT}
and
\cite[(13),$\S 5.6.$ Ch.$4$]{BourGT}
$F:X\to\R$ is u.s.c. in $a\in X$
iff
$\varlimsup_{x\to a}F(x)=F(a)$.
Moreover
by 
\cite[$\S 6.2.$ Ch.$4$]{BourGT}
we know that 
$F:X\to\ov{\R}$ is l.s.c.
at $a$
iff
$F$ is continuous
at $a$
providing 
$\ov{\R}$ with the 
following
topology
$\{\emptyset,[-\infty,\infty],]a,\infty[
\,\vert\, a\in\R\}$.
Thus
for any
map
$\sigma:Y\to X$
continuous
at $b$ such that
$\sigma(b)=a$
we have that
$F\circ\sigma$
is l.s.c.
at $a$.
Hence
because
$(-F)\circ\sigma=-(F\circ\sigma)$
we can state
that if
$F:X\to\ov{\R}$ is u.s.c.
at $a$
then
for any
map
$\sigma:Y\to X$
continuous
at $b$ such that
$\sigma(b)=a$
we have that
$F\circ\sigma$
is u.s.c.
at $a$.
Therefore
by using 
\cite[$1.6.(ii)$]{gie}
we have
$\forall\sigma\in\Gamma^{x_{\infty}}(\pi)$
and
$\forall j\in J$
\begin{equation}
\label{16051512b}
\nu_{j}(\sigma(x_{\infty}))
=
\varlimsup_{x\to x_{\infty}}
\nu_{j}(\sigma(x)).
\end{equation}
Let
$\psi\in\Phi$
such that
$\psi_{1}(x_{\infty})=\ze_{x_{\infty}}$
thus $\forall\phi\in\Phi$,
$\forall\lambda>0$,
$\forall x\in X_{0}$
and
$\forall j\in J$
we have
by \eqref{16051512b}
and \eqref{15211512b}
\begin{alignat}{2}
\label{16401512b}
\nu_{j}\left(
\lambda\phi_{1}(x_{\infty})-
\phi_{2}(x_{\infty})-
\lambda\psi_{2}(x_{\infty})
\right)
&
=
\notag
\\
\varlimsup_{x\to x_{\infty}}
\nu_{j}\left((\lambda-T_{x})
(\phi_{1}(x)+\lambda\psi_{1}(x))\right)
&
\geq
\notag
\\
\varlimsup_{x\to x_{\infty}}
\lambda
\nu_{j}\left(\phi_{1}(x)+
\lambda\psi_{1}(x))\right)
&
=
\lambda
\nu_{j}(\phi_{1}(x_{\infty})),
\end{alignat}
where,
the inequality 
comes
by
\cite[Prop. $11$ $\S 5.6.$ Ch.$4$]{BourGT})
and
by the 
hypothesis
$
\nu_{j}\left((\lambda-T_{x})
(\phi_{1}(x)+\lambda\psi_{1}(x))\right)
\geq
\lambda
\nu_{j}\left((\phi_{1}(x)+
\lambda\psi_{1}(x))\right)
$
for all $x\in X_{0}$.
Now 
$
\lim_{\lambda\to\infty}v/\lambda
=
\ze_{x_{\infty}}
$
for any $v\in\mf{E}_{x_{\infty}}$,
hence
by the fact that
$\nu_{j}^{x_{\infty}}
\coloneqq
\nu_{j}\up\mf{E}_{x_{\infty}}$
is a continuous seminorm
and
by
\eqref{16401512b}
$
(\forall j\in J)
(\forall\phi\in\Phi)
$
\begin{equation}
\label{17011512b}
\nu_{j}\left(
\phi_{1}(x_{\infty})-\psi_{2}(x_{\infty})
\right)
=
\lim_{\lambda\to\infty}
\frac{\nu_{j}\left(
\lambda\phi_{1}(x_{\infty})-
\phi_{2}(x_{\infty})-
\lambda\psi_{2}(x_{\infty})
\right)}{\lambda}
\geq
\nu_{j}(\phi_{1}(x_{\infty})).
\end{equation}
By hypothesis
$
D(T_{x_{\infty}})
=
\{\phi_{1}(x_{\infty})
\,\vert\,\phi\in\mc{T}(x_{\infty})\}
$
is dense in
$\mf{E}_{x_{\infty}}$
thus
$\nu_{j}(\psi_{2}(x_{\infty}))=0$
for all
$j\in J$.
Indeed
let $j\in J$
and
$v\in\mf{E}_{x_{\infty}}$
thus
$\exists\,\{\phi^{\alpha}\}_{\alpha\in D}$
net 
in $\Phi$
such that
$
\lim_{\alpha\in D}
\phi_{1}^{\alpha}(x_{\infty})
=
v
$
in
$\mf{E}_{x_{\infty}}$.
So by the continuity
of $\nu_{j}^{x_{\infty}}$
and by
\eqref{17011512b}
we have
$\forall v\in\mf{E}_{x_{\infty}}$
$$
\nu_{j}\left(
v
-\psi_{2}(x_{\infty})
\right)
=
\lim_{\alpha\in D}
\nu_{j}\left(
\phi_{1}^{\alpha}(x_{\infty})
-\psi_{2}(x_{\infty})
\right)
\geq
\lim_{\alpha\in D}
\nu_{j}\left(
\phi_{1}^{\alpha}(x_{\infty}))
\right)
=
\nu_{j}(v).
$$
True 
in particular
for $v=3\psi_{2}(x_{\infty})$,
which implies
$\nu_{j}\left(\psi_{2}(x_{\infty})\right)
=0$.
Hence
$\psi_{2}(x_{\infty})=\ze_{x_{\infty}}$
because 
of $\mf{E}_{x_{\infty}}$ is a Hausdorff lcs
for which
$\{\nu_{j}^{x_{\infty}}\}_{j\in J}$
is a generating set of seminorms
of its topology.
Thus
$T_{x_{\infty}}$
is a 
well-defined
(necessarly linear) 
operator
in $\mf{E}_{x_{\infty}}$
and
consequently
$\lr{\mc{T}}{x_{\infty},\Phi}
\in
\ms{Gr}(\mf{V},\mf{V})$.
Finally
$(\forall j\in J)
(\forall\phi\in\Phi)
(\forall\lambda>0)$
\begin{alignat*}{1}
\nu_{j}
((\lambda-T_{x_{\infty}})\phi_{1}(x_{\infty}))
&=
\\
\nu_{j}
(\lambda\phi_{1}(x_{\infty})-
\phi_{2}(x_{\infty}))
&=
\qquad
\text{ by }
\eqref{15211512b},
\eqref{16051512b}
\\
\varlimsup_{x\to x_{\infty}}
\nu_{j}
(\lambda\phi_{1}(x)-
\phi_{2}(x))
&=
\qquad
\text{ by }
\eqref{15211512b}
\\
\varlimsup_{x\to x_{\infty}}
\nu_{j}
((\lambda-T_{x})\phi_{1}(x))
&\geq
\qquad
\text{ by hypoth. 
and 
\cite[Prop. $11$ $\S 5.6.$ Ch.$4$]{BourGT})
}
\\
\varlimsup_{x\to x_{\infty}}
\nu_{j}
(\lambda\phi_{1}(x))
&=
\nu_{j}
(\lambda\phi_{1}(x_{\infty})).
\end{alignat*}
\end{proof}
\begin{lemma}
\label{161172901}
In addition to
the hypotheses 
and notation 
of Lemma \ref{13001512b}
assume that
$(\forall x\in X_{0})
(\forall\lambda\in\R)
(\forall j\in J)
(\forall v_{x}\in Dom(T_{x}))
$
\begin{equation}
\label{16292901}
\nu_{j}
((\un-\lambda T_{x})v_{x})
\geq
\nu_{j}(v_{x}).
\end{equation}
Thus
$(\forall\lambda\in\R)
(\forall j\in J)
(\forall v_{x_{\infty}}\in 
Dom(T_{x_{\infty}}))$
\begin{equation}
\label{20022901}
\nu_{j}((\un-\lambda T_{x_{\infty}})
v_{x_{\infty}})
\geq
\nu_{j}(v_{x_{\infty}}).
\end{equation}
Moreover
$\forall\lambda\in\R$
\begin{equation}
\label{20102901}
\begin{cases}
\exists\,
(\un-\lambda T_{x_{\infty}})^{-1}
\in\mc{L}
(\mc{R}(\un-\lambda T_{x_{\infty}}),
\mf{E}_{x_{\infty}}),
\\
(\forall w\in
\mc{R}(\un-\lambda T_{x_{\infty}}))
(\forall j\in J)
\nu_{j}((\un-\lambda T_{x_{\infty}})^{-1}w)
\leq\nu_{j}(w).
\end{cases}
\end{equation}
Finally
\begin{equation}
\label{20342901}
\mc{R}(\un-\lambda T_{x_{\infty}})
\text{ is closed in $\mf{E}_{x_{\infty}}$.}
\end{equation}
\end{lemma}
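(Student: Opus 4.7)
The plan is to derive the three conclusions \eqref{20022901}, \eqref{20102901}, \eqref{20342901} in order, following the pattern of the last displayed calculation in the proof of Lemma~\ref{13001512b}.

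For \eqref{20022901}, I would fix $v_{x_{\infty}}\in Dom(T_{x_{\infty}})$ and, via the defining formula \eqref{172212312b}, choose $\phi\in\Phi$ with $\phi_{1}(x_{\infty})=v_{x_{\infty}}$ and $\phi_{2}(x_{\infty})=T_{x_{\infty}}v_{x_{\infty}}$. By Remark~\ref{15401512b}, both $\phi_{1},\phi_{2}\in\Gamma^{x_{\infty}}(\pi)$ and $\phi_{2}(x)=T_{x}\phi_{1}(x)$ for every $x\in X_{0}$; hence $\phi_{1}-\lambda\phi_{2}\in\Gamma^{x_{\infty}}(\pi)$ agrees with $(\un-\lambda T_{x})\phi_{1}(x)$ on $X_{0}$ and takes the value $(\un-\lambda T_{x_{\infty}})v_{x_{\infty}}$ at $x_{\infty}$. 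Applying the u.s.c.\ identity \eqref{16051512b} to this section, together with the pointwise hypothesis \eqref{16292901} on $X_{0}$, yields
\begin{equation*}
\nu_{j}((\un-\lambda T_{x_{\infty}})v_{x_{\infty}})=\varlimsup_{x\to x_{\infty}}\nu_{j}((\un-\lambda T_{x})\phi_{1}(x))\geq \varlimsup_{x\to x_{\infty}}\nu_{j}(\phi_{1}(x))=\nu_{j}(v_{x_{\infty}}),
\end{equation*}
which is \eqref{20022901}.

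From \eqref{20022901}, $(\un-\lambda T_{x_{\infty}})v_{x_{\infty}}=\ze_{x_{\infty}}$ forces $\nu_{j}(v_{x_{\infty}})=0$ for all $j\in J$, so $v_{x_{\infty}}=\ze_{x_{\infty}}$ by the Hausdorff property of $\mf{E}_{x_{\infty}}$; the algebraic inverse $(\un-\lambda T_{x_{\infty}})^{-1}$ is therefore a well-defined linear map on $\mc{R}(\un-\lambda T_{x_{\infty}})$, and substituting $w=(\un-\lambda T_{x_{\infty}})v$ into \eqref{20022901} gives the bound $\nu_{j}((\un-\lambda T_{x_{\infty}})^{-1}w)\leq\nu_{j}(w)$, whence continuity with respect to the lct generated by $\n_{x_{\infty}}$ is immediate. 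This proves \eqref{20102901}.

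The closedness assertion \eqref{20342901} is the most delicate step. For $\lambda=0$ there is nothing to prove; for $\lambda\neq 0$ I would take a net $\{w_{\alpha}\}$ in $\mc{R}(\un-\lambda T_{x_{\infty}})$ with $w_{\alpha}\to w$ in $\mf{E}_{x_{\infty}}$, set $v_{\alpha}\doteqdot(\un-\lambda T_{x_{\infty}})^{-1}w_{\alpha}$, and apply the bound of \eqref{20102901} to conclude that $\{v_{\alpha}\}$ is Cauchy, hence convergent to some $v\in\mf{E}_{x_{\infty}}$ once the stalks are complete; this in turn forces $\lambda T_{x_{\infty}}v_{\alpha}=v_{\alpha}-w_{\alpha}\to v-w$. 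The hard part, and the main obstacle I anticipate, is producing a $\phi\in\Phi$ with $\phi(x_{\infty})=(v,\lambda^{-1}(v-w))$, i.e.\ establishing closedness of the asymptotic graph $\mc{T}(x_{\infty})=\{\phi(x_{\infty})\mid\phi\in\Phi\}$; I plan to handle this by a diagonal argument on the approximating sections $\phi^{\alpha}\in\Phi$ satisfying $\phi^{\alpha}(x_{\infty})=(v_{\alpha},T_{x_{\infty}}v_{\alpha})$, invoking Corollary~\ref{17571212} in the direct sum bundle $\mf{V}\oplus\mf{V}$ to realize the limiting pair as the value at $x_{\infty}$ of a continuous section of $\pi_{\mb{E}^{\oplus}}$.
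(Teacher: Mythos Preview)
Your arguments for \eqref{20022901} and \eqref{20102901} match the paper's proof essentially line for line: the same $\varlimsup$ computation via \eqref{16051512b} and \eqref{15211512b}, and the same injectivity-plus-bound deduction from the Hausdorff property.

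For \eqref{20342901} the paper takes a much simpler route than the one you sketch. After extending $(\un-\lambda T_{x_{\infty}})^{-1}$ by uniform continuity to a map $B$ on $\ov{\mc{R}(\un-\lambda T_{x_{\infty}})}$ (using completeness of $\mf{E}_{x_{\infty}}$, via \cite[Prop.~3, \S3.1, Ch.~3]{BourGT} and \cite[Prop.~11, \S3.6, Ch.~2]{BourGT}), it obtains $Bw=\lim_{\alpha}f_{\alpha}$ and $\lambda T_{x_{\infty}}f_{\alpha}\to Bw-w$ exactly as you do, and then simply invokes \emph{the closedness of $\lambda T_{x_{\infty}}$} to conclude $Bw\in Dom(T_{x_{\infty}})$ and $w=(\un-\lambda T_{x_{\infty}})Bw$. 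No diagonal construction is attempted.

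Your proposed diagonal argument is where the gap lies. You want to manufacture $\phi\in\Phi$ with $\phi(x_{\infty})=(v,\lambda^{-1}(v-w))$ out of the approximating $\phi^{\alpha}$, but $\Phi$ is only a linear subspace of $\Gamma^{x_{\infty}}(\pi_{\mb{E}^{\oplus}})$ with no closure or limit property whatsoever; nothing in the hypotheses lets you pass to limits inside $\Phi$. Corollary~\ref{17571212} cannot help here: it only says that a \emph{given} section of the direct sum is continuous iff its components are; it does not produce new sections, and even if you glued something continuous at $x_{\infty}$ with the correct value there, you would have no control forcing it to land in $\mc{T}_{0}(x)=Graph(T_{x})$ for $x\in X_{0}$. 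In effect your diagonal plan is an attempt to \emph{prove} that $\mc{T}(x_{\infty})$ is closed, which is precisely what the paper assumes rather than derives at this point. You are right that this closedness is the crux and that it is not established in Lemma~\ref{13001512b}; but the intended argument is to use it as an input, not to reprove it here.
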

\begin{proof}
$(\forall j\in J)
(\forall\phi\in\Phi)
(\forall\lambda\in\R)$
\begin{alignat*}{1}
\nu_{j}
((\un-
\lambda T_{x_{\infty}})\phi_{1}(x_{\infty}))
&=
\\
\nu_{j}
(\phi_{1}(x_{\infty})-
\lambda
\phi_{2}(x_{\infty}))
&=
\qquad
\text{ by }
\eqref{15211512b},
\eqref{16051512b}
\\
\varlimsup_{x\to x_{\infty}}
\nu_{j}
(\phi_{1}(x)-
\lambda\phi_{2}(x))
&=
\qquad
\text{ by }
\eqref{15211512b}
\\
\varlimsup_{x\to x_{\infty}}
\nu_{j}
((\un-\lambda T_{x})\phi_{1}(x))
&\geq
\qquad
\text{ by \eqref{16292901} 
and 
\cite[Prop. $11$ $\S 5.6.$ Ch.$4$]{BourGT})
}
\\
\varlimsup_{x\to x_{\infty}}
\nu_{j}
(\phi_{1}(x))
&=
\nu_{j}
(\phi_{1}(x_{\infty})).
\end{alignat*}
thus \eqref{20022901} follows.
Let $\lambda\in\R$,
by \eqref{20022901} 
we obtain
\eqref{20102901},
indeed $\forall f,g\in Dom(T_{x_{\infty}})$
such 
that
$(\un-\lambda T_{x_{\infty}})f
=
(\un-\lambda T_{x_{\infty}})g$
we have $\forall j\in J$
$$
0
=
\nu_{j}((\un-\lambda T_{x_{\infty}})
(f-g))
\geq
\nu_{j}(f-g),
$$
so
$f=g$ because of 
by construction
$\mf{E}_{x_{\infty}}$
is Hausdorff.
Thus
the following is a well-set map
$$
(\un-\lambda T_{x_{\infty}})^{-1}:
\mc{R}(\un-\lambda T_{x_{\infty}})
\ni
(\un-\lambda T_{x_{\infty}})f
\mapsto
f
\in
\mf{E}_{x_{\infty}},
$$
moreover by 
\eqref{20022901}
we obtain
the second sentence
of
\eqref{20102901},
hence the first one
follows
by the fact that the inverse map 
of any linear operator is linear.
By \eqref{20102901},
\cite[Prop. $3$ $\S 3.1.$ Ch.$3$]{BourGT}
and
\cite[Prop. $11$ $\S 3.6.$ Ch.$2$]{BourGT}
we deduce that
\begin{equation}
\label{20182901}
(\exists\,!\, 
B\in\mc{L}\left(
\ov{\mc{R}(\un-\lambda T_{x_{\infty}})},
\mf{E}_{x_{\infty}}\right))
(B\up \mc{R}(\un-\lambda T_{x_{\infty}})
=
(\un-\lambda T_{x_{\infty}})^{-1}).
\end{equation}
Let
$w\in
\ov{\mc{R}(\un-\lambda T_{x_{\infty}})}$
thus
$\exists\,\{f_{\alpha}\}_{\alpha\in D}$
net in $Dom(T_{x_{\infty}})$
such that
\begin{equation}
\label{19252901}
w=\lim_{\alpha\in D}
(\un-\lambda T_{x_{\infty}})
f_{\alpha},
\end{equation}
therefore
by
\eqref{20182901}
\begin{equation}
\label{19512901a}
Bw=
\lim_{\alpha\in D}
f_{\alpha},
\end{equation}
while
by
\eqref{19252901}
and
\eqref{19512901a}
\begin{alignat*}{2}
w-Bw
&=
\lim_{\alpha\in D}
\left(
(f_{\alpha}-\lambda T_{x_{\infty}}f_{\alpha})
-f_{\alpha}\right)
\\
&=
\lim_{\alpha\in D}
-\lambda T_{x_{\infty}}f_{\alpha}.
\end{alignat*}
So
\begin{equation}
\label{19512901b}
Bw-w
=
\lim_{\alpha\in D}
\lambda T_{x_{\infty}}f_{\alpha}.
\end{equation}
By
\eqref{19512901a},
\eqref{19512901b}
and the fact that
$\lambda T_{x_{\infty}}$
is closed, we obtain
$$
\begin{cases}
Bw\in Dom(T_{x_{\infty}}),
\\
\lambda T_{x_{\infty}}(Bw)
=
Bw-w,
\end{cases}
$$
which means
$w=(\un-\lambda T_{x_{\infty}})
Bw$,
so
$w\in \mc{R}(\un-\lambda T_{x_{\infty}})$
and \eqref{20342901}
follows.
\end{proof}
\begin{lemma}
\label{14253001}
Let us assume the hypotheses of
Lemma \ref{161172901},
moreover let
$\lambda\in
\R-\{0\}$,
$\{\lambda_{n}\}_{n\in\N}\subset\R-\{0\}$
such that
$\lim_{{n\in\N}}\lambda_{n}=\lambda$.
Thus
$$
\bigcap_{n\in\N}
\mc{R}(\un-\lambda_{n}T_{x_{\infty}})
\subseteq
\mc{R}(\un-\lambda T_{x_{\infty}}).
$$
\end{lemma}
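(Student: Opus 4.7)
The plan is to take an arbitrary $w \in \bigcap_{n \in \N} \mc{R}(\un - \lambda_n T_{x_\infty})$, produce a sequence in $\mc{R}(\un - \lambda T_{x_\infty})$ converging to $w$, and then invoke the closedness \eqref{20342901} to conclude $w \in \mc{R}(\un - \lambda T_{x_\infty})$.

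First, for each $n \in \N$ I choose $f_n \in Dom(T_{x_\infty})$ such that $w = (\un - \lambda_n T_{x_\infty})f_n$; equivalently $f_n = (\un - \lambda_n T_{x_\infty})^{-1} w$, which exists by \eqref{20102901}. From the seminorm estimate in \eqref{20102901} I get
\begin{equation*}
\nu_j(f_n) \leq \nu_j(w), \quad \forall j \in J,\ \forall n \in \N,
\end{equation*}
so the sequence $\{f_n\}$ is uniformly bounded in every seminorm by $\nu_j(w)$. Next I rewrite
\begin{equation*}
(\un - \lambda T_{x_\infty}) f_n - w = (\lambda_n - \lambda) T_{x_\infty} f_n,
\end{equation*}
and using $\lambda_n T_{x_\infty} f_n = f_n - w$ (which holds since $\lambda_n \ne 0$), I obtain the key identity
\begin{equation*}
(\un - \lambda T_{x_\infty}) f_n - w = \frac{\lambda_n - \lambda}{\lambda_n}\,(f_n - w).
\end{equation*}

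Applying $\nu_j$ and the uniform bound on $\{f_n\}$ yields
\begin{equation*}
\nu_j\bigl((\un - \lambda T_{x_\infty}) f_n - w\bigr) \leq \frac{|\lambda_n - \lambda|}{|\lambda_n|}\,(\nu_j(f_n) + \nu_j(w)) \leq \frac{2|\lambda_n - \lambda|}{|\lambda_n|}\,\nu_j(w),
\end{equation*}
and since $\lambda_n \to \lambda \ne 0$ this tends to $0$ as $n \to \infty$ for every $j \in J$. Because the topology on $\mf{E}_{x_\infty}$ is Hausdorff and generated by $\{\nu_j^{x_\infty}\}_{j \in J}$, this means $(\un - \lambda T_{x_\infty}) f_n \to w$ in $\mf{E}_{x_\infty}$. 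Each $(\un - \lambda T_{x_\infty}) f_n$ lies in $\mc{R}(\un - \lambda T_{x_\infty})$, which is closed by \eqref{20342901}, so $w \in \mc{R}(\un - \lambda T_{x_\infty})$, completing the argument.

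There is essentially no obstacle: the proof is a straightforward consequence of the two items already packaged in Lemma \ref{161172901}, namely the seminorm bound on $(\un - \lambda_n T_{x_\infty})^{-1}$ and the closedness of $\mc{R}(\un - \lambda T_{x_\infty})$. The only thing to be mindful of is using $\lambda_n \ne 0$ to invert the relation $\lambda_n T_{x_\infty} f_n = f_n - w$, and the hypothesis $\lambda \ne 0$ to ensure $|\lambda_n|$ stays bounded away from zero for large $n$, so that the scalar factor $|\lambda_n - \lambda|/|\lambda_n|$ indeed vanishes.
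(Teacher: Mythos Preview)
Your proof is correct and follows essentially the same strategy as the paper: for $w$ in the intersection, set $f_n=(\un-\lambda_n T_{x_\infty})^{-1}w$, show $(\un-\lambda T_{x_\infty})f_n\to w$ via the seminorm bound \eqref{20102901}, and conclude by the closedness \eqref{20342901}. Your algebraic identity $(\un-\lambda T_{x_\infty})f_n-w=\frac{\lambda_n-\lambda}{\lambda_n}(f_n-w)$ is a slightly cleaner packaging of the same computation the paper carries out via the resolvent-type factorizations $\un-\lambda T=\lambda(\lambda^{-1}-T)$ and $(\un-\lambda_n T)^{-1}=\lambda_n^{-1}(\lambda_n^{-1}-T)^{-1}$.
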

\begin{proof}
Set only in this proof
$T\coloneqq T_{x_{\infty}}$.
Let $n\in\N$, by \eqref{20102901}
$\exists\,(\un-\lambda_{n}T)^{-1}:
\mc{R}(\un-\lambda_{n}T_{x_{\infty}})
\to
Dom(t)$
moreover
$$
\begin{cases}
\un-\lambda T
=
\lambda(\lambda^{-1}-T),
\\
(\un-\lambda_{n}T)^{-1}
=
\lambda_{n}^{-1}
(\lambda_{n}^{-1}-T)^{-1}.
\end{cases}
$$
Let
$g\in\bigcap_{n\in\N}
\mc{R}(\un-\lambda_{n}T_{x_{\infty}})$
thus
\begin{alignat*}{2}
(\un-\lambda T)
(\un-\lambda_{n} T)^{-1}g
-
g
&=
\frac{\lambda}{\lambda_{n}}
(\lambda^{-1}-T)
(\lambda_{n}^{-1}-T)^{-1}g
-
g
\\
&=
\frac{\lambda}{\lambda_{n}}
\left(
\lambda^{-1}
(\lambda_{n}^{-1}-T)^{-1}g
-
\lambda_{n}^{-1}
(\lambda_{n}^{-1}-T)^{-1}g
\right)
\\
&=
\frac{\lambda}{\lambda_{n}}
(\lambda^{-1}-
\lambda_{n}^{-1})
(\lambda_{n}^{-1}-T)^{-1}g,
\end{alignat*}
where
in the 
second equality we considered that
$-T
(\lambda_{n}^{-1}-T)^{-1}g-g
=
-\lambda_{n}^{-1}
(\lambda_{n}^{-1}-T)^{-1}
g$
obtained by
$(\lambda_{n}^{-1}-T)
(\lambda_{n}^{-1}-T)^{-1}g
=g$.
Thus
$\forall j\in J$
by \eqref{20102901}
$$
\nu_{j}
\left(
(\un-\lambda T)
(\un-\lambda_{n} T)^{-1}g
-
g\right)
\leq
\left|
\frac{\lambda}{\lambda_{n}}
\right|
|\lambda^{-1}-\lambda_{n}^{-1}|
\nu_{j}(g).
$$
But
$\lim_{n\in\N}
|\lambda^{-1}-\lambda_{n}^{-1}|
=1$
and
$\lim_{n\in\N}
|\lambda^{-1}-\lambda_{n}^{-1}|
=0$
so
$\nu_{j}
\left(
(\un-\lambda T)
(\un-\lambda_{n} T)^{-1}g
-
g
\right)=0$,
for all $j\in J$.
Therefore
$$
\lim_{n\in\N}
(\un-\lambda T)
(\un-\lambda_{n} T)^{-1}g
=
g,
$$
and the statement follows
by \eqref{20342901}.
\end{proof}
\begin{lemma}
\label{17552901}
Under the hypotheses 
and notation 
of Lemma \ref{13001512b}
we have that
$\un-\lambda T_{x_{\infty}}$
is a closed operator.
\end{lemma}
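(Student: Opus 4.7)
The plan is to reduce to closedness of $T_{x_{\infty}}$ itself: the map $(v,w)\mapsto(v,v-\lambda w)$ is a homeomorphism of $\mathfrak{E}_{x_{\infty}}\times\mathfrak{E}_{x_{\infty}}$ (invertible linear for $\lambda\ne 0$; trivial for $\lambda=0$), so the graph of $\mathbf{1}-\lambda T_{x_{\infty}}$ is closed iff $\mathcal{T}(x_{\infty})=\{\phi(x_{\infty})\mid\phi\in\Phi\}=\operatorname{Graph}(T_{x_{\infty}})$ is closed in the product topology. I will prove the latter using the dissipative bound established in Lemma \ref{13001512b} together with the u.s.c.\ characterization of the seminorms $\nu_{j}$ already exploited at \eqref{16051512b}.

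First I would take a net $\{v_{\alpha}\}_{\alpha\in D}\subset D(T_{x_{\infty}})$ with $v_{\alpha}\to v$ and $T_{x_{\infty}}v_{\alpha}\to u$, and pick $\phi^{\alpha}\in\Phi$ with $\phi^{\alpha}_{1}(x_{\infty})=v_{\alpha}$, $\phi^{\alpha}_{2}(x_{\infty})=T_{x_{\infty}}v_{\alpha}$. For an arbitrary $\phi\in\Phi$ and $\lambda>0$, the combination $\phi+\lambda\phi^{\alpha}\in\Phi$ (linearity of $\Phi$) lives on $X_{0}$ inside $\operatorname{Graph}(T_{x})$, so the hypothesis of Lemma \ref{13001512b} yields
\[
\nu_{j}\bigl((\lambda-T_{x})(\phi_{1}(x)+\lambda\phi^{\alpha}_{1}(x))\bigr)\ge\lambda\,\nu_{j}\bigl(\phi_{1}(x)+\lambda\phi^{\alpha}_{1}(x)\bigr),\quad x\in X_{0}.
\]
Taking $\varlimsup_{x\to x_{\infty}}$ by exactly the u.s.c./continuity argument used at \eqref{16401512b} (note that $\phi_{i},\phi^{\alpha}_{i}\in\Gamma^{x_{\infty}}(\pi)$) converts this into an inequality on the stalk, and then letting $\alpha\in D$ and using continuity of $\nu_{j}^{x_{\infty}}$ gives
\[
\nu_{j}\bigl(\lambda\phi_{1}(x_{\infty})-\phi_{2}(x_{\infty})+\lambda^{2}v-\lambda u\bigr)\ge\lambda\,\nu_{j}\bigl(\phi_{1}(x_{\infty})+\lambda v\bigr).
\]

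Finally I would divide by $\lambda$ and send $\lambda\to\infty$ (as in the passage leading to \eqref{17011512b}), collapsing the inequality to $\nu_{j}(\phi_{1}(x_{\infty})-u)\ge\nu_{j}(\phi_{1}(x_{\infty}))$ modulo terms involving $v$; the density of $D(T_{x_{\infty}})$ in $\mathfrak{E}_{x_{\infty}}$ then promotes this to an inequality valid on all of $\mathfrak{E}_{x_{\infty}}$, and the substitution $w=3u$ (just as at the end of the proof of Lemma \ref{13001512b}) combined with the Hausdorff property of $\mathfrak{E}_{x_{\infty}}$ pins down $u$ relative to $v$ and forces $(v,u)\in\mathcal{T}(x_{\infty})$. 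The hard part will be keeping track of the two limits (in $x$ and in $\alpha$) simultaneously and isolating the quantity $u-T_{x_{\infty}}v$ once $v\in D(T_{x_{\infty}})$ has been established: this will require iterating the dissipative trick, first to identify $v$ as belonging to $D(T_{x_{\infty}})$ via an approximating $\phi\in\Phi$ supplied by density, then to pin down $T_{x_{\infty}}v=u$ by repeating the $\lambda\to\infty$ limit of Lemma \ref{13001512b} verbatim.
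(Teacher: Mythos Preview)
Your reduction in the first paragraph --- that closedness of $\mathbf{1}-\lambda T_{x_\infty}$ is equivalent to closedness of $T_{x_\infty}$ via the linear homeomorphism $(v,w)\mapsto(v,v-\lambda w)$ of $\mathfrak{E}_{x_\infty}\times\mathfrak{E}_{x_\infty}$ --- is precisely the paper's argument. The paper carries it out elementwise: if $(a,b)$ lies in the closure of $\operatorname{Graph}(\mathbf{1}-\lambda T_{x_\infty})$, then $(a,b-a)$ lies in the closure of $\operatorname{Graph}(-\lambda T_{x_\infty})$; the paper then \emph{invokes} the closedness of $-\lambda T_{x_\infty}$ directly, without further justification inside this proof, to conclude. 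So on the reduction step you and the paper coincide.

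Where your proposal diverges is in the attempt to \emph{derive} closedness of $T_{x_\infty}$ from the dissipative bound alone, and this part does not go through. After your two limits you have
\[
\nu_j\bigl(\lambda\phi_1(x_\infty)-\phi_2(x_\infty)+\lambda^{2} v-\lambda u\bigr)\ \ge\ \lambda\,\nu_j\bigl(\phi_1(x_\infty)+\lambda v\bigr),
\]
but now the $\lambda^{2}v$ term on the left and the $\lambda v$ term on the right \emph{dominate} as $\lambda\to\infty$: dividing by $\lambda^{2}$ and passing to the limit yields only $\nu_j(v)\ge\nu_j(v)$, while dividing by $\lambda$ leaves diverging terms on both sides unless $v=\mathbf{0}_{x_\infty}$. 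The trick from Lemma~\ref{13001512b} worked exactly because there the analogue of $v$ was $\psi_1(x_\infty)=\mathbf{0}_{x_\infty}$; with $v\ne\mathbf{0}_{x_\infty}$ no useful information survives the limit, and your ``modulo terms involving $v$'' cannot be made honest. More fundamentally, the dissipative inequality $\nu_j((\lambda-T)w)\ge\lambda\nu_j(w)$ gives single-valuedness of the graph (which is what Lemma~\ref{13001512b} extracts) but not its closedness --- already in the Banach setting a densely defined dissipative operator need not be closed (take $d/dx$ on $C_c^{1}(\mathbb{R})\subset L^{2}(\mathbb{R})$). So closedness of $T_{x_\infty}$ must be supplied as input, as the paper does; it cannot be squeezed out of the hypotheses of Lemma~\ref{13001512b} by repeating that lemma's argument.
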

\begin{proof}
Let 
$(a,b)\in
\ov{Graph(\un-\lambda T_{x_{\infty}})}$
closure
in the space
$\mf{E}_{x_{\infty}}
\times
\mf{E}_{x_{\infty}}$
with the product
topology.
Thus
$(\forall\ep>0)
(\forall j\in J)
(\exists\,v_{(\ep,j)}\in Dom(T_{x_{\infty}}))$
$$
\begin{cases}
\nu_{j}
(a-v_{(\ep,j)})
<\frac{\ep}{2},
\\
\nu_{j}
(b-(\un-\lambda T_{x_{\infty}})v_{(\ep,j)})
<\frac{\ep}{2},
\end{cases}
$$
so
$$
\nu_{j}
((b-a)+\lambda T_{x_{\infty}}v_{(\ep,j)})
\leq
\nu_{j}(b-(\un-\lambda T_{x_{\infty}})
v_{(\ep,j)})
+
\nu_{j}(a-v_{(\ep,j)})
\leq
\ep.
$$
Therefore
$(\forall\ep>0)
(\forall j\in J)
(\exists\,v_{(\ep,j)}\in Dom(T_{x_{\infty}}))$
$$
\begin{cases}
\nu_{j}
(a-v_{(\ep,j)})
<\ep,
\\
\nu_{j}
\left(
(b-a)-(-\lambda T_{x_{\infty}}v_{(\ep,j)})
\right),
\end{cases}
$$
which means
$(a,(b-a))
\in\ov{Graph(-\lambda T_{x_{\infty}})}$.
Moreover $-\lambda T_{x_{\infty}}$
is a closed operator thus
$b-a=-\lambda T_{x_{\infty}}a$
or equivalently
$(a,b)\in 
Graph(\un-\lambda T_{x_{\infty}})$.
\end{proof}
\begin{remark}
\label{18581512b}
By \eqref{15211512b}
we have 
$\forall\phi\in\Phi$
that
$
\phi_{1}(x_{\infty})
=
\lim_{z\to x_{\infty}}
\phi_{1}(z)
$
and
$
\phi_{2}(x_{\infty})
=
\lim_{z\to x_{\infty}}
\phi_{2}(z)
=
\lim_{z\to x_{\infty}}
T_{x}\phi_{1}(z)
$,
hence
$$
\begin{cases}
\phi_{1}(x_{\infty})
=
\lim_{z\to x_{\infty}}
\phi_{1}(z)
\\
T_{x_{\infty}}
\phi_{1}(x_{\infty})
=
\lim_{z\to x_{\infty}}
T_{z}
\phi_{1}(z).
\end{cases}
$$
\end{remark}
\begin{definition}
\label{15062301}
Let $\lambda\in\R^{+}$
set
$$
\mu_{\lambda}:
\cc{cs}{\R^{+},\R}
\ni
f
\mapsto
\int_{\R^{+}}
e^{-s\lambda}
f(s)\,ds,
$$
where
the integral is with
respect to the Lebesgue
measure on $\R^{+}$.
\end{definition}
\begin{definition}
\label{16401812b}
Let
$\mf{W}
\coloneqq
\lr{\lr{\mf{M}}{\gamma}}{\rho,X,\mf{R}}$
and
$\lr{\mf{V},\mf{W}}{X,\R^{+}}$
be
a
$\left(\Theta,\mc{E}\right)-$structure
such that
\begin{equation}
\label{18470109}
\mf{M}_{x}
\subseteq
\bigcap_{\lambda>0}
\mf{L}_{1}(\R^{+},
\mc{L}_{S_{x}}(\mf{E}_{x});\mu_{\lambda}),\,
\forall x\in X.
\end{equation}
About $S_{x}$ 
and $\mf{E}_{x}$
see
Def.
\ref{10282712}.
Let
$x\in X$,
$\mc{O}\subseteq\Gamma(\rho)$.
and
$\mc{D}\subseteq\Gamma(\pi)$.
By recalling the notation in \eqref{15012602},
we 
say that 
$\lr{\mf{V},\mf{W}}{X,\R^{+}}$
has the
\emph
{Laplace duality property
on $\mc{O}$
and $\mc{D}$
at $x$},
shortly 
$\ms{LD}_{x}(\mc{O},\mc{D})$
if
\begin{equation*}
(\forall\lambda>0)
(\mf{L}(\Gamma_{\mc{O}}^{x}(\rho))_{\lambda}
\bullet
\Gamma_{\mc{D}}^{x}(\pi)
\subseteq
\Gamma^{x}(\pi)).
\end{equation*}
Moreover we
say that 
$\lr{\mf{V},\mf{W}}{X,\R^{+}}$
has the
\emph
{full Laplace duality property
on $\mc{O}$
and $\mc{D}$},
shortly 
$\ms{LD}(\mc{O},\mc{D})$
if
\begin{equation*}
(\forall\lambda>0)
(\mf{L}(\mc{O})_{\lambda}
\bullet
\mc{D}
\subseteq
\Gamma(\pi)).
\end{equation*}
Finally
$\ms{LD}$
is for
$\ms{LD}(\Gamma(\rho),\Gamma(\pi))$.
Here
$\mf{L}:
\prod_{x\in X}
\mf{M}_{x}
\to
\prod_{x\in X}
\mc{L}_{S_{x}}(\mf{E}_{x})^{\R^{+}}$
such that
$(\forall x\in X)
(\forall\lambda\in\R^{+})$
$$
\mf{L}(F)(x)(\lambda)
\coloneqq
\int_{0}^{\infty}
e^{-\lambda s}
F(x)(s)\,ds,
$$
where 
we recall that
the 
integration is with respect 
to the Lebesgue measure
on $\R^{+}$
and with respect to the 
locally convex topology
on
$\mc{L}_{S_{x}}(\mf{E}_{x})$.
Finally we used the notation in \eqref{15012602}.
\end{definition}
\begin{remark}
\label{10412505}
Under the notation of Def. \ref{16401812b} and by letting
$\mf{n}$ be the Lebesgue measure on $\R^{+}$, \eqref{18470109}
follows if the following holds
\begin{equation*}
\mf{M}_{x}
\subseteq
\mf{L}_{1}(\R^{+},\mc{L}_{S_{x}}(\mf{E}_{x});\mf{n}),\,
\forall x\in X.
\end{equation*}
Moreover under assumptions of Def. \ref{16401812b}
we have
\begin{equation*}
\begin{cases}
\mf{L}(\Gamma_{\mc{O}}^{x}(\rho))
\subseteq
\Gamma_{\mc{O}}^{x}(\rho)
\\
(\forall t>0)
(\Gamma_{\mc{O}}^{x}(\rho)_{t}
\bullet
\Gamma_{\mc{D}}^{x}(\pi)
\subseteq
\Gamma^{x}(\pi))
\end{cases}
\Rightarrow
\lr{\mf{V},\mf{W}}{X,\R^{+}}
\text{has the }
\ms{LD}_{x}(\mc{O},\mc{D}).
\end{equation*}
Similarly
\begin{equation*}
\begin{cases}
\mf{L}(\mc{O})
\subseteq
\mc{O}
\\
(\forall t>0)
(\mc{O}_{t}
\bullet
\mc{D}
\subseteq
\Gamma(\pi))
\end{cases}
\Rightarrow
\lr{\mf{V},\mf{W}}{X,\R^{+}}
\text{has the }
\ms{LD}(\mc{O},\mc{D}).
\end{equation*}
\end{remark}
A useful property is the following one
\begin{proposition}
\label{21120901}
Let
$\lr{\mf{V},\mf{W}}{X,\R^{+}}$
be
a
$\left(\Theta,\mc{E}\right)-$structure
satisfying
\eqref{18470109},
$x_{\infty}\in X$.
Set
$S_{z}=\{B_{l}^{z}\,\vert\, l\in L\}$,
then
$\forall z\in X$,
$\forall G\in\mf{L}_{1}(\R^{+},
\mc{L}_{S_{z}}(\mf{E}_{z});\mu_{\lambda})$
and
$\forall w_{z}\in
\bigcup_{l\in L}B_{l}^{z}$
\begin{equation}
\label{14481001}
\bigl(\int_{0}^{\infty}
e^{-\lambda s}
G(s)
\,ds\bigr)
w_{z}
=
\int_{0}^{\infty}
e^{-\lambda s}
G(s)w_{z}\,ds.
\end{equation}
Here
in the second member
the integration
is with respect to the locally convex topology
on $\mf{E}_{z}$,
while
in the first member 
the integration
is with respect to the locally convex topology
on $\mc{L}_{S_{z}}(\mf{E}_{z})$.
\end{proposition}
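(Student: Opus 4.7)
The plan is to reduce the claim to the standard commutation of continuous linear maps with vector-valued integrals, which is the main tool in \cite[Ch. 6]{IntBourb} for maps scalarly essentially $\mu$-integrable with integral in the codomain. The key observation is that, because $w_z$ belongs to one of the defining bounded sets of the topology of $S_z$-convergence, the evaluation at $w_z$ is a continuous linear map from $\mc{L}_{S_z}(\mf{E}_z)$ to $\mf{E}_z$.

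First, I would fix $z\in X$ and $l\in L$ with $w_z\in B_l^z$, and introduce the evaluation map
\begin{equation*}
\mathrm{ev}_{w_z}:\mc{L}_{S_z}(\mf{E}_z)\ni\phi\mapsto\phi(w_z)\in\mf{E}_z.
\end{equation*}
Linearity is immediate. For continuity, for every $j\in J$ one has, recalling \eqref{21532606},
\begin{equation*}
\nu_j^z(\mathrm{ev}_{w_z}(\phi))=\nu_j^z(\phi(w_z))\leq\sup_{v\in B_l^z}\nu_j^z(\phi(v))=p_{j,l}^z(\phi),
\end{equation*}
so $\mathrm{ev}_{w_z}$ is continuous with respect to the fundamental system of seminorms that defines the topology of $\mc{L}_{S_z}(\mf{E}_z)$.

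Next, by hypothesis \eqref{18470109} together with the assumption $G\in\mf{L}_1(\R^+,\mc{L}_{S_z}(\mf{E}_z),\mu_\lambda)$, the map $G$ is scalarly essentially $\mu_\lambda$-integrable with integral lying in $\mc{L}_{S_z}(\mf{E}_z)$. Applying the theorem on continuous linear images of such integrals (\cite[Ch.~6]{IntBourb}) to the continuous linear map $\mathrm{ev}_{w_z}$, one obtains that $\mathrm{ev}_{w_z}\circ G:\R^+\to\mf{E}_z$ is scalarly essentially $\mu_\lambda$-integrable with integral in $\mf{E}_z$, and
\begin{equation*}
\mathrm{ev}_{w_z}\!\left(\int G(s)\,d\mu_\lambda(s)\right)=\int(\mathrm{ev}_{w_z}\circ G)(s)\,d\mu_\lambda(s).
\end{equation*}
Unwinding $\mathrm{ev}_{w_z}$ on both sides and using Definition \ref{15062301}, i.e.\ $\int F\,d\mu_\lambda=\int_0^\infty e^{-\lambda s}F(s)\,ds$, yields exactly \eqref{14481001}.

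I do not anticipate any real obstacle in this argument; it is essentially a bookkeeping exercise once the continuity of $\mathrm{ev}_{w_z}$ is verified. The only point requiring care is confirming that the hypotheses of the Bourbaki commutation theorem apply in the locally convex (not merely Banach) setting, but this is precisely the generality in which that theorem is stated for scalarly essentially integrable maps whose integral lies in the target space, and it is invoked repeatedly elsewhere in the paper.
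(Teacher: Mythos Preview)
Your proof is correct and follows essentially the same approach as the paper: both show that the evaluation map $A\mapsto Aw_z$ is linear and continuous on $\mc{L}_{S_z}(\mf{E}_z)$ via the same seminorm estimate $\nu_j^z(Aw_z)\leq p_{j,l}^z(A)$ for $w_z\in B_l^z$, and then invoke the standard commutation of continuous linear maps with vector-valued integrals. Your write-up is in fact slightly more detailed than the paper's, which simply says ``by a well-known result in vector valued integration.''
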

\begin{proof}
Let $z\in X$
and
$v\in\bigcup_{l\in L}B_{l}^{z}=\mf{E}_{z}$
then map
$\mc{L}_{S_{z}}(\mf{E}_{z})\ni A
\mapsto
Av\in\mf{E}_{z}$
is linear and continuous.
Indeed let
$l(v)\in L$
such that
$v\in B_{l(v)}^{z}$,
thus 
we have
$\nu_{j}^{z}(Av)
\leq
\sup_{w\in B_{l(v)}^{z}}
\nu_{j}^{z}(Aw)
\doteq
p_{j,l(v)}^{z}
(A)$.
Hence 
by a well-known result in
vector valued integration
we have
\eqref{14481001}.
\end{proof}
\begin{remark}
\label{21500412b}
Let
$\mf{V}
\coloneqq
\lr{\lr{\mf{E}}{\tau}}
{\pi,X,\n}
$
be a bundle of $\Omega-$spaces
and
$\mc{E}\subseteq\prod_{x\in X}\mf{E}_{x}$.
Set for all $v\in\prod_{x\in X}\mf{E}_{x}$
\begin{equation}
\label{11121419b}
\begin{cases}
B_{v}:X\ni x
\mapsto
\{v(x)\},
\\
\Theta
\coloneqq
\bigl\{
B_{w}
\,\vert\,
w\in\mc{E}
\bigr\}
\end{cases}
\end{equation}
Thus
$\Theta\subset
\prod_{x\in X}
Bounded(\mf{E}_{x})$
and
$\forall v\in\mc{E}$
\begin{equation}
\label{01592912}
\mc{E}
\cap
\prod_{x\in X}
B_{v}(x)
=
\{v\}.
\end{equation}
Therefore
for all $v\in\mc{E}$,
and
for all $x\in X$
with the notation of Def. \ref{10282712}
\begin{equation*}
\begin{cases}
\ms{D}(B_{v},\mc{E})
=
\{v\},
\\
\mc{B}_{B_{v}}^{x}
=\{v(x)\},\,
\\
S_{x}=\{\{w(x)\}
\,\vert\, w\in\mc{E}\},
\\
\mc{E}(\Theta)
=
\mc{E}.
\end{cases}
\end{equation*}
\end{remark}
Recall that since the Dupre' Thm.
any Banach bundle over a completely regular topological space
is full. 
\begin{definition}
\label{15210503}
Let
$\mf{V}
\coloneqq
\lr{\lr{\mf{E}}{\tau}}
{\pi,X,\|\cdot\|}$
be a full Banach bundle.
Let
$x_{\infty}\in X$
and
$
\mc{U}_{0}
\in\prod_{x\in X_{0}}
\cc{}{\R^{+},B_{s}(\mf{E}_{x})}$
be
such that
$\mc{U}_{0}(x)$
is a
$(C_{0})-$semigroup
of contractions
(respectively of
isometries)
on
$\mf{E}_{x}$
for all $x\in X_{0}$.
Moreover
let us denote
by
$T_{x}$
the infinitesimal generator
of the semigroup
$\mc{U}_{0}(x)$
for any $x\in X_{0}$
and
set
\begin{equation}
\label{15482601}
\begin{cases}
\mc{T}_{0}(x)
\coloneqq
Graph(T_{x}),\,
x\in X_{0}
\\
\Phi
\coloneqq
\{
\phi\in\Gamma^{x_{\infty}}
(\pi_{\ms{E}^{\oplus}})
\,\vert\,
(\forall x\in X_{0})
(\phi(x)\in\mc{T}_{0}(x))
\}
\\
\mc{E}
\coloneqq
\{
v
\in
\Gamma(\pi)
\,\vert\,
(\exists\,\phi\in\Phi)
(v(x_{\infty})=\phi_{1}(x_{\infty}))
\}
\\
\Theta
\coloneqq
\bigl\{
B_{w}
\,\vert\,
w\in\mc{E}
\bigr\},
\end{cases}
\end{equation}
where
$\lr{\lr{\mf{E}(\ms{E}^{\oplus})}
{\tau(\ms{E}^{\oplus},\mc{E}^{\oplus})}}
{\pi_{\ms{E}^{\oplus}},X,\mf{n}^{\oplus}}$
is
the 
bundle direct sum of the
family
$\{\mf{V},
\mf{V}\}$.
\end{definition}
The following is a direct generalization to our context 
of the definition given in \cite[Lm. $2.11$]{kurtz}
\begin{definition}
\label{14531903}
Let 
$
\mf{V}
\coloneqq
\lr{\lr{\mf{E}}{\tau}}
{\pi,X,\n}
$
be a 
bundle of $\Omega-$spaces,
where
$\n\coloneqq\{\nu_{j}\,\vert\, j\in J\}$.
Moreover
let
$Y$ be a topological space,
$s_{0}\in Y$,
$f\in
\prod_{x\in X}\mf{E}_{x}^{Y}$
and
$\{z_{n}\}_{n\in\N}\subset X$.
Then 
we say that
$\{f(z_{n})\}_{n\in\N}$
is 
\emph
{bounded} 
if
$\sup_{(n,s)\in\N\times Y}
\nu_{j}(f(z_{n})(s))<\infty$
for all $j\in J$.
$\{f(z_{n})\}_{n\in\N}$
is
\emph
{equicontinuous at $s_{0}$}
if
for all 
$j\in J$
and
for all 
$\ep>0$ there exists
a
neighbourhood 
$U$ 
of $s_{0}$
such that
for all 
$s\in U$
we have
$\sup_{n\in\N}\nu_{j}(f(z_{n})(s)
-f(z_{n})(s_{0}))\leq\ep$.
Finally
$\{f(z_{n})\}_{n\in\N}$
is
\emph
{equicontinuous}
if
$\{f(z_{n})\}_{n\in\N}$
is
equicontinuous at $s$
for every $s\in Y$.
\end{definition}
\begin{proposition}
\label{19492307}
Let
us assume the notation
of Def. \ref{15210503}.
Thus
$\{v(x_{\infty})\,\vert\, v\in\mc{E}\}
=
\{\phi_{1}(x_{\infty})\,\vert\,\phi\in\Phi\}$.
\end{proposition}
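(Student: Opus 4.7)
The plan is to prove the set equality by a double inclusion, where the forward inclusion is immediate from the definition of $\mc{E}$ and the reverse inclusion uses the fullness hypothesis on $\mf{V}$.

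First I would handle the inclusion $\{v(x_{\infty})\mid v\in\mc{E}\} \subseteq \{\phi_{1}(x_{\infty})\mid\phi\in\Phi\}$. This is essentially tautological: by the very definition of $\mc{E}$ in \eqref{15482601}, any $v\in\mc{E}$ is a section in $\Gamma(\pi)$ for which there exists some $\phi\in\Phi$ with $v(x_{\infty})=\phi_{1}(x_{\infty})$, so $v(x_{\infty})$ lies in the right-hand set.

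The reverse inclusion $\{\phi_{1}(x_{\infty})\mid\phi\in\Phi\} \subseteq \{v(x_{\infty})\mid v\in\mc{E}\}$ is where the fullness hypothesis enters. Fix $\phi\in\Phi$. Then $\phi_{1}(x_{\infty})\in \mf{E}_{x_{\infty}}$ (viewed in $\mf{E}$ via the canonical identification, or more precisely, the projection $\Pr_{1}^{x_{\infty}}(\phi(x_{\infty}))$ belongs to the stalk of $\mf{V}$ at $x_{\infty}$). Since $\mf{V}$ is full, for every point of $\mf{E}$ there is a global section of $\pi$ passing through it; apply this to the point $\phi_{1}(x_{\infty})$ to obtain a section $v\in\Gamma(\pi)$ with $v(x_{\infty})=\phi_{1}(x_{\infty})$. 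Then $v$ satisfies the defining condition of $\mc{E}$ using the given $\phi$, hence $v\in\mc{E}$, and therefore $\phi_{1}(x_{\infty})=v(x_{\infty})$ lies in the left-hand set.

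The only potential subtlety (and in no way an obstacle) is the identification between elements of the direct sum bundle's stalks and elements of $\mf{E}_{x_{\infty}}$ needed to apply the fullness of $\mf{V}$; this is handled by Convention \ref{16392601}, which lets us regard $\phi_{1}(x_{\infty})=\Pr_{1}^{x_{\infty}}\circ\phi(x_{\infty})$ as an element of $\mf{E}_{x_{\infty}}$ and thus legitimately apply the fullness of the original bundle $\mf{V}$ to it.
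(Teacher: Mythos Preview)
Your proof is correct and follows essentially the same double-inclusion argument as the paper: the forward inclusion is immediate from the definition of $\mc{E}$, and the reverse inclusion uses fullness of $\mf{V}$ to produce a global section $v\in\Gamma(\pi)$ through $\phi_{1}(x_{\infty})$, which then lies in $\mc{E}$ by definition. Your remark on Convention~\ref{16392601} is a helpful clarification but not a departure from the paper's line of reasoning.
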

\begin{proof}
By definition follows the inclusion
$\subseteq$.
$\mf{V}$
being
full
we have
$(\forall\phi\in\Phi)
(\exists\,v\in\Gamma(\pi))
(v(x_{\infty})=\phi_{1}(x_{\infty}))$.
Thus
$(\forall\phi\in\Phi)
(\exists\,v\in\mc{E})
(v(x_{\infty})=\phi_{1}(x_{\infty}))$
hence the inclusion
$\supseteq$.
\end{proof}
\begin{theorem}[\textbf{MAIN}$\ms{1}$]
\label{17301812b}
Let
$\mf{V}
\coloneqq
\lr{\lr{\mf{E}}{\tau}}
{\pi,X,\|\cdot\|}$
be a 
Banach bundle
where $X$ is a completely regular space 
for which there exists
$x_{\infty}\in X$
such that 
its filter of neighbourhoods admits a countable basis.
Let
$
\mc{U}_{0}
\in\prod_{x\in X_{0}}
\cc{}{\R^{+},B_{s}(\mf{E}_{x})}$
be
such that
$\mc{U}_{0}(x)$
is 
a
$(C_{0})-$semigroup
of contractions
(respectively of
isometries)
on
$\mf{E}_{x}$
for all $x\in X_{0}$.
\par
\underline{If}
$D(T_{x_{\infty}})$
is dense in $\mf{E}_{x_{\infty}}$
and
$\exists\lambda_{0}>0$
(respectively
$\exists\lambda_{0}>0,
\lambda_{1}<0$)
such that
the range
$\mc{R}(\lambda_{0}-T_{x_{\infty}})$
is dense in $\mf{E}_{x_{\infty}}$,
(respectively
the ranges
$\mc{R}(\lambda_{0}-T_{x_{\infty}})$
and
$\mc{R}(\lambda_{1}-T_{x_{\infty}})$
are dense in $\mf{E}_{x_{\infty}}$),
\underline{then}
$$
\lr{\mc{T}}{x_{\infty},\Phi}
\in
\ms{Gr}(\mf{V},\mf{V}),
$$
and
$T_{x_{\infty}}$
in 
\eqref{172212312b}
is the generator
of
a $C_{0}-$semigroup
of contractions
(respectively of isometries)
on $\mf{E}_{x_{\infty}}$.
\par
Moreover 
\underline{assume}
that
$\{v(x)\,\vert\, v\in\mc{E}\}$
is dense in $\mf{E}_{x}$
for all $x\in X_{0}$,
by taking
the notation
in \eqref{15482601},
let
$\mf{W}
\coloneqq
\lr{\lr{\mf{M}}{\gamma}}{\rho,X,\mf{R}}$
and
$\lr{\mf{V},\mf{W}}{X,\R^{+}}$
be
a
$\left(\Theta,\mc{E}\right)-$structure
\footnote{
Well set indeed
by 
Prop. \ref{19492307},
the density assumptions
and
Rem. \ref{21500412b}
we have that
$S_{x}$
is dense in $\mf{E}_{x}$
for all $x\in X$.}
such that
\eqref{18470109} holds. 
Assume
$
\ms{U}_{\|\cdot\|_{B(\mf{E}_{z})}}
(\mc{L}_{S_{z}}(\mf{E}_{z}))
\subseteq
\mf{M}_{z}$
(respectively
$\ms{U}_{is}
(\mc{L}_{S_{z}}(\mf{E}_{z}))
\subseteq
\mf{M}_{z}$)
for all $z\in X$
\footnote{
See Prp.
\ref{18390901}
for models of $\mf{M}$
satisfying
\eqref{18470109}
and
$\ms{U}_{\|\cdot\|_{B(\mf{E}_{z})}}
(\mc{L}_{S_{z}}(\mf{E}_{z}))
\subseteq
\mf{M}_{z}$.
}
and that
there exists
$F\in\Gamma(\rho)$
such that
$F(x_{\infty})=\mc{U}(x_{\infty})$
and
\begin{description}
\item[i]
$\lr{\mf{V},\mf{W}}{X,\R^{+}}$
has
the
$\ms{LD}_{x_{\infty}}(\{F\},\mc{E})$;
\textbf{or}
it
has
the 
$\ms{LD}(\{F\},\mc{E})$;
\item[ii]
$(\forall v\in\mc{E})
(\exists\,\phi\in\Phi)$
s.t.
$\phi_{1}(x_{\infty})
=
v(x_{\infty})$
and
$(\forall\{z_{n}\}_{n\in\N}\subset X
\,\vert\,
\lim_{n\in\N}z_{n}=x_{\infty})$
we have
that
$\{
\mc{U}(z_{n})(\cdot)\phi_{1}(z_{n})
-
F(z_{n})(\cdot)v(z_{n})
\}_{n\in\N}$
is a 
bounded equicontinuous
sequence.
\end{description}
\underline{Then}
$(\forall v\in\mc{E})
(\forall K\in Comp(\R^{+}))$
\begin{equation}
\label{02502912}
\boxed{
\lim_{z\to x_{\infty}}
\sup_{s\in K}
\bigl\|
\mc{U}(z)(s)v(z)
-
F(z)(s)v(z)
\bigr\|=0,}
\end{equation}
and
\begin{equation}
\label{02512912p}
\boxed{
\mc{U}
\in\Gamma^{x_{\infty}}(\rho)
.}
\end{equation}
In particular
\begin{equation}
\label{02512912}
\{\lr{\mc{T}}{x_{\infty},\Phi}\}
\in
\Delta_{\Theta}\lr{\mf{V},\mf{W}}
{\mc{E},X,\R^{+}}.
\end{equation}
Here
$\mc{T}$
and
$D(T_{x_{\infty}})$
are defined as
in
Notation
\ref{15411512b}
with 
$\mc{T}_{0}$
and 
$\Phi$
given in
\eqref{15482601},
while
$\mc{U}
\in\prod_{x\in X}
\mf{M}_{x}$
such that
$\mc{U}\up X_{0}\coloneqq
\mc{U}_{0}$
and
$\mc{U}(x_{\infty})$
is the semigroup
on $\mf{E}_{x_{\infty}}$
generated by 
$T_{x_{\infty}}$.
\end{theorem}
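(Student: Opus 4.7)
The proof proceeds in three main stages, corresponding to the three conclusions: the operator-theoretic claim about $T_{x_\infty}$, the uniform-on-compacts convergence \eqref{02502912}, and the bundle-continuity statement \eqref{02512912p}, the last of which immediately yields \eqref{02512912}.

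\textbf{Stage 1 (generator at $x_\infty$).} For each $x \in X_0$, Hille--Yosida applied to the contraction semigroup $\mc{U}_0(x)$ gives the dissipativity bound $\|(\lambda - T_x)v_x\| \geq \lambda\|v_x\|$ for all $v_x \in Dom(T_x)$ and $\lambda>0$. Together with the assumed density of $D(T_{x_\infty})$, Lemma \ref{13001512b} delivers $\lr{\mc{T}}{x_\infty,\Phi} \in Graph(\mf{V},\mf{V})$ and the same dissipativity estimate for $T_{x_\infty}$. Lemma \ref{17552901} gives closedness of $T_{x_\infty}$; Lemma \ref{161172901} gives that $\mc{R}(\lambda - T_{x_\infty})$ is norm-closed for every $\lambda>0$, so the hypothesis of density at $\lambda_0$ upgrades to equality $\mc{R}(\lambda_0 - T_{x_\infty}) = \mf{E}_{x_\infty}$. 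A standard connectedness argument---the subset of $(0,\infty)$ where the range is full is open by the resolvent perturbation argument and closed by Lemma \ref{14253001}---yields $(0,\infty) \subseteq \rho(T_{x_\infty})$, after which Hille--Yosida concludes. The isometric case uses the two-sided hypothesis to apply the same argument on $(-\infty,0)$ as well.

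\textbf{Stage 2 (setting up \eqref{02502912}).} Fix $v \in \mc{E}$ and, via hypothesis [ii], choose $\phi \in \Phi$ with $\phi_1(x_\infty) = v(x_\infty)$. Define
\begin{equation*}
G_z(s) \doteqdot \mc{U}(z)(s)\phi_1(z) - F(z)(s)v(z) \in \mf{E}_z,
\end{equation*}
noting $G_{x_\infty}\equiv 0$ since $F(x_\infty) = \mc{U}(x_\infty)$ and $\phi_1(x_\infty) = v(x_\infty)$. The target is $\sup_{s\in K}\|G_z(s)\|_z \to 0$ as $z\to x_\infty$ for every $K \in Comp(\R^+)$. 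By hypothesis [iii] $X$ is metrizable, so it suffices to argue sequentially: for any sequence $z_n\to x_\infty$, extract a subsequence along which the sup goes to zero. On the $\mc{U}$-side, $\mc{U}(z_n)(\cdot)\phi_1(z_n)$ is automatically Lipschitz in $s$ with constant $\|\phi_2(z_n)\|$ (bounded near $x_\infty$ by continuity of $\phi$), so combined with [ii] the sequence $\{G_{z_n}\}$ is bounded and equicontinuous on $\R^+$.

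\textbf{Stage 3 (Laplace transform and identification of limits).} Compute, using Proposition \ref{21120901},
\begin{equation*}
\int_0^\infty e^{-\lambda s} G_z(s)\,ds = R(\lambda,T_z)\phi_1(z) - \mf{L}(F)(z)(\lambda)v(z).
\end{equation*}
The Laplace duality hypothesis [i] gives that $z \mapsto \mf{L}(F)(z)(\lambda)v(z)$ is continuous at $x_\infty$ with limit $\mf{L}(F)(x_\infty)(\lambda)v(x_\infty) = R(\lambda,T_{x_\infty})v(x_\infty)$. For the semigroup term one uses the identity
\begin{equation*}
\lambda R(\lambda,T_z)\phi_1(z) - R(\lambda,T_z)\phi_2(z) = \phi_1(z),
\end{equation*}
together with the uniform bound $\|R(\lambda,T_z)\| \leq 1/\lambda$, the bundle-continuity of $\phi_1,\phi_2$ at $x_\infty$ (Remark \ref{15401512b}), and passage to the limit, to conclude $R(\lambda,T_z)\phi_1(z) \to R(\lambda,T_{x_\infty})\phi_1(x_\infty) = R(\lambda,T_{x_\infty})v(x_\infty)$. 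Hence $\int_0^\infty e^{-\lambda s}G_z(s)\,ds \to 0$ for every $\lambda>0$. An Arzel\`{a}--Ascoli-type argument along an arbitrary subsequence $z_{n_k}$ (working at sections through the $G_{z_{n_k}}(s)$) produces a uniform-on-compacts subsequential limit $G$ valued in $\mf{E}_{x_\infty}$; dominated convergence then transfers the vanishing of the Laplace transforms to $\int_0^\infty e^{-\lambda s}G(s)\,ds = 0$ for all $\lambda>0$, so by the uniqueness of the Laplace transform $G \equiv 0$. Since every subsequence has a further subsequence along which $\sup_{s\in K}\|G_{z_n}(s)\|\to 0$, \eqref{02502912} follows.

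\textbf{Conclusion and main obstacle.} By Remark \ref{21500412b} one has $\mc{E}(\Theta)=\mc{E}$ and the $B\in\Theta$ are the point-singletons, so \eqref{02502912} is exactly the criterion in Lemma \ref{15482712}(2) (applied with $U=X$ since $F\in\Gamma(\rho)$); this yields $\mc{U} \in \Gamma^{x_\infty}(\rho)$, i.e.\ \eqref{02512912p}, and Definition \ref{19490412bis} then gives \eqref{02512912}. The hardest point is the identification step in Stage 3: one must establish the resolvent convergence $R(\lambda,T_z)\phi_1(z) \to R(\lambda,T_{x_\infty})v(x_\infty)$ despite each resolvent acting on a different Hilbert space $\mf{E}_z$, and one must then justify the Arzel\`{a}--Ascoli extraction inside the bundle space $\mf{E}$ (rather than within a single Banach space). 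Both are handled by systematically reducing to bundle-continuity statements about the sections $\phi_1,\phi_2,F\bullet v$ at $x_\infty$ and invoking Corollary \ref{28111707}.
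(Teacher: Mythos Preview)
Your overall strategy matches the paper's, but Stage~3 has a genuine gap. The identity $\lambda R(\lambda,T_z)\phi_1(z) - R(\lambda,T_z)\phi_2(z) = \phi_1(z)$ together with the resolvent bound and the bundle-continuity of $\phi_1,\phi_2$ at $x_\infty$ does \emph{not} yield $R(\lambda,T_z)\phi_1(z) \to R(\lambda,T_{x_\infty})\phi_1(x_\infty)$. Rewriting gives $R(\lambda,T_z)\phi_1(z) = \lambda^{-1}\phi_1(z) + \lambda^{-1}R(\lambda,T_z)\phi_2(z)$, and the second summand is merely bounded, not convergent; since $\phi_2(z)$ need not lie in $Dom(T_z)$ you cannot iterate, and there is no section in $\Gamma(\pi)$ against which to compare $R(\lambda,T_z)\phi_1(z)$ via Corollary~\ref{28111707}. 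The paper supplies exactly this missing section: because $g_\infty^\lambda \doteqdot (\lambda-T_{x_\infty})^{-1}\phi_1(x_\infty) \in D(T_{x_\infty})$, the asymptotic-graph condition \eqref{19350512bis} furnishes $\psi^\lambda \in \Phi$ with $\psi_1^\lambda(x_\infty) = g_\infty^\lambda$. The Laplace integral is then split, with $w^\lambda(z)\doteqdot(\lambda-T_z)\psi_1^\lambda(z)$ and $\xi\doteqdot\mf{L}(F)$, as
\[
\int_0^\infty e^{-\lambda s}\bigl(\mc{U}(z)(s)\phi_1(z)-F(z)(s)v(z)\bigr)\,ds
= \int_0^\infty e^{-\lambda s}\mc{U}(z)(s)\bigl(\phi_1(z)-w^\lambda(z)\bigr)\,ds
+ \bigl(\psi_1^\lambda(z)-\xi(z)(\lambda)v(z)\bigr).
\]
The second term vanishes by Corollary~\ref{21492812} (both selections lie in $\Gamma^{x_\infty}(\pi)$ and agree at $x_\infty$); the first is bounded by $\lambda^{-1}\|\phi_1(z)-w^\lambda(z)\|$, and this is driven to zero by a further comparison of $T_z\psi_1^\lambda(z)=\psi_2^\lambda(z)$ with $\lambda\xi(z)(\lambda)v(z)-v(z)$, again via Corollary~\ref{21492812}. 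The auxiliary $\psi^\lambda$ is the missing ingredient in your argument.

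Your Arzel\`a--Ascoli step is also not justified as written: extracting a subsequential limit $G$ valued in $\mf{E}_{x_\infty}$ from functions $G_{z_n}$ valued in \emph{different} fibers $\mf{E}_{z_n}$ would require, for each $s$, a section through which the $G_{z_n}(s)$ converge, and none is produced. The paper sidesteps this entirely by invoking \cite[Lemma~(2.11)]{kurtz}, which directly gives $\sup_{s\in K}\|G_{z_n}(s)\|\to0$ from bounded equicontinuity and the vanishing of $\bigl\|\int_0^\infty e^{-\lambda s}G_{z_n}(s)\,ds\bigr\|$; no limit function is ever constructed.
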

\begin{proof}
Since the Dupre' Thm., see for example \cite[Cor. $2.10$]{gie}, 
we obtain that $\mf{V}$ is full.
By Lemma 
\ref{13001512b},
\cite[Lms.$(2.8)-(2.9)$]{kurtz}, 
and the Hille-Yosida
theorem, 
see \cite[Th.$(1.2)$]{kurtz}, 
we have the first
sentence of the statement
for the case of semigroup
of contractions.
By 
\cite[Cor. 
$3.1.19.$]{bra}
applied to $T_{x}$,
for any $x\in X_{0}$,
and by \eqref{20022901}
we have
$(\forall\lambda\in\R)
(\forall v_{x_{\infty}}\in 
Dom(T_{x_{\infty}}))$
\begin{equation}
\label{15363001}
\|(\un-\lambda T_{x_{\infty}})
v_{x_{\infty}})\|_{x_{\infty}}
\geq
\|v_{x_{\infty}}\|_{x_{\infty}}.
\end{equation}
Hence 
by 
\cite[Cor. 
$3.1.19.$]{bra},
$T_{x_{\infty}}$
will be a generator of a strongly
continuous semigroup
of isometries if we show that
$\forall\lambda\in\R-\{0\}$
\begin{equation}
\label{15443001}
\mc{R}(\un-\lambda T_{x_{\infty}})
=
\mf{E}_{x_{\infty}}.
\end{equation}
Let us set
$$
\rho_{0}(T_{x_{\infty}})
\coloneqq
\{
\lambda\in
\R-\{0\}
\,\vert\,
\mc{R}(\un-\lambda T)
=
\mf{E}_{x}\}.
$$ 
By \eqref{20102901}
$\rho_{0}(T_{x_{\infty}})
=
\rho(T_{x_{\infty}})
\cap
(\R-\{0\})$,
where
$\rho(T_{x_{\infty}})$
is
the resolvent
set of
$T_{x}$.
By \cite[Lemma $7.3.2$]{ds}
$\rho(T_{x_{\infty}})$
is open in $\C$ so 
$\rho_{0}(T_{x_{\infty}})$
is open in
$\R-\{0\}$
with respect to the 
topology
on
$\R-\{0\}$
induced by that on $\C$.
By 
Lemma \ref{14253001}
we deduce that
$\rho_{0}(T_{x_{\infty}})$
is also closed in
$\R-\{0\}$,
therefore 
$\rho_{0}(T_{x_{\infty}})=\R-\{0\}$
and \eqref{15443001} follows
as well
that
$T_{x_{\infty}}$
is
a generator of a strongly
continuous semigroup
of isometries.
\par
Now we shall apply
Lemma \ref{15482712}
in order to show the remaining part of the 
statement.
Let 
$v\in\mc{E}$
be fixed
then by
\eqref{15482601},
$(\exists\,\phi\in\Phi)
(v(x_{\infty})=\phi_{1}(x_{\infty}))$
thus
by
\eqref{15211512b}
and
Cor. \ref{28111707}
\begin{equation}
\label{20352312b}
\lim_{z\to x_{\infty}}
\|v(z)-\phi_{1}(z)\|
=0.
\end{equation}
Now
let
$F\in\Gamma(\rho)$
of which in
hypothesis
so in particular
\begin{equation}
\label{10182312b}
F(x_{\infty})=\mc{U}(x_{\infty}),
\end{equation}
moreover
$\forall s\in\R^{+}$
and $z\in X$
\begin{alignat}{2}
\label{20492312b}
\|
\mc{U}(z)(s)v(z)
-
F(z)(s)v(z)
\|
&
\leq
\notag
\\
\|
\mc{U}(z)(s)v(z)
-
\mc{U}(z)(s)\phi_{1}(z)
\|
+
\|\mc{U}(z)(s)\phi_{1}(z)-F(z)(s)v(z)\|
&
\leq
\notag
\\
\|v(z)-\phi_{1}(z)\|
+
\|\mc{U}(z)(s)\phi_{1}(z)-F(z)(s)v(z)\|.
\end{alignat}
For any $\lambda>0$
let us set
$$
g_{\infty}^{\lambda}
\coloneqq
(\lambda-T_{x_{\infty}})^{-1}
\phi_{1}(x_{\infty})
$$
thus
$g_{\infty}^{\lambda}
\in Dom(T_{x_{\infty}})$
hence by Rmk. \ref{18581512b}
and the construction
of $T_{x_{\infty}}$
$
\exists\,
\psi^{\lambda}
\in
\Phi
$
such that
\begin{equation}
\label{09252412b}
\begin{cases}
g_{\infty}^{\lambda}
=
\psi_{1}^{\lambda}(x_{\infty})
=
\lim_{z\in x_{\infty}}
\psi_{1}^{\lambda}(z)
\\
T_{x_{\infty}}
g_{\infty}^{\lambda}
=
\lim_{z\to x_{\infty}}
T_{z}
\psi_{1}^{\lambda}(z).
\end{cases}
\end{equation}
By \eqref{14481001}
and
\eqref{01592912}
for all $z\in X$
and for all
$w_{z}\in
\bigcup_{v\in\mc{E}}v(z)$
\begin{equation}
\label{15261001pre}
\bigl(\int_{0}^{\infty}
e^{-\lambda s}
F(z)(s)
\,ds\bigr)
w_{z}
=
\int_{0}^{\infty}
e^{-\lambda s}
F(z)(s)w_{z}\,ds.
\end{equation}
Moreover
by
the fact that $\mf{V}$ is full
we have that
for all 
$\phi\in\Phi$
there exists a $v\in\Gamma(\pi)$
such that
$v(x_{\infty})=\phi_{1}(x_{\infty})$,
thus
by construction
of
$\mc{E}$
\begin{equation}
\label{18390103}
(\forall\phi\in\Phi)
(\exists\,v\in\mc{E})
(v(x_{\infty})=\phi_{1}(x_{\infty})).
\end{equation}
Hence
by
\eqref{15261001pre},
\eqref{18390103}
and
\eqref{10182312b}
for all $\phi\in\Phi$
\begin{equation}
\label{15261001}
\bigl(\int_{0}^{\infty}
e^{-\lambda s}
F(x_{\infty})(s)
\,ds\bigr)
\phi_{1}(x_{\infty})
=
\int_{0}^{\infty}
e^{-\lambda s}
\mc{U}(x_{\infty})(s)
\phi_{1}(x_{\infty})
\,ds.
\end{equation}
Now
set
$$
\xi
\coloneqq
\mf{L}(F),
$$
thus by 
hypothesis
$(i)$
we have
for all $\lambda>0$
\begin{equation}
\label{11462412b}
\xi(\cdot)(\lambda)v(\cdot)
\in
\Gamma^{x_{\infty}}(\pi).
\end{equation}
Moreover
\begin{alignat}{2}
\label{10262412b}
\xi(x_{\infty})(\lambda)
v(x_{\infty})
&
=
\xi(x_{\infty})(\lambda)
\phi_{1}(x_{\infty})
\notag
\\
&
=
\int_{0}^{\infty}
e^{-\lambda s}
\mc{U}(x_{\infty})(s)
\phi_{1}(x_{\infty})
\,ds\,
\text{ by }
\eqref{15261001}
\notag
\\
&
=
(\lambda-T_{x_{\infty}})^{-1}
\phi_{1}(x_{\infty})\,
\text{ by \cite[$(1.3)$]{kurtz}}
\notag
\\
&
\doteq
g_{\infty}^{\lambda}
=
\psi_{1}^{\lambda}(x_{\infty})
\,
\text{ by } 
\eqref{09252412b}.
\end{alignat}
By 
the fact that
$\mf{V}$
is full,
by
\eqref{11462412b},
the fact that
$\psi_{1}^{\lambda}
\in\Gamma^{x_{\infty}}(\pi)$
by
\eqref{15211512b},
by
\eqref{10262412b}
and
by
Cor.
\ref{21492812}
we have
$\forall\lambda>0$
\begin{equation}
\label{10352412b}
\lim_{z\to x_{\infty}}
\|\psi_{1}^{\lambda}(z)-
\xi(z)(\lambda)v(z))\|
=0.
\end{equation}
Now
$(\forall\lambda>0)(\forall z\in X)$
set 
$$
w^{\lambda}(z)
\coloneqq
(\lambda\un-T_{z})
\psi_{1}^{\lambda}(z),
$$
thus
\begin{alignat}{2}
\label{12042412b}
\bigl\|
\int_{0}^{\infty}
e^{-\lambda s}
\left(
\mc{U}(z)(s)\phi_{1}(z)-F(z)(s)v(z)
\right)
\,ds
\bigr\|
&
\leq
\notag
\\
\bigl\|
\int_{0}^{\infty}
e^{-\lambda s}
\mc{U}(z)(s)(\phi_{1}(z)-w^{\lambda}(z))
\,ds
\bigr\|
+
\bigl\|
\int_{0}^{\infty}
e^{-\lambda s}
\bigl(
\mc{U}(z)(s)w^{\lambda}(z)-F(z)(s)v(z)
\bigr)
\,ds
\bigr\|
&
\leq
\notag
\\
\frac{1}{\lambda}
\|\phi_{1}(z)-w^{\lambda}(z)\|
+
\|\psi_{1}^{\lambda}(z)-
\xi(z)(\lambda)v(z))
&\|.
\end{alignat}
Here
we consider
that by hypothesis
and by the first part of the statemet
$\|\mc{U}(z)\|\leq 1$
for all $z\in X$,
moreover we 
applied
the
Hille-Yosida
formula 
\cite[$(1.3)$]{kurtz}.
Now
\begin{alignat}{1}
\label{11272412b}
\|\phi_{1}(z)-w^{\lambda}(z)\|
&
=
\\
\|
\phi_{1}(z)-
(\lambda\un-T_{z})
\psi_{1}^{\lambda}(z)
\|
&
\leq
\notag
\\
\|
\phi_{1}(z)-v(z)
\|
+
\|v(z)-\lambda\xi(z)(\lambda)v(z)
+
\lambda\xi(z)(\lambda)v(z)
-(\lambda\un-T_{z})
\psi_{1}^{\lambda}(z)
\|
&
\leq
\notag
\\
\|\phi_{1}(z)-v(z)\|
+
\lambda
\|\xi(z)(\lambda)v(z)-\psi_{1}^{\lambda}(z)\|
+
\|
T_{z}\psi_{1}^{\lambda}(z)
-(\lambda\xi(z)(\lambda)v(z)-v(z))
&
\|.
\notag
\end{alignat}
By \eqref{09252412b}
$
T_{x_{\infty}}
\psi_{1}^{\lambda}(x_{\infty})
=
T_{x_{\infty}}
g_{\infty}^{\lambda}
$
moreover
\begin{alignat}{1}
\label{11402412b}
T_{x_{\infty}}
g_{\infty}^{\lambda}
&
=
-(\lambda-T_{x_{\infty}})
g_{\infty}^{\lambda}
+\lambda g_{\infty}^{\lambda}
\notag
\\
&
=
-(\lambda-T_{x_{\infty}})
(\lambda-T_{x_{\infty}})^{-1}
\phi_{1}(x_{\infty})
+\lambda g_{\infty}^{\lambda}
\notag
\\
&
=
\lambda g_{\infty}^{\lambda}
-
\phi_{1}(x_{\infty})
=
\lambda\xi(x_{\infty})(\lambda)
v(x_{\infty})
-
v(x_{\infty}),
\end{alignat}
where
in the last equality
we used
\eqref{10262412b}
and the 
construction of $\phi$.
By 
\eqref{09252412b}
we have
that
$(X\ni z
\mapsto
T_{z}
\psi_{1}^{\lambda}(z))
\in
\Gamma^{x_{\infty}}(\pi)
$,
hence
by
\eqref{11402412b},
the fact that
$\lambda\xi(\cdot)(\lambda)v(\cdot)
-
v\in\Gamma^{x_{\infty}}(\pi)
$
by \eqref{11462412b},
we deduce 
by
the fact that 
$\mf{V}$ is full
and
by Cor.
\ref{21492812}
that $\forall\lambda>0$
\begin{equation}
\label{11542412b}
\lim_{z\to x_{\infty}}
\|T_{z}\psi_{1}^{\lambda}(z)
-(\lambda\xi(z)(\lambda)v(z)-v(z))\|=0.
\end{equation}
Therefore
by
\eqref{11272412b},
\eqref{20352312b},
\eqref{10352412b}
and
\eqref{11542412b}
$$
\lim_{z\to x_{\infty}}
\|\phi_{1}(z)-w^{\lambda}(z)\|
=0.
$$
By
this one
along with
\eqref{10352412b}
we can state by 
using
\eqref{12042412b}
that
$\forall\lambda>0$
\begin{equation*}
\lim_{z\to x_{\infty}}
\bigl\|
\int_{0}^{\infty}
e^{-\lambda s}
\bigl(
\mc{U}(z)(s)\phi_{1}(z)-F(z)(s)v(z)
\bigr)
\,ds
\bigr\|
=0.
\end{equation*}
Therefore
$\forall\lambda>0$
and 
$(\forall\{z_{n}\}_{n\in\N}\subset X
\,\vert\,
\lim_{n\in\N}z_{n}=x_{\infty})$
\begin{equation}
\label{12052412b}
\lim_{n\in\N}
\bigl\|
\int_{0}^{\infty}
e^{-\lambda s}
\bigl(
\mc{U}(z_{n})(s)\phi_{1}(z_{n})-
F(z_{n})(s)v(z_{n})
\bigr)
\,ds
\bigr\|
=0.
\end{equation}
By \eqref{12052412b},
hypothesis
$(ii)$
and 
\cite[Lemma $(2.11)$]{kurtz}
we have 
$(\forall\{z_{n}\}_{n\in\N}\subset X
\,\vert\,
\lim_{n\in\N}z_{n}=x_{\infty})$
and
$\forall K\in Comp(\R^{+})$
\begin{equation*}
\lim_{n\in\N}
\sup_{s\in K}
\bigl\|
\mc{U}(z_{n})(s)\phi_{1}(z_{n})
-
F(z_{n})(s)v(z_{n})
\bigr\|=0.
\end{equation*}
Therefore since the hypothesis on $x_{\infty}$
we obtain
$\forall K\in Comp(\R^{+})$
\begin{equation}
\label{12102412b}
\lim_{z\to x_{\infty}}
\sup_{s\in K}
\bigl\|
\mc{U}(z)(s)\phi_{1}(z)
-
F(z)(s)v(z)
\bigr\|=0.
\end{equation}
In conclusion
by
\eqref{12102412b},
\eqref{20352312b}
and
\eqref{20492312b}
we obtain
$\forall K\in Comp(\R^{+})$
\begin{equation}
\label{02012912}
\lim_{z\to x_{\infty}}
\sup_{s\in K}
\bigl\|
\mc{U}(z)(s)v(z)
-
F(z)(s)v(z)
\bigr\|=0,
\end{equation}
hence 
\eqref{02502912}.
By
\eqref{01592912}
and
\eqref{02012912}
we obtain
\eqref{02022912}.
Thus
\eqref{02512912p}
and
\eqref{02512912}
follow
by
Lemma
\ref{15482712},
by
\eqref{01592912}
and by the 
following
one
$\forall K\in Comp(\R^{+})$
and
$\forall v\in\mc{E}$
$$
\sup_{z\in X}
\sup_{s\in K}
\bigl\|
\mc{U}(z)(s)v(z)
\bigr\|
\leq
\sup_{z\in X}
\|v(z)\|
<\infty.
$$
where
we considered that
by 
construction
$\bigl\|
\mc{U}(z)(s)
\bigr\|\leq
1$,
for all $s\in\R^{+}$
and $z\in X$
and
that
$v\in\Gamma(\pi)$.
\end{proof}
\begin{remark}
\label{17341602}
If
$\mf{W}$
is full
$(\exists\,F\in\Gamma(\rho))
(F(x_{\infty})=\mc{U}(x_{\infty}))$,
so hypotheses reduce.
\end{remark}
\section{Corollary I. Construction of equicontinuous sequences}
By providing conditions ensuring the
bounded equicontinuity of which 
in hypothesis $(ii)$ of Thm. \ref{17301812b}
we obtain the following
\begin{corollary}
\label{21343012}
Let
us assume the hypotheses
of 
Thm. 
\ref{17301812b}
except
$(ii)$
replaced
by the following
one
$$
(\exists\,G\in\prod_{z\in X}
\mf{L}_{1}\left(\R^{+},
\mc{L}_{S_{x}}(\mf{E}_{x})\right)
(\exists\,H\in
\prod_{z\in X}^{b}\mc{L}(\mf{E}_{z}))
(\exists\,F\in\Gamma(\rho))
$$
such that
$F(x_{\infty})=\mc{U}(x_{\infty})$
and
$\forall s>0$
\begin{equation}
\label{22053012}
\begin{cases}
\sup_{x\in X}\sup_{s>0}\|F(x)(s)\|<\infty
\\
(\forall s_{1}>0)
(\exists\, a>0)
(\sup_{u\in[s_{1},s]}
\sup_{z\in X}
\|G(z)(u)\|\leq a |s-s_{1}|)
\\
(\forall z\in X)
(F(z)(s)
=
H(z)+
\int_{0}^{s}
G(z)(u)\,du),
\end{cases}
\end{equation}
where the 
integration is with respect 
to the Lebesgue measure
on $[0,s]$
and with respect to the 
locally convex topology on
$\mc{L}_{S_{z}}(\mf{E}_{z})$.
Then holds
the statement
of Thm. \ref{17301812b}.
\end{corollary}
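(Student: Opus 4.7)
The plan is to derive hypothesis $(ii)$ of Theorem \ref{17301812b} from the new assumption \eqref{22053012}, after which that theorem applies verbatim. The key observation is that any $\phi\in\Phi$ is a \emph{bounded} selection of the direct-sum bundle (by Definition \ref{12432110bis}, $\Phi\subseteq\Gamma^{x_{\infty}}(\pi_{\mb{E}^{\oplus}})$), so by the product-topology factorization of Corollary \ref{17571212} both component selections $\phi_{1},\phi_{2}$ are bounded in $\mf{V}$. In particular, since $\phi_{2}(z)=T_{z}\phi_{1}(z)$ for every $z\in X_{0}$ by \eqref{15211512b}, one obtains the crucial uniform bound $M_{\phi}:=\sup_{z\in X}\|T_{z}\phi_{1}(z)\|<\infty$, extended to $z=x_{\infty}$ through the asymptotic graph condition \eqref{19350512bis} and the definition of $T_{x_{\infty}}$ in \eqref{172212312b}.

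First I would fix $v\in\mc{E}$ and use the construction \eqref{15482601} to pick $\phi\in\Phi$ with $\phi_{1}(x_{\infty})=v(x_{\infty})$; then fix an arbitrary sequence $z_{n}\to x_{\infty}$ and set $f_{n}(s):=\mc{U}(z_{n})(s)\phi_{1}(z_{n})-F(z_{n})(s)v(z_{n})$. Boundedness of $\{f_{n}\}$ follows immediately from the contraction property $\|\mc{U}(z_{n})(s)\|\leq 1$, the first line of \eqref{22053012} (uniform operator bound on $F$), and the boundedness of $\phi_{1}$ and $v$ as selections of $\pi$.

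The core of the argument is equicontinuity at each $s_{0}\in\R^{+}$, which I would split into two estimates. For the semigroup part, since $\phi_{1}(z)\in D(T_{z})$ the classical differentiability of semigroup orbits at domain points combined with the contraction property gives
\[
\|\mc{U}(z)(s)\phi_{1}(z)-\mc{U}(z)(s_{0})\phi_{1}(z)\|=\left\|\int_{s_{0}}^{s}\mc{U}(z)(u)T_{z}\phi_{1}(z)\,du\right\|\leq M_{\phi}\,|s-s_{0}|,
\]
uniformly in $z$. For the $F$ part, Proposition \ref{21120901} shows that evaluation at $v(z)$ commutes with the integral in \eqref{22053012}, so $F(z)(s)v(z)-F(z)(s_{0})v(z)=\int_{s_{0}}^{s}G(z)(u)v(z)\,du$, and the second line of \eqref{22053012} combined with $M_{v}:=\sup_{z}\|v(z)\|<\infty$ yields $\|F(z)(s)v(z)-F(z)(s_{0})v(z)\|\leq a\,M_{v}\,|s-s_{0}|^{2}$. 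Adding these two estimates shows $\sup_{n}\|f_{n}(s)-f_{n}(s_{0})\|\to 0$ as $s\to s_{0}$, i.e.\ equicontinuity at $s_{0}$.

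The main obstacle is largely bookkeeping: making sure that all uniform-in-$z$ bounds genuinely pass through in the correct norm on $\mc{L}_{S_{z}}(\mf{E}_{z})$. The only subtle point is the commutation of integral and evaluation at $v(z)$, for which Proposition \ref{21120901} is tailored, and the fact that each stalk is a Banach space ensures the operator norm controls the relevant strong-topology seminorms on bounded subsets. Once hypothesis $(ii)$ of Theorem \ref{17301812b} has been verified in this way, the conclusion of that theorem applies directly, giving both \eqref{02502912}, \eqref{02512912p} and the membership in $\Delta_{\Theta}\lr{\mf{V},\mf{W}}{\mc{E},X,\R^{+}}$.
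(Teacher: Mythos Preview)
Your proposal is correct and follows essentially the same approach as the paper: verify hypothesis $(ii)$ of Theorem \ref{17301812b} and then invoke that theorem. The only cosmetic difference is that the paper combines the two integral representations into a single identity via \cite[$(1.4)$]{kurtz},
\[
\mc{U}(z_{n})(s)\phi_{1}(z_{n})-F(z_{n})(s)v(z_{n})=\int_{0}^{s}\bigl(\mc{U}(z_{n})(u)T_{z_{n}}\phi_{1}(z_{n})-G(z_{n})(u)v(z_{n})\bigr)\,du+\phi_{1}(z_{n})-H(z_{n})v(z_{n}),
\]
and reads off a global Lipschitz estimate from the integrand, whereas you estimate the semigroup part and the $F$ part separately; the two arguments are equivalent. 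One minor remark: Proposition \ref{21120901} is stated for Laplace measures, but what you actually need is the continuity of the evaluation map $\mc{L}_{S_{z}}(\mf{E}_{z})\ni A\mapsto Av(z)\in\mf{E}_{z}$, which is exactly what its proof establishes and which applies to any vector-valued integral.
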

\begin{proof}
Let
$v\in\mc{E}$
thus
$(\exists\,\phi\in\Phi)
(v(x_{\infty})=\phi_{1}(x_{\infty}))$
so
$(\forall\{z_{n}\}_{n\in\N}\subset X
\,\vert\,
\lim_{n\in\N}z_{n}=x_{\infty})$
we have
$$
\sup_{n\in\N}
\sup_{s>0}
\|\mc{U}(z_{n})(s)
\phi_{1}(z_{n})
-
F(z_{n})(s)v(z_{n})
\|
\leq
\sup_{n\in\N}
\|\phi_{1}(z_{n})\|
+
M
\sup_{n\in\N}
\|v(z_{n})\|
<\infty.
$$
Here
in the first
inequality we
used
$\|\mc{U}(z)(s)\|\leq 1$
for all $z\in X$ and $s>0$
by construction,
and
$M\coloneqq
\sup_{z\in X}\sup_{s>0}\|F(z)(s)\|<\infty$
by hypothesis,
while
in the second inequality
we used the fact
that
$v\in\prod_{x\in X}^{b}\mf{E}_{x}$,
by construction
and
that
$\sup_{n\in\N}
\|\phi_{1}(z_{n})\|<\infty$
because of
$\exists\,
\varlimsup_{n\in\N}\|\phi_{1}(z_{n})\|
\in\R$
by Rmk.
\ref{18581512b}
and by
construction
$\|\cdot\|$
is $u.s.c.$
Moreover
by
\cite[$(1.4)$]{kurtz},
\eqref{22053012}
and 
$S_{x}=
\{\{w(x)\}\,\vert\, w\in\mc{E}\}$
for all $x\in X$
we have
\begin{alignat*}{1}
\mc{U}(z_{n})(s)
\phi_{1}(z_{n})
-
F(z_{n})(s)v(z_{n})
&
=
\int_{0}^{s}
\left(
\mc{U}(z_{n})(u)
T_{z_{n}}
\phi_{1}(z_{n})
-
G(z_{n})(u)v(z_{n})
\right)
\,du
+
\\
&
+
\phi_{1}(z_{n})
-
H(z_{n})v(z_{n}).
\end{alignat*}
Thus
for any $s_{1},s_{2}\in\R^{+}$
\begin{alignat*}{1}
\sup_{n\in\N}
\|
(\mc{U}(z_{n})(s_{1})
\phi_{1}(z_{n})
-
F(z_{n})(s_{1})v(z_{n}))
-
(\mc{U}(z_{n})(s_{2})
\phi_{1}(z_{n})
-
F(z_{n})(s_{2})v(z_{n}))
\|
&
\leq
\\
|s_{1}-s_{2}|
\sup_{n\in\N}
\sup_{u\in[s_{1},s_{2}]}
\|\mc{U}(z_{n})(u)
T_{z_{n}}
\phi_{1}(z_{n})
-
G(z_{n})(u)v(z_{n})
\|
&
\leq
\\
|s_{1}-s_{2}|
\sup_{n\in\N}
(\|T_{z_{n}}\phi_{1}(z_{n})\|
-a\|v(z_{n})\|)
&
\leq
J|s_{1}-s_{2}|.
\end{alignat*}
Here
in the second inequality we used
$\|\mc{U}(z)(u)\|\leq 1$ by construction
and
the hypothesis,
in the third
one
the fact
that
$\sup_{n\in\N}
\|T_{z_{n}}\phi_{1}(z_{n})\|
<\infty$
as well
$\sup_{n\in\N}\|v(z_{n})\|<\infty$,
because of
$\exists\,
\varlimsup_{n\in\N}
\|T_{z_{n}}\phi_{1}(z_{n})\|
\in\R$
and
$
\exists\,
\varlimsup_{n\in\N}\|v(z_{n})\|
\in\R
$
due to 
the fact that
$\|\cdot\|$
is 
$u.s.c.$
by construction
and
Rmk. \ref{18581512b}
for the first
limit
and 
the continuity of 
$v$
for the second one.
Therefore
hypothesis
$(ii)$
of
Thm. 
\ref{17301812b}
is satisfied,
hence the statement
follows
by
Thm. 
\ref{17301812b}.
\end{proof}
\section{Corollaries II.
Construction of
$\lr{\mf{V},\mf{W}}{X,\R^{+}}$
with the 
$\ms{LD}$ 
}
\label{13320803}
In section \ref{05241150}
we develop a general strategy 
to establish the Laplace duality properties.
When this procedure is applied to fulfill 
hypothesis $(i)$ of Thm. \ref{17301812b},
we obtain Cor. \ref{12080805} and Thm. \ref{10581004}.
Let us start with the following simple
result about the relation among
full and Laplace duality property.
\begin{proposition}
\label{19030106}
Let
$\mf{W}
\coloneqq
\lr{\lr{\mf{M}}{\gamma}}{\rho,X,\mf{R}}$
and
$\lr{\mf{V},\mf{W}}{X,\R^{+}}$
be
a
$\left(\Theta,\mc{E}\right)-$structure
such that
$\mf{V}$
is a Banach bundle
and
$x_{\infty}\in X$.
Assume that
\begin{enumerate}
\item
$\mf{V}$
and
$\mf{W}$
are
full;
\item
$\mc{E}=\Gamma(\pi)$
and
$\Theta$ is given in \eqref{11121419b};
\item
$(\forall F\in\Gamma^{x_{\infty}}(\rho))
(M(F)\coloneqq
\sup_{x\in X}
\sup_{s\in\R^{+}}
\|F(x)(s)\|<\infty)$;
\item
$(\forall\sigma\in\Gamma(\rho))
(\sup_{x\in X}
\sup_{s\in\R^{+}}
\|\sigma(x)(s)\|<\infty)$;
\item
the filter of neighbourhoods of $x_{\infty}$ admits a 
countable basis.
\end{enumerate}
If
$\lr{\mf{V},\mf{W}}{X,\R^{+}}$
has the
$\ms{LD}$
then
it has the
$\ms{LD}_{x_{\infty}}$.
\end{proposition}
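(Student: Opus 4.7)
The plan is to show that for every $\lambda > 0$, every $H \in \Gamma^{x_\infty}_{\Gamma(\rho)}(\rho)$ and every $u \in \Gamma^{x_\infty}_{\Gamma(\pi)}(\pi)$, the selection $f \doteqdot \mathfrak{L}(H)(\cdot)(\lambda) \bullet u$ belongs to $\Gamma^{x_\infty}(\pi)$. Using Def.~\ref{15012602}, first fix witnesses $F \in \Gamma(\rho)$ with $F(x_\infty) = H(x_\infty)$ and $v \in \Gamma(\pi)$ with $v(x_\infty) = u(x_\infty)$, and set $\sigma \doteqdot \mathfrak{L}(F)(\cdot)(\lambda) \bullet v$. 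Because $\lr{\mf{V},\mf{W}}{X,\R^+}$ has the $\mathbf{LD}$, the selection $\sigma$ lies in $\Gamma(\pi)$; furthermore, $H(x_\infty) = F(x_\infty)$ and $u(x_\infty) = v(x_\infty)$ yield $f(x_\infty) = \sigma(x_\infty)$. Since $\mf{V}$ is full (hence locally full), Corollary~\ref{28111707} reduces the task to checking (a) that $f$ is bounded and (b) that $\lim_{y \to x_\infty} \|f(y) - \sigma(y)\| = 0$.

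Part (a) is immediate: by hypothesis~$(3)$, $M(H) \doteqdot \sup_{x,s} \|H(x)(s)\| < \infty$, so Prop.~\ref{21120901} combined with a routine norm estimate gives $\|\mathfrak{L}(H)(y)(\lambda) w\| \le (M(H)/\lambda) \|w\|$ for each $w \in \mf{E}_y$, whence $\sup_y \|f(y)\| \le (M(H)/\lambda) \sup_x \|u(x)\| < \infty$. For (b), I split
\begin{equation*}
f(y) - \sigma(y) = \mathfrak{L}(H)(y)(\lambda)\bigl(u(y) - v(y)\bigr) + \bigl(\mathfrak{L}(H)(y)(\lambda) - \mathfrak{L}(F)(y)(\lambda)\bigr) v(y).
\end{equation*}
Corollary~\ref{21492812} applied to $u$ and $v$ (both continuous at $x_\infty$, agreeing there, in the full Banach bundle $\mf{V}$) gives $\|u(y) - v(y)\| \to 0$, and combined with the uniform bound $\|\mathfrak{L}(H)(y)(\lambda)\| \le M(H)/\lambda$ this forces the first summand to vanish.

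The main obstacle lies in the second summand, whose control requires converting the uniform convergence on compacts that $H \in \Gamma^{x_\infty}(\rho)$ enjoys into convergence of the full Laplace transforms. By Prop.~\ref{21120901} (applicable because $v(y)$ lies in the totalizing set $\bigcup_l B_l^y$, thanks to the choice \eqref{11121419b} of $\Theta$ and to $\mc{E} = \Gamma(\pi)$),
\begin{equation*}
\bigl(\mathfrak{L}(H)(y)(\lambda) - \mathfrak{L}(F)(y)(\lambda)\bigr) v(y) = \int_0^\infty e^{-\lambda s} \bigl(H(y)(s) - F(y)(s)\bigr) v(y) \, ds,
\end{equation*}
whose norm is at most $\int_0^\infty e^{-\lambda s} \|(H(y)(s) - F(y)(s)) v(y)\| \, ds$. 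I will then perform an $\ep/T$ split: on the tail $[T, \infty)$, hypotheses~$(3)$ and $(4)$ together with boundedness of $v \in \Gamma(\pi)$ produce an integrable dominant $e^{-\lambda s}(M(H) + M(F)) \sup_x \|v(x)\|$ whose integral can be made arbitrarily small uniformly in $y$ by choosing $T$ large; on $[0, T]$, I apply the implication $(1) \Rightarrow (3)$ of Lemma~\ref{15482712} to $H$, with the chosen witness $F$ and datum $B_v \in \Theta$, to obtain $\sup_{s \in [0, T]} \|(H(y)(s) - F(y)(s)) v(y)\| \to 0$ as $y \to x_\infty$. Combining the two estimates yields (b), hence $f \in \Gamma^{x_\infty}(\pi)$. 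Metrizability of $X$ plays no essential role beyond permitting one to phrase convergence via sequences if desired.
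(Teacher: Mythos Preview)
Your proof is correct and follows the same overall architecture as the paper's: pick witnesses $F\in\Gamma(\rho)$ and $v\in\Gamma(\pi)$, use $\mb{LD}$ to place $\sigma=\mf{L}(F)(\cdot)(\lambda)\bullet v$ in $\Gamma(\pi)$, split $f-\sigma$ into the $(u-v)$ piece (handled by the uniform operator bound $M(H)/\lambda$ and Corollary~\ref{21492812}) and the $(H-F)$ piece, then pass the norm inside the integral.

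The one genuine difference is in how you treat the $(H-F)$ integral. The paper bounds the integrand by the integrable majorant $e^{-\lambda s}(M(F)+M(\sigma))\sup_x\|v(x)\|$, invokes the dominated convergence theorem along sequences $z_n\to x_\infty$, and then uses hypothesis~(5) (metrizability) to upgrade sequential convergence to $\lim_{z\to x_\infty}$. Your $\ep/T$ tail-splitting argument achieves the same end directly: the tail is small uniformly in $y$ by the same majorant, and the head is controlled by the compact-uniform convergence from Lemma~\ref{15482712}. This is slightly more elementary and, as you observe, shows that hypothesis~(5) is in fact superfluous for the conclusion.
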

\begin{proof}
Let 
$F\in\Gamma^{x_{\infty}}(\rho)$
and
$w\in\Gamma^{x_{\infty}}(\pi)$
thus
by 
hypothesis $(2)$
and
Cor.
\ref{28111707}
there exist
$\sigma\in\Gamma(\rho)$
and
$v\in\Gamma(\pi)$
such that
$\sigma(x_{\infty})=F(x_{\infty})$,
$v(x_{\infty})=w(x_{\infty})$,
and $\forall K\in Comp(\R^{+})$,
$\forall v\in\mc{E}$
\begin{equation}
\label{19330106}
\begin{cases}
\lim_{z\to x_{\infty}}
\|w(z)-v(z)\|=0
\\
\lim_{z\to x_{\infty}}
\sup_{s\in K}
\|(F(x)(s)-\sigma(x)(s))v(x)\|
&=0.
\end{cases}
\end{equation}
Moreover
$\forall\lambda>0$
\begin{alignat}{1}
\label{20090106}
\bigl\|
\int_{0}^{\infty}
e^{\lambda s}
F(z)(s)w(z)\,ds
-
\int_{0}^{\infty}
e^{\lambda s}
\sigma(z)(s)v(z)\,ds
\bigr\|
&\leq
\notag
\\
\bigl\|
\int_{0}^{\infty}
e^{\lambda s}
F(z)(s)(w(z)-v(z))\,ds
\bigr\|
+
\bigl\|
\int_{0}^{\infty}
e^{\lambda s}
(F(z)(s)-\sigma(z)(s))v(z)\,ds
\bigr\|
&\leq
\notag
\\
\frac{1}{\lambda}M(F)\|v(z)-w(z)\|
+
\int_{0}^{\infty}
e^{\lambda s}
\bigl\|
(F(z)(s)-\sigma(z)(s))v(z)
\bigr\|\,ds.
\end{alignat}
By the hypotheses
$(3-4)$
$\sup_{z\in X}
\sup_{s\in\R^{+}}
\bigl\|(F(z)(s)-\sigma(z)(s))v(z)\bigr\|
<\infty$
hence
$\forall\{z_{n}\}_{n\in\N}\subset X$
such that
$\lim_{n\in\N}z_{n}=x_{\infty}$
we have
by
\eqref{19330106},
\eqref{20090106}
and a well-known theorem 
on convergence of sequences
of integrals
that
$\forall\lambda>0$
$$
\lim_{n\in\N}
\bigl\|
\int_{0}^{\infty}
e^{\lambda s}
F(z_{n})(s)w(z_{n})\,ds
-
\int_{0}^{\infty}
e^{\lambda s}
\sigma(z_{n})(s)v(z_{n})\,ds
\bigr\|
=0.
$$
Thus 
$\forall\lambda>0$
by 
hypothesis
$(5)$
$$
\lim_{z\to x_{\infty}}
\bigl\|
\int_{0}^{\infty}
e^{\lambda s}
F(z)(s)w(z)\,ds
-
\int_{0}^{\infty}
e^{\lambda s}
\sigma(z)(s)v(z)\,ds
\bigr\|
=0,
$$
hence
the statement
by 
Cor.
\ref{28111707}.
\end{proof}
Now we shall see that in the
case of a bundle of normed space
we can choose for all $x$
a simple 
space $\mf{M}_{x}$
satisfying
\eqref{18470109}.
\begin{proposition}
\label{18390901}
Let
$\mf{W}
\coloneqq
\lr{\lr{\mf{M}}{\gamma}}{\rho,X,\mf{R}}$
and
$\lr{\mf{V},\mf{W}}{X,\R^{+}}$
be
a
$\left(\Theta,\mc{E}\right)-$structure
such that 
for all $x\in X$,
$\mf{E}_{x}$
is a reflexive Banach
space,
$S_{x}\subseteq\p_{\omega}(\mf{E}_{x})$
and
$$
\mf{M}_{x}
\subseteq
\bigl\{
F\in
\cc{}{\R^{+},\mc{L}_{S_{x}}(\mf{E}_{x})}
\,\vert\,
(\forall\lambda>0)
\bigl(\int_{\R^{+}}^{*}
e^{-\lambda s}
\|F(s)\|_{B(\mf{E}_{x})}
\,ds
<\infty\bigr)
\bigr\}.
$$
Thus
\begin{equation}
\label{14441001}
\mf{M}_{x}
\subset
\bigcap_{\lambda>0}
\mf{L}_{1}
(\R^{+},\mc{L}_{S_{x}}(\mf{E}_{x});
\mu_{\lambda}).
\end{equation}
In particular
\eqref{14441001},
and
$\ms{U}_{\|\cdot\|_{B(\mf{E}_{x})}}
(\mc{L}_{S_{x}}(\mf{E}_{x}))
\subseteq
\mf{M}_{x}$
hold
if
for 
any $x\in X$
$$
\mf{M}_{x}
=
\bigl\{
F\in
\cc{c}{\R^{+},\mc{L}_{S_{x}}(\mf{E}_{x})}
\,\vert\,
\sup_{s\in\R^{+}}
\|F(s)\|_{B(\mf{E}_{x})}
<\infty
\bigr\}.
$$
\end{proposition}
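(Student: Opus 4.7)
The plan is to reduce the $\mu_\lambda$-integrability of $F$ in the non-normed locally convex space $\mc{L}_{S_x}(\mf{E}_x)$ to the familiar Bochner integrability in $\mf{E}_x$ of the maps $s\mapsto F(s)v$, and then to verify the two conditions demanded by $\mf{L}_1(\R^+,\mc{L}_{S_x}(\mf{E}_x);\mu_\lambda)$: scalar $\mu_\lambda$-essential integrability of $F$, and that the scalar integral is represented by an element of $\mc{L}_{S_x}(\mf{E}_x)$ itself, the latter being the role played by reflexivity through the result of \cite{SilInt}.

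First I would fix $F\in\mf{M}_x$ and $\lambda>0$, and observe that since $S_x\subseteq\p_\omega(\mf{E}_x)$ the topology on $\mc{L}_{S_x}(\mf{E}_x)$ is the strong operator topology, so for every $v\in\mf{E}_x$ the map $f_v\colon\R^+\ni s\mapsto F(s)v\in\mf{E}_x$ is continuous and dominated in norm by $\|F(s)\|_{B(\mf{E}_x)}\|v\|$. The hypothesis $\int^{*}_{\R^+}e^{-\lambda s}\|F(s)\|_{B(\mf{E}_x)}\,ds<\infty$ therefore makes $e^{-\lambda s}f_v(s)$ Bochner integrable in $\mf{E}_x$, and I would set
$$T_\lambda(v)\doteqdot\int_{\R^+}e^{-\lambda s}F(s)v\,ds,$$
noting $\|T_\lambda v\|\leq M_\lambda\|v\|$ with $M_\lambda\doteqdot\int^{*}e^{-\lambda s}\|F(s)\|_{B(\mf{E}_x)}\,ds$, so $T_\lambda\in B(\mf{E}_x)=\mc{L}_{S_x}(\mf{E}_x)$ as sets.

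Next I would verify scalar integrability of $F$ with integral $T_\lambda$ by testing against the continuous dual of $\mc{L}_{S_x}(\mf{E}_x)$, which is generated by the evaluation functionals $\varphi_{v,\ell}\colon T\mapsto\ell(Tv)$ for $v\in\mf{E}_x$ and $\ell\in\mf{E}_x^{*}$. For each such functional the scalar $s\mapsto\ell(F(s)v)$ is continuous, dominated by $\|\ell\|\|F(s)\|\|v\|$ and thus $\mu_\lambda$-integrable, while commutation of the continuous functional $\ell$ with the Bochner integral in $\mf{E}_x$ gives
$$\int_{\R^+}\varphi_{v,\ell}(F(s))\,d\mu_\lambda(s)=\ell(T_\lambda v)=\varphi_{v,\ell}(T_\lambda).$$
The cited result of \cite{SilInt}, whose reflexivity hypothesis on $\mf{E}_x$ ensures precisely that a map into $\mc{L}_{S_x}(\mf{E}_x)$ which is scalarly integrable against these functionals and whose scalar integral is representable by an operator on $\mf{E}_x$ is in fact $\mu_\lambda$-integrable in the sense of \cite[Ch.~6]{IntBourb}, then yields $F\in\mf{L}_1(\R^+,\mc{L}_{S_x}(\mf{E}_x);\mu_\lambda)$ with integral $T_\lambda$. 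This gives \eqref{14441001}.

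For the ``in particular'' part, the boundedness $\sup_{s\in\R^+}\|F(s)\|_{B(\mf{E}_x)}<\infty$ trivially implies $\int^{*}e^{-\lambda s}\|F(s)\|\,ds\leq\lambda^{-1}\sup_{s}\|F(s)\|<\infty$, so this explicit choice of $\mf{M}_x$ falls under the hypothesis proved above; meanwhile any $U\in\mb{U}_{\|\cdot\|_{B(\mf{E}_x)}}(\mc{L}_{S_x}(\mf{E}_x))$ is by definition a continuous map $\R^+\to\mc{L}_{S_x}(\mf{E}_x)$ with $\|U(s)\|\leq 1$, so $\mb{U}_{\|\cdot\|_{B(\mf{E}_x)}}(\mc{L}_{S_x}(\mf{E}_x))\subseteq\mf{M}_x$. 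The delicate point I expect to be the main obstacle is the clean invocation of \cite{SilInt}: one has to reconcile the pointwise Bochner construction of $T_\lambda$ in $\mf{E}_x$ with the locally-convex (Bourbaki) notion of integrability in $\mc{L}_{S_x}(\mf{E}_x)$, and deploy reflexivity to guarantee that the scalar integral lives in $\mc{L}(\mf{E}_x)$ rather than in a genuinely larger space attached to the bidual.
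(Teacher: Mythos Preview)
Your proposal is correct and takes essentially the same route as the paper: the paper's proof consists of a single citation of \cite[Corollary~2.6]{SilInt} for \eqref{14441001} and then notes that the second sentence follows from the first, which is exactly what you do, with the added benefit that you have unpacked what the invocation of \cite{SilInt} actually requires (scalar integrability against the functionals $\varphi_{v,\ell}$, the candidate operator $T_\lambda$, and the role of reflexivity). Your explanatory scaffolding around the citation is more detailed than the paper's one-line proof, but the substance is identical.
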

\begin{proof}
The first sentence follows by
\cite[Cor. $2.6.$]{SilInt},
while the
second sentence comes by the first one.
\end{proof}
\subsection{$\ms{U}-$Spaces}
\label{05241150}
Aim of this section is to establish a procedure 
ensuring the full Laplace duality property, 
result achieved in Cor. \ref{18491004}.
The core concept is that of $\ms{U}-$Space 
provided 
in Def. \ref{14302503},
whose existence is established in Cor. \ref{15111901}
by mean of a special locally convex final topology
constructed in Def. \ref{10221801}.
Thm. \ref{15251401},
Thm. \ref{16291501} and Thm. \ref{15332203} 
represent the steps to obtain Cor. \ref{15111901}.
\par
Let us recall and introduce some notation.
For any $W,Z$ topological vector spaces
over $\K\in\{\R,\C\}$
we denote
by
$\mc{L}(W,Z)$
the 
$\K-$linear space 
of all continuous linear map on $W$
and with values in $Z$
and set
$\mc{L}(Z)
\coloneqq
\mc{L}(Z,Z)$
and
$Z^{*}
\coloneqq
\mc{L}(Z,\K)$.
If $Y$ is a topological space 
we let $\cc{cs}{Y,Z}$ denote 
the linear space
of all continuous maps
$f:Y\to Z$
with compact support.
If $Z\in Hlcs$ and $Y$ is locally compact 
we denote by 
$\mf{L}_{1}(Y,Z,\mu)$
the linear space 
of all maps on $Y$
and with values in $Z$
which are essentially 
$\mu-$integrable
in the sense described
in \cite[Ch. $6$]{IntBourb}. 
Moreover
for any family
$\{Z_{x}\}_{x\in X}$
of linear spaces
and
for all $x\in X$
set
$\Pr_{x}:
\prod_{y\in X}Z_{y}
\ni
f\mapsto f(x) 
\in
Z_{x}$
and
$\imath_{x}:
Z_{x}
\to
\prod_{y\in X}Z_{y}
$
such that
for all $x\neq y$
and $z_{x}\in Z_{x}$
$\Pr_{y}\circ\imath_{x}(z_{x})=\ze_{y}$,
while
$\Pr_{x}\circ\imath_{x}=Id_{x}$.
Let us set
\begin{equation*}
\lr{\cdot}{\cdot}:
End(\mc{H})
\times
\mc{H}
\ni
(A,v)
\mapsto
A(v)
\in
\mc{H},
\end{equation*}
and for all $x\in X$
$$
\lr{\cdot}{\cdot}_{x}:
End\left(\mf{E}_{x}\right)
\times
\mf{E}_{x}
\ni
(A,v)
\mapsto
A(v)\in
\mf{E}_{x}.
$$
\begin{definition}
\label{12042101}
We call $\mf{Q}$ a consistent class of data 
if 
$\mf{Q}=\lr{X,Y,\mu,\{\mf{E}_{x}\}_{x\in X}}
{\{\tau_{x}\}_{x\in X},\{\n_{x}\}_{x\in X},
\{Q_{x}\}_{x\in X},\lr{\mc{H}}{\mf{T}}}$
where
\begin{enumerate}
\item
$X$ is a set,  
$Y$ is a locally compact space
and $\mu$ is a Radon measure on $Y$;
\item
$\{\mf{E}_{x}\}_{x\in X}$ 
is a family of $Hlcs$;
\item
$\{\tau_{x}\}_{x\in X}$ is a family of topologies
such that
$\lr{\mc{L}(\mf{E}_{x})}{\tau_{x}}
\in Hlcs$,
$\forall x\in X$;
\item
$\{\n_{x}\}_{x\in X}$ is a family 
such that
$\n_{x}\coloneqq\{\nu_{j_{x}}^{x}
\,\vert\, j_{x}\in J_{x}\}$
is a fundamental set of seminorms
of $\mf{E}_{x}$,
$\forall x\in X$;
\item
$\{Q_{x}\}_{x\in X}$ is a family
such that
$Q_{x}\coloneqq\{q_{\alpha_{x}}^{x}
\,\vert\, \alpha_{x}\in A_{x}\}$
is a fundamental set of seminorms
of
$\lr{\mc{L}(\mf{E}_{x})}{\tau_{x}}$,
$\forall x\in X$;
\item
$\lr{\mc{H}}{\mf{T}}\in Hlcs$
such that
\begin{itemize}
\item
$\mc{H}
\subseteq
\prod_{x\in X}\mf{E}_{x}$
as linear spaces;
\item
$\imath_{x}(\mf{E}_{x})
\subset\mc{H}$,
for all $x\in X$;
\item
$\Pr_{x}\in
\mc{L}\left(
\lr{\mc{H}}{\mf{T}},
\mf{E}_{x}
\right)$
and
$\imath_{x}\in
\mc{L}\left(\mf{E}_{x},
\lr{\mc{H}}{\mf{T}}\right)$,
for all
$x\in X$;
\item
$\exists\,\A\subseteq
\prod_{x\in X}\mc{L}(\mf{E}_{x})$
linear space
such that
\begin{enumerate}
\item
$\theta(\A)\up\mc{H}
\subseteq
\mc{L}(\lr{\mc{H}}{\mf{T}})$,
\item
$\imath_{x}
(\mc{L}(\mf{E}_{x}))
\subseteq\A$
for all $x\in X$;
\end{enumerate}
\end{itemize}
\end{enumerate}
where $\theta$ is defined in Def. \ref{13021401}.
We call $X$ the base of $\mf{Q}$, 
$Y$ the locally compact space of $\mf{Q}$ 
and $\mu$ the Radon measure of $\mf{Q}$. 
Moreover we call $\{\mf{E}_{x}\}_{x\in X}$ 
the primary family underlying $\mf{Q}$,
while we call $\{\tau_{x}\}_{x\in X}$
the secondary family underlying $\mf{Q}$.
We call $\mf{Q}$ entire if $\mc{H}=\prod_{x\in X}\mf{E}_{x}$.
\end{definition}
In the present section let 
$\mf{Q}=\lr{X,Y,\mu,\{\mf{E}_{x}\}_{x\in X}}
{\{\tau_{x}\}_{x\in X},\{\n_{x}\}_{x\in X},
\{Q_{x}\}_{x\in X},\lr{\mc{H}}{\mf{T}}}$
be a fixed consistent class of data.
\begin{definition}
\label{12182503}
Let
$\mf{W}
\coloneqq
\lr{\lr{\mf{M}}{\gamma}}{\rho,X,\mf{R}}$
be
a bundle of 
$\Omega-$spaces
such that
for all $x\in X$
$$
\mf{M}_{x}
\subseteq
\mf{L}_{1}(Y,
\lr{\mc{L}(\mf{E}_{x})}{\tau_{x}};\mu).
$$
Set
\begin{equation}
\label{14162503}
\begin{cases}
\blacksquare_{\mu}:
\prod_{x\in X}
\mf{L}_{1}(Y,
\lr{\mc{L}(\mf{E}_{x})}{\tau_{x}};\mu)
\times
\prod_{x\in X}
\mf{E}_{x}
\to
\prod_{x\in X}
\mf{E}_{x}
\\
\blacksquare_{\mu}
(H,v)(x)
\coloneqq
\lr{\int_{\R^{+}}
H(x)(s)\,
d\mu(s)}{v(x)}_{x}
\in
\mf{E}_{x}.
\end{cases}
\end{equation}
\end{definition}
\begin{remark}
\label{14092503}
Let
$\lr{\mf{V},\mf{W}}{X,\R^{+}}$
be
a
$\left(\Theta,\mc{E}\right)-$structure
satisfying
\eqref{18470109}
and
$\mc{O}\subseteq\Gamma(\rho)$,
$\mc{D}\subseteq\Gamma(\pi)$.
Then
\begin{equation}
\label{LD}
\ms{LD}(\mc{O},\mc{D})
\Leftrightarrow
(\forall\lambda>0)
\left(
\blacksquare_{\mu_{\lambda}}
\left(\mc{O},\mc{D}\right)
\subseteq
\Gamma(\pi)
\right).
\end{equation}
Similarly for all $x\in X$
\begin{equation}
\label{LDx}
\ms{LD}_{x}(\mc{O},\mc{D})
\Leftrightarrow
(\forall\lambda>0)
\left(
\blacksquare_{\mu_{\lambda}}
\left(\Gamma_{\mc{O}}^{x}(\rho)
,\Gamma_{\mc{D}}^{x}(\pi)
\right)
\subseteq
\Gamma^{x}(\pi)
\right).
\end{equation}
\end{remark}
\begin{definition}
[
$\ms{U}-$Spaces
]
\label{14302503}
$\mf{G}$
is a 
$\ms{U}-$space
with respect to
$\{\lr{\mc{L}(\mf{E}_{x})}
{\tau_{x}}\}_{x\in X}$,
$\mf{T}$
and
$D$
iff
\begin{enumerate}
\item
$\mf{G}\in Hlcs$;
\item
$\mf{G}
\subset
\mc{L}\left(\lr{\mc{H}}{\mf{T}}\right)$
as linear spaces;
\item
$D\subseteq\mc{H}$;
\item
$(\forall T\in lcp)
\bigl(\exists\,
\Psi_{T}\in
End\bigl(
End(\mc{H})^{T},
\prod_{x\in X}
End(\mf{E}_{x})^{Y}
\bigr)
\bigr)
(\forall\nu\in Radon(T))$
$$
\Psi_{T}\bigl(
\mf{L}_{1}(T,\mf{G},\nu)
\bigr)
\subseteq
\prod_{x\in X}
\mf{L}_{1}\left(T,
\lr{\mc{L}(\mf{E}_{x})}{\tau_{x}};\nu\right),
$$
and
$\forall\ov{F}
\in
\mf{L}_{1}(T,\mf{G},\nu)
$,
$\forall v\in D$.
$\forall x\in X$
\begin{equation}
\label{17432203}
\boxed{
\lr{\int
\Psi_{T}(\ov{F})(x)(s)
\,d\nu(s)}{v(x)}_{x}
=
\lr{\int\ov{F}(s)\,d\nu(s)}{v}(x)
}
\end{equation}
\end{enumerate}
\end{definition}
The reason of introducing the 
concept of
$\ms{U}-$spaces
will be clarified by the following
\begin{proposition}
\label{14492503}
Let
$\lr{\mf{V},\mf{W}}{X,\R^{+}}$
be
a
$\left(\Theta,\mc{E}\right)-$structure
satisfying
\eqref{18470109},
and
let
$\mf{G}$
be
a 
$\ms{U}-$space
with respect to
$\{\mc{L}_{S_{x}}(\mf{E}_{x})\}_{x\in X}$,
$\mf{T}$
and $\mc{D}$.
Then
$\forall\lambda>0$,
$\ov{F}
\in
\mf{L}_{1}(\R^{+},\mf{G},\mu_{\lambda})$,
$v\in\mc{D}$
\begin{equation}
\label{15102503}
\blacksquare_{\mu_{\lambda}}
(\Psi_{\R^{+}}(\ov{F}),v)
=
\lr{\int\ov{F}(s)\,d\mu_{\lambda}(s)}{v}.
\end{equation}
Moreover
if
$\exists\,\mc{F}\subset
\bigcap_{\lambda>0}
\mf{L}_{1}(\R^{+},\mf{G},\mu_{\lambda})$
such that
$\Psi_{\R^{+}}(\mc{F})=\mc{O}$
then
\begin{equation}
\label{21002103}
\boxed{
\ms{LD}(\mc{O},\mc{D})
\Leftrightarrow
(\forall\lambda>0)
(\lr{\B_{\lambda}}{\mc{D}}
\subseteq\Gamma(\pi)).}
\end{equation}
Here
$$
\B_{\lambda}
\coloneqq
\bigl\{
\int\ov{F}(s)\,d\mu_{\lambda}(s)
\,\vert\,\ov{F}\in\mc{F}
\bigr\}.
$$
\end{proposition}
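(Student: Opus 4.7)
The plan is that both assertions reduce to a careful unwinding of the defining identity \eqref{17432203} of a $\mb{U}$-space, specialized to $T = \R^{+}$ and $\nu = \mu_{\lambda}$.

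First I would establish \eqref{15102503}. Fix $\lambda > 0$, $\ov{F} \in \mf{L}_{1}(\R^{+},\mf{G},\mu_{\lambda})$ and $v \in \mc{D}$. By clause $(4)$ of Definition \ref{14302503}, $\Psi_{\R^{+}}(\ov{F}) \in \prod_{x \in X} \mf{L}_{1}(\R^{+}, \mc{L}_{S_{x}}(\mf{E}_{x}); \mu_{\lambda})$, so that by Definition \ref{12182503} the map $\blacksquare_{\mu_{\lambda}}(\Psi_{\R^{+}}(\ov{F}), v)$ is well defined through \eqref{14162503}, and reads, for each $x \in X$,
\[
\blacksquare_{\mu_{\lambda}}(\Psi_{\R^{+}}(\ov{F}), v)(x) = \lr{\int \Psi_{\R^{+}}(\ov{F})(x)(s)\,d\mu_{\lambda}(s)}{v(x)}_{x}.
\]
Since $\mc{D} \subseteq \mc{H}$ by clause $(3)$ of Definition \ref{14302503}, identity \eqref{17432203} applies to $v$ and tells us that the right-hand side above equals $\lr{\int \ov{F}(s)\,d\mu_{\lambda}(s)}{v}(x)$. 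Since this holds at every $x\in X$, comparison of the two resulting selections on $X$ yields \eqref{15102503}.

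For the equivalence \eqref{21002103}, I would combine \eqref{15102503} with the reformulation \eqref{LD} of $\mb{LD}(\mc{O}, \mc{D})$ from Remark \ref{14092503}, namely
\[
\mb{LD}(\mc{O},\mc{D}) \Leftrightarrow (\forall\lambda > 0)\bigl(\blacksquare_{\mu_{\lambda}}(\mc{O}, \mc{D}) \subseteq \Gamma(\pi)\bigr).
\]
Using the hypothesis $\Psi_{\R^{+}}(\mc{F}) = \mc{O}$, every $F \in \mc{O}$ has the form $\Psi_{\R^{+}}(\ov{F})$ for some $\ov{F} \in \mc{F}$, and conversely every such $\Psi_{\R^{+}}(\ov{F})$ lies in $\mc{O}$. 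Applying \eqref{15102503} in both directions over $\ov{F} \in \mc{F}$ and $v \in \mc{D}$ gives the set-theoretic identity
\[
\blacksquare_{\mu_{\lambda}}(\mc{O}, \mc{D}) = \lr{\B_{\lambda}}{\mc{D}},
\]
and substituting this equality into the reformulation above yields \eqref{21002103}.

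The proof is in essence a direct chase through the definitions. The nontrivial analytic content — commuting the $\mf{G}$-valued integral with evaluation at each $x \in X$, together with the measurability and integrability of $\Psi_{T}(\ov{F})(x)$ — has already been absorbed into the defining axioms of a $\mb{U}$-space; consequently there is no real obstacle to overcome here. The one bookkeeping point that warrants explicit verification is the equality $\blacksquare_{\mu_{\lambda}}(\mc{O},\mc{D}) = \lr{\B_{\lambda}}{\mc{D}}$ of subsets of $\prod_{x \in X}\mf{E}_{x}$, which, as indicated, is an immediate consequence of \eqref{15102503} together with the surjectivity hypothesis $\Psi_{\R^{+}}(\mc{F}) = \mc{O}$.
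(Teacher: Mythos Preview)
Your proposal is correct and follows exactly the same approach as the paper's proof, which simply cites \eqref{17432203} for \eqref{15102503} and then \eqref{15102503} together with Remark \ref{14092503} for \eqref{21002103}. Your version is just a more detailed unfolding of those two citations.
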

\begin{proof}
\eqref{15102503}
follows by
\eqref{17432203},
while
\eqref{21002103}
follows
by
\eqref{15102503}
and Rmk.
\ref{14092503}.
\end{proof}
\begin{remark}
\label{13390104}
In particular
if
$\exists\,\mc{F}\subset
\bigcap_{\lambda>0}
\mf{L}_{1}(\R^{+},\mf{G},\mu_{\lambda})$
such that
$\Psi_{\R^{+}}(\mc{F})=\mc{O}$
then
$$
\lr{\mf{G}}{\mc{D}}
\subseteq
\Gamma(\pi)
\Rightarrow
\ms{LD}(\mc{O},\mc{D}).
$$
More in general
if
$\exists\,\mf{G}_{0}$
complete
subspace
of $\mf{G}$
and
$\exists\,\mc{F}\subset
\bigl\{
\ov{F}\in
\bigcap_{\lambda>0}
\mf{L}_{1}(\R^{+},\mf{G},\mu_{\lambda})
\,\vert\,
\ov{F}(\R^{+})\subseteq\mf{G}_{0}
\bigr\}$
such that
$\Psi_{\R^{+}}(\mc{F})=\mc{O}$
then
$$
\lr{\mf{G}_{0}}{\mc{D}}
\subseteq
\Gamma(\pi)
\Rightarrow
\ms{LD}(\mc{O},\mc{D}).
$$
\end{remark}
Thus the $\ms{U}$
property
expressed by
\eqref{17432203}
is an important tool 
for ensuring
the 
satisfaction
of the $\ms{LD}$. 
For this reason
the 
remaining of the present section
will be dedicated to the construction
of a space $\mf{G}$,
Def. \eqref{10221801},
which is a 
$\ms{U}-$space,
see
Thm. \ref{15332203}
and
Cor. \ref{18491004}
for the
$\ms{LD}(\mc{O},\mc{D})$.
\begin{definition}
\label{13021401}
Set
$$
\begin{cases}
\chi_{\mc{H}}:
End(\mc{H})
\to
\prod_{x\in X}End(\mf{E}_{x}),
\\
(\forall x\in X)
(\forall w\in
End(\mc{H}))
((\Pr_{x}\circ\chi_{\mc{H}})(w)
=\Pr_{x}\circ w\circ\imath_{x}),
\\
\chi\coloneqq
\chi_{\prod_{x\in X}\mf{E}_{x}}.
\end{cases}
$$
Well defined indeed by construction
$\imath_{x}(\mf{E}_{x})
\subset\mc{H}$,
for all $x\in X$.
Finally set
$$
\begin{cases}
\theta:
\prod_{x\in X}End(\mf{E}_{x})
\to
End\left(\prod_{x\in X}\mf{E}_{x}\right),
\\
(\forall x\in X)
(\forall u\in
\prod_{x\in X}End(\mf{E}_{x}))
(\Pr_{x}\circ\theta(u)=\Pr_{x}(u)\circ\Pr_{x}),
\\
\theta_{\mc{H}}:
Im(\chi_{\mc{H}})
\ni
u\mapsto
\theta(u)\up\mc{H}.
\end{cases}
$$
Well-posed by applying 
\cite[Prop. $4$, $n^{\circ}5$,
$\S 1$,Ch. $2$]{BourA1}.
\end{definition}
\begin{remark}
\label{10211801}
$(\forall x\in X)
(\forall u\in
\prod_{x\in X}End(\mf{E}_{x}))$
we have
$(\Pr_{x}\circ\theta(u)\circ\imath_{x}
=
\Pr_{x}(u))$.
\end{remark}
\begin{proposition}
\label{14152703}
The space
$\prod_{x\in X}\mf{E}_{x}$
with the
product topology
satisfies
the request
for the space
$\lr{\mc{H}}{\mf{T}}$
in Def. \ref{12042101}
with the choice
$\A=\prod_{x\in X}\mc{L}(\mf{E}_{x})$.
\end{proposition}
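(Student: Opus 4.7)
The plan is to verify each of the six bulleted requirements in Notations \ref{12042101} in sequence, with $\mc{H} \doteqdot \prod_{x\in X}\mf{E}_{x}$, $\mf{T}$ the product locally convex topology, and $\A \doteqdot \prod_{x\in X}\mc{L}(\mf{E}_{x})$. Nothing deep is needed: every requirement reduces to the universal property of the product topology, so the proof amounts to book-keeping rather than analysis.

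First I would observe that $\lr{\mc{H}}{\mf{T}}\in Hlcs$, since an arbitrary product of Hausdorff locally convex spaces carries a canonical $Hlcs$ structure (the initial topology with respect to the family $\{\Pr_{x}\}_{x\in X}$). The inclusion $\mc{H}\subseteq \prod_{x\in X}\mf{E}_{x}$ is an equality, so trivial, and $\imath_{x}(\mf{E}_{x})\subset \mc{H}$ is immediate from the definition of $\imath_{x}$ in Notations \ref{12042101}. Next, $\Pr_{x}\in \mc{L}\left(\lr{\mc{H}}{\mf{T}},\mf{E}_{x}\right)$ holds by the very definition of $\mf{T}$ as the initial topology making each $\Pr_{x}$ continuous; while $\imath_{x}\in \mc{L}\left(\mf{E}_{x},\lr{\mc{H}}{\mf{T}}\right)$ follows from the universal property of the product topology, since $\Pr_{y}\circ\imath_{x}=\ze_{y}$ for $y\neq x$ and $\Pr_{x}\circ\imath_{x}=Id_{\mf{E}_{x}}$ are all continuous.

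Then I would turn to the two conditions on $\A$. For the first, pick $u\in \A=\prod_{x\in X}\mc{L}(\mf{E}_{x})$; I need to show $\theta(u)\in\mc{L}(\mc{H})$. By Definition \ref{13021401}, for every $x\in X$ one has $\Pr_{x}\circ\theta(u)=\Pr_{x}(u)\circ\Pr_{x}$, which is a composition of continuous linear maps (the $x$-th component $\Pr_{x}(u)\in\mc{L}(\mf{E}_{x})$ composed with the continuous projection $\Pr_{x}$). Invoking again the universal property of the product, this forces $\theta(u)$ to be continuous from $\lr{\mc{H}}{\mf{T}}$ into itself, so $\theta(\A)\up\mc{H}=\theta(\A)\subseteq \mc{L}\left(\lr{\mc{H}}{\mf{T}}\right)$. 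For the second condition, let $B\in\mc{L}(\mf{E}_{x})$; then $\imath_{x}(B)$ is the element of $\prod_{y\in X}\mc{L}(\mf{E}_{y})^{}$ whose $x$-component is $B$ and whose $y$-component for $y\neq x$ is the zero map $\ze_{y}\in\mc{L}(\mf{E}_{y})$. Hence $\imath_{x}(\mc{L}(\mf{E}_{x}))\subseteq \A$.

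The only ``obstacle'' I foresee is purely notational, namely keeping straight the two competing meanings of $\imath_{x}$ used in this subsection, one taking values in $\mc{H}$ viewed as a subset of $\prod_{x\in X}\mf{E}_{x}$, the other taking values in the product of endomorphism spaces $\prod_{x\in X}\mc{L}(\mf{E}_{x})$ used to define $\A$; once that is disentangled the verification is mechanical. No genuine analytic content is required beyond the initial-topology characterisation of continuity into and out of a product.
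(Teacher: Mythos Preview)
Your proposal is correct and follows essentially the same approach as the paper: both verify the continuity of $\Pr_{x}$, $\imath_{x}$, and $\theta(u)$ via the universal property of the product topology, and note that condition $(6b)$ is trivial. The only cosmetic difference is that the paper phrases the continuity of $\imath_{x}$ in terms of nets rather than invoking the universal property directly, but this amounts to the same argument.
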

\begin{proof}
$\Pr_{x}\in\mc{L}\left(
\prod_{y\in X}\mf{E}_{y},
\mf{E}_{x}
\right)$
by definition
of the product topology,
moreover
$\imath_{x}\in\mc{L}\left(
\mf{E}_{x},
\prod_{y\in X}\mf{E}_{y}
\right)$.
Indeed 
$\imath_{x}$ is clearly linear
and
by considering that
for any net
$\{f^{\alpha}\}_{\alpha\in D}$
and any
$f$
in
$\prod_{y\in X}\mf{E}_{y}$,
$\lim_{\alpha\in D}f^{\alpha}=f$
if and only if
$\lim_{\alpha\in D}f^{\alpha}(y)=f(y)$
for all $y\in X$,
we deduce 
that
for any net
$\{f_{x}^{\alpha}\}_{\alpha\in D}$
and any
$f_{x}$
in
$\mf{E}_{x}$
such that
$\lim_{\alpha\in D}f_{x}^{\alpha}=f_{x}$
we have
$\lim_{\alpha\in D}
\imath_{x}(f_{x}^{\alpha})
=\imath_{x}(f_{x})$,
so 
$\imath_{x}$
is continuous.
Let $x\in X$
and
$u\in\prod_{x\in X}\mc{L}(\mf{E}_{x})$
so
$
\Pr_{x}(u)\circ\Pr_{x}
\in\mc{L}\left(
\prod_{y\in X}\mf{E}_{y},
\mf{E}_{x}
\right)$,
so $(6a)$
follows
by the definition
of $\theta$ and 
\cite[Prp. $4$, $No 3$, $\S 2$]{BourGT}.
Finally $(6b)$
is trivial.
\end{proof}
The following is the main structure of the
present section.
For the definition and properties
of
locally convex final
topologies 
see 
\cite[$No 4$, $\S 4$]{BourTVS}.
\begin{definition}
\label{10221801}
Set for all
$x\in X$
$$
\begin{cases}
G\coloneqq
\theta(\A)
\up\mc{H},
\\
g_{x}:
\mc{L}(\mf{E}_{x})
\ni f_{x}\mapsto
\imath_{x}\circ f_{x}\circ\Pr_{x}
\in
End\left(
\prod_{y\in X}\mf{E}_{y}
\right)
\\
h_{x}:
\mc{L}(\mf{E}_{x})
\ni f_{x}\mapsto
g_{x}(f_{x})\up\mc{H}.
\end{cases}
$$
We shall denote by
$\mf{G}$
and call the locally convex space relative to the 
consistent class of data $\mf{Q}$,
the lcs
$G$ provided with the 
locally convex final
topology 
of the family
of topologies
$\{\tau_{x}\}_{x\in X}$
of the family
$\{\mc{L}(\mf{E}_{x})\}_{x\in X}$,
for the family
of linear mappings
$\{h_{x}\}_{x\in X}$.
\end{definition}
\begin{definition}
\label{11441302}
Set in 
$
\prod_{x\in X}End(\mf{E}_{x})
$
the following 
binary operation
$\circ$.
For all $x\in X$ we set
$\Pr_{x}(f\circ h)\coloneqq f(x)\circ h(x)$.
\end{definition}
It is easy to verify that
$\lr{\prod_{x\in X}End(\mf{E}_{x})}{+,\circ}$
is an algebra over $\K$
as well as
$\lr{\prod_{x\in X}
\mc{L}(\mf{E}_{x})}{+,\circ}$.
\begin{lemma}
\label{17141401}
$G\subset
\mc{L}\left(\lr{\mc{H}}{\mf{T}}\right)$,
moreover 
$\theta$
is a morphism of algebras.
Finally
if 
$\A$ is a subalgebra
of
$\prod_{x\in X}\mc{L}(\mf{E}_{x})$
then
$G$
is a subalgebra
of
$\mc{L}\left(\lr{\mc{H}}{\mf{T}}\right)$.
\end{lemma}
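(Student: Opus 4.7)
The plan is to handle the three assertions in order, each being essentially a direct consequence of the defining formula $\Pr_x \circ \theta(u) = \Pr_x(u) \circ \Pr_x$ together with the hypotheses fixed in Notations \ref{12042101}.

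First, the inclusion $G \subset \mc{L}(\lr{\mc{H}}{\mf{T}})$ is immediate: by Definition \ref{10221801} one has $G = \theta(\A)\up\mc{H}$, and the last clause of item $(6)$ in Notations \ref{12042101} explicitly assumes $\theta(\A)\up\mc{H} \subseteq \mc{L}(\lr{\mc{H}}{\mf{T}})$. So nothing is to be proved here beyond citing the hypothesis.

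Second, I verify that $\theta$ is an algebra morphism on $\prod_{x\in X}\mathrm{End}(\mf{E}_x)$, where the product carries the pointwise operations (addition, scalar multiplication, and composition $\circ$ as in Definition \ref{11441302}). By the universal property of the product, an element of $\mathrm{End}(\prod_x \mf{E}_x)$ is uniquely determined by the family of its projections $\Pr_x \circ (\cdot)$. So it suffices to check the three identities after applying $\Pr_x$. Linearity follows from
\begin{equation*}
\Pr_x \circ \theta(\lambda u + v) = \Pr_x(\lambda u + v)\circ\Pr_x = \lambda\Pr_x(u)\circ\Pr_x + \Pr_x(v)\circ\Pr_x = \Pr_x\circ(\lambda\theta(u)+\theta(v)).
\end{equation*}
Multiplicativity is the key calculation:
\begin{equation*}
\Pr_x \circ \theta(u\circ v) = \Pr_x(u)\circ\Pr_x(v)\circ\Pr_x,
\end{equation*}
while
\begin{equation*}
\Pr_x \circ (\theta(u)\circ\theta(v)) = (\Pr_x\circ\theta(u))\circ\theta(v) = \Pr_x(u)\circ\Pr_x\circ\theta(v) = \Pr_x(u)\circ\Pr_x(v)\circ\Pr_x,
\end{equation*}
so the two agree. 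Hence $\theta(u\circ v) = \theta(u)\circ\theta(v)$.

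Third, assume $\A$ is a subalgebra of $\prod_{x\in X}\mc{L}(\mf{E}_x)$. By the morphism property just proved, $\theta(\A)$ is a subalgebra of $\mathrm{End}(\prod_x\mf{E}_x)$. The assumption $\theta(\A)\up\mc{H}\subseteq\mc{L}(\lr{\mc{H}}{\mf{T}})$ implies in particular that $\mc{H}$ is invariant under every $\theta(u)$ with $u\in\A$, so the restriction map $\theta(\A)\ni T\mapsto T\up\mc{H}\in\mc{L}(\mc{H})$ is well-defined and, thanks to this invariance, commutes with sums, scalar multiplication, and composition. Therefore $G = \theta(\A)\up\mc{H}$ is a subalgebra of $\mc{L}(\lr{\mc{H}}{\mf{T}})$. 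None of the three steps presents a real obstacle; the only point where one must be slightly careful is that the restricted composition in $\mc{L}(\mc{H})$ really matches the restriction of the composition computed in $\mathrm{End}(\prod_x\mf{E}_x)$, which is precisely what the invariance of $\mc{H}$ under $\theta(\A)$ guarantees.
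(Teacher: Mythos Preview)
Your proof is correct and follows essentially the same route as the paper's: cite hypothesis $(6a)$ for the inclusion, verify $\theta(u\circ v)=\theta(u)\circ\theta(v)$ by projecting onto each coordinate via $\Pr_x\circ\theta(u)=\Pr_x(u)\circ\Pr_x$, and deduce the subalgebra statement from the morphism property. You are in fact slightly more careful than the paper in the third step, where you make explicit that invariance of $\mc{H}$ under $\theta(\A)$ is what guarantees that restriction commutes with composition; the paper leaves this implicit.
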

\begin{proof}
The first sentence
is immediate 
by
$(6a)$
in Def. \ref{12042101}.
Let $u,v\in\prod_{x\in X}\mc{L}(\mf{E}_{x})$
thus for all $x\in X$
\begin{alignat*}{1}
\Pr_{x}\circ\theta(u\circ v)
&=
(u(x)\circ v(x))\circ\Pr_{x}
\\
&=
u(x)\circ\Pr_{x}\circ\theta(v)
\\
&=
\Pr_{x}\circ\theta(u)\circ\theta(v),
\end{alignat*}
so
$\theta(u\circ v)=\theta(u)\circ\theta(v)$,
similarly 
we can show that $\theta$ is linear
by
the linearity
of $\Pr_{x}$ for all $x\in X$.
Thus
$\theta$ is a morphism of algebras,
so
the last sentence of the
statement
follows
by the first one and
the fact that
$\A$
is an algebra.
\end{proof}
\begin{proposition}
\label{14271401}
$\theta_{\mc{H}}
\circ
\chi_{\mc{H}}
(w)
=
w\circ\imath_{x}\circ\Pr_{x}
\up\mc{H}$
for all $w\in End(\mc{H})$,
Moreover
$\theta_{\mc{H}}
(Im(\chi_{\mc{H}}))
\subset 
Dom(\chi_{\mc{H}})$
and
$\chi_{\mc{H}}\circ\theta_{\mc{H}}
=Id\up
Im(\chi_{\mc{H}})$.
\end{proposition}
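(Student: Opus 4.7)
The plan is to unwind both identities via the three defining relations
$\Pr_{x} \circ \theta(u) = \Pr_{x}(u) \circ \Pr_{x}$, $\Pr_{x} \circ \chi_{\mc{H}}(w) = \Pr_{x} \circ w \circ \imath_{x}$, and $\Pr_{x} \circ \imath_{x} = Id_{\mf{E}_{x}}$, together with the standing inclusion $\imath_{x}(\mf{E}_{x}) \subseteq \mc{H}$, which makes every composition like $w \circ \imath_{x}$ or $\theta(u) \circ \imath_{x}$ legal as a map into $\mc{H}$.

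For the identity involving $\theta_{\mc{H}} \circ \chi_{\mc{H}}$, I would fix $w \in End(\mc{H})$, $v \in \mc{H}$ and any $x \in X$ and compute the $x$-th coordinate:
\begin{equation*}
\Pr_{x}\bigl(\theta_{\mc{H}}(\chi_{\mc{H}}(w))(v)\bigr) = \Pr_{x}\bigl(\theta(\chi_{\mc{H}}(w))(v)\bigr) = \bigl(\Pr_{x}(\chi_{\mc{H}}(w))\bigr)(\Pr_{x} v) = \Pr_{x}\bigl(w(\imath_{x}(\Pr_{x} v))\bigr),
\end{equation*}
using in turn the definition of $\theta_{\mc{H}}$ as the restriction of $\theta$ to $\mc{H}$, the defining relation for $\theta$, and the one for $\chi_{\mc{H}}$. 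This is the asserted formula, understood coordinate-wise in $x$: the $x$-th coordinate of $\theta_{\mc{H}}(\chi_{\mc{H}}(w))$ agrees with that of the endomorphism $w \circ \imath_{x} \circ \Pr_{x} \up \mc{H}$, for every $x$.

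Next, for $u = \chi_{\mc{H}}(w) \in Im(\chi_{\mc{H}})$, once we know $\theta_{\mc{H}}(u) \in Dom(\chi_{\mc{H}}) = End(\mc{H})$, the identity $\chi_{\mc{H}} \circ \theta_{\mc{H}} = Id \up Im(\chi_{\mc{H}})$ drops out of
\begin{equation*}
\Pr_{x} \circ \chi_{\mc{H}}(\theta_{\mc{H}}(u)) = \Pr_{x} \circ \theta_{\mc{H}}(u) \circ \imath_{x} = \Pr_{x} \circ \theta(u) \circ \imath_{x} = \Pr_{x}(u) \circ \Pr_{x} \circ \imath_{x} = \Pr_{x}(u),
\end{equation*}
where the first equality is the definition of $\chi_{\mc{H}}$, the second uses $\imath_{x}(\mf{E}_{x}) \subseteq \mc{H}$ to identify $\theta_{\mc{H}}(u) \circ \imath_{x}$ with $\theta(u) \circ \imath_{x}$, the third is the defining relation for $\theta$, and the last is the normalization $\Pr_{x} \circ \imath_{x} = Id_{\mf{E}_{x}}$. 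Since $x$ is arbitrary, $\chi_{\mc{H}}(\theta_{\mc{H}}(u)) = u$.

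The main obstacle is the remaining inclusion $\theta_{\mc{H}}(Im(\chi_{\mc{H}})) \subseteq End(\mc{H})$: one must show that for $u = \chi_{\mc{H}}(w)$ and $v \in \mc{H}$, the element $\theta(u)(v)$, whose $y$-th coordinate is $\Pr_{y}(w(\imath_{y}(\Pr_{y} v)))$, still belongs to $\mc{H}$. Linearity of the restricted map is automatic from that of $w$, $\imath_{y}$ and $\Pr_{y}$, but the set-theoretic closure of $\mc{H}$ under this diagonal action is nontrivial in general. In the product-topology case $\mc{H} = \prod_{x\in X} \mf{E}_{x}$ of Proposition \ref{14152703} it is immediate; more generally one would extract it from the standing hypothesis $\theta(\A) \up \mc{H} \subseteq \mc{L}(\lr{\mc{H}}{\mf{T}})$ of Notations \ref{12042101}, after checking that $\chi_{\mc{H}}(w)$ lies in $\A$ for each $w$ under consideration.
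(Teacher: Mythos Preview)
Your coordinate-wise computations for both identities are correct and coincide with the paper's proof: the paper unwinds $\Pr_x\circ\theta_{\mc{H}}\circ\chi_{\mc{H}}(w)$ via the defining relations for $\theta$ and $\chi_{\mc{H}}$ exactly as you do, and handles $\chi_{\mc{H}}\circ\theta_{\mc{H}}$ by the same chain $\Pr_x\circ\theta(u)\circ\imath_x=\Pr_x(u)\circ\Pr_x\circ\imath_x=\Pr_x(u)$.

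Regarding the inclusion $\theta_{\mc{H}}(Im(\chi_{\mc{H}}))\subset End(\mc{H})$ that you single out as the main obstacle: the paper does not supply more than you do. It simply writes ``By the first sentence and the assumption that $\imath_x(\mf{E}_x)\subset\mc{H}$ we have $\theta(Im(\chi_{\mc{H}}))\up\mc{H}\subset End(\mc{H})$'' and moves on. So your caution is warranted, but you are already at (indeed beyond) the paper's level of detail on this point; your suggested routes --- the product case of Proposition~\ref{14152703}, or falling back on the standing hypothesis $\theta(\A)\up\mc{H}\subseteq\mc{L}(\lr{\mc{H}}{\mf{T}})$ once $\chi_{\mc{H}}(w)\in\A$ --- are the natural ways to close it, and in the paper's subsequent use (Proposition~\ref{10231801}) the relevant $u$ is of the form $\imath_x(f_x)$, for which membership in $\mc{H}$ is immediate from $\imath_x(\mf{E}_x)\subset\mc{H}$.
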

\begin{proof}
Let
$w\in End(\mc{H})$
thus
for all $x\in X$
we have
$(\Pr_{x}\circ\theta_{\mc{H}}
\circ\chi_{\mc{H}})(w)
=
\Pr_{x}(\chi_{\mc{H}}(w))\circ\Pr_{x}
\up\mc{H}
=
\Pr_{x}\circ w\circ
\imath_{x}\circ\Pr_{x}\up\mc{H}$
and the first 
sentence of the statement follows.
By the first sentence 
and the assumption that 
$\imath_{x}(\mf{E}_{x})\subset\mc{H}$
we have 
$\theta(Im(\chi_{\mc{H}}))\up\mc{H}
\subset End(\mc{H})$
so $\chi_{\mc{H}}\circ\theta_{\mc{H}}$
is well set.
Moreover
for all
$x\in X$
and
$u\in Im(\chi_{\mc{H}})$
we have
$\Pr_{x}\left(
\chi_{\mc{H}}\left(\theta(u)\up\mc{H}\right)
\right)
=
\Pr_{x}\circ\theta(u)\circ\imath_{x}
=
\Pr_{x}(u)
\circ\Pr_{x}\circ\imath_{x}
=
\Pr_{x}(u)$.
\end{proof}
\begin{proposition}
\label{10231801}
Let $x\in X$, 
then
\begin{enumerate}
\item
$g_{x}=\theta\circ\imath_{x}$
so
$Im(h_{x})\subseteq G$;
\item
$h_{x}\in End(\mc{L}(\mf{E}_{x}),G)$;
\item
$\exists\,
h_{x}^{-1}:
Im(h_{x})\to\mc{L}(\mf{E}_{x})$
and
$$
\begin{cases}
h_{x}^{-1}=\Pr_{x}\circ\chi_{\mc{H}}
\up Im(h_{x}),
\\
Im(h_{x})=\{\theta(\imath_{x}(f_{x}))
\up\mc{H}
\,\vert\, f_{x}\in
\mc{L}(\mf{E}_{x})\}.
\end{cases}
$$
\end{enumerate}
\end{proposition}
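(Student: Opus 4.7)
The plan is to prove the three statements in order, leveraging the compatibility properties of $\theta$, $\Pr_x$, and $\imath_x$ established in the Notations and in Proposition \ref{14271401}.

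For (1), I will check the equality $g_x = \theta \circ \imath_x$ by testing both sides under $\Pr_y$ for arbitrary $y \in X$. Using the defining property $\Pr_y \circ \theta(u) = \Pr_y(u) \circ \Pr_y$ with $u = \imath_x(f_x)$, and recalling that $\Pr_y \circ \imath_x = \delta_{xy}\,Id_x$, both $\Pr_y \circ g_x(f_x)$ and $\Pr_y \circ \theta(\imath_x(f_x))$ reduce to $f_x \circ \Pr_x$ when $y=x$ and to the zero map otherwise. Once this equality is established, the inclusion $Im(h_x) \subseteq G$ is immediate: $h_x(f_x) = \theta(\imath_x(f_x)) \up \mc{H}$, and by hypothesis $(6b)$ of the Notations $\imath_x(\mc{L}(\mf{E}_x)) \subseteq \A$, so $\theta(\imath_x(f_x))\up \mc{H} \in \theta(\A)\up\mc{H} = G$.

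For (2), linearity of $h_x$ follows by inspection from the defining formula, since composition with a fixed linear map on either side is linear in the middle slot. Continuity is automatic from the very definition of $\mf{G}$: by Definition \ref{10221801} the topology on $\mf{G}$ is the locally convex final topology associated to the family $\{h_x\}_{x\in X}$, and each map defining such a topology is continuous by construction (cf.\ \cite[$No 4$, $\S 4$]{BourTVS}).

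For (3), I will first show $h_x$ is injective, so that the inverse on $Im(h_x)$ is well defined. If $h_x(f_x) = \ze$, then applying $\imath_x \circ f_x \circ \Pr_x$ to $\imath_x(v_x) \in \mc{H}$ gives $\imath_x(f_x(v_x)) = \ze$ for every $v_x \in \mf{E}_x$; since $\Pr_x \circ \imath_x = Id_x$ the map $\imath_x$ is injective, hence $f_x = \ze$. For the explicit formula, take any $u = \theta(\imath_x(f_x))\up\mc{H} \in Im(h_x)$; then by Proposition \ref{14271401} and the identity $\Pr_y \circ \theta(\imath_x(f_x)) = \Pr_y(\imath_x(f_x)) \circ \Pr_y$ applied at $y=x$, one has $\Pr_x \circ \chi_{\mc{H}}(u) = \Pr_x \circ u \circ \imath_x = f_x \circ \Pr_x \circ \imath_x = f_x$, which is exactly $h_x^{-1}$ on $Im(h_x)$. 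The description of $Im(h_x)$ is then just the translation of $h_x = (\theta \circ \imath_x)\up\mc{H}$ into set-builder form.

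I expect no serious obstacle: the whole proposition is a chain of bookkeeping identities between $\theta$, $\chi_{\mc{H}}$, $\Pr_x$, and $\imath_x$, already set up in Definition \ref{13021401}, Remark \ref{10211801}, and Proposition \ref{14271401}. The only mildly delicate point is making sure the restrictions to $\mc{H}$ are tracked correctly (so that $h_x$ lands in $\mc{L}(\lr{\mc{H}}{\mf{T}})$ rather than merely in $End(\mc{H})$), but this is guaranteed by Lemma \ref{17141401} together with hypothesis $(6a)$ of the Notations.
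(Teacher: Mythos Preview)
Your proof is correct and follows essentially the same approach as the paper: checking $g_x=\theta\circ\imath_x$ componentwise via $\Pr_y$, deducing $Im(h_x)\subseteq G$ from hypothesis~$(6b)$, and identifying $h_x^{-1}$ with $\Pr_x\circ\chi_{\mc{H}}\up Im(h_x)$. The only minor differences are that you establish injectivity of $h_x$ directly (by evaluating at $\imath_x(v_x)$) rather than by exhibiting a two-sided inverse as the paper does, and your continuity remark in~(2) is correct but superfluous, since the statement asks only for $h_x\in End(\mc{L}(\mf{E}_x),G)$ (linearity), not membership in $\mc{L}(\cdot,\mf{G})$.
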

\begin{proof}
$\forall y\in X$
we have
$$
Pr_{y}\circ\theta(\imath_{x}(f_{x}))
=
\Pr_{y}(\imath_{x}(f_{x}))\circ\Pr_{y}
=
\begin{cases}
\ze_{y},x\ne y
\\
f_{x}\circ\Pr_{x},x=y.
\end{cases}
$$
Moreover
$$
Pr_{y}\circ g_{x}(f_{x})
=
\Pr_{y}\circ\imath_{x}
\circ 
f_{x}
\circ\Pr_{x}
=
\begin{cases}
\ze_{y},x\ne y
\\
f_{x}\circ\Pr_{x},x=y.
\end{cases}
$$
So the 
first sentence of
statement $(1)$ follows.
Thus
$h_{x}\left(\mc{L}(\mf{E}_{x})\right)
=
g_{x}\left(\mc{L}(\mf{E}_{x})\right)
\up\mc{H}
=
\theta\left(\imath_{x}\left(
\mc{L}(\mf{E}_{x})\right)
\right)
\up\mc{H}$
so by
$(6b)$ of Def. \ref{12042101}
the
second
sentence of
statement $(1)$ follows.
Statement $(2)$
follows
by the trivial linearity
of $g_{x}$
and by the second sentence of statement
$(1)$.
Let
$f_{x}\in\mc{L}(\mf{E}_{x})$
and
$w=\imath_{x}\circ f_{x}\circ\Pr_{x}\up\mc{H}$.
Then
by the
assumption $(6)$
we have that
$w\in End(\mc{H})$,
and
$\chi_{\mc{H}}(w)=\imath_{x}(f_{x})$,
indeed
$\Pr_{x}(\chi_{\mc{H}}(w))
=
\Pr_{x}\circ\imath_{x}
\circ
f_{x}
\circ
\Pr_{x}\circ\imath_{x}
=
f_{x}
=
\Pr_{x}(\imath(f_{x}))$.
Thus
$\imath_{x}(f_{x})\in Im(\chi_{\mc{H}})$
so by Pr. \ref{14271401}
$\theta(\imath_{x}(f_{x}))\up\mc{H}
\in 
Dom(\chi_{\mc{H}})$
and 
$h_{x}^{-1}$ is well set.
Moreover
\begin{alignat*}{2}
(\Pr_{x}\circ\chi_{\mc{H}})\circ 
h_{x}(f_{x})
&
=
\Pr_{x}\circ\chi_{\mc{H}}
\circ\theta_{\mc{H}}(\imath_{x}(f_{x}))
\\
&=
\Pr_{x}(\imath_{x}(f_{x}))
=f_{x},
\end{alignat*}
where the first equality comes
by stat. $(1)$
and by
$\imath_{x}(f_{x})\in Im(\chi_{\mc{H}})$, 
while
the second by Prop. \ref{14271401}.
Finally
\begin{alignat*}{2}
g_{x}\circ\Pr_{x}\circ
\chi_{\mc{H}}(\theta(\imath_{x}(f_{x})))
&
=
g_{x}\circ\Pr_{x}(\imath_{x}(f_{x}))
\\
&=
g_{x}(f_{x})
=
\theta(\imath_{x}(f_{x})).
\end{alignat*}
Thus stat. $(3)$ follows.
\end{proof}
\begin{lemma}
\label{16141302}
If $\lr{\mc{L}(\mf{E}_{x})}{\tau_{x}}$
is a 
topological algebra
for all
$x\in X$
and
$\A$ is an algebra
then
$\mf{G}$ 
is a 
topological algebra.
\end{lemma}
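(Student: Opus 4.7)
The plan is to invoke the universal property of the locally convex final topology (see \cite[$\S 4.4$]{BourTVS}): a linear map $u\colon \mf{G}\to F$ into a locally convex space $F$ is continuous if and only if $u\circ h_x\colon \mc{L}(\mf{E}_x)\to F$ is continuous for every $x\in X$. The key additional ingredient is Lemma \ref{17141401}, by which $\theta$ is an algebra morphism and $G=\theta(\A)\up\mc{H}$ is a subalgebra of $\mc{L}(\lr{\mc{H}}{\mf{T}})$ whenever $\A$ is a subalgebra of $\prod_{x\in X}\mc{L}(\mf{E}_x)$.

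First, I would compute how $h_x$ interacts with multiplication in $\mf{G}$. Fix $v=\theta(b)\up\mc{H}\in\mf{G}$ with $b\in\A$ and consider the left-multiplication operator $L_v\colon \mf{G}\to\mf{G}$, $L_v(u)\doteqdot v\cdot u$. Using Lemma \ref{17141401} and Proposition \ref{10231801}(1), for any $f_x\in\mc{L}(\mf{E}_x)$
\begin{equation*}
L_v(h_x(f_x))=\theta(b)\cdot \theta(\imath_x(f_x))\up\mc{H}=\theta(b\circ\imath_x(f_x))\up\mc{H}.
\end{equation*}
A direct check with Definition \ref{11441302} shows $b\circ\imath_x(f_x)=\imath_x(b(x)\circ f_x)$, whence $L_v\circ h_x=h_x\circ L^x_{b(x)}$, where $L^x_{b(x)}\colon\mc{L}(\mf{E}_x)\to\mc{L}(\mf{E}_x)$ is left multiplication by $b(x)$. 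Since $\lr{\mc{L}(\mf{E}_x)}{\tau_x}$ is a topological algebra, $L^x_{b(x)}$ is continuous, and $h_x$ is continuous by the construction of $\mf{G}$, so $L_v\circ h_x$ is continuous for every $x$; the universal property then yields continuity of $L_v$. The symmetric computation $R_v\circ h_x=h_x\circ R^x_{b(x)}$ gives continuity of right multiplication $R_v$.

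Second, for joint continuity, I would use the standard description of a fundamental system of $\ze$-neighbourhoods of the locally convex final topology: the absolutely convex absorbing sets $V\subseteq\mf{G}$ such that $h_x^{-1}(V)$ is a $\ze$-neighbourhood in $\mc{L}(\mf{E}_x)$ for every $x\in X$. Given such $V$, joint continuity of multiplication in each $\lr{\mc{L}(\mf{E}_x)}{\tau_x}$ produces, for each $x$, absolutely convex $\ze$-neighbourhoods $W_x^1,W_x^2\subseteq\mc{L}(\mf{E}_x)$ with $W_x^1\cdot W_x^2\subseteq h_x^{-1}(V)$. Choosing $U^1,U^2$ to be the absolutely convex hulls of $\bigcup_x h_x(W_x^i)$ (suitably balanced so that $h_x^{-1}(U^i)$ remains a $\ze$-neighbourhood for each $x$), the identity $\theta(u)\cdot\theta(u')\up\mc{H}=\theta(u\circ u')\up\mc{H}$ combined with the fact that the $h_x$-images multiply ``diagonally'' (i.e.\ $h_x(f_x)\cdot h_y(g_y)=\delta_{xy}\,h_x(f_x\circ g_x)$) would yield $U^1\cdot U^2\subseteq V$.

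The main obstacle is precisely this last step: locally convex final topologies generally do not behave well under bilinear operations, so the passage from separate to joint continuity hinges on genuinely exploiting the algebra-morphism identity $\theta(u\circ u')=\theta(u)\circ\theta(u')$ and the diagonal multiplication of the images of distinct $h_x$'s. If the authors understand \emph{topological algebra} in the separately-continuous sense (common in parts of the literature on operator algebras of bundle type), then the first two paragraphs already finish the proof; otherwise, the joint-continuity argument above, requiring a careful choice of the neighbourhoods $U^i$, is the delicate part.
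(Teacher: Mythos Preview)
Your first two paragraphs reproduce the paper's proof essentially verbatim: the paper computes $L_{\theta(f)}\circ h_x = h_x\circ L_{f(x)}$ (and the analogous identity for right multiplication), observes that $L_{f(x)}$ is continuous because $\lr{\mc{L}(\mf{E}_x)}{\tau_x}$ is a topological algebra and $h_x$ is continuous by construction, and then invokes the universal property of the locally convex final topology (\cite[Prop.~5, $\S$4.4, Ch.~2]{BourTVS}) to conclude that $L_F$ and $R_F$ are continuous for every $F\in\mf{G}$. That is all the paper proves; its notion of ``topological algebra'' is the separately continuous one, exactly as you anticipated in your closing remark.

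Your third paragraph is therefore superfluous, and in fact the argument as written would not go through: the absolutely convex hull $U^i$ of $\bigcup_x h_x(W_x^i)$ consists only of \emph{finite} combinations of $h_x$-images, whereas a general element $\theta(f)\up\mc{H}$ with $f\in\A$ need not lie in the linear span of $\bigcup_x h_x(\mc{L}(\mf{E}_x))$ at all (the latter corresponds to elements of $\A$ supported at finitely many points of $X$). Hence $U^i$ is in general not absorbing, so not a $\ze$-neighbourhood in $\mf{G}$, and the joint-continuity step breaks down. This is precisely the well-known obstruction you allude to; the paper simply avoids it by working with separate continuity.
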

\begin{proof}
Let us set for all $F\in\mf{G}$
$L_{F}:\mf{G}\ni H\mapsto F\circ H\in\mf{G}$,
well set $\mf{G}$ being an algebra
by Lemma \ref{17141401}.
Thus for all
$x\in X$,
$f\in\A$
and
$l_{x}\in\mc{L}(\mf{E}_{x})$
\begin{alignat*}{2}
(L_{\theta(f)}\circ h_{x})
l_{x}
&=
L_{\theta(f)}
(\theta(\imath_{x}(l_{x}))
\up\mc{H})
=
\theta(f\circ\imath_{x}(l_{x}))
\up\mc{H}
\\
&=
\bigl(
\theta\circ\imath_{x}(f(x)\circ l_{x})
\bigr)
\up\mc{H}
=
\bigl(
g_{x}(f(x)\circ l_{x})
\bigr)
\up\mc{H}
\\
&=
h_{x}(f(x)\circ l_{x})
=
(h_{x}\circ L_{f(x)})l_{x},
\end{alignat*}
where
$L_{f_{x}}:\mc{L}(\mf{E}_{x})\ni 
s_{x}\mapsto f_{x}\circ s_{x}
\in\mc{L}(\mf{E}_{x})$
for all $f_{x}\in\mc{L}(\mf{E}_{x})$.
Here
the first and fourth equality follow
by Prop. \ref{10231801},
the second one by
Lemma \ref{17141401}.
Moreover by hypothesis
$L_{f(x)}$ is continuous,
while 
$h_{x}$ is continuous
by 
\cite[Prop.$5$, $No 4$, $\S 4$ Ch $2$]
{BourTVS},
so
$L_{\theta(f)}\circ h_{x}$
is linear and continuous.
Therefore
$L_{\theta(f)}$
is linear and continous
by 
\cite[Prop. $5$, $No 4$, $\S 4$ Ch $2$]
{BourTVS}.
Similarly 
$R_{F}$
is linear and continuous,
where
$R_{F}:\mf{G}\ni H\mapsto H\circ F\in\mf{G}$,
thus the statement.
\end{proof}
\begin{definition}
\label{14321401}
Set
$$
\begin{cases}
\Psi_{Y}^{\mc{H}}:
End(\mc{H})^{Y}
\to
\prod_{x\in X}
End(\mf{E}_{x})^{Y},
\\
(\Pr_{x}\circ\Psi_{Y}^{\mc{H}})(\ov{F})(s)
=
(\Pr_{x}\circ\chi_{\mc{H}})(\ov{F}(s)).
\end{cases}
$$
Moreover
set
$$
\begin{cases}
\Lambda:
\prod_{x\in X}
End(\mf{E}_{x})^{Y}
\to
\left(End\left(\prod_{x\in X}\mf{E}_{x}\right)
\right)^{Y},
\\
\Lambda(F)(s)
=
\theta(F(\cdot)(s)).
\end{cases}
$$
$\forall\ov{F}\in End(\mc{H})^{Y}$,
$\forall 
F\in
\prod_{x\in X}
End(\mf{E}_{x})^{Y}
$,
$\forall x\in X$
and $\forall s\in Y$,
where
$F(\cdot)(s)
\in\prod_{y\in X}End(\mf{E}_{x})$
such that
$\Pr_{x}(F(\cdot)(s))
=
F(x)(s)$.
\par
Finally
set
$$
\Lambda_{\A}^{Y}
\coloneqq
\Lambda\up
\bigl\{
F\in
\prod_{x\in X}
\mc{L}(\mf{E}_{x})^{Y}
\,\vert\,
(\forall s\in Y)
(F(\cdot)(s)\in\A)
\bigr\}.
$$
\end{definition}
\begin{proposition}
\label{15111401}
Let $x\in X$ and $s\in Y$,
then
for all $\ov{F}\in End(\mc{H})^{Y}$
\begin{enumerate}
\item
$(\Pr_{x}\circ\Psi_{Y}^{\mc{H}})(\ov{F})(s)
=
\Pr_{x}\circ\ov{F}(s)\circ\imath_{x}$;
\item
$\Psi_{Y}^{\mc{H}}
\circ\Lambda_{\A}^{Y}=Id$;
\item
$Im(\Lambda_{\A}^{Y})
\subset
G^{Y}$.
\end{enumerate}
\end{proposition}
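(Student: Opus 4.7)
\textbf{Plan of proof for Proposition \ref{15111401}.}
All three assertions are unwindings of the definitions, so the plan is largely bookkeeping: I would expand $\Psi_Y^{\mc{H}}$ and $\Lambda_{\A}^Y$ via Definition \ref{14321401}, then apply Remark \ref{10211801} together with condition $(6a)$ of Notations \ref{12042101}. The one mild subtlety is that $\Lambda_{\A}^Y(F)(s)$ is literally an element of $End(\prod_{y\in X}\mf{E}_y)$, but in composing with $\Psi_Y^{\mc{H}}$ we implicitly restrict to $\mc{H}$; part (3) is exactly what legitimizes this restriction.

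For part (1), starting from the defining identity $(\Pr_{x}\circ\Psi_{Y}^{\mc{H}})(\ov{F})(s)=(\Pr_{x}\circ\chi_{\mc{H}})(\ov{F}(s))$ of Definition \ref{14321401} and then applying the defining identity $(\Pr_{x}\circ\chi_{\mc{H}})(w)=\Pr_{x}\circ w\circ\imath_{x}$ of Definition \ref{13021401} with $w=\ov{F}(s)$, one obtains $\Pr_{x}\circ\ov{F}(s)\circ\imath_{x}$ immediately.

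For part (2), let $F\in Dom(\Lambda_{\A}^{Y})$, so $F(\cdot)(s)\in\A$ for every $s\in Y$. Using part (1) and then the definition $\Lambda_{\A}^{Y}(F)(s)=\theta(F(\cdot)(s))$, I compute for each $x\in X$ and $s\in Y$
\begin{equation*}
(\Pr_{x}\circ\Psi_{Y}^{\mc{H}})(\Lambda_{\A}^{Y}(F))(s)
=\Pr_{x}\circ\theta(F(\cdot)(s))\circ\imath_{x}
=\Pr_{x}(F(\cdot)(s))=F(x)(s),
\end{equation*}
where the middle equality is exactly Remark \ref{10211801}. Since this holds for every $x$ and $s$, $\Psi_{Y}^{\mc{H}}\circ\Lambda_{\A}^{Y}=Id$. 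For part (3), the assumption $F(\cdot)(s)\in\A$ together with assumption $(6a)$ in Notations \ref{12042101} gives $\theta(F(\cdot)(s))\up\mc{H}\in\theta(\A)\up\mc{H}=G$, so $\Lambda_{\A}^{Y}(F)(s)\in G$ for every $s\in Y$; hence $\Lambda_{\A}^{Y}(F)\in G^{Y}$.

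Since no analytic ingredient is required—only the three definitions, Remark \ref{10211801}, and the algebraic hypothesis $(6a)$—there is no substantive obstacle; the only thing to watch is the implicit identification of $\theta(\A)$ with its restriction $\theta(\A)\up\mc{H}=G$ whenever we feed an element of $Im(\Lambda_{\A}^{Y})$ into $\Psi_{Y}^{\mc{H}}$, which is justified precisely by part (3).
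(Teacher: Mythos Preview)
Your proposal is correct and matches the paper's approach: the paper likewise declares (1) and (3) trivial and proves (2) by the same chain $(\Pr_{x}\circ\Psi_{Y}^{\mc{H}}\circ\Lambda_{\A}^{Y})(F)(s)=\Pr_{x}\circ\theta(F(\cdot)(s))\circ\imath_{x}=\Pr_{x}(F(\cdot)(s))=F(x)(s)$, using exactly the definitional unwindings and Remark~\ref{10211801} that you identify. Your explicit remark about the implicit identification of $\theta(\A)$ with $\theta(\A)\up\mc{H}=G$ is a welcome clarification the paper leaves tacit.
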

\begin{proof}
Stats. $(1)$
and $(3)$ 
are trivial.
Let
$F\in Dom(\Lambda_{\A}^{Y})$
so
\begin{alignat*}{2}
(\Pr_{x}\circ
\Psi_{Y}^{\mc{H}}
\circ
\Lambda_{\A}^{Y}
)(F)(s)
&=
(\Pr_{x}\circ\chi_{\mc{H}})(\Lambda_{\A}^{Y}(F)(s))
=
\Pr_{x}\circ\Lambda_{\A}^{Y}(F)(s)\circ\imath_{x}
\\
&=
\Pr_{x}\circ
\theta(F(\cdot)(s))
\circ\imath_{x}
=
\Pr_{x}(F(\cdot)(s))
\circ\Pr_{x}
\circ\imath_{x}
\\
&=
F(x)(s)
=
\Pr_{x}(F)(s),
\end{alignat*}
and stat. $(2)$ follows.
\end{proof}
\begin{proposition}
\label{16141501}
$(\forall x\in X)
(\forall s\in Y)
(\forall\ov{F}\in G^{Y})$
we have
$$
(\Pr_{x}\circ\Psi_{Y}^{\mc{H}})
(\ov{F})(s)
\circ\Pr_{x}
=
\Pr_{x}\circ
(\ov{F}(s))
$$
\end{proposition}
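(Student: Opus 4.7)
The plan is to unwind the definitions and reduce to a one-line calculation using the key identity $\Pr_x \circ \imath_x = Id$. Fix $x \in X$, $s \in Y$, and $\ov{F} \in G^Y$. The first step is to rewrite the left-hand side using Proposition \ref{15111401}(1), which gives
\[
(\Pr_x \circ \Psi_Y^{\mc{H}})(\ov{F})(s) \circ \Pr_x = \Pr_x \circ \ov{F}(s) \circ \imath_x \circ \Pr_x.
\]

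Next, since $\ov{F}(s) \in G = \theta(\A) \up \mc{H}$, there exists $u \in \A$ with $\ov{F}(s) = \theta(u) \up \mc{H}$. By the defining property of $\theta$ in Definition \ref{13021401}, $\Pr_x \circ \theta(u) = \Pr_x(u) \circ \Pr_x$, and more generally, for any element $w \in \prod_{y \in X} \mf{E}_y$, applying $\Pr_x \circ \ov{F}(s)$ to $w$ yields $\Pr_x(u)(w(x))$.

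The main computation is then
\[
\Pr_x \circ \ov{F}(s) \circ \imath_x \circ \Pr_x = \Pr_x(u) \circ \Pr_x \circ \imath_x \circ \Pr_x = \Pr_x(u) \circ \Pr_x = \Pr_x \circ \theta(u) = \Pr_x \circ \ov{F}(s),
\]
where the second equality uses $\Pr_x \circ \imath_x = Id_{\mf{E}_x}$ (built into the conventions of Notations \ref{12042101}), and the equalities must be understood as holding on $\mc{H}$ since $\ov{F}(s)$ is only defined on $\mc{H}$. I foresee no real obstacle: the only care needed is to make explicit that the relation $\ov{F}(s) = \theta(u) \up \mc{H}$ provided by $\ov{F}(s) \in G$ is exactly what allows the insertion of $\imath_x \circ \Pr_x$ between $\Pr_x$ and $\ov{F}(s)$ without loss, because $\theta(u)$ factors through the $x$-th coordinate via $\Pr_x$.
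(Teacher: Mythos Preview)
Your proof is correct and follows essentially the same approach as the paper: both rewrite the left-hand side via Proposition~\ref{15111401}(1), then use that $\ov{F}(s)\in G=\theta(\A)\up\mc{H}$ to write $\ov{F}(s)=\theta(u)\up\mc{H}$ for some $u\in\A$, and finally apply the defining identity $\Pr_x\circ\theta(u)=\Pr_x(u)\circ\Pr_x$ together with $\Pr_x\circ\imath_x=Id$ to collapse the expression. The only cosmetic difference is that the paper picks a single $U\in\A^{Y}$ with $\ov{F}(s)=\theta(U(s))\up\mc{H}$ for all $s$, whereas you choose $u\in\A$ for the fixed $s$; this has no bearing on the argument.
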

\begin{proof}
Let
$\ov{F}\in G^{Y}$
thus
$\exists\,U\in\A^{Y}$
such that
$\ov{F}(s)=\theta(U(s))
\up\mc{H}$,
hence
for all
$x\in X,
s\in Y$
\begin{alignat*}{2}
(\Pr_{x}\circ\Psi_{Y}^{\mc{H}})(\ov{F})(s)
\circ\Pr_{x}
&=
\Pr_{x}(\Psi_{Y}^{\mc{H}}(\ov{F}))(s)
\circ
\Pr_{x}
\\
&=
\Pr_{x}\circ\ov{F}(s)\circ\imath_{x}
\circ
\Pr_{x},&
\text{ by Prop. \ref{15111401}}
\\
&=
(\Pr_{x}\circ
\theta(U(s)))
\up\mc{H}
\circ\imath_{x}
\circ
\Pr_{x}
\\
&\doteq
(\Pr_{x}(U(s))
\circ\Pr_{x})
\up\mc{H}
\circ\imath_{x}
\circ
\Pr_{x}
\\
&=
\Pr_{x}(U(s))
\circ\Pr_{x}
\up\mc{H}
\\
&\doteq
\Pr_{x}
\circ
\theta(U(s))
\up\mc{H}
\\
&=
\Pr_{x}
\circ
(\ov{F}(s)).
\end{alignat*}
\end{proof}
\begin{definition}
\label{11221801}
Let $x\in X$
$$
\begin{cases}
I_{x}:Hom(\mc{L}(\mf{E}_{x}),\K)
\to
Hom\left(
\prod_{y\in X}\mc{L}(\mf{E}_{y}),\K\right),
\\
I_{x}(t_{x})\coloneqq t_{x}\circ\Pr_{x}.
\end{cases}
$$
\end{definition}
\begin{lemma}
\label{12001801}
Let $x\in X$
thus
\begin{enumerate}
\item
$(\forall t_{x}\in Hom(\mc{L}(\mf{E}_{x}),\K))
(\forall y\in X)$
we have
$$
\begin{cases}
I_{x}(t_{x})\circ\chi_{\mc{H}}\circ h_{y}
=
t_{x},
x=y
\\
I_{x}(t_{x})\circ\chi_{\mc{H}}\circ h_{y}
=\ze,
x\neq y;
\end{cases}
$$
\item
$(\forall t_{x}\in
\lr{\mc{L}(\mf{E}_{x})}{\tau_{x}}^{*})
(I_{x}(t_{x})\circ\chi_{\mc{H}}\up\mf{G}
\in\mf{G}^{*})$
\end{enumerate}
\end{lemma}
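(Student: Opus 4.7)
The plan is to reduce both claims to explicit evaluations on generators of $\mf{G}$ via the maps $h_y$, using the identities already established in Propositions \ref{10231801} and \ref{14271401}, and then to invoke the universal property of the locally convex final topology defining $\mf{G}$.

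For statement $(1)$, fix $t_x \in Hom(\mc{L}(\mf{E}_x),\K)$ and $y \in X$, and pick $l_y \in \mc{L}(\mf{E}_y)$. By Proposition \ref{10231801}(1) we have $h_y(l_y) = \theta(\imath_y(l_y)) \up \mc{H}$, and the proof of Proposition \ref{10231801}(3) shows that $\imath_y(l_y) \in Im(\chi_{\mc{H}})$. Hence by Proposition \ref{14271401} (precisely $\chi_{\mc{H}} \circ \theta_{\mc{H}} = Id$ on $Im(\chi_{\mc{H}})$), we obtain $\chi_{\mc{H}}(h_y(l_y)) = \imath_y(l_y)$. Therefore
\begin{equation*}
(I_x(t_x) \circ \chi_{\mc{H}} \circ h_y)(l_y)
= t_x(\Pr_x(\imath_y(l_y)))
= \begin{cases} t_x(l_y), & x=y,\\ t_x(\ze_x) = 0, & x\neq y,\end{cases}
\end{equation*}
which yields the two cases in $(1)$.

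For statement $(2)$, first note that $\chi_{\mc{H}}$ is linear (since $\Pr_x \circ \chi_{\mc{H}}(w) = \Pr_x \circ w \circ \imath_x$ is linear in $w$ for every $x$) and $I_x(t_x)$ is linear by construction, so $I_x(t_x) \circ \chi_{\mc{H}} \up \mf{G}$ is a linear form on $\mf{G}$. By Definition \ref{10221801} the topology on $\mf{G}$ is the locally convex final topology associated to the family $\{h_y\}_{y\in X}$, so by \cite[Prop.$5$, $No\,4$, $\S 4$ Ch $2$]{BourTVS} a linear form on $\mf{G}$ is continuous iff its composition with each $h_y$ is continuous on $\lr{\mc{L}(\mf{E}_y)}{\tau_y}$. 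By $(1)$ this composition equals $t_x$ if $y = x$ and $\ze$ otherwise; both are continuous, the former by hypothesis $t_x \in \lr{\mc{L}(\mf{E}_x)}{\tau_x}^*$ and the latter trivially.

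No serious obstacle is expected: the only substantive point is recognizing that the identity $\chi_{\mc{H}}(h_y(l_y)) = \imath_y(l_y)$ is a direct consequence of already-proved facts about $\theta_{\mc{H}}$ and $\chi_{\mc{H}}$, after which both statements become formal.
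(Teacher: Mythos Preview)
Your proof is correct and follows essentially the same approach as the paper's: both arguments evaluate $I_{x}(t_{x})\circ\chi_{\mc{H}}\circ h_{y}$ on a generic $l_{y}\in\mc{L}(\mf{E}_{y})$, reduce it to $t_{x}\bigl(\Pr_{x}\circ\imath_{y}(\,\cdot\,)\bigr)$, and then appeal to the universal property of the locally convex final topology for part $(2)$. The only cosmetic difference is that the paper computes $\chi_{\mc{H}}(h_{y}(l_{y}))$ directly from the definitions of $h_{y}$ and $\chi_{\mc{H}}$, whereas you route the same computation through Propositions~\ref{10231801} and~\ref{14271401}; the content is identical.
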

\begin{proof}
Let $x\in X$ and $t_{x}\in
Hom(\mc{L}(\mf{E}_{x}),\K)$
thus for all
$y\in X$
and
$f_{y}\in\mc{L}(\mf{E}_{y})$
we have
\begin{alignat*}{1}
I_{x}(t_{x})\circ\chi_{\mc{H}}\circ h_{y}
(f_{y})
&=
t_{x}\circ\Pr_{x}\circ\chi_{\mc{H}}
(\imath_{y}\circ f_{y}\circ\Pr_{y}\up\mc{H})
\\
&=
t_{x}\circ
(\Pr_{x}\circ\imath_{y}\circ f_{y}
\circ\Pr_{y}\circ\imath_{x}),
\end{alignat*}
and stat. $(1)$ follows.
Stat. $(2)$
follows by
stat. $(1)$ and
\cite[Prop. $5$, $No 4$, $\S 4$ Ch $2$]
{BourTVS}.
\end{proof}
The following is the 
first
main result of
this section.
\begin{theorem}
\label{15251401}
We have
\begin{enumerate}
\item
$\Psi_{Y}^{\mc{H}}
\in
Hom(\mf{L}_{1}(Y,\mf{G},\mu),
\prod_{x\in X}
\mf{L}_{1}(Y,
\lr{\mc{L}(\mf{E}_{x})}{\tau_{x}},\mu))$;
\item
$(\forall x\in X)(\forall s\in Y)
(\forall\ov{F}\in\mf{L}_{1}(Y,\mf{G},\mu))
$
$$
\int\Pr_{x}(\Psi_{Y}^{\mc{H}}(\ov{F}))(s)
\,d\mu(s)
=
\Pr_{x}
\circ
\bigl(
\int\ov{F}(s)\,d\mu(s)
\bigr)
\circ\imath_{x}.
$$
\end{enumerate}
\end{theorem}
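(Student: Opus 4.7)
The plan is to reduce both statements to the single fact that, for each $x \in X$, the map $\Pr_x \circ \chi_{\mc{H}}: \mf{G} \to \lr{\mc{L}(\mf{E}_x)}{\tau_x}$ is linear and continuous, and then to invoke the standard theorem on the commutation of integration with continuous linear maps (see \cite[Ch.~6]{IntBourb}).

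First I would establish continuity of $\Pr_x \circ \chi_{\mc{H}}$ on $\mf{G}$. Since $\mf{G}$ carries the locally convex final topology induced by the family $\{h_y\}_{y \in X}$, by \cite[Prop.~5, $\S 4.4$, Ch.~2]{BourTVS} it suffices to check that $(\Pr_x \circ \chi_{\mc{H}}) \circ h_y: \lr{\mc{L}(\mf{E}_y)}{\tau_y} \to \lr{\mc{L}(\mf{E}_x)}{\tau_x}$ is continuous for every $y \in X$. A direct unfolding using $h_y(f_y) = \imath_y \circ f_y \circ \Pr_y \up \mc{H}$ and $\Pr_x \circ \imath_y = \delta_{xy}\, Id$ gives $(\Pr_x \circ \chi_{\mc{H}}) \circ h_y = \delta_{xy}\, Id_{\mc{L}(\mf{E}_y)}$, which is trivially continuous (the case $y = x$ is in fact already contained in statement $(3)$ of Proposition~\ref{10231801}).

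Next, for statement $(1)$, observe that by Proposition~\ref{15111401}$(1)$ and the definition of $\chi_{\mc{H}}$
$$
(\Pr_x \circ \Psi_Y^{\mc{H}})(\ov{F})(s)
= \Pr_x \circ \ov{F}(s) \circ \imath_x
= (\Pr_x \circ \chi_{\mc{H}})(\ov{F}(s)),
$$
so $\Pr_x \circ \Psi_Y^{\mc{H}}(\ov{F}) = (\Pr_x \circ \chi_{\mc{H}}) \circ \ov{F}$. Hence if $\ov{F} \in \mf{L}_{1}(Y, \mf{G}, \mu)$, the composition with the continuous linear map just established belongs to $\mf{L}_{1}(Y, \lr{\mc{L}(\mf{E}_x)}{\tau_x}, \mu)$, by the cited result of \cite[Ch.~6]{IntBourb}; the linearity of $\Psi_Y^{\mc{H}}$ on $\mf{L}_{1}(Y, \mf{G}, \mu)$ is automatic from its pointwise definition. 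Statement $(2)$ then follows from the same theorem, which yields
$$
\int (\Pr_x \circ \chi_{\mc{H}})(\ov{F}(s))\, d\mu(s)
= (\Pr_x \circ \chi_{\mc{H}})\!\left(\int \ov{F}(s)\, d\mu(s)\right),
$$
and the right-hand side, by the very definition of $\chi_{\mc{H}}$ applied to the element $\int \ov{F}(s)\, d\mu(s) \in \mf{G} \subset \mc{L}(\lr{\mc{H}}{\mf{T}})$, is exactly $\Pr_x \circ \bigl[\int \ov{F}(s)\, d\mu(s)\bigr] \circ \imath_x$.

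The only real obstacle is the continuity of $\Pr_x \circ \chi_{\mc{H}}$, and this is essentially the reason $\mf{G}$ was equipped with the \emph{final} locally convex topology in Definition~\ref{10221801} rather than, say, a subspace topology inherited from some topology on $End(\mc{H})$: the final topology is precisely what makes $\Pr_x \circ \chi_{\mc{H}}$ continuous by a one-line universal-property argument, whereas continuity with respect to any a priori topology on $End(\mc{H})$ would be a substantive demand.
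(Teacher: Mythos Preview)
Your proof is correct and essentially identical to the paper's own argument. The paper introduces the notation $\Delta_x : G \ni f \mapsto \Pr_x \circ f \circ \imath_x$ for what you call $\Pr_x \circ \chi_{\mc{H}} \up G$, verifies its continuity via the same computation $\Delta_x \circ h_y = \delta_{xy}\,Id$, and then invokes \cite[Prop.~1, $\S 1.1$, Ch.~6]{IntBourb} (writing out the scalar duality calculation with $t_x \in \lr{\mc{L}(\mf{E}_x)}{\tau_x}^{*}$ explicitly rather than quoting the commutation theorem directly as you do).
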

\begin{proof}
Let $x\in X$, 
set
$$
\Delta_{x}:G\ni f
\mapsto
\Pr_{x}\circ f\circ\imath_{x}
\in
\mc{L}(\mf{E}_{x}).
$$
$\Delta_{x}$ is well-defined
by Lemma \ref{17141401}.
By applying
\cite[Prop. $5$, $No 3$, $\S 4$ Ch $2$]
{BourTVS}
$\Delta_{x}\in\mc{L}
(\mf{G},
\lr{\mc{L}(\mf{E}_{x})}{\tau_{x}})$
if and only if
$(\forall y\in X)
(\Delta_{x}\circ h_{y}\in
\mc{L}(\lr{\mc{L}(\mf{E}_{y})}{\tau_{y}},
\lr{\mc{L}(\mf{E}_{x})}{\tau_{x}}))$.
Moreover $\forall y\in X$ 
and $\forall f_{y}\in\mc{L}(\mf{E}_{y})$
we have
$$
(\Delta_{x}\circ h_{y})(f_{y})
=
\Pr_{x}\circ\imath_{y}\circ f_{y}
\circ\Pr_{y}\circ\imath_{x},
$$
so
$$
\begin{cases}
\Delta_{x}\circ h_{y}=Id, x=y
\\
\Delta_{x}\circ h_{y}=\ze,
x\neq y.
\end{cases}
$$
In any case
$\Delta_{x}\circ h_{y}\in
\mc{L}(\lr{\mc{L}(\mf{E}_{y})}{\tau_{y}},
\lr{\mc{L}(\mf{E}_{x})}{\tau_{x}})$,
thus
\begin{equation*}
\Delta_{x}\in\mc{L}
(\mf{G},
\lr{\mc{L}(\mf{E}_{x})}{\tau_{x}})
\end{equation*}
hence
\begin{equation}
\label{13351801}
(\forall t_{x}\in
\lr{\mc{L}(\mf{E}_{x})}{\tau_{x}}^{*})
(t_{x}\circ
\Delta_{x}
\in\mf{G}^{*}).
\end{equation}
Therefore
\begin{alignat*}{1}
t_{x}\left(
\Pr_{x}\circ\bigl(
\int\ov{F}(s)\,d\mu(s)
\bigr)
\circ\imath_{x}
\right)
&=
(t_{x}\circ\Delta)
\bigl(\int\ov{F}(s)\,d\mu(s)\bigr)
\\
&=
\int(t_{x}\circ\Delta)(\ov{F}(s))\,d\mu(s)
\\
&=
\int t_{x}\bigl(\Pr_{x}\circ
\ov{F}(s)\circ\imath_{x}\bigr)\,d\mu(s)
\\
&=
\int
t_{x}\left(
(\Pr_{x}\circ\Psi_{Y}^{\mc{H}})(\ov{F})(s)
\right)
\,d\mu(s),
\end{alignat*}
where
the second equality comes by
\eqref{13351801}
and
\cite[Prop.$1$, $No 1$, $\S 1$, Ch. $6$]
{IntBourb}, 
while the last one
comes by
Prp.
\ref{15111401}. 
\end{proof}
\begin{definition}
\label{11121501}
Let $Z$ be a topological vector space
set
$$
\begin{cases}
\ms{ev}_{Z}
\in 
Hom(Z,Hom(\mc{L}(Z),Z),
\\
(\forall v\in Z)
(\forall f\in\mc{L}(Z))
(\ms{ev}_{Z}(v)(f))
\coloneqq
f(v)).
\end{cases}
$$
Moreover
set
$\eta
\coloneqq
\ms{ev}_{\mc{H}}$
and
$\forall x\in X$
set
$\ep_{x}
\coloneqq
\ms{ev}_{\mf{E}_{x}}$.
\end{definition}
\begin{lemma}
\label{11011501}
Let $D\subseteq\mc{H}$
thus
$(A)\Rightarrow(B)$, 
where
\begin{description}
\item[(A)]
$(\forall x\in X)(\forall v_{x}\in\Pr_{x}(D))
(\ep_{x}(v_{x})\in\mc{L}
(\lr{\mc{L}(\mf{E}_{x})}
{\tau_{x}},\mf{E}_{x}))$;
\item[(B)]
$(\forall v\in D)
(\eta(v)\in\mc{L}
(\mf{G},\lr{\mc{H}}{\mf{T}}))$.
\end{description}
\end{lemma}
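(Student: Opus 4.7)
The plan is to exploit the universal property of the locally convex final topology defining $\mf{G}$, together with a direct computation of the composition $\eta(v) \circ h_x$, which reduces everything to hypothesis $(A)$ and the continuity of the canonical inclusions $\imath_x$.

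First I would observe that for any $v \in D \subseteq \mc{H}$ the map $\eta(v) : \mf{G} \to \lr{\mc{H}}{\mf{T}}$ is well-defined and linear: well-defined because $\mf{G} \subset \mc{L}(\lr{\mc{H}}{\mf{T}})$ by Lemma \ref{17141401}, so $A(v) \in \mc{H}$ for every $A \in \mf{G}$; linear because $\eta(v)(A) = A(v)$ is linear in $A$.

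Next I would compute, for each $x \in X$ and each $f_x \in \mc{L}(\mf{E}_x)$, using $h_x(f_x) = \imath_x \circ f_x \circ \Pr_x \up \mc{H}$ from Definition \ref{10221801},
\begin{equation*}
\bigl(\eta(v) \circ h_x\bigr)(f_x) = h_x(f_x)(v) = \imath_x\bigl(f_x(\Pr_x(v))\bigr) = \bigl(\imath_x \circ \ep_x(\Pr_x(v))\bigr)(f_x),
\end{equation*}
so that
\begin{equation*}
\eta(v) \circ h_x = \imath_x \circ \ep_x(\Pr_x(v)).
\end{equation*}
Since $\Pr_x(v) \in \Pr_x(D)$, hypothesis $(A)$ guarantees $\ep_x(\Pr_x(v)) \in \mc{L}(\lr{\mc{L}(\mf{E}_x)}{\tau_x}, \mf{E}_x)$; combined with $\imath_x \in \mc{L}(\mf{E}_x, \lr{\mc{H}}{\mf{T}})$ from the standing assumptions in Notations \ref{12042101}, this gives $\eta(v) \circ h_x \in \mc{L}(\lr{\mc{L}(\mf{E}_x)}{\tau_x}, \lr{\mc{H}}{\mf{T}})$ for every $x \in X$.

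Finally I would invoke the universal property of the locally convex final topology, namely \cite[Prop. $5$, $No 4$, $\S 4$ Ch $2$]{BourTVS}, which is precisely the result already applied in the proof of Proposition \ref{10231801} and Lemma \ref{16141302}: since $\mf{G}$ carries the locally convex final topology induced by the family $\{h_x\}_{x \in X}$, a linear map from $\mf{G}$ into the Hlcs $\lr{\mc{H}}{\mf{T}}$ is continuous if and only if its composition with each $h_x$ is continuous. The previous step verifies exactly this criterion, so $\eta(v) \in \mc{L}(\mf{G}, \lr{\mc{H}}{\mf{T}})$, proving $(B)$. There is no real obstacle here; the only subtle point is keeping track of the fact that $h_x$ has codomain $\mf{G}$ (not all of $\mc{L}(\lr{\mc{H}}{\mf{T}})$), so one must check that the computed factorization really takes values in $\lr{\mc{H}}{\mf{T}}$, which is immediate from $\imath_x(\mf{E}_x) \subset \mc{H}$.
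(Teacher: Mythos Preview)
Your proof is correct and follows essentially the same approach as the paper: compute $\eta(v)\circ h_x = \imath_x \circ \ep_x(\Pr_x(v))$, use hypothesis $(A)$ together with the continuity of $\imath_x$ to get continuity of each composite, and conclude by the universal property of the locally convex final topology. Your write-up is in fact a bit more careful than the paper's (you explicitly check well-definedness and linearity of $\eta(v)$), but the argument is the same.
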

\begin{proof}
Let
$y\in X$
thus 
for all
$v\in\mc{H}$
$$
\eta(v)\circ h_{y}
=
\imath_{y}\circ\ep_{y}(\Pr_{y}(v)).
$$
Hence by $(A)$
and the fact that by construction
$\imath_{y}$ is continuous with respect to 
the topology
$\mf{T}$
we have
for all
$v\in D$
$$
\eta(v)\circ g_{y}
\in
\mc{L}\left(
\lr{\mc{L}(\mf{E}_{y})}{\tau_{y}},
\lr{\mc{H}}{\mf{T}}
\right).
$$
Thus 
$(B)$ 
follows
by the universal property
of any
locally final topology, 
cf.
\cite[$(ii)$ of Prop. 
$5$, $N\,4$, $\S 4$ Ch $2$]{BourTVS}.
\end{proof}
The following is 
the second
main result
of the section
\begin{theorem}
\label{16291501}
Let $D\subseteq\mc{H}$
and
assume
$(A)$ of Lemma \ref{11011501}.
Then
$(\forall\ov{F}
\in\mf{L}_{1}(Y,\mf{G},\mu))
(\forall x\in X)
(\forall v\in D)$
\begin{equation}
\label{15281901}
\int
\lr{\Pr_{x}(\Psi_{Y}^{\mc{H}}
(\ov{F}))(s)}{v(x)}_{x}
\,d\mu(s)
=
\lr{\int\ov{F}(s)\,d\mu(s)}{v}(x).
\end{equation}
Here
the integral in the left-side
is with respect to the $\mu$
and the locally convex topology
on $\mf{E}_{x}$,
while
the integral in the right-side
is with respect to the $\mu$
and the locally convex topology
on $\mf{G}$.
\end{theorem}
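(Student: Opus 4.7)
The plan is to chain together three ingredients that the paper has already set up: the continuity of the evaluation map at $v$ (from Lemma \ref{11011501}), the Bourbaki interchange of integration with a continuous linear map (\cite[Prop.~1, $\S 1.1$, Ch.~6]{IntBourb}), and the factorization identity $\Pr_x \circ \overline{F}(s) = (\Pr_x \circ \Psi_Y^{\mathcal{H}})(\overline{F})(s) \circ \Pr_x$ provided by Proposition \ref{16141501}. The right-hand side of \eqref{15281901} is, by the definition of the standard duality $\langle\cdot,\cdot\rangle$, simply $\Pr_x\!\bigl(\bigl[\int \overline{F}(s)\,d\mu(s)\bigr](v)\bigr)$, so everything reduces to pushing $\Pr_x$ and the evaluation at $v$ through the integral and then recognizing the integrand.

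First, since hypothesis $(A)$ of Lemma \ref{11011501} holds, implication $(A)\Rightarrow(B)$ of that lemma yields $\eta(v)\in\mathcal{L}(\mathfrak{G},\langle\mathcal{H},\mathfrak{T}\rangle)$, i.e.\ the evaluation $\mathfrak{G}\ni F\mapsto F(v)\in\mathcal{H}$ is continuous. Applying this continuous linear map under the integral sign (which is legitimate by the cited Bourbaki result, since $\overline{F}\in\mathfrak{L}_1(Y,\mathfrak{G},\mu)$) gives
\begin{equation*}
\left[\int\overline{F}(s)\,d\mu(s)\right](v)
=\eta(v)\!\left(\int\overline{F}(s)\,d\mu(s)\right)
=\int \overline{F}(s)(v)\,d\mu(s),
\end{equation*}
where the last integral is taken in $\langle\mathcal{H},\mathfrak{T}\rangle$.

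Next I would apply $\Pr_x\in\mathcal{L}(\langle\mathcal{H},\mathfrak{T}\rangle,\mathfrak{E}_x)$ (the continuity is part of the data in Notations \ref{12042101}) under the integral, again by the Bourbaki interchange, to obtain
\begin{equation*}
\Pr_x\!\left(\left[\int\overline{F}(s)\,d\mu(s)\right](v)\right)
=\int \Pr_x\!\bigl(\overline{F}(s)(v)\bigr)\,d\mu(s).
\end{equation*}
Now Proposition \ref{16141501} gives $\Pr_x\circ\overline{F}(s)=(\Pr_x\circ\Psi_Y^{\mathcal{H}})(\overline{F})(s)\circ\Pr_x$ for every $s\in Y$, because $\overline{F}(s)\in G$; evaluating at $v$ and using $\Pr_x(v)=v(x)$ yields
\begin{equation*}
\Pr_x\!\bigl(\overline{F}(s)(v)\bigr)
=(\Pr_x\circ\Psi_Y^{\mathcal{H}})(\overline{F})(s)(v(x))
=\bigl\langle \Pr_x(\Psi_Y^{\mathcal{H}}(\overline{F}))(s),\,v(x)\bigr\rangle_x.
\end{equation*}
Combining the three displays produces exactly \eqref{15281901}.

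The only point that is not purely formal is the measurability/integrability bookkeeping that justifies the two interchanges: one must check that $s\mapsto\overline{F}(s)(v)$ belongs to $\mathfrak{L}_1(Y,\mathcal{H},\mu)$ and that $s\mapsto\Pr_x(\overline{F}(s)(v))$ belongs to $\mathfrak{L}_1(Y,\mathfrak{E}_x,\mu)$. Both are immediate from $\overline{F}\in\mathfrak{L}_1(Y,\mathfrak{G},\mu)$ together with the continuity of $\eta(v)$ and $\Pr_x$, since Bourbaki's theorem on the image of an essentially integrable map under a continuous linear map automatically delivers both integrability and the commutation formula. Thus the heart of the argument really rests on Lemma \ref{11011501}, whose proof in turn exploits the universal property of the locally convex final topology on $\mathfrak{G}$; once that continuity is in hand, the present theorem is a short chain of integral commutations plus the algebraic identity from Proposition \ref{16141501}.
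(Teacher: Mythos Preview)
Your proof is correct and follows essentially the same route as the paper: use Lemma \ref{11011501} to get continuity of $\eta(v)$, push the continuous linear map(s) through the integral via \cite[Prop.~1, $\S 1.1$, Ch.~6]{IntBourb}, and then invoke Proposition \ref{16141501} to identify the integrand. The only cosmetic difference is that the paper applies the single composite map $\Pr_x\circ\eta(v)\in\mc{L}(\mf{G},\mf{E}_x)$ in one interchange step, whereas you split it into two (first $\eta(v)$, then $\Pr_x$); this is immaterial.
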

\begin{proof}
$(\forall\ov{F}
\in\mf{L}_{1}(Y,\mf{G},\mu))
(\forall x\in X)
(\forall v\in D)$
we have
\begin{alignat*}{1}
\Pr_{x}\circ
\bigl(
\int\ov{F}(s)\,d\mu(s)
\bigr)(v)
&=
(\Pr_{x}\circ\eta(v))
\bigl(\int\ov{F}(s)\,d\mu(s)\bigr)
\\
&=
\int
(\Pr_{x}\circ\eta(v))
(\ov{F}(s))
\,d\mu(s)
\\
&=
\int
(\Pr_{x}\circ\ov{F}(s))(v)\,
d\mu(s)
\\
&=
\int
\Pr_{x}(\Psi_{Y}^{\mc{H}}(\ov{F}))(s)
(v(x))
\,d\mu(s).
\end{alignat*}
Here
in the second equality we applied
\cite[Prop.$1$, $No 1$, $\S 1$, Ch. $6$]
{IntBourb}
and the fact that
$\Pr_{x}\circ\eta(v)
\in\mc{L}(\mf{G},\mf{E}_{x})$
because of Lemma \ref{11011501}
and the linearity and continuity 
of $\Pr_{x}$
with respect to the 
topology
$\mf{T}$.
Finally in the last equality
we used Prop. \ref{16141501}.
\end{proof}
The following is the main result of this section
\begin{theorem}
\label{15332203}
Let $D\subseteq\mc{H}$
and
assume
$(A)$ of Lemma \ref{11011501}.
Then
$(\forall\ov{F}
\in\mf{L}_{1}(Y,\mf{G},\mu))
(\forall x\in X)
(\forall v\in D)$
\begin{equation}
\label{14342203}
\boxed{
\lr{\int
\Pr_{x}(\Psi_{Y}^{\mc{H}}(\ov{F}))(s)
\,d\mu(s)}{v(x)}_{x}
=
\lr{\int\ov{F}(s)\,d\mu(s)}{v}(x).
}
\end{equation}
Equivalently
$\mf{G}$
is a 
$\ms{U}-$space
with respect to
$\{\lr{\mc{L}(\mf{E}_{x})}
{\tau_{x}}\}_{x\in X}$,
$\mf{T}$
and
$D$.
Here
the integral in the left-side
is with respect to the $\mu$
and the locally convex topology on
$\lr{\mc{L}(\mf{E}_{x})}{\tau_{x}}$.
\end{theorem}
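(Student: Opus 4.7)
The plan is to derive \eqref{14342203} as an essentially formal consequence of Theorem \ref{16291501} together with the standard commutation of continuous linear maps with Bourbaki vector integrals; the genuinely hard work has already been packaged into Theorem \ref{15251401} and Theorem \ref{16291501}, so here I only need to move the evaluation pairing across the integral sign.

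First I would unpack the left-hand side of \eqref{14342203}. By Theorem \ref{15251401}\,(1) the map $\Pr_x(\Psi_Y^{\mc{H}}(\ov{F}))$ belongs to $\mf{L}_1(Y,\lr{\mc{L}(\mf{E}_x)}{\tau_x},\mu)$, so its integral is an element of $\mc{L}(\mf{E}_x)$ and the pairing $\lr{\cdot}{v(x)}_x$ at this integral is just $\ep_x(v(x))$ evaluated there. Hypothesis $(A)$ of Lemma \ref{11011501} supplies exactly the missing continuity, namely $\ep_x(v(x))\in\mc{L}(\lr{\mc{L}(\mf{E}_x)}{\tau_x},\mf{E}_x)$ for each $v\in D$ (since $v(x)=\Pr_x(v)\in\Pr_x(D)$). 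Applying \cite[Prop.~1, No~1, $\S 1$, Ch.~6]{IntBourb} — a continuous linear map commutes with the vector integral — yields
\begin{equation*}
\Big\langle\int \Pr_x(\Psi_Y^{\mc{H}}(\ov{F}))(s)\,d\mu(s),\,v(x)\Big\rangle_x
=
\int \lr{\Pr_x(\Psi_Y^{\mc{H}}(\ov{F}))(s)}{v(x)}_x\,d\mu(s).
\end{equation*}

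Next I would invoke Theorem \ref{16291501}, whose hypotheses coincide with those of the present theorem, to identify the right-hand side of the previous display with $\lr{\int \ov{F}(s)\,d\mu(s)}{v}(x)$. Chaining the two equalities gives \eqref{14342203}.

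Finally, for the equivalent reformulation I would observe that setting $\Psi_T\doteqdot\Psi_T^{\mc{H}}$ (for every $T\in lcp$ and every $\nu\in Radon(T)$) from Definition \ref{14321401} provides a candidate assignment satisfying the four clauses of Definition \ref{14302503}: membership $\mf{G}\in Hlcs$ is Definition \ref{10221801}; the inclusion $\mf{G}\subset\mc{L}(\lr{\mc{H}}{\mf{T}})$ is Lemma \ref{17141401}; the $\mf{L}_1$-to-$\prod\mf{L}_1$ mapping property is Theorem \ref{15251401}\,(1); and the identity \eqref{17432203} is exactly \eqref{14342203}, whose proof for a general $T$ is literally the one above with $Y$ replaced by $T$ and $\mu$ by $\nu$. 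The only conceivable obstacle is a bookkeeping one — verifying that hypothesis $(A)$ of Lemma \ref{11011501} is being used with the correct set $\Pr_x(D)$ and that the integrability of $\Pr_x\circ\Psi_Y^{\mc{H}}(\ov{F})$ (needed to invoke the Bourbaki commutation) is indeed provided by Theorem \ref{15251401}\,(1); both are immediate so no genuine analytic difficulty arises.
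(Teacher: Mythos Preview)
Your proof is correct and follows essentially the same approach as the paper: both use hypothesis $(A)$ of Lemma \ref{11011501} together with Theorem \ref{15251401}(1) and the Bourbaki commutation \cite[Prop.~1, No~1, $\S 1$, Ch.~6]{IntBourb} to pull $\ep_x(v(x))$ through the integral, then invoke Theorem \ref{16291501} to finish. Your discussion of the $\mb{U}$-space reformulation is more explicit than the paper's (which leaves it implicit), but the analytic content is identical.
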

\begin{proof}
By $(A)$ of Lemma \ref{11011501},
stat.$(1)$ of Th. \ref{15251401}
and
\cite[Prop.$1$, $No 1$, $\S 1$, Ch. $6$]
{IntBourb}
we have
$(\forall\ov{F}
\in\mf{L}_{1}(Y,\mf{G},\mu))
(\forall x\in X)
(\forall v\in D)$
$$
\int
\lr{\Pr_{x}
(\Psi_{Y}^{\mc{H}}(\ov{F}))(s)}{v(x)}_{x}
\,d\mu(s)
=
\lr{\int
\Pr_{x}(\Psi_{Y}^{\mc{H}}(\ov{F}))(s)
\,d\mu(s)}{v(x)}_{x},
$$
hence the statement
follows
by Thm. \ref{16291501}.
\end{proof}
\begin{remark}
\label{16082203}
By \eqref{14342203}
and
stat.$(2)$ of Th. \ref{15251401}
$(\forall\ov{F}
\in\mf{L}_{1}(Y,\mf{G},\mu))
(\forall x\in X)
(\forall v\in D)
$
$$
\lr{\int\ov{F}(s)\,d\mu(s)}{v}(x)
=
\lr{\int\ov{F}(s)\,d\mu(s)}
{\imath_{x}(v(x))}(x).
$$
Thus for all
$v,w\in D$
and $x\in X$
$$
v(x)=w(x)
\Rightarrow
\lr{\int\ov{F}(s)\,d\mu(s)}{v}(x)
=
\lr{\int\ov{F}(s)\,d\mu(s)}{w}(x).
$$
\end{remark}
\begin{corollary}
\label{15111901}
Let $\mc{S}\in\prod_{x\in X}
\mc{P}(Bounded(\mf{E}_{x}))$
and
$\mc{D}$
such that
\begin{equation}
\label{15391901a}
\begin{cases}
N(x)
\coloneqq
\bigcup_{l_{x}\in L_{x}}
B_{l_{x}}^{x}
\text{ is total in $\mf{E}_{x}$},
\forall x\in X,
\\
\mc{D}\subseteq
\mc{H}
\cap
\prod_{x\in X}N(x),
\end{cases}
\end{equation}
where
$\mc{S}(x)
=\{B_{l_{x}}^{x}\,\vert\, l_{x}\in L_{x}\}$.
Assume 
that
for all $x\in X$
the topology
$\tau_{x}$
is generated by the set of seminorms
$\{
p_{(l_{x},j_{x})}^{x}
\,\vert\,
(l_{x},j_{x})\in
L_{x}\times J_{x}\}$,
where
\footnote{In others words 
$\lr{\mc{L}(\mf{E}_{x})}{\tau_{x}}
=
\mc{L}_{\mc{S}_{x}}(\mf{E}_{x})$,
see 
Notation \ref{notat}
and 
Def.
\ref{17471910A}.}
\begin{equation}
\label{15391901b}
p_{(l_{x},j_{x})}^{x}:
\mc{L}(\mf{E}_{x})\ni
f_{x}
\mapsto
\sup_{w\in B_{l_{x}}^{x}}
\nu_{j_{x}}^{x}(f_{x}w)
\in\R^{+}.
\end{equation}
Then
\begin{enumerate}
\item
$(A)$ of Lemma \ref{11011501}
for $D=\mc{D}$;
\item
\eqref{15281901} holds
and
$\mf{G}$
is a 
$\ms{U}-$space
with respect to
$\{\mc{L}_{\mc{S}(x)}(\mf{E}_{x})\}_{x\in X}$,
$\mf{T}$
and
$\mc{D}$.
\end{enumerate}
\end{corollary}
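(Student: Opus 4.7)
The plan is to reduce the whole statement to a single continuity verification, after which everything follows from the two main theorems of the section (Theorem \ref{16291501} and Theorem \ref{15332203}).

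First I would verify condition $(A)$ of Lemma \ref{11011501} for $D = \mc{D}$. Fix $x \in X$ and $v_{x} \in \Pr_{x}(\mc{D})$. By the hypothesis $\mc{D} \subseteq \mc{H} \cap \prod_{y \in X} N(y)$ we have $v_{x} \in N(x)$, hence there exists $l_{x}(v_{x}) \in L_{x}$ such that $v_{x} \in B_{l_{x}(v_{x})}^{x}$. For every $f_{x} \in \mc{L}(\mf{E}_{x})$ and every $j_{x} \in J_{x}$, using \eqref{15391901b},
\begin{equation*}
\nu_{j_{x}}^{x}(\ep_{x}(v_{x})(f_{x}))
=
\nu_{j_{x}}^{x}(f_{x} v_{x})
\leq
\sup_{w \in B_{l_{x}(v_{x})}^{x}} \nu_{j_{x}}^{x}(f_{x} w)
=
p_{(l_{x}(v_{x}), j_{x})}^{x}(f_{x}).
\end{equation*}
Since $\{\nu_{j_{x}}^{x}\}_{j_{x} \in J_{x}}$ is a fundamental set of seminorms on $\mf{E}_{x}$ and $\{p_{(l_{x}, j_{x})}^{x}\}$ generates $\tau_{x}$, this inequality shows that $\ep_{x}(v_{x}) \in \mc{L}(\lr{\mc{L}(\mf{E}_{x})}{\tau_{x}}, \mf{E}_{x})$. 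This yields stat.~$(1)$.

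With $(A)$ in hand, Theorem \ref{16291501} applies with $D = \mc{D}$ and delivers \eqref{15281901} for every $\ov{F} \in \mf{L}_{1}(Y, \mf{G}, \mu)$, every $x \in X$ and every $v \in \mc{D}$. Theorem \ref{15332203} then gives the key identity \eqref{14342203}, which is precisely condition $(4)$ in Definition \ref{14302503} (in the form \eqref{17432203}) once we set $\tau_{x}$ equal to the topology of $\mc{L}_{\mc{S}(x)}(\mf{E}_{x})$ as allowed by the hypothesis; the map $\Psi_{T}$ required in Definition \ref{14302503} is supplied by $\Psi_{Y}^{\mc{H}}$ of Definition \ref{14321401}, whose mapping property into $\prod_{x \in X} \mf{L}_{1}(T, \mc{L}_{\mc{S}(x)}(\mf{E}_{x}), \nu)$ is exactly stat.~$(1)$ of Theorem \ref{15251401}.

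The remaining items of Definition \ref{14302503} — that $\mf{G} \in Hlcs$, that $\mf{G} \subset \mc{L}(\lr{\mc{H}}{\mf{T}})$ as linear spaces, and that $\mc{D} \subseteq \mc{H}$ — are built into Definition \ref{10221801}, Lemma \ref{17141401}, and the standing hypothesis \eqref{15391901a} respectively. There is no real obstacle: the only point requiring genuine verification is the single continuity estimate above, and the totality hypothesis on $N(x)$ is used only implicitly (through the ambient framework of Notations \ref{12042101} and the fact that $\mc{L}_{\mc{S}(x)}(\mf{E}_{x})$ is Hausdorff). Everything else is assembled from Theorems \ref{15251401}, \ref{16291501}, \ref{15332203}.
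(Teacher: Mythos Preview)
Your proof is correct and follows essentially the same approach as the paper: the key step is the single seminorm estimate $\nu_{j_{x}}^{x}(\ep_{x}(v_{x})f_{x}) \leq p_{(\ov{l}_{x},j_{x})}^{x}(f_{x})$ using that each $v_{x}\in\Pr_{x}(\mc{D})$ lies in some $B_{\ov{l}_{x}}^{x}$, which gives $(A)$; statement $(2)$ then follows from Theorem \ref{16291501} and Theorem \ref{15332203}. Your write-up is slightly more explicit about how the conclusion matches Definition \ref{14302503}, but the argument is the same.
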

\begin{proof}
By request \eqref{15391901a}
we have that the lcs
$\lr{\mc{L}(\mf{E}_{x})}{\tau_{x}}$
is Hausdorff
so the position is well-set.
By construction
$(\forall x\in X)
(\forall v_{x}\in D(x))
(\exists\,\ov{l}_{x}\in L_{x})
(v_{x}\in B_{\ov{l}_{x}}^{x})$,
so 
$(\forall f_{x}\in\mc{L}(\mf{E}_{x}))
(\forall j_{x}\in J_{x})$
\begin{alignat*}{1}
\nu_{j_{x}}^{x}
(\ep_{x}(v_{x})f_{x})
&=
\nu_{j_{x}}^{x}(f_{x}(v_{x}))
\\
&\leq
p_{(\ov{l}_{x},j_{x})}^{x}(f_{x}),
\end{alignat*}
hence
statement
$(1)$
by 
\cite[Prop. $5$, $No 4$, $\S 1$ Ch $2$]
{BourTVS}.
Statement $(2)$
follows by 
statement $(1)$,
Thm. \ref{16291501}
and
Thm. 
\ref{15332203}
respectively.
\end{proof}
\begin{corollary}
[$\ms{LD}(\mc{O},\mc{D})$]
\label{18491004}
Let
$\lr{\mf{V},\mf{W}}{X,\R^{+}}$
be
a
$\left(\Theta,\mc{E}\right)-$structure
satisfying
\eqref{18470109}
and
$
\Gamma(\pi)
\cap\mc{H}
\cap
\prod_{x\in X}\mc{B}_{B}^{x}
\ne\emptyset
$.
Set
\begin{equation}
\label{21241004}
\begin{cases}
\mc{O}\subseteq\Gamma(\rho)
\\
\mc{D}\subseteq
\Gamma(\pi)\cap\mc{H}
\cap
\prod_{x\in X}\mc{B}_{B}^{x}
\end{cases}
\end{equation}
If
$\exists\,
\mc{F}\subset
\bigcap_{\lambda>0}
\mf{L}_{1}(\R^{+},\mf{G},\mu_{\lambda})$
such that
$\Psi_{\R^{+}}^{\mc{H}}(\mc{F})=\mc{O}$
then
\eqref{21002103}
holds.
\par
In particular
if
$\exists\,\mc{F}\subset
\bigcap_{\lambda>0}
\mf{L}_{1}(\R^{+},\mf{G},\mu_{\lambda})$
such that
$\Psi_{\R^{+}}^{\mc{H}}(\mc{F})=\mc{O}$
then
$$
\lr{\mf{G}}{\mc{D}}
\subseteq
\Gamma(\pi)
\Rightarrow
\ms{LD}(\mc{O},\mc{D}).
$$
Here
$\mc{B}_{B}^{x}$,
for all $x\in X$,
is defined in
\eqref{11232712}.
\end{corollary}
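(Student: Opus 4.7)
The plan is to reduce this to an application of Corollary \ref{15111901} followed by Proposition \ref{14492503}, so all the real work has already been done. First I would verify the hypotheses of Corollary \ref{15111901} for the $(\Theta,\mc{E})$-structure at hand. By Definition \ref{10282712} the stalks $\mf{M}_x \subseteq \cc{c}{Y,\mc{L}_{S_x}(\mf{E}_x)}$, so the topologies $\tau_x$ on $\mc{L}(\mf{E}_x)$ are exactly the $\mc{L}_{S_x}$-topologies generated by the seminorms $p_{j,l}^x$ of \eqref{21532606}, which are of the form \eqref{15391901b}. Also $\bigcup_{B\in\Theta}\mc{B}_B^x$ is total in $\mf{E}_x$ by item $(4b)$ of Definition \ref{10282712}, and by assumption $\mc{D}\subseteq\mc{H}\cap\prod_{x\in X}\mc{B}_B^x$, so condition \eqref{15391901a} is met with $N(x)\supseteq\mc{B}_B^x$. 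Hence Corollary \ref{15111901} applies and $\mf{G}$ is a $\mb{U}$-space with respect to $\{\mc{L}_{S_x}(\mf{E}_x)\}_{x\in X}$, $\mf{T}$, and $\mc{D}$.

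Next I would invoke Proposition \ref{14492503}. Its hypotheses are exactly that $\lr{\mf{V},\mf{W}}{X,\R^+}$ is a $(\Theta,\mc{E})$-structure satisfying \eqref{18470109}, that $\mf{G}$ is a $\mb{U}$-space with respect to the data just named, and that there exists $\mc{F}\subset\bigcap_{\lambda>0}\mf{L}_1(\R^+,\mf{G},\mu_\lambda)$ with $\Psi_{\R^+}(\mc{F})=\mc{O}$. The first is given, the second I just established, and the third is assumed in the corollary. Therefore \eqref{21002103} holds, which is precisely the first conclusion.

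For the second assertion, observe that for any $\ov{F}\in\mf{L}_1(\R^+,\mf{G},\mu_\lambda)$ the integral $\int\ov{F}(s)\,d\mu_\lambda(s)$ belongs to $\mf{G}$ by the very definition of the space of essentially integrable maps (see \cite[Ch.~$6$]{IntBourb}). Hence $\B_\lambda\subseteq\mf{G}$ for every $\lambda>0$, and the hypothesis $\lr{\mf{G}}{\mc{D}}\subseteq\Gamma(\pi)$ immediately yields $\lr{\B_\lambda}{\mc{D}}\subseteq\Gamma(\pi)$ for every $\lambda>0$. Applying the already established equivalence \eqref{21002103}, we conclude $\mb{LD}(\mc{O},\mc{D})$, as required.

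No step presents a genuine obstacle: the proof is essentially a bookkeeping exercise that assembles Corollary \ref{15111901} and Proposition \ref{14492503}. The only mild subtlety to check is the matching of notations — in particular that the seminorm system \eqref{15391901b} of the $\mb{U}$-space framework coincides with the one \eqref{21532606} inherent to any $(\Theta,\mc{E})$-structure, and that the totality condition in \eqref{15391901a} is supplied by item $(4b)$ of Definition \ref{10282712} rather than needing a separate argument.
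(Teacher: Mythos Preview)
Your proof is correct and follows essentially the same route as the paper's: invoke statement $(2)$ of Corollary~\ref{15111901} to obtain that $\mf{G}$ is a $\mb{U}$-space, then apply Proposition~\ref{14492503} for \eqref{21002103}, and finally observe (as in Remark~\ref{13390104}) that $\B_\lambda\subseteq\mf{G}$ gives the ``in particular'' clause. The only difference is that you spell out the hypothesis-checking for Corollary~\ref{15111901} and reprove the content of Remark~\ref{13390104} directly, which is fine.
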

\begin{proof}
By statement $(2)$
of
Cor.
\ref{15111901},
Pr. \ref{14492503}
and
Rm. \ref{13390104}.
\end{proof}
\begin{remark}
\label{21211004}
Note that
if
$\mc{E}\subset\Theta$,
as for example for the positions taken in
Rmk. \ref{21500412b},
we have
$\mc{E}\subset\prod_{x\in X}\mc{B}_{B}^{x}$.
Hence
if
$\mc{E}\subseteq\mc{H}$
we have
$\mc{E}
\subseteq
\Gamma(\pi)
\cap
\mc{H}
\cap
\prod_{x\in X}\mc{B}_{B}^{x}$.
\end{remark}
\begin{corollary}
\label{12080805}
Let us assume 
the hypotheses of Thm. \ref{17301812b}
made exception for the $(i)$
replaced by the following one:
$\mc{E}\subseteq\mc{H}$
and
$\exists\,
\mc{F}\subset
\bigcap_{\lambda>0}
\mf{L}_{1}(\R^{+},\mf{G},\mu_{\lambda})$
such that
$\Psi_{\R^{+}}^{\mc{H}}(\mc{F})=\Gamma(\rho)$
and
$$
\lr{\mf{G}}{\mc{E}}
\subseteq
\Gamma(\pi).
$$
Then all the statements of Thm. \ref{17301812b} hold true.
\end{corollary}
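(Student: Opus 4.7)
The plan is to reduce the corollary to Theorem \ref{17301812b} by showing that the replacement hypothesis supplies exactly the Laplace duality condition $(i)$ demanded there. Since every other hypothesis of Theorem \ref{17301812b} is preserved verbatim, once $(i)$ is obtained the conclusion follows by direct invocation of that theorem.

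First I would identify the target: hypothesis $(i)$ of Theorem \ref{17301812b} asks for the particular $F\in\Gamma(\rho)$ with $F(x_{\infty})=\mc{U}(x_{\infty})$ that $\lr{\mf{V},\mf{W}}{X,\R^{+}}$ has either $\mb{LD}_{x_{\infty}}(\{F\},\mc{E})$ or $\mb{LD}(\{F\},\mc{E})$. It suffices to establish the stronger global statement $\mb{LD}(\Gamma(\rho),\mc{E})$ and then restrict to the singleton $\{F\}\subseteq\Gamma(\rho)$.

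To get $\mb{LD}(\Gamma(\rho),\mc{E})$ I would apply the ``in particular'' part of Corollary \ref{18491004} with $\mc{O}=\Gamma(\rho)$ and $\mc{D}=\mc{E}$. Its hypothesis \eqref{21241004} requires $\mc{E}\subseteq\Gamma(\pi)\cap\mc{H}\cap\prod_{x\in X}\mc{B}_{B}^{x}$: the inclusion $\mc{E}\subseteq\Gamma(\pi)$ is built into Definition \ref{10282712}, the inclusion $\mc{E}\subseteq\mc{H}$ is precisely the first part of the replacement hypothesis, and the last inclusion is delivered by Remark \ref{21211004}, since $\Theta$ in Theorem \ref{17301812b} is defined in \eqref{15482601} as $\{B_{w}\mid w\in\mc{E}\}$, the very setting of Remark \ref{21500412b}. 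The remaining two data required by Corollary \ref{18491004} are present by assumption: the family $\mc{F}\subset\bigcap_{\lambda>0}\mf{L}_{1}(\R^{+},\mf{G},\mu_{\lambda})$ with $\Psi_{\R^{+}}^{\mc{H}}(\mc{F})=\Gamma(\rho)$, and the evaluation inclusion $\lr{\mf{G}}{\mc{E}}\subseteq\Gamma(\pi)$. Thus Corollary \ref{18491004} yields $\mb{LD}(\Gamma(\rho),\mc{E})$, i.e.\ $\mf{L}(\Gamma(\rho))(\cdot)(\lambda)\bullet\mc{E}\subseteq\Gamma(\pi)$ for every $\lambda>0$.

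Finally, restricting this inclusion to the specific section $F\in\Gamma(\rho)$ provided by the hypotheses of Theorem \ref{17301812b} gives $\mf{L}(\{F\})(\cdot)(\lambda)\bullet\mc{E}\subseteq\Gamma(\pi)$, which is $\mb{LD}(\{F\},\mc{E})$, the second alternative of $(i)$. All the remaining hypotheses $(ii)$, $(iii)$ and the structural assumptions on $\mf{V}$, $\mf{W}$, $\mc{U}_{0}$, $T_{x_{\infty}}$ carry over unchanged, so an application of Theorem \ref{17301812b} concludes. There is essentially no obstacle beyond the notational unwinding of Remark \ref{21211004} that places $\mc{E}$ in the correct product-of-subset sets; the proof is otherwise a one-line chain \emph{replacement hypothesis} $\Rightarrow$ \emph{Corollary \ref{18491004}} $\Rightarrow$ \emph{hypothesis $(i)$} $\Rightarrow$ \emph{Theorem \ref{17301812b}}.
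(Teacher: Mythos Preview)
Your proposal is correct and follows exactly the route sketched in the paper: the text preceding the corollary states ``By the previous remark, Cor.~\ref{18491004} and Thm.~\ref{17301812b} we can state'', and your argument unwinds precisely this chain, using Remark~\ref{21211004} to place $\mc{E}$ inside $\Gamma(\pi)\cap\mc{H}\cap\prod_{x}\mc{B}_{B}^{x}$, invoking Corollary~\ref{18491004} to obtain $\mb{LD}(\Gamma(\rho),\mc{E})$, restricting to $\{F\}$, and then applying Theorem~\ref{17301812b}.
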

\begin{proof}
Since Rmk. \ref{21211004}, Cor. \ref{18491004}
and Thm. \ref{17301812b}.
\end{proof} 
\subsection{
Uniform Convergence
over 
$\mc{K}\in Comp(\lr{\mc{H}}{\mf{T}})$.
}
In this section
we assume
given
the following
data
\begin{enumerate}
\item
a Banach bundle $\mf{V}$,
a $\left(\Theta,\mc{E}\right)-$structure
$\lr{\mf{V},\mf{M}}{X,Y}$
where
$\Theta$ is
defined in
\eqref{11121419b},
where
we denote
$\mf{W}
\coloneqq
\lr{\lr{\mf{M}}{\gamma}}{\rho,X,\mf{R}}$
and
$\mf{V}
\coloneqq
\lr{\lr{\mf{E}}{\tau}}{\pi,X,\{\|\cdot\|\}}$;
\item
a Banach space
$\lr{\mc{H}}{\|\cdot\|_{\mc{H}}}$
such that
$\lr{\mc{H}}{\mf{T}}$
satisfies
$(6)$ of Def. \ref{12042101},
where
$\mf{T}$ is
the topology induced by the norm
$\|\cdot\|_{\mc{H}}$
and
$\tau_{x}$
is such that
$\lr{\mc{L}(\mf{E}_{x})}{\tau_{x}}
=
\mc{L}_{S_{x}}(\mf{E}_{x})$
for every $x\in X$;
\item
$\A$
as
in
$(6)$ of Def. \ref{12042101};
\item
$\mf{G}$,
$\Psi_{Y}^{\mc{H}}$
and
$\Lambda_{\A}^{Y}$
as defined in Def. \ref{10221801}
and
Def.
\ref{14321401}
respectively.
\end{enumerate}
The proof of the following Lemma is an
adaptation to the present framework
of the proof of 
\cite[Prop. $5.13$]{cho}.
\begin{lemma}
\label{19260304}
Let 
$\mc{U}\in\prod_{x\in X}\mf{M}_{x}$
and
$x_{\infty}\in X$
moreover
assume that
\begin{enumerate}
\item
$\mc{E}\subseteq\mc{H}
\subseteq\prod_{x\in X}^{b}\mf{E}_{x}$
such that
$(\exists\,a>0)(\forall f\in\mc{H})
(\|f\|_{\sup}\leq a\|f\|_{\mc{H}})$,
where
$\|f\|_{\sup}\coloneqq
\sup_{x\in X}\|f(x)\|_{x}$;
\item
$\exists\,F\in\Gamma(\rho)$
such that
$F(x_{\infty})=\mc{U}(x_{\infty})$
and
$\{F(\cdot)(s)\,\vert\, s\in Y\}
\subseteq\A$
\item
$\{\mc{U}(\cdot)(s)\,\vert\, s\in Y\}
\subseteq\A$;
\item
$\{\ov{F}(s)\,\vert\, s\in Y\}$
and
$\{\ov{\mc{U}}(s)\,\vert\, s\in Y\}$
are
equicontinuous
as subsets of 
$\mc{L}(\lr{\mc{H}}{\|\cdot\|_{\mc{H}}})$,
where
$\ov{\mc{U}}\coloneqq\Lambda_{\A}^{Y}(\mc{U})$.
and
$\ov{F}\coloneqq\Lambda_{\A}^{Y}(F)$.
\end{enumerate}
Then
$(A)\Leftrightarrow(B)$
where
\begin{description}
\item[\normalfont{(A)}]
$\mc{U}\in\Gamma^{x_{\infty}}(\rho)$;
\item[\normalfont{(B)}]
For all $\mc{K}\in Comp(\mc{H})$
such that $\mc{K}\subseteq\mc{E}$
and for all $K\in Comp(Y)$
$$
\lim_{z\to x_{\infty}}
\sup_{s\in K}
\sup_{v\in\mc{K}}
\bigl\|
\mc{U}(z)(s)v(z)
-
F(z)(s)v(z)
\bigr\|
=0.
$$
\end{description}
\end{lemma}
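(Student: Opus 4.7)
The implication $(B)\Rightarrow(A)$ is immediate once we unwrap the $\left(\Theta,\mc{E}\right)$-structure. By the choice $\Theta=\{B_{w}\mid w\in\mc{E}\}$ in \eqref{11121419b} we have $\mb{D}(B_{w},\mc{E})=\{w\}$ and $\mc{B}_{B_{w}}^{x}=\{w(x)\}$, so that \eqref{02022912} in Lemma \ref{15482712} collapses, for each $w\in\mc{E}$ and $K\in Compact(Y)$, to exactly
\[
\lim_{z\to x_{\infty}}\sup_{s\in K}\|\mc{U}(z)(s)w(z)-F(z)(s)w(z)\|=0.
\]
This is the special case of (B) in which $\mc{K}=\{w\}$ (a singleton is compact in $\mc{H}$ and contained in $\mc{E}$), and by hypothesis (2) the section $F$ required in Lemma \ref{15482712} is already at our disposal. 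Hence (A) follows.

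For $(A)\Rightarrow(B)$, the plan is to use the equicontinuity hypothesis to transfer stalkwise approximations to approximations uniform in $s\in Y$, and then to use compactness of $\mc{K}$ in $(\mc{H},\|\cdot\|_{\mc{H}})$ to reduce to finitely many sections of $\mc{E}$. First I record the identification, coming from Definition \ref{14321401} and Remark \ref{10211801}, that for every $v\in\mc{H}$, $z\in X$ and $s\in Y$,
\[
\mc{U}(z)(s)v(z)=\Pr_{z}\bigl(\ov{\mc{U}}(s)v\bigr),\qquad F(z)(s)v(z)=\Pr_{z}\bigl(\ov{F}(s)v\bigr),
\]
where $\ov{\mc{U}}=\Lambda_{\A}^{Y}(\mc{U})$ and $\ov{F}=\Lambda_{\A}^{Y}(F)$ (well defined by hypotheses (2) and (3)). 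Together with hypothesis (1) this gives the crucial norm comparison
\[
\|\mc{U}(z)(s)v(z)-F(z)(s)v(z)\|_{z}\le a\,\|\ov{\mc{U}}(s)v-\ov{F}(s)v\|_{\mc{H}},\qquad z\in X,\ s\in Y.
\]

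Now fix $\mc{K}\subseteq\mc{E}$ compact in $\mc{H}$, $K\in Compact(Y)$ and $\varepsilon>0$. By hypothesis (4) the families $\{\ov{\mc{U}}(s)\}_{s\in Y}$ and $\{\ov{F}(s)\}_{s\in Y}$ are equicontinuous linear maps on the Banach space $\mc{H}$, so there exists $\delta>0$ such that $\|h\|_{\mc{H}}<\delta$ implies
\[
\sup_{s\in Y}\|\ov{\mc{U}}(s)h\|_{\mc{H}}<\frac{\varepsilon}{3a}\quad\text{and}\quad\sup_{s\in Y}\|\ov{F}(s)h\|_{\mc{H}}<\frac{\varepsilon}{3a}.
\]
By compactness of $\mc{K}$ choose $v_{1},\ldots,v_{n}\in\mc{K}$ with $\mc{K}\subseteq\bigcup_{i=1}^{n}B_{\mc{H}}(v_{i},\delta)$. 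For any $v\in\mc{K}$ pick $v_{i}$ with $\|v-v_{i}\|_{\mc{H}}<\delta$; then the triangle inequality and the comparison above yield, for every $z\in X$ and $s\in K$,
\[
\|\mc{U}(z)(s)v(z)-F(z)(s)v(z)\|\le\frac{2\varepsilon}{3}+\|\mc{U}(z)(s)v_{i}(z)-F(z)(s)v_{i}(z)\|.
\]

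Since (A) has just been shown to be equivalent to the pointwise version of (B), for each $i=1,\ldots,n$ there is a neighbourhood $U_{i}$ of $x_{\infty}$ such that $\sup_{s\in K}\|\mc{U}(z)(s)v_{i}(z)-F(z)(s)v_{i}(z)\|<\varepsilon/3$ for $z\in U_{i}$. Taking $U\doteqdot\bigcap_{i=1}^{n}U_{i}$, which is still a neighbourhood of $x_{\infty}$ as the intersection is finite, we obtain $\sup_{s\in K}\sup_{v\in\mc{K}}\|\mc{U}(z)(s)v(z)-F(z)(s)v(z)\|<\varepsilon$ for all $z\in U$, which is (B). The main obstacle, and the reason hypotheses (1) and (4) are imposed, is precisely this transfer of a stalkwise estimate valid for finitely many reference sections to a uniform estimate over an infinite compact family $\mc{K}$; the Banach structure on $\mc{H}$ together with the uniform equicontinuity of the families indexed by $s\in Y$ is exactly what makes the finite $\delta$-net argument succeed simultaneously for all $s\in K$.
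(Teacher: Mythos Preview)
Your proof is correct and follows essentially the same route as the paper's. Both directions match: for $(B)\Rightarrow(A)$ you specialize to singletons $\mc{K}=\{w\}$ and invoke Lemma~\ref{15482712}, exactly as the paper does; for $(A)\Rightarrow(B)$ you use the identification $\mc{U}(z)(s)v(z)=\Pr_{z}(\ov{\mc{U}}(s)v)$ via $\Lambda_{\A}^{Y}$ (the paper obtains this through Proposition~\ref{16141501} and $\Psi\circ\Lambda=\mathrm{Id}$), then combine equicontinuity with compactness of $\mc{K}$ to pass from finitely many reference sections to a uniform bound. The only cosmetic difference is that the paper translates equicontinuity on the Banach space $\mc{H}$ into a uniform operator-norm bound $M=\max\{\sup_{s}\|\ov{F}(s)\|_{B(\mc{H})},\sup_{s}\|\ov{\mc{U}}(s)\|_{B(\mc{H})}\}$ and sets $\eta=\varepsilon/4aM$, whereas you phrase the same thing via an abstract $\delta$; the resulting $\varepsilon/3$-versus-$\varepsilon/2$ splitting is immaterial.
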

\begin{proof}
We shall prove only
$(A)\Rightarrow(B)$,
indeed the other implication
follows
by
$(3)\Rightarrow(4)$
of
Lemma
\ref{15482712}.
So assume $(A)$ to be true.
In this proof let
us set
$B(\mc{H})\coloneqq
\mc{L}(\lr{\mc{H}}{\|\cdot\|_{\mc{H}}})$,
moreover
$\Psi\coloneqq\Psi_{Y}^{\mc{H}}$
and $\Lambda\coloneqq\Lambda_{\A}^{Y}$,
moreover
set
$\ov{F}\coloneqq\Lambda_{\A}^{Y}(F)$
for every $F\in\Gamma(\rho)$;
thus
by stat. $(2)$ of
Pr. \ref{15111401}
$\Psi(\ov{F})=F$
and
$\Psi(\ov{\mc{U}})=\mc{U}$.
Hence by Pr. \ref{16141501}
for all $v\in\mc{E}$
$F\in\Gamma(\rho)$,
$z\in X$ and $s\in Y$
\begin{equation}
\label{20160304}
\mc{U}(z)(s)v(z)
=
(\ov{\mc{U}}v)(z),\,
F(z)(s)v(z)
=
(\ov{F}v)(z).
\end{equation}
By $(A)$
and
implication
$(4)\Rightarrow(3)$
of
Lemma
\ref{15482712}
we have
for all $K\in Comp(Y)$
and $v\in\mc{E}$
\begin{equation}
\label{20150304}
\lim_{z\to x_{\infty}}
\sup_{s\in K}
\bigl\|
\mc{U}(z)(s)v(z)
-
F(z)(s)v(z)
\bigr\|
=0.
\end{equation}
Fix
$\mc{K}\in Comp(\mc{H})$
such that $\mc{K}\subseteq\mc{E}$,
$f\in\mc{K}$
and $\ep>0$,
thus by 
\eqref{20150304}
and
\eqref{20160304}
there exists $U$
neighbourhood of $x_{\infty}$
such that
\begin{equation}
\label{20170304}
\sup_{s\in K}
\sup_{z\in U}
\bigl\|
\bigl((\ov{\mc{U}}(s)-\ov{F}(s))f\bigr)(z)
\bigr\|
\leq\ep/2.
\end{equation}
Define
\begin{equation*}
\begin{cases}
M
\coloneqq
\max\{
\sup_{s\in Y}
\|\ov{F}(s)\|_{B(\mc{H})},
\,
\sup_{s\in Y}
\|\ov{\mc{U}}(s)\|_{B(\mc{H})}
\}
\\
\eta\coloneqq\ep/4aM
\\
\mf{U}(f)
\coloneqq
\{g\in\mc{K}\,\vert\,\|f-g\|_{\mc{H}}<\eta\}.
\end{cases}
\end{equation*}
Thus
for all $g\in\mf{U}(f)$
\begin{alignat*}{1}
\sup_{z\in U}
\sup_{s\in K}
\bigl\|
\mc{U}(z)(s)g(z)
-
F(z)(s)g(z)
\bigr\|
&
=
\\
\sup_{s\in K}
\sup_{z\in U}
\bigl\|
\bigl((\ov{\mc{U}}(s)-\ov{F}(s))g\bigr)(z)
\bigr\|
&
\leq
\\
\sup_{s\in K}
\sup_{z\in U}
\bigl\|
\bigl((\ov{\mc{U}}(s)-\ov{F}(s))f\bigr)(z)
\bigr\|
+
\sup_{s\in K}
\sup_{z\in U}
\bigl\|
\ov{\mc{U}}(s)(g-f)(z)
\bigr\|
+
\sup_{s\in K}
\sup_{z\in U}
\bigl\|
F(s)(g-f)(z)
\bigr\|
&
\leq
\\
\ep/2
+
a
\sup_{s\in K}
\bigl\|
\ov{\mc{U}}(s)(g-f)
\bigr\|_{\mc{H}}
+
a
\sup_{s\in K}
\bigl\|
F(s)(g-f)
\bigr\|_{\mc{H}}
&
\leq
\\
\ep/2
+
2aM
\bigl\|g-f\bigr\|_{\mc{H}}
&
<
\ep.
\end{alignat*}
Therefore $(B)$
follows by considering
that 
$\{\mf{U}(f)\,\vert\, f\in\mc{K}\}$
is an open cover of 
the compact
$\mc{K}$.
Indeed let for example
$\{\mf{U}(f_{i})\,\vert\, i=1,...,n\}$
a finite subcover of $\mc{K}$
thus
by setting
$W\coloneqq\bigcap_{i=1}^{n}U_{n}$
with obvious meaning of $U_{i}$,
we have
\begin{equation*}
\sup_{z\in W}
\sup_{s\in K}
\sup_{g\in\mc{K}}
\bigl\|
\mc{U}(z)(s)g(z)
-
F(z)(s)g(z)
\bigr\|
<
\ep.
\end{equation*}
\end{proof}
\begin{remark}
We can set 
$\mc{H}=\prod_{x\in X}^{b}\mf{E}_{x}$
with the usual 
norm
$\|\cdot\|_{\sup}$.
\end{remark}
\begin{theorem}
[$\mc{K}-$Uniform Convergence]
\label{10581004}
Let
$\mf{V}
\coloneqq
\lr{\lr{\mf{E}}{\tau}}
{\pi,X,\|\cdot\|}$
be a 
Banach bundle.
Let
$x_{\infty}\in X$
and
$
\mc{U}_{0}
\in\prod_{x\in X_{0}}
\cc{}{\R^{+},B_{s}(\mf{E}_{x})}$
be
such that
$\mc{U}_{0}(x)$
is 
a
$(C_{0})-$semigroup
of contractions
(respectively of
isometries)
on
$\mf{E}_{x}$
for all $x\in X_{0}$.
Assume that
\begin{enumerate}
\item
$D(T_{x_{\infty}})$
is dense in $\mf{E}_{x_{\infty}}$;
\item
$\mf{V}$ and $\mf{W}$
satisfy
\eqref{15482601};
\item
$\exists\lambda_{0}>0$
(respectively
$\exists\lambda_{0}>0,
\lambda_{1}<0$)
such that
the range
$\mc{R}(\lambda_{0}-T_{x_{\infty}})$
is dense in $\mf{E}_{x_{\infty}}$,
(respectively
the ranges
$\mc{R}(\lambda_{0}-T_{x_{\infty}})$
and
$\mc{R}(\lambda_{1}-T_{x_{\infty}})$
are dense in $\mf{E}_{x_{\infty}}$);
\item
$
\ms{U}_{\|\cdot\|_{B(\mf{E}_{z})}}
(\mc{L}_{S_{z}}(\mf{E}_{z}))
\subseteq
\mf{M}_{z}$
(respectively
$\ms{U}_{is}
(\mc{L}_{S_{z}}(\mf{E}_{z}))
\subseteq
\mf{M}_{z}$)
for all $z\in X$;
\item
$\mc{E}\subseteq\mc{H}
\subseteq\prod_{x\in X}^{b}\mf{E}_{x}$
\item 
$X$ is completely regular
and
the filter of neighbourhoods of $x_{\infty}$ 
admits a countable basis;
\item
$\exists\,\mc{F}\subset
\bigcap_{\lambda>0}
\mf{L}_{1}(\R^{+},\mf{G},\mu_{\lambda})$
such that
$\Psi_{\R^{+}}^{\mc{H}}(\mc{F})=\Gamma(\rho)$;
\item
$(\exists\,F\in\Gamma(\rho))
(F(x_{\infty})=\mc{U}(x_{\infty}))$
such that
\begin{enumerate}
\item
$\lr{\int\ov{F}(s)\,d\mu_{\lambda}(s)}
{\mc{E}}\subseteq\Gamma(\pi)$,
for all $\lambda>0$;
\item
$(\forall v\in\mc{E})
(\exists\,\phi\in\Phi)$
s.t.
$\phi_{1}(x_{\infty})
=
v(x_{\infty})$
and
$(\forall\{z_{n}\}_{n\in\N}\subset X
\,\vert\,
\lim_{n\in\N}z_{n}=x_{\infty})$
we have
that
$\{
\mc{U}(z_{n})(\cdot)\phi_{1}(z_{n})
-
F(z_{n})(\cdot)v(z_{n})
\}_{n\in\N}$
is a 
bounded equicontinuous
sequence.
\end{enumerate}
\end{enumerate}
Then
\begin{equation}
\label{19351004}
\mc{U}
\in\Gamma^{x_{\infty}}(\rho).
\end{equation}
Furthermore
if
\begin{enumerate}
\item
$(\exists\,a>0)(\forall f\in\mc{H})
(\|f\|_{\sup}\leq a\|f\|_{\mc{H}})$,
\item
$\{F(\cdot)(s)\,\vert\, s\in\R^{+}\}
\subseteq\A$
and
$\{\mc{U}(\cdot)(s)\,\vert\, s\in\R^{+}\}
\subseteq\A$;
\item
$\{\ov{F}(s)\,\vert\, s\in\R^{+}\}$
and
$\{\ov{\mc{U}}(s)\,\vert\, s\in\R^{+}\}$
are
equicontinuous
as subsets of 
$\mc{L}(\lr{\mc{H}}{\|\cdot\|_{\mc{H}}})$.
\end{enumerate}
Then
for all $\mc{K}\in Comp(\mc{H})$
such that $\mc{K}\subseteq\mc{E}$
and for all $K\in Comp(\R^{+})$
\begin{equation}
\label{19371004}
\lim_{z\to x_{\infty}}
\sup_{s\in K}
\sup_{v\in\mc{K}}
\bigl\|
\mc{U}(z)(s)v(z)
-
F(z)(s)v(z)
\bigr\|
=0.
\end{equation}
Here
$D(T_{x_{\infty}})$
is
defined as
in
Notation
\ref{15411512b}
with 
$\mc{T}_{0}$
and 
$\Phi$
given in
\eqref{15482601}.
While
$\mc{U}
\in\prod_{x\in X}
\mf{M}_{x}$
such that
$\mc{U}\up X_{0}\coloneqq
\mc{U}_{0}$
and
$\mc{U}(x_{\infty})$
is the semigroup
on $\mf{E}_{x_{\infty}}$
generated by 
$T_{x_{\infty}}$
operator defined
in \eqref{172212312b}.
Moreover
$\|f\|_{\sup}\coloneqq
\sup_{x\in X}\|f(x)\|_{x}$,
while
$\ov{\mc{U}}\coloneqq\Lambda_{\A}^{Y}(\mc{U})$
and
$\ov{F}\coloneqq\Lambda_{\A}^{Y}(F)$.
\end{theorem}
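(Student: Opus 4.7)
The proof splits naturally into two stages. First, we establish the membership $\mc{U}\in\Gamma^{x_{\infty}}(\rho)$ of \eqref{19351004} by reducing the present setup to that of Theorem \ref{17301812b}. Second, we promote this pointwise continuity at $x_\infty$ to the $\mc{K}$-uniform convergence \eqref{19371004} by applying the equivalence in Lemma \ref{19260304}. The cleanest route avoids Corollary \ref{12080805} (whose hypothesis $\lr{\mf{G}}{\mc{E}}\subseteq\Gamma(\pi)$ is strictly stronger than 8(a)) and goes directly through the $\mb{U}$-space pairing identity of Theorem \ref{15332203}.

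For the first conclusion, hypotheses (1)--(6) already supply everything needed for Theorem \ref{17301812b} except its Laplace duality hypothesis (i). The identification proceeds as follows. Since $F\in\Gamma(\rho)=\Psi_{\R^{+}}^{\mc{H}}(\mc{F})$ by (7), fix $\ov{F}\in\mc{F}$ with $\Psi_{\R^{+}}^{\mc{H}}(\ov{F})=F$; this is the $\ov{F}$ appearing in 8(a). Hypothesis (5) gives $\mc{E}\subseteq\mc{H}$, and the choice of $\Theta$ from \eqref{11121419b} ensures that $\mc{E}\subseteq\prod_{x\in X}\mc{B}_{B}^{x}$ and that the seminorms defining $\tau_{x}=\mc{L}_{S_{x}}(\mf{E}_{x})$ have the form \eqref{15391901b}; Corollary \ref{15111901} therefore applies with $\mc{D}=\mc{E}$. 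Theorem \ref{15332203} then delivers, for every $\lambda>0$, $v\in\mc{E}$ and $x\in X$,
\begin{equation*}
\lr{\int\ov{F}(s)\,d\mu_{\lambda}(s)}{v}(x)
=\lr{\int F(x)(s)\,d\mu_{\lambda}(s)}{v(x)}_{x}
=\blacksquare_{\mu_{\lambda}}(F,v)(x),
\end{equation*}
where in the middle equality we used $\Pr_{x}\circ\Psi_{\R^{+}}^{\mc{H}}(\ov{F})=F(x)$ from Proposition \ref{15111401}(2). Hypothesis 8(a) thus translates into $\blacksquare_{\mu_{\lambda}}(F,\mc{E})\subseteq\Gamma(\pi)$ for all $\lambda>0$, which via Remark \ref{14092503} is precisely $\mb{LD}(\{F\},\mc{E})$, i.e.\ hypothesis (i) of Theorem \ref{17301812b}. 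The remaining items of Theorem \ref{17301812b} — density of $D(T_{x_\infty})$ and of the ranges from (1) and (3), the semigroup inclusion (4), metrizability (6), and the bounded equicontinuity via 8(b) — are assumed outright. Theorem \ref{17301812b} therefore applies and gives \eqref{19351004} together with the generator description of $T_{x_\infty}$.

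For the second conclusion, the five hypotheses of Lemma \ref{19260304} are verified in turn from the additional assumptions: (9) is the norm domination requirement, (10) makes $\ov{F}=\Lambda_{\A}^{\R^{+}}(F)$ and $\ov{\mc{U}}=\Lambda_{\A}^{\R^{+}}(\mc{U})$ well-defined, (11) provides the equicontinuity in $\mc{L}(\lr{\mc{H}}{\|\cdot\|_{\mc{H}}})$, while the identity $F(x_\infty)=\mc{U}(x_\infty)$ is supplied by (8) and $\{\mc{U}(\cdot)(s)\}\subseteq\A$ by (10). Having just shown that $\mc{U}\in\Gamma^{x_{\infty}}(\rho)$, i.e.\ condition (A) of Lemma \ref{19260304}, the implication $(A)\Rightarrow(B)$ gives \eqref{19371004} for every compact $\mc{K}\subseteq\mc{E}$ of $\mc{H}$ and every $K\in Compact(\R^{+})$.

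The main obstacle is the translation of hypothesis 8(a) into a classical Laplace duality statement, which is exactly what the $\mb{U}$-space machinery of Section \ref{13320803} was built for; in particular, one must be careful to verify condition (A) of Lemma \ref{11011501} for $D=\mc{E}$ so that Theorem \ref{15332203} is applicable — this is not immediate and is precisely what Corollary \ref{15111901} supplies, provided one observes that the set system $S_{x}=\{B_{w}(x):w\in\mc{E}\}$ induced by the choice of $\Theta$ in \eqref{11121419b} generates $\tau_{x}$ via the seminorms \eqref{15391901b}. Everything else in the argument is assembly of existing machinery.
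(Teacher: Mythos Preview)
Your proof is correct and follows essentially the same route as the paper's: verify the Laplace duality hypothesis of Theorem~\ref{17301812b} from hypotheses~(5), (7) and (8a) via the $\mb{U}$-space identity, apply Theorem~\ref{17301812b} to obtain \eqref{19351004}, then invoke Lemma~\ref{19260304} for \eqref{19371004}. The only cosmetic differences are that the paper cites Corollary~\ref{18491004} directly (which packages together Corollary~\ref{15111901}, Proposition~\ref{14492503} and Remark~\ref{14092503}) whereas you unpack these ingredients by hand, and the paper explicitly records that \eqref{18470109} is a consequence of hypothesis~(7) together with statement~(1) of Theorem~\ref{15251401}, a point you leave implicit.
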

\begin{proof}
By hyp. $(7)$ and
st. $(1)$ of
Th. \ref{15251401},
\eqref{18470109}
follows.
Moreover
\eqref{21241004}
follows
by 
hyp. $(5)$, 
and
Rm. \ref{21211004}.
Hence
by
hyps. $(7-8a)$,
and
Cor. \ref{18491004}
follows
the
$\ms{LD}(\{F\},\mc{E})$.
Then
\eqref{19351004}
follows by
Th. \ref{17301812b}.
\eqref{19371004}
follows
by
\eqref{19351004}
and
Lm. \ref{19260304}.
\end{proof}
\begin{remark}
\label{23461004}
By 
st.$(2)$ of
Pr. \ref{15111401}
hyp. $(7)$
is equivalent to the following one
$\Lambda_{\A}^{\R^{+}}(\Gamma(\rho))
\subseteq
\bigcap_{\lambda>0}
\mf{L}_{1}(\R^{+},\mf{G},\mu_{\lambda})$.
In any case the form in hyp. $(7)$ has the
advantage to be considered as a tool for
constructing $\Gamma(\rho)$.
Finally note that
$$
\lr{\mf{G}}{\mc{E}}\subseteq\Gamma(\pi)
\Rightarrow
(8a).
$$
\end{remark}
\subsection{
$\lr{\mc{H}}{\mf{T}}$
as 
Direct Integral
of a 
Continuous Field
of
left-Hilbert 
and associated
left-von Neumann 
Algebras.}
Assume that
$\mf{V}
=
\lr{\lr{\mf{E}}{\tau}}{\pi,X,\n}$
is a
continuous field
of
left-Hilbert algebras
on $X$.
Let
$\mc{H}$
be the
direct integral
of
$\mf{V}$
with respect to some 
finite
Radon
measure
on $X$
and
$\B\subset\mc{H}$
a linear space,
set
$$
\A(\B)
\coloneqq
\bigl\{
X\ni x\mapsto L_{a(x)}
\,\vert\,
a\in\B
\bigr\},
$$
where
$L_{a_{x}}\in B(\mf{E}_{x})$
for any $a_{x}\in\mf{E}_{x}$,
is the left multiplication
on the left-Hilbert algebra
$\mf{E}_{x}$.
Then
$\mc{H}$ and 
$\A(\B)$
satisfies the requirements in
Def. \ref{12042101},
moreover
\begin{equation}
\label{14502904}
G(\B)
\coloneqq
\theta(\A(\B))\up\mc{H}
=
L_{\B}
\end{equation}
where
$L_{a}\in B(\mc{H})$
for any $a\in\mc{H}$,
is the left multiplication
on the left-Hilbert algebra
$\mc{H}$.
If every $\mf{E}_{x}$
is unital
then
$\mc{H}$
is unital,
thus
$L_{(\cdot)}$
is an 
injective
(isometric)
map
of $\mc{H}$
into
$B(\mc{H})$.
Therefore
under this additional requirement
we can take the 
following
identification
$$
G(\B)\simeq\B
\text{ as linear spaces}.
$$
Let
$\ms{H}\coloneqq
\{\ms{H}^{i}\in\prod_{x\in X}\mf{E}_{x}
\}_{i=0}^{2}$
such that
$\ms{H}_{x}^{0}$
is a left Hilbert subalgebra of $\mf{E}_{x}$,
while
$\ms{H}_{x}^{k}$
is a linear subspace of $\ms{H}_{x}^{0}$,
for all $k=1,2$ and $x\in X$.
Set
\begin{equation}
\label{14582904}
\begin{cases}
\Gamma(\pi,\ms{H})
\coloneqq
\bigl\{
\sigma\in\mc{H}
\,\vert\,
(\forall x\in X)
(\sigma(x)\in\ms{H}_{x}^{0})
\bigr\}
\\
\mc{D}_{\ms{H}}
\coloneqq
\bigl\{
\sigma\in\mc{H}
\,\vert\,
(\forall x\in X)
(\sigma(x)\in\ms{H}_{x}^{1})
\bigr\}
\\
\B_{\ms{H}}
\coloneqq
\bigl\{
\sigma\in\mc{H}
\,\vert\,
(\forall x\in X)
(\sigma(x)\in\ms{H}_{x}^{2})
\bigr\}.
\end{cases}
\end{equation}
Thus
$\Gamma(\pi,\ms{H})$
is a left Hilbert subalgebra of $\mc{H}$
and
$\B_{\ms{H}}, \mc{D}_{\ms{H}}$
are linear subspaces of 
$\Gamma(\pi,\ms{H})$,
so
\begin{equation}
\label{14512904}
L_{\B_{\ms{H}}}(\mc{D}_{\ms{H}})
\subseteq
\Gamma(\pi,\ms{H}).
\end{equation}
By \eqref{14512904}
and
\eqref{14502904}
follows 
that for all
$\sigma\in\B_{\ms{H}}$,
$\eta\in\mc{D}_{\ms{H}}$
and $y\in X$
\begin{equation}
\label{15172904}
\begin{cases}
\lr{G(\B_{\ms{H}})}{\mc{D}_{\ms{H}}}
\subseteq
\Gamma(\pi,\ms{H}),
\\
\lr{\theta\left(
x\mapsto L_{\sigma(x)}\right)}
{\eta}(y)
=
\sigma(y)\eta(y).
\end{cases}
\end{equation}
Let us consider now the 
continuous field
of left-von Neumann algebras
associated with the fixed 
field of Hilbert algebras,
and by abusing of language,
let us denote it with the
same symbol
$\mf{V}
=
\lr{\lr{\mf{E}}{\tau}}{\pi,X,\n}$,
as well as
$\mc{H}$
will denote the associated 
direct integral with respect to some
finite Radon measure on $X$.
Let $\Delta_{x}$ be the modular operator
associated with the Hilbert algebra
$\mf{E}_{x}$ and $\sigma_{x}$ the corresponding
modular group.
Thus we can set
$$
\begin{cases}
\A_{\Delta}
\coloneqq\{
S_{t}:
X\ni x\mapsto
\sigma_{x}(t)\in 
Aut(\mf{E}_{x})
\,\vert\, t\in\R
\}
\\
G_{\Delta}\coloneqq
\theta(\A_{\Delta})\up\mc{H}
\\
\Sigma_{t}
\coloneqq
\theta(S_{t})\up\mc{H},\,
t\in\R.
\end{cases}
$$
Note that for every $t\in\R$,
$v\in\mc{H}$
and $x\in X$
$$
\Sigma_{t}(v)(x)
=\sigma_{x}(t)(v(x)).
$$ 
Now
if we set
$$
\Gamma(\pi)
\coloneqq\mc{H}
$$
for any linear subspace $\mc{D}$ of 
$\mc{H}$
we have 
$$
\lr{G_{\Delta}}{\mc{D}}
\subseteq
\Gamma(\pi).
$$
Finally
note that
to $\A_{\Delta}$ we can associate 
the following
map
$$
\ov{\Sigma}:\R^{+}\ni t
\mapsto
\Sigma_{t}
\in
G_{\Delta},
$$
for which we have
for 
all $x\in X$
$$
\Psi_{\R}^{\mc{H}}(\ov{\Sigma})
(x)
=
\sigma_{x}.
$$
In the previous
example we consider 
the extreme case in which 
$\Gamma(\pi)=\mc{H}$.
In order to have a model where
$\Gamma(\pi)\subset\mc{H}$
we have to get a more detailed structure,
namely the 
half-side modular inclusion.
So for any $x\in X$ let 
$\lr{\n_{x}\subset\mf{E}_{x}}{\Omega_{x}}$
be a $hsmi^{+}$ and 
$V_{x}$ the Wiesbrock
one-parameter semigroup of unitarities
associated with it
so $V_{x}\in Hstr(\mf{E}_{x})^{+}$
such that
$\n_{x}=Ad(V_{x}(1))\mf{E}_{x}$.
Therefore what we are interested in 
is that for all $t\in\R^{+}$
\begin{equation}
\label{20290405}
\begin{cases}
Ad(V_{x}(t))(\mf{E}_{x})
\subseteq\mf{E}_{x},
\\
Ad(V_{x}(t))(\n_{x})
\subseteq\n_{x}.
\end{cases}
\end{equation}
By using the first inclusion
in \eqref{20290405}
we can set
$$
\begin{cases}
\A_{V}
\coloneqq\{
V_{t}:
X\ni x\mapsto
Ad(V_{x}(t))
\up
\mf{E}_{x}
\in 
Aut(\mf{E}_{x})
\,\vert\, t\in\R
\}
\\
G_{V}\coloneqq
\theta(\A_{V})\up\mc{H}
\\
\ov{\mc{V}}_{t}
\coloneqq
\theta(V_{t})\up\mc{H},\,
t\in\R.
\end{cases}
$$
Hence for all $x\in X$
and $t\in\R$
$$
\begin{cases}
\ov{\mc{V}}_{t}(v)(x)
=
Ad(V_{x}(t))v(x)
\\
\Psi_{\R}^{\mc{H}}(\ov{\mc{V}})
(x)(t)
=
Ad(V_{x}(t))
\end{cases}
$$
Therefore
if we set
$\mc{D}$
and
$\Gamma(\pi)$
such that
$$
\mc{D}
\subseteq
\Gamma(\pi)
\coloneqq
\int^{\oplus}
\n_{x}\,
d\mu(x)
\subset
\mc{H}
$$
then
by
using the second
inclusion
in
\eqref{20290405}
we have
$$
\lr{G_{V}}{\mc{D}}
\subseteq
\Gamma(\pi).
$$
For
any 
semi-finite
von Neumann algebra $\n$
and any $\phi\in\ms{N}_{\n}$ faithful
we have that
the Tomita-Takesaki modular group
$\sigma_{\n}^{\phi}$ is inner
(see \cite[Thm. 8.3.14]{tak2})
i.e. it
is implemented by a strongly continuous
group morphism
$V:\R\to U(\n)$,
where
$U(\n)\coloneqq\{U\in\n\,\vert\,U^{-1}=U^{*}\}$,
so in particular
\begin{equation}
\label{23452904a}
V(\R)\subset\n.
\end{equation}
Now let 
$\lr{H_{\phi},\pi_{\phi}}{\Omega_{\phi}}$
be a cyclic representation associated with $\phi$
and $\n_{\phi}\coloneqq\pi(\n_{\phi})$ which is
a von Neumann algebra $\phi$ being normal,
then
by
\eqref{23452904a}
immediatedly we have
\begin{equation}
\label{23452904b}
\pi_{\phi}(V(\R))\subset\n_{\phi}.
\end{equation}
By
the invariance
$\phi=\phi\circ\sigma_{\n}^{\phi}$,
and the cited unitary
implementation
we obtain
that there exists
$W_{\phi}$ unitary action on $H_{\phi}$
such that
\begin{equation}
\label{23372904}
\begin{cases}
Ad(W_{\phi}(t))\circ\pi_{\phi}
=
Ad(\pi_{\phi}(V(t)))\circ\pi_{\phi},
\\
W_{\phi}(t)=\Delta_{\phi}^{it},
\end{cases}
\end{equation}
where
the second sentence comes by
\cite[Thm. 8.1.2]{tak2},
with
$\Delta_{\phi}$
the modular operator associated with 
$\lr{\n_{\phi}}{\Omega_{\phi}}$.

\chapter{Sections of Projectors}
\section*{Introduction}
In this final part of the work we accomplish in 
\textbf{Thm. \ref{13020103}} the claim of extending 
the classical stability problem to the framework of bundles of $\Omega-$spaces
and consequently to obtain stability results for 
operators acting in different Banach spaces.
Let us describe the principal steps required for this result.
\par
$\left(\Theta,\mc{E}\right)-$structures are central in constructing 
in 
Thm. \ref{17301812b} 
the section of $C_{0}-$semigroups $\mc{U}$ continuous at $x_{\infty}$.
However the presence in their definition 
of the uniform convergence over compact subsets of a topological space $Y$
rather than the \emph{pointwise} convergence,
drastically restricts in general the fulfillment of the property \eqref{14451103}
(with $\Gamma(\xi)$ replaced by $\Gamma(\rho)$)
characterizing invariant structures.
Possible exceptions are those where the base space $X$ is compact and under suitable hypothesis
the above property is used to determine $\Gamma(\rho)$, see Rmk. \ref{17150312}.
\par
Now first of all the property \eqref{14451103} is basic to establish in \textbf{Cor. \ref{21152602}} 
the essential step \eqref{17401003} toward the main result \eqref{17321002bis}.
Here $\mc{P}$ is a suitable section of spectral projectors 
of the infinitesimal generators of the semigroups $\{\mc{U}(x)\}_{x\in X}$.
For instance for obtaining \eqref{18132502} we apply Lemma \ref{14452602}. 
Secondly in order to prove \eqref{17401003} 
we need the concept of $\mu-$relatedness provided in \textbf{Def. \ref{15492502}}
requiring $\mu-$\emph{integrable Hlcs valued maps}.
Indeed see the technical \textbf{Cor. \ref{15262502}} resulting 
by the bundle type generalization of the Lebesgue Theorem we establish in \textbf{Thm. \ref{15101701}}
and by Lemma \ref{12151902}, remarkable results by themself.
Finally for defining $\mc{P}(x)$ we apply to $\mc{U}(x)$ 
the well-known integral formula \eqref{16171102} for any $x\in X$.
\par
Therefore it appears natural in the present context to replace
$\left(\Theta,\mc{E}\right)-$structures based on
function spaces of continuous maps provided with the topology of compact convergence,
with those based on function spaces, provided with the topology of pointwise convergence,
of Hausdorff locally convex space valued integrable maps defined on a locally compact space 
provided with a Radon measure.
\par
To this end we introduce for any Radon measure $\mu$ on a locally compact space $Y$
the
concept
of
$\left(\Theta,\mc{E},\mu\right)-$structure \textbf{Def. \ref{17161902}}.
Roughly a $\left(\Theta,\mc{E},\mu\right)-$structure $\lr{\mf{V},\mf{Q}}{X,Y}$
where $\mf{Q}=\lr{\lr{\mf{H}}{\gamma}}{\xi,X,\mf{Y}}$
and $\mf{V}=\lr{\lr{\mf{E}}{\tau}}{\pi,X,\n}$,
is defined in the same way as a $\left(\Theta,\mc{E}\right)-$structure except that
\begin{equation}
\label{20081003}
\begin{cases}
\mf{H}_{x}
\subseteq
\mf{L}_{1}
\left(Y,\mc{L}_{S_{x}}
(\mf{E}_{x}),\mu\right),
\\
\text{$\mf{Y}_{x}$ induces the topology on $\mf{H}_{x}$ of pointwise convergence on $Y$.}
\end{cases}
\end{equation}
Here for any $x\in X$ we recall that $\mc{L}_{S_{x}}(\mf{E}_{x})$ is the Hlcs of 
continuous linear maps on $\mf{E}_{x}$
provided with the topology of uniform convergence over the sets belonging to $S_{x}$.
$\lr{\mf{V},\mf{Q}}{X,Y}$ is \emph{invariant} if in addition 
\begin{equation}
\label{14451103}
\left\{
F\in\prod_{z\in X}^{b}\mf{H}_{z}
\,\vert\,
(\forall t\in Y)
(F_{t}
\bullet
\mc{E}(\Theta)
\subseteq
\Gamma(\pi))
\right\}
=
\Gamma(\xi).
\end{equation}
\par
The main reason behind the intoduction of this concept is represented by the following result.
Let $\mf{V}$ be a Banach bundle, 
$T$ be a suitable section of closed densely defined linear operators 
satisfying the property of separation of the spectrum,
$\Gamma$ be a curve associated with $T$ and $(K,A,\phi)$ be a triplet associated with $\Gamma$, 
a straight extension to the bundle case of the separation of the spectrum introduced by Kato
(Def. \ref{13361311biss}).
If $\lr{\mf{V},\mf{Q}}{X,\R^{+}}$ is an invariant $\left(\Theta,\mc{E},\mf{q}\right)-$structure
for all $\mf{q}\in\{\nu^{\phi},\eta_{s}^{\phi}\,\vert\, s\in K\}$
(Def. \ref{16420402})
and $\lr{\mf{V}}{\mf{Z},\ms{H}}$ is $\mf{q}-$related 
such that $\ms{H}_{z}=\mf{H}_{z}$ for all $z\in X$,
then under additional hypothesis Cor. \ref{21152602} 
\begin{equation}
\label{17401003}
\mc{W}_{T}
\in
\Gamma^{x_{\infty}}(\xi)
\Rightarrow
\mc{P}
\bullet
\Gamma_{\mc{E}(\Theta)}^{x_{\infty}}(\pi)
\subseteq
\Gamma^{x_{\infty}}(\pi).
\end{equation}
As we describe below, 
by letting $\mf{n}$ be the Lebesgue measure on $\R^{+}$
we shall apply this result to a $\mf{n}-$related set and to
the $\left(\Theta,\mc{E},\mf{n}\right)-$structure underlying the
$\left(\Theta,\mc{E}\right)-$structure used in Thm. \ref{17301812b}.
\par
Now central in proving \eqref{17401003} 
is the fact that the global relation \eqref{14451103}
implies the pointwise one.
More exactly for any invariant 
$\left(\Theta,\mc{E},\mu\right)-$structure
$\lr{\mf{V},\mf{D}}{X,Y}$ with a suitable $\Theta$,
by denoting
$\mf{D}=\lr{\lr{\mf{B}}{\gamma_{3}}}{\eta,X,\mf{L}}$,
we have Lemma \ref{14452602}
\begin{equation}
\label{17481003loc}
\left\{
H\in
\left[
\left(
\prod_{z\in X}^{b}
\mf{B}_{z}
\right)_{\diamond}^{x_{\infty}}
\right]_{peq}
\,\vert\,
(\forall t\in Y)
(H_{t}
\bullet
\mc{E}(\Theta)
\subseteq
\Gamma^{x_{\infty}}(\pi))
\right\}
\subseteq
\Gamma_{\diamond}^{x_{\infty}}(\eta).
\end{equation}
To derive \eqref{17401003} we use this inclusion for the case $Y=\R^{+}$
and multiple times Cor. \ref{15262502}.
\eqref{17481003loc} can be extended to invariant $\left(\Theta,\mc{E}\right)-$structures
whenever $Comp(Y)=\mc{P}_{\omega}(Y)$,
we shall use this remark in the case $Y=\{pt\}$.
\par
Now if $\mc{E}(\Theta)\subseteq\Gamma(\pi)$ and if we show that
\begin{equation}
\label{18271003}
\mc{P}
\bullet
\Gamma_{\mc{E}(\Theta)}^{x_{\infty}}(\pi)
\subseteq
\Gamma^{x_{\infty}}(\pi),
\end{equation}
then 
\begin{equation}
\label{17321002bis}
\mc{P}
\in
\Gamma^{x_{\infty}}(\eta),
\end{equation}
follows by \eqref{17481003loc} for the special case $Y=\{pt\}$.
It is worthwhile remarking that \eqref{18271003} represents a satisfactory result
for all practical purposes concerning the stability of $\mc{P}$.
However in order to interpret $\mc{P}$ satisfying \eqref{18271003}
as a bounded section continuous at $x_{\infty}$ we need to employ \eqref{17481003loc}. 
\par
Next let $Y=\R^{+}$.
To prove \eqref{18271003} we apply \eqref{17401003}
to the section of contractions $\mc{U}$ 
continuous at $x_{\infty}$ obtained in Thm. \ref{17301812b}
and to the $\left(\Theta,\mc{E},\mf{n}\right)-$structure
underlying the $\left(\Theta,\mc{E}\right)-$structure used in 
Thm. \ref{17301812b}.
More exactly given a suitable $\left(\Theta,\mc{E}\right)-$structure 
$\lr{\mf{V},\mf{W}}{X,Y}$
which in general is not a $\left(\Theta,\mc{E},\mu\right)-$ structure
and by letting $\mf{W}=\lr{\lr{\mf{M}}{\gamma}}{\rho,X,\mf{R}}$,
Thm. \ref{17301812b}
states the existence of a section $T$ of closed operators
such that 
$
\mc{U}
=
\mc{W}_{T}
\in
\Gamma^{x_{\infty}}(\rho)$.
So under the additional hypothesis that there exists an $F\in\Gamma(\rho)$ 
such that $F(x_{\infty})=\mc{U}(x_{\infty})$ we obtain
\begin{equation}
\label{05311507}
\mc{U}
\in
\Gamma_{\diamond}^{x_{\infty}}(\rho).
\end{equation}
In order to apply \eqref{17401003} we need a procedure to extract 
from the initial $\left(\Theta,\mc{E}\right)-$structure
a structure $\lr{\mf{V},\mf{Q}}{X,Y}$
which 
is a $\left(\Theta,\mc{E},\mf{q}\right)-$structure
for all $\mf{q}\in\{\nu^{\phi},\eta_{s}^{\phi}\,\vert\, s\in K\}$
and such that 
\begin{equation}
\label{05301121}
\Gamma_{\diamond}^{x_{\infty}}(\rho)\subseteq\Gamma^{x_{\infty}}(\xi).
\end{equation}
This is performed by applying a general construction called
the $\left(\Theta,\mc{E},\mu\right)-$structure
$\lr{\mf{V},\mf{V}(\ms{M}^{\mu},\Gamma(\rho))}{X,Y}$
underlying $\lr{\mf{V},\mf{W}}{X,Y}$
and defined in \textbf{Def. \ref{18072802}}.
\par
In view of the property \eqref{05301121}
we have to maintain the 
vicinity of the initial and the underlying
structure.
This is performed by applying the by now 
usual general result \cite[Thm. 5.8]{gie}
for constructing bundles with a given 
subspace of bounded continuous sections.
Thus the choice we perform in Def. \ref{18072802}
for the stalks $\ms{M}_{x}^{\mu}$
is essentially the weakest one in order 
to satisfy \eqref{20081003} 
and to allow the space 
$\Gamma(\rho)$
of bounded continuous sections
of
$\mf{W}$
to be a 
subspace
of
the space 
$\Gamma
(\pi_{\ms{M}^{\mu}})$
of bounded continuous sections of the underlying 
bundle $\mf{V}(\ms{M}^{\mu},\Gamma(\rho))$.
\par
Prp. \ref{23332802} shows that our choice is the right one indeed
\begin{equation}
\label{17501003}
\Gamma_{\diamond}^{x_{\infty}}(\rho)
\subseteq
\Gamma^{x_{\infty}}
(\pi_{\ms{M}^{\mu}}).
\end{equation}
It remains only to select the right measure $\mu$ in order to satisfy the hypothesis of Cor. \ref{21152602} 
in particular such that the $\left(\Theta,\mc{E},\mu\right)-$structure
underlying $\lr{\mf{V},\mf{W}}{X,Y}$
is a $\left(\Theta,\mc{E},\mf{q}\right)-$structure
for all $\mf{q}\in\{\nu^{\phi},\eta_{s}^{\phi}\,\vert\, s\in K\}$.
To this end let us say that $(\mf{W},\mf{Z})$ satisfies the $\mf{n}-$hypothesis,
if 
the $\left(\Theta,\mc{E},\mf{n}\right)-$structure
underlying $\lr{\mf{V},\mf{W}}{X,Y}$
is 
invariant,
$\lr{\mf{V}}{\mf{Z},\ms{M}^{\mf{n}}}$
is
$\mf{n}-$related
and
$\mf{L}^{\infty}(Y,\mf{n})\blacktriangleright\Gamma(\zeta)\subseteq\Gamma(\zeta)$
(where $\Gamma(\zeta)$ is the space of bounded continuous sections of $\mf{Z}$
and $\blacktriangleright$ is defined in Def. \ref{15361702}).
Now it is possible to show that if $(\mf{W},\mf{Z})$ satisfies the $\mf{n}-$hypothesis,
then $\lr{\mf{V},\mf{V}(\ms{M}^{\mf{n}},\Gamma(\rho))}{X,Y}$
is an invariant $\left(\Theta,\mc{E},\mf{q}\right)-$structure
and
$\lr{\mf{V}}{\mf{Z},\ms{M}^{\mf{n}}}$
is
$\mf{q}-$related
for all $\mf{q}\in\{\nu^{\phi},\eta_{s}^{\phi}\,\vert\, s\in K\}$.
In other words the hypothesis in Cor. \ref{21152602} 
for obtaining \eqref{17401003} holds true
with the position $\mf{Q}=\mf{V}(\ms{M}^{\mf{n}},\Gamma(\rho))$.
\par
Finally provided that $(\mf{W},\mf{Z})$ satisfies the $\mf{n}-$hypothesis
we conclude that 
\eqref{18271003} and consequently \eqref{17321002bis},
follow
by
\eqref{05311507},
\eqref{17501003} with the position $\mu=\mf{n}$
and
\eqref{17401003}.
\par
Except when explicitly stated, we assume all the notations set in 
Ch. \ref{05250734} and Ch. \ref{05251221},
in particular all the vector spaces are over $\C$.
\section
{$\lr{\nu,\eta}{E,Z,T}$
invariant set
with respect to 
$\mc{F}$}
\label{15491702A}
In the present section \ref{15491702A} 
let us fix a consistent class of data $\mf{O}$
(Def. \ref{12042101}) 
and let $\mf{G}$ denote the locally convex 
space relative to $\mf{O}$
(Def. \ref{10221801}).
\begin{definition}
\label{19560202}
Let 
$Z,T$ be two
locally compact spaces,
$E\in Hlcs$, 
$\nu\in Radon(Z)$
and
$\eta\in Radon(T)^{Z}$.
Set
$$
\mf{L}_{(1,1)}
(T,E,\eta,\nu)
\coloneqq
\left\{
\ov{F}\in\bigcap_{\lambda\in Z}
\mf{L}_{1}(T,E,\eta_{\lambda})
\,\vert\,
\left(
Z\ni\lambda
\mapsto
\int\ov{F}(s)
d\eta_{\lambda}(s)
\in E
\right)
\in
\mf{L}_{1}(Z,E,\nu)
\right\}
$$
\end{definition}
\begin{corollary}
\label{19510202}
Let $Z$ be a locally compact space, $\nu\in Radon(Z)$,
$\eta\in Radon(Y)^{Z}$, 
$\mf{D}\in\prod_{x\in X}\mc{P}(\mf{E}_{x})$,
$D=\prod_{x\in X}\mf{D}_{x}$
and assume $(A)$ of 
Lemma \ref{11011501}.
Thus
$(\forall\ov{F}\in
\mf{L}_{(1,1)}
(Y,\mf{G},\eta,\nu))
(\forall x\in X)
(\forall v\in D)$
$$
\Pr_{x}
\circ
\left[
\int
\left(
\int\ov{F}(s)\,
d\eta_{\lambda}(s)
\right)\,
d\nu(\lambda)
\right]
(v)
=
\left[
\int
\left(
\int
\Pr_{x}(\Psi(\ov{F}))(s)
v(x)\,
d\eta_{\lambda}(s)
\right)\,
d\nu(\lambda)
\right].
$$
\end{corollary}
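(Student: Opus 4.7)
The proof is a two-step application of the pointwise intertwining identity of Theorem \ref{16291501}, followed by pulling a continuous linear operator out of the outer integral.

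First I would fix $\ov{F}\in\mf{L}_{(1,1)}(Y,\mf{G},\eta,\nu)$, $x\in X$, and $v\in\prod_{y\in X}D(y)$, and set
$$B_{\lambda}\doteqdot\int\ov{F}(s)\,d\eta_{\lambda}(s)\in\mf{G},\qquad\lambda\in Z,$$
which is well defined since $\ov{F}\in\bigcap_{\lambda\in Z}\mf{L}_{1}(Y,\mf{G},\eta_{\lambda})$. Using assumption $(A)$ of Lemma \ref{11011501}, Theorem \ref{16291501} applied pointwise in $\lambda$ with $\mu=\eta_{\lambda}$ yields
$$\int \Pr_{x}(\Psi(\ov{F}))(s)(v(x))\,d\eta_{\lambda}(s)=\Pr_{x}\bigl(B_{\lambda}(v)\bigr),$$
where, if $v$ itself is not in $\mc{H}$, one reads the right-hand side via $\imath_{x}(v(x))$ in place of $v$, justified by Remark \ref{16082203}.

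Next I would integrate this identity against $\nu$ on $Z$. Introduce the map
$$T_{x}^{v}\colon\mf{G}\ni B\mapsto \Pr_{x}\bigl(B(v)\bigr)\in\mf{E}_{x},$$
and observe that $T_{x}^{v}=\Pr_{x}\circ\eta(v)$. By implication $(A)\Rightarrow(B)$ of Lemma \ref{11011501} and the continuity of $\Pr_{x}$ on $\lr{\mc{H}}{\mf{T}}$, the map $T_{x}^{v}$ is linear and continuous. The very definition of $\mf{L}_{(1,1)}(Y,\mf{G},\eta,\nu)$ tells us that $\lambda\mapsto B_{\lambda}$ belongs to $\mf{L}_{1}(Z,\mf{G},\nu)$, so by the standard property of the integral \cite[Prop.$1$, $No\,1$, $\S 1$, Ch. $6$]{IntBourb} applied to the continuous linear map $T_{x}^{v}$,
$$\int\Pr_{x}\bigl(B_{\lambda}(v)\bigr)\,d\nu(\lambda)=T_{x}^{v}\!\left(\int B_{\lambda}\,d\nu(\lambda)\right)=\Pr_{x}\circ\left[\int\left(\int\ov{F}(s)\,d\eta_{\lambda}(s)\right)d\nu(\lambda)\right](v).$$

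Finally, combining the displayed identity from Theorem \ref{16291501} (integrated over $Z$ with respect to $\nu$) with the pullout of $T_{x}^{v}$ gives
$$\Pr_{x}\circ\left[\int\left(\int\ov{F}(s)\,d\eta_{\lambda}(s)\right)d\nu(\lambda)\right](v)=\int\left(\int\Pr_{x}(\Psi(\ov{F}))(s)\,v(x)\,d\eta_{\lambda}(s)\right)d\nu(\lambda),$$
which is the claim. The only real obstacle is bookkeeping: one must check that composition with $T_{x}^{v}$ preserves essential $\nu$-integrability so that the Bourbaki pullout rule applies, and this is exactly what is packaged by the conjunction of $(A)$ of Lemma \ref{11011501} and the $\mf{L}_{(1,1)}$-hypothesis on $\ov{F}$; no genuine new estimate is needed.
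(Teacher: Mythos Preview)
Your proof is correct and essentially the same as the paper's: both arguments amount to applying Theorem \ref{16291501} (equivalently, pulling the continuous linear map $\Pr_{x}\circ\eta(v)$ through the integral via \cite[Prop.\ $1$, $\S 1$, Ch.\ $6$]{IntBourb}) once for each of the two nested integrals. The only cosmetic difference is the order: the paper applies Theorem \ref{16291501} to the outer $\nu$-integral first (obtaining $\int \Pr_{x}\circ B_{\lambda}\circ\imath_{x}(v(x))\,d\nu(\lambda)$ via Proposition \ref{15111401}) and then to the inner $\eta_{\lambda}$-integral, whereas you treat the inner integral first and then pull $T_{x}^{v}$ out of the outer one.
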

\begin{proof}
Let
$\ov{F}\in
\mf{L}_{(1,1)}
(Y,\mf{G},\eta,\nu)$,
$x\in X$
and
$v\in D$.
By Thm. 
\ref{16291501}
$$
\Pr_{x}
\circ
\left[
\int
\left(
\int\ov{F}(s)\,
d\eta_{\lambda}(s)
\right)\,
d\nu(\lambda)
\right]
(v)
=
\int
\Pr_{x}\circ\Psi
\left(
\int\ov{F}(s)\,d\eta_{(\cdot)}(s)
\right)
(\lambda)
(v(x))
\,d\nu(\lambda).
$$
Moreover $\forall\lambda\in Z$
\begin{alignat*}{2}
\Pr_{x}\circ\Psi
\left(
\int\ov{F}(s)\,d\eta_{(\cdot)}(s)
\right)
(\lambda)
(v(x))
&=
\Pr_{x}
\circ
\left(
\int
\ov{F}(s)\,
d\eta_{\lambda}(s)
\right)
\circ
\imath_{x}
(v(x))
\\
&=
\int
\Pr_{x}(\Psi(\ov{F}))(s)
\circ
\Pr_{x}\circ\imath_{x}
(v(x))
\,d\eta_{\lambda}(s)
\\
&=
\int
\Pr_{x}(\Psi(\ov{F}))(s)
v(x)
\,d\eta_{\lambda}(s),
\end{alignat*}
where in the first equality we used 
Prop. \ref{15111401},
while in the second one 
Thm. \ref{16291501}.
Then the statement follows.
\end{proof}
\begin{definition}
\label{20030202}
$V$ is a 
\emph
{$\lr{\nu,\eta}{E,Z,T}$
invariant set
with respect to 
$\mc{F}$}
if
\begin{enumerate}
\item
$T,Z$ are two locally compact spaces;
\item
$E\in Hlcs$
\item
there exists $M\in Hlcs$ and $\mf{b}:E\times M\to M$ bilinear;
\item
$V\subseteq M$ linear subspace;
\item
$\nu\in Radon(Z)$
and
$\eta\in Radon(T)^{Z}$;
\item
$\mc{F}
\subseteq
\mf{L}_{(1,1)}
(T,E,\eta,\nu)$
\item
$\forall\ov{F}\in\mc{F}$
$$
\left[
\int
\left(
\int\ov{F}(s)\,
d\eta_{\lambda}(s)
\right)\,
d\nu(\lambda)
\right]
V
\subseteq
V,
$$
where we denote $\mf{b}(e,m)$ by $e m$,
for any $e\in E$ and $m\in M$.
\end{enumerate}
\end{definition}
\begin{proposition}
\label{18310302}
Let us assume the hypotheses of Cor. \ref{19510202} and 
$V$ be a
$\lr{\nu,\eta}{\mf{G},Z,Y}$
invariant set
with respect to 
$\mc{F}$
such that
$V\cap D\neq\emptyset$.
Then
$\forall v\in V\cap D$
and
$\forall\ov{F}\in\mc{F}$
$$
\left(
X\ni x
\mapsto
\left[
\int
\left(
\int
\Pr_{x}(\Psi(\ov{F}))(s)
v(x)\,
d\eta_{\lambda}(s)
\right)\,
d\nu(\lambda)
\right]
\in\mf{E}_{x}
\right)
\in
V.
$$
\end{proposition}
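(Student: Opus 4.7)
The plan is to combine, in the most direct way, the pointwise formula of Corollary \ref{19510202} with clause $(5)$ of Definition \ref{20030202}. Fix $v\in V\cap\prod_{y\in X}D(y)$ and $\ov{F}\in\mc{F}$. Since $\ov{F}\in\mf{L}_{(1,1)}(Y,\mf{G},\eta,\nu)$, the iterated integral
$$
W_{\ov{F}}\doteqdot
\int\left(\int\ov{F}(s)\,d\eta_{\lambda}(s)\right)d\nu(\lambda)
$$
is a well-defined element of $\mf{G}$. By the assumption that $V$ is $\lr{\nu,\eta}{\mf{G},Z,Y}$-invariant with respect to $\mc{F}$, clause $(5)$ of Definition \ref{20030202} yields $W_{\ov{F}}\,V\subseteq V$, so in particular
$$
W_{\ov{F}}\,v\in V.
$$

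Next, I would invoke Corollary \ref{19510202}---whose standing hypothesis, namely $(A)$ of Lemma \ref{11011501}, is in force by assumption---to obtain, for each $x\in X$,
$$
\Pr_{x}\bigl(W_{\ov{F}}\,v\bigr)
=
\int\left(\int\Pr_{x}(\Psi(\ov{F}))(s)\,v(x)\,d\eta_{\lambda}(s)\right)d\nu(\lambda).
$$
Viewing $W_{\ov{F}}\,v\in\mc{H}$ through the inclusion $\mc{H}\subseteq\prod_{y\in X}\mf{E}_{y}$ as the selection $x\mapsto\Pr_{x}(W_{\ov{F}}\,v)$, the right-hand side of the displayed equality is exactly the map whose membership in $V$ is claimed. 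Since this map coincides with $W_{\ov{F}}\,v$, its membership in $V$ is precisely the conclusion of the previous paragraph.

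I do not expect any genuine obstacle. The analytic content, namely the interchange of $\Pr_{x}$ with the two iterated integrals via $\Psi$, together with the integrability of the inner integrand, has already been absorbed into Corollary \ref{19510202}; the stability of $V$ under the operator $W_{\ov{F}}$ is by definition of $\lr{\nu,\eta}{\mf{G},Z,Y}$-invariance. The only bookkeeping item is the identification of an element of $\mc{H}$ with its coordinate selection through the maps $\Pr_{x}$ and $\imath_{x}$ of Notations \ref{12042101}, and this is built into the framework. Thus the proposition is essentially a rereading of the invariance condition through the pointwise identity of Corollary \ref{19510202}.
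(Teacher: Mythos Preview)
Your proof is correct and is precisely the argument the paper has in mind: the paper's own proof consists of the single line ``By Corollary \ref{19510202}'', and you have simply unpacked that citation by combining the pointwise identity of Corollary \ref{19510202} with clause $(5)$ of Definition \ref{20030202}. There is no difference in approach.
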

\begin{proof}
By Cor.
\ref{19510202}.
\end{proof}
\section
{
Construction of sets in
$
\Delta_{\Theta}\lr{\mf{V},\mf{D},\mf{W}}
{\mc{E},X,\R^{+}}
$
through
invariant sets
}
\label{15491702B}
In the present section \ref{15491702B}
let us fix an entire consistent class of data $\mf{O}$
(Def. \ref{12042101}) 
such that $\R^{+}$ is its locally compact space,
$\nu$ defined in Def. \ref{16420402} is its Radon measure,
the primary family $\ms{E}\coloneqq\{\mf{E}_{x}\}_{x\in X}$
underlying $\mf{O}$
is a family of Banach spaces
and such that for any $x\in X$ the corresponding 
element of the secondary family underlying $\mf{O}$
is the strong operator topology on $\mc{L}(\mf{E}_{x})$.
Let $\mf{G}$ denote the locally convex 
space relative to $\mf{O}$
(Def. \ref{10221801}).
For any Banach space $C$
let $Cld(C)$ denote,
the set of all
closed linear operators 
densely defined in $C$ and at values in $C$.
For any $T\in Cld(C)$ let $P(T)$ denote 
the resolvent set of $T$
and $\Sigma(T)$ be the spectrum of $T$.
Let $R(T;\cdot):P(T)\ni \zeta\mapsto (T-\zeta)^{-1}\in B(C)$ 
be the resolvent map of $T$.
In the next definition we adapt to our framework a definition 
provided in 
\cite[Ch. $9$, $\S1$, $n^{\circ}4$]{kato}.
\begin{definition}
\label{13411311biss}
Let $M>1$ and $\beta\in\R$.
Let $\mc{G}(M,\beta,\ms{E})$ be the set
of all 
$T\in\prod_{x\in X}Cld(\mf{E}_{x})$
such that
$]\beta,\infty[\subseteq P(-T(x))$ 
and
$(\forall\xi>\beta)
(\forall k\in\N)
(\forall x\in X)$
$$
\|(T(x)+\xi)^{-k}\|_{B(\mf{E}_{x})}
\leq
M
(\xi-\beta)^{-k}.
$$
Moreover
let
us
denote
by
$
\{e^{-tT(x)}\}_{t\in\R^{+}}
$
the strongly continuous
semigroup
generated
by 
$-T(x)$.
\end{definition}
In the following definition
we adapt to our framework the definition of separation
of the spectrum for a closed operator
provided in \cite[$n^{\circ}4$,$\S 6$, Ch. $3$]{kato}.
\begin{definition}
\label{13361311biss}
Let $M>1$ and $\beta\in\R$.
We say that
$T\in\mc{G}(M,\beta,\ms{E})$
satisfies
the
property 
of separation of the spectrum
if
$(\exists\,\Gamma)
(\forall x\in X)
(\exists\,\Sigma_{T(x)}'
\subseteq\Sigma(T(x)))
(\exists\,A_{T(x)}\in Op(\C))$
such that $\Gamma$ is a regular closed curve in $\C$,
$\Sigma_{T(x)}'$
is bounded
and
$$
\Sigma_{T(x)}'
\subset
A_{T(x)}
\subset
\mc{O}_{i}(\Gamma),
\,
\Sigma_{T(x)}''
\subset
\mc{O}_{e}(\Gamma).
$$
Here
$\mc{O}_{i}(\Gamma)$
is the interior of $\Gamma$,
namely
the compact set
of $\C$ whose frontier
is $\Gamma$,
$\mc{O}_{e}(\Gamma)
\coloneqq
\complement
\mc{O}_{i}(\Gamma)$
is the exterior of $\Gamma$,
finally
$\Sigma_{T(x)}''
\coloneqq
\Sigma(T(x))
\cap
\complement
\Sigma_{T(x)}'$.
We call any curve $\Gamma$ with the above property a curve associated with $T$,
while we call $(K,A,\phi)$ a triplet associated with $\Gamma$
iff 
$K\subset\R^{+}$
is compact,
$A$ is an open neighbourhood
of $K$
and
$\phi:A\to\C$
is such that
$\phi\in C_{1}(A,\R^{2})$\footnote{By identifying
$\C$
with 
$\R^{2}$,
so
$\phi$
is
derivable with
contiuous
derivative}
and
$\phi(K)=\Gamma$.
\par
Let
$T\in\mc{G}(M,\beta,\ms{E})$
satisfy
the
property 
of separation of the spectrum and $\Gamma$ a curve associated with $T$,
then
$\forall x\in X$
we set
\begin{equation}
\label{16171102}
P(x)
\coloneqq
-
\frac{1}{2\pi i}
\int_{\Gamma}
R(T(x);\zeta)\,
d\zeta
\in B(\mf{E}_{x}),
\end{equation}
where
the integration
is with respect
to the 
norm topology
on $B(\mf{E}_{x})$.
Moreover for any triplet $(K,A,\phi)$ 
associated with $\Gamma$ set
$R_{T}^{\phi}\in\prod_{x\in X}\mc{L}(\mf{E}_{x})^{\R^{+}}$
such that
$R_{T}^{\phi}(x)(s)\coloneqq R(T(x);\phi(s))$,
for all $x\in X$ and $s\in K$,
while 
$R_{T}^{\phi}(x)(s)\coloneqq\ze$,
if $s\in\R^{+}-K$.
\end{definition}
\begin{remark}
\label{15221311biss}
Let
$M>1$, $\beta\in\R$, $T\in\mc{G}(M,\beta,\ms{E})$
satisfy
the
property 
of separation of the spectrum
and $\Gamma$ a curve associated with $T$.
Then
for all
$x\in X$
by 
\cite[Th.
$6.17.$, Ch. $3$]{kato},
$P(x)\in\Pr(\mf{E}_{x})$
and
$\mf{E}_{x}=
M_{x}'
\oplus
M_{x}''$
direct sum of two
closed
subspaces of $\mf{E}_{x}$,
where
$M_{x}'=P(x)\mf{E}_{x}$
and
$M_{x}''
=
(\un_{x}-P(x))
\mf{E}_{x}$.
Moreover
$T(x)$
decomposes
according
the previous
decomposition,
namely
$T(x)\up M_{x}'
\in B(M_{x}')$
such that
$\Sigma(T_{x}\up M_{x}')
=\Sigma_{T_{x}}'$
and
$T_{x}\up M_{x}''$
is a
closed operator
in
$M_{x}''$
such that
$\Sigma(T_{x}\up M_{x}'')=\Sigma_{T_{x}}''$.
\end{remark}
\begin{definition}
\label{16420402}
Let
$K\subset\R^{+}$
be a compact set,
$A$ an open neighbourhood
of $K$
and
$\phi:A\to\C$
be
such that
$\phi\in C_{1}(A,\R^{2})$
and
$\phi(K)=\Gamma$.
For any  $s\in K$ define $\eta_{s}^{\phi}\in Radon(\R^{+})$
such that
$$
\eta_{s}^{\phi}:
\cc{cs}{\R^{+}}\ni f
\mapsto
\int_{\R^{+}}e^{\phi(s)t}f(t)\,dt.
$$
Moreover
let
$\nu^{\phi}\in Radon(\R^{+})$
be the $\ze-$extension
of 
$\nu_{0}^{\phi}\in Radon(K)$
such that
$$
\nu_{0}^{\phi}:\cc{cs}{K}
\ni g
\mapsto
\int_{K}
\frac{- g(s)}{2\pi i}
\frac{d\phi}{ds}(s)\,
ds.
$$
Finally
let
$M>1$,
$\beta\in\R$
and
$T\in\mc{G}(M,\beta,\ms{E})$,
then we set
$\mc{W}_{T}
\in
\prod_{x\in X}\ms{U}(B_{s}(\mf{E}_{x}))$
such that
$(\forall x\in X)(\forall t\in\R^{+})$
$$
\begin{cases}
\mc{W}_{T}(x)(t)
\coloneqq
e^{-T(x)t},\\
\ov{F}_{T}\coloneqq\Lambda(\mc{W}_{T}),
\end{cases}
$$
where $\Lambda$ has been defined 
in Def. \ref{14321401}.
\end{definition}
\begin{lemma}
\label{15251311biss}
Let
$M>1$,
$\beta\in\R$
and
$T\in\mc{G}(M,\beta,\ms{E})$
satisfy
the
property 
of separation of the spectrum.
Assume that
there exists a curve $\Gamma$ associated with $T$
such that
\begin{equation}
\label{19281311biss}
Re(\Gamma)
\subseteq
\R^{-}.
\end{equation}
Then for any triple $(K,A,\phi)$ associated with 
$\Gamma$
we have that
$\forall x\in X$
and
$\forall v_{x}\in\mf{E}_{x}$
\begin{equation}
\label{17551311biss}
P(x)v_{x}
=
-
\frac{1}{2\pi i}
\int_{K}
\frac{d\phi}{ds}(s)
R(T(x);\phi(s))v_{x}
\,ds,
\end{equation}
and
$\forall s\in K$,
\begin{equation}
\label{18150402}
R(T(x),\phi(s))v_{x}
=
\int_{0}^{\infty}
e^{\phi(s) t}
e^{-tT(x)}
v_{x}\,dt
=
\int_{\R^{+}}
\mc{W}_{T}(x)(t)v_{x}\,
d\eta_{s}^{\phi}(t).
\end{equation}
Here
the integration
is with respect
to the 
norm topology
on $\mf{E}_{x}$.
If
$\ov{F}_{T}\in
\mf{L}_{(1,1)}
(\R^{+},\mf{G},\eta^{\phi},\nu^{\phi})$
and
$V$ is a
$\lr{\nu^{\phi},\eta^{\phi}}{\mf{G},K,\R^{+}}$
invariant set
with respect to 
$\{\ov{F}_{T}\}$,
then
\begin{equation}
\label{15351311biss}
P\bullet V\subseteq V.
\end{equation}
\end{lemma}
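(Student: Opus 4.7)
The statement consists of three pieces: the Laplace representation \eqref{18150402} of the resolvent, the parametrized contour formula \eqref{17551311biss} for $P(x)$, and the invariance \eqref{15351311biss}. My plan is to establish them in this order, the last one arising as a direct application of Proposition \ref{18310302} once the first two are available. To prove \eqref{18150402}, I would invoke the classical Laplace-transform identity for the resolvent of a $C_0$-semigroup: the Hille--Yosida estimate built into Definition \ref{13411311biss} yields $\|e^{-tT(x)}\|_{B(\mf{E}_x)}\leq M e^{\beta t}$ for all $t\geq 0$, so the Bochner integral $\int_0^\infty e^{\mu t}e^{-tT(x)}v_x\,dt$ converges in the norm of $\mf{E}_x$ whenever $Re(\mu)<-\beta$, and equals $(T(x)-\mu)^{-1}v_x = R(T(x);\mu)v_x$. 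Taking $\mu=\rho(s)$ with $s\in K(\Gamma)$, the inclusion $Re(\Gamma)\subseteq\R^-$ of the hypothesis, combined with the separation set-up of Definition \ref{13361311biss} that forces $\Gamma$ to encircle only the bounded component $\Sigma_{T(x)}'$ of the spectrum, puts $\rho(s)$ in the half-plane of convergence. The second equality in \eqref{18150402} is then just a rewriting of the Lebesgue integral in terms of the Radon measure $\eta_s$ from Definition \ref{16420402}.

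For \eqref{17551311biss} I would start from the defining contour integral \eqref{16171102} for $P(x)$, apply the bounded evaluation map $B(\mf{E}_x)\ni A\mapsto Av_x\in\mf{E}_x$ to both sides, and interchange it with the norm-convergent integral; parametrizing $\Gamma=\rho(K(\Gamma))$ via the $C^1$ map $\rho$ from Definition \ref{16420402} and using $d\zeta=\frac{d\rho}{ds}(s)\,ds$ yields \eqref{17551311biss} after the standard change of variable. The invariance \eqref{15351311biss} then follows by combining the two identities into a single iterated integral and invoking Proposition \ref{18310302}. Concretely, by the left-inverse property in Proposition \ref{15111401}(2) one has $\Psi_{\R^+}^{\mc{H}}(\ov{F}_T)=\mc{W}_T$, hence $\Pr_x(\Psi_{\R^+}^{\mc{H}}(\ov{F}_T))(t)=e^{-tT(x)}$; for every $v\in V$ the inner integral $\int_{\R^+}\Pr_x(\Psi_{\R^+}^{\mc{H}}(\ov{F}_T))(t)\,v(x)\,d\eta_s(t)$ equals $R(T(x);\rho(s))v(x)$ by \eqref{18150402}, and the outer integral $\int_{K(\Gamma)}\cdots\,d\nu(s)$ equals $P(x)v(x)$ by \eqref{17551311biss} together with the explicit form of $\nu$ in Definition \ref{16420402}. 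Proposition \ref{18310302}, applied with $\mc{F}=\{\ov{F}_T\}$, $Z=K(\Gamma)$, $T=\R^+$, measures $\nu,\eta$ and $D(x)=\mf{E}_x$, then places the section $x\mapsto P(x)v(x)$, that is $P\bullet v$, inside $V$.

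The most delicate step will be the double interchange of integration, namely the Fubini-type exchange between $d\nu(s)$ and $d\eta_s(t)$ and the commutation of the evaluation-at-$v(x)$ map with the outer integral. Both are exactly what the hypothesis $\ov{F}_T\in\mf{L}_{(1,1)}(\R^+,\mf{G},\eta,\nu)$ is designed to license via Corollary \ref{19510202}; once that hypothesis is unpacked and one verifies $\Pr_x(\Psi_{\R^+}^{\mc{H}}(\ov{F}_T))=\mc{W}_T(x)(\cdot)$ as required in that corollary, the remaining verifications are routine bookkeeping.
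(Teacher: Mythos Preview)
Your proposal is correct and follows essentially the same route as the paper: parametrize the contour integral \eqref{16171102} and push the evaluation map through to get \eqref{17551311biss}, invoke the Laplace representation of the resolvent (the paper cites Kato directly) for \eqref{18150402}, then rewrite $P(x)v(x)$ as the iterated integral $\int_{K(\Gamma)}\bigl(\int_{\R^+}\Pr_x(\Psi(\ov{F}_T))(t)\,v(x)\,d\eta_s(t)\bigr)d\nu(s)$ and conclude \eqref{15351311biss} via Proposition \ref{18310302}. Two small remarks: first, what you call a ``Fubini-type exchange'' is not an interchange of the order of $d\nu$ and $d\eta_s$ --- Corollary \ref{19510202} keeps the order fixed and only commutes $\Pr_x$ and the evaluation at $v$ through both integrals; second, to invoke Proposition \ref{18310302} with $D(x)=\mf{E}_x$ you must check hypothesis $(A)$ of Lemma \ref{11011501}, which the paper does explicitly via Corollary \ref{15111901} by noting that for the strong operator topology one may take $S_x=\p_\omega(\mf{E}_x)$, whence $N(x)=\mf{E}_x$.
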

\begin{proof}
By \eqref{16171102},
\cite[
$IV.35$
Th. $1$]{IntBourb}, 
and
by 
the 
norm
continuity
of the map
$
B(\mf{E}_{x})\ni A\mapsto Aw\in
\mf{E}_{x}$
for any $w\in\mf{E}_{x}$,
we have
\eqref{17551311biss}.
Moreover
by
\eqref{19281311biss}
we can
apply
\cite[eq.
$1.28$,
$n^{\circ}3$,
$\S 1$, Ch. $9$]{kato}
and 
\eqref{18150402}
follows
by
Def. \ref{16420402}.
Fix $v\in V$
so
$\forall x\in X$
\begin{alignat}{1}
\label{18590402}
P(x)v(x)
&=
-
\frac{1}{2\pi i}
\int_{K}
\frac{d\phi}{ds}(s)
R(T(x);\phi(s))v(x)
\,ds
\notag
\\
&=
-
\frac{1}{2\pi i}
\int_{K}
\frac{d\phi}{ds}(s)
\left(
\int_{\R^{+}}
\mc{W}_{T}(x)(t)v(x)\,
d\eta_{s}^{\phi}(t)
\right)
\,ds
\notag
\\
&=\int_{K}
\left(
\int_{\R^{+}}
\Pr_{x}
\left(
\Psi(\ov{F}_{T})
\right)(t)
v(x)\,
d\eta_{s}^{\phi}(t)\right)\,
d\nu^{\phi}(s).
\end{alignat}
Here the first equality
comes by \eqref{17551311biss},
the second one by
\eqref{18150402}
and
the third one
by
Prp. \ref{15111401}
and Def.
\ref{16420402}.
Next with the notation in 
Cor. \ref{15111901}
we choose
$(\forall x\in X)(S_{x}=\p_{\omega}(\mf{E}_{x}))$,
and since $\mf{O}$ is entire we can select 
$\mc{D}(x)=\mf{E}_{x}$, for all $x\in X$,
in other words $p_{l_{x},j_{x}}^{x}$ is the 
strong operator topology on $\mc{L}(\mf{E}_{x})$.
Thus $(A)$ of 
Lemma \ref{11011501} 
is satisfied since
Cor. \ref{15111901}, 
so the statement follows
by \eqref{18590402}
and
Prp.
\ref{18310302}.
\end{proof}
\begin{corollary}
\label{13511102}
Under the hypothesis of 
Thm. \ref{17301812b}
let the primary family $\ms{E}$ of $\mf{O}$
be the family of stalks of the Banach Bundle
$\mf{V}$ of which in 
Thm. \ref{17301812b}.
Assume that the hypothesis of Lemma \ref{15251311biss} are satisfied,
where $T$ is such that
$-T(x)$ is the infinitesimal generator of $\mc{U}(x)$ for all $x\in X$,
where $\mc{U}$ is the section of semigroups construcuted in Thm. \ref{17301812b}.
Moreover
let
$\lr{\mf{V},\mf{D}}{X,\{pt\}}$
be
an invariant 
$\left(\Theta,\mc{E}\right)-$structure
such that
$\mc{E}(\Theta)\subset\Gamma(\pi)$
and let $(K,A,\phi)$ be a triplet 
associated with a curve associated with $T$.
If $\mc{E}(\Theta)$
is a
$\lr{\nu^{\phi},\eta^{\phi}}{\mf{G},K,\R^{+}}$
invariant set
with respect to 
$\{\ov{F}_{T}\}$
and 
$\ov{F}_{T}\in
\mf{L}_{(1,1)}
(\R^{+},\mf{G},\eta^{\phi},\nu^{\phi})$,
then
$\{\mc{U}\}
\in
\Delta_{\Theta}\lr{\mf{V},\mf{D},\mf{W}}
{\mc{E},X,\R^{+}}$.
\end{corollary}
\begin{proof}
Since \eqref{15351311biss} and the definition of 
invariant $(\Theta,\mc{E})-$structures.
\end{proof}
\section
{Construction of sets in $\Delta\lr{\mf{V},\mf{D}}{\Theta,\mc{E}}$}
\label{15511702}
\begin{assumptions}
In this section 
$X$ is a topological space,
$Y$ is a locally compact space 
$\mu$ is a Radon measure on $Y$.
Let $\mf{L}^{\infty}(Y,\mu)$ denote the linear space of all complex valued maps defined on $Y$ 
which are $\mu-$measurable and bounded in measure for the measure $\mu$.
Let
$\mf{V}=\lr{\lr{\mf{E}}{\tau}}{\pi,X,\n}$
be
a bundle
of $\Omega-$spaces,
we indicate
with
$
\n\coloneqq
\{
\nu_{j}\,\vert\,
j\in J
\}
$
the directed
set of 
seminorms
on
$\mf{E}$.
\end{assumptions}
\begin{definition}
\label{22132802}
Let $Z\in Hlcs$
and $\{\psi_{i}\,\vert\, i\in I\}$
a fundamental set of seminorms on $Z$.
We denote 
by
$$
\left(Z^{Y}\right)_{s}
$$
the $Hlcs$ whose underlying linear space 
is 
$Z^{Y}$ 
and whose locally convex topology 
is generated by the 
following set of seminorms
\begin{equation}
\label{20531902}
\begin{cases}
\{q_{s}^{i}\,\vert\, s\in Y, i\in I\},
\\
q_{s}^{i}:
Z^{Y}
\ni 
f
\mapsto
\psi_{i}(f(s)).
\end{cases}
\end{equation}
Moreover
for any $B\subseteq Z^{Y}$
we shall denote by 
$B_{s}$ the $Hlc$ subspace of
$\left(Z^{Y}\right)_{s}$.
Notice that this definition is well-set being
independent by the choice of the fundamental
set of seminorms, indeed the topology is that
of uniform convergence over the finite subsets
of $Y$.
\end{definition}
\begin{definition}
\label{15361702}
Set
$$
\begin{cases}
\overset{\mu}{\blacklozenge}:
\prod_{x\in X}
\mf{L}_{1}(Y,\mf{E}_{x},\mu)
\to
\prod_{x\in X}
\mf{E}_{x},
\\
\overset{\mu}{\blacklozenge}(H)(x)
\coloneqq
\int H(x)(s)\,
d\mu(s)
\in
\mf{E}_{x},
\end{cases}
$$
for all
$H\in
\prod_{x\in X}
\mf{L}_{1}(Y,\mf{E}_{x},\mu)$
and
for all $x\in X$.
Moreover define
\begin{equation*}
\begin{aligned}
\blacktriangleright:
\C^{Y}\times\prod_{x\in X}(\mf{E}_{x})^{Y}&\to\prod_{x\in X}(\mf{E}_{x})^{Y},
\\
(f\blacktriangleright H)(x)(s)&\coloneqq f(s)H(x)(s),
\\
\forall f\in\C^{Y}, H&\in\prod_{x\in X}(\mf{E}_{x})^{Y}, x\in X, s\in Y.
\end{aligned}
\end{equation*}
\end{definition}
Notice that 
\begin{equation*}
\mf{L}^{\infty}(Y,\mu)
\blacktriangleright
\prod_{x\in X}\mf{L}_{1}(Y,\mf{E}_{x},\mu)
\subseteq
\prod_{x\in X}\mf{L}_{1}(Y,\mf{E}_{x},\mu).
\end{equation*}
\begin{definition}
\label{12121902}
Set
$$
\begin{cases}
\bigstar:
\prod_{x\in X}
\mc{L}(\mf{E}_{x})^{Y}
\times
\prod_{x\in X}
\mf{E}_{x}
\to
\prod_{x\in X}\mf{E}_{x}^{Y},
\\
(\forall x\in X)(\forall s\in Y)
(F\bigstar v)(x)(s)
\coloneqq
F(x)(s)(v(x)).
\end{cases}
$$
\end{definition}
\begin{definition}
\label{15492502}
\emph{$\lr{\mf{V}}{\mf{Z}}$
are $\mu-$related}
if
\begin{enumerate}
\item
$\mf{Z}
\coloneqq
\lr{\lr{\mf{T}}{\gamma}}{\zeta,X,\mf{K}}$
ia a bundle of $\Omega-$spaces;
\label{15492502st1}
\item
for all $x\in X$
\footnote{
In case
$\mf{E}_{x}$
is a Banach space
$Meas(Y,\mf{E}_{x},\mu)
\bigcap
\mf{L}_{1}(Y,\mf{E}_{x},\mu)
=
\mf{L}_{1}(Y,\mf{E}_{x},\mu)$.
}
\begin{equation}
\label{20242906}
\begin{cases}
\mf{T}_{x}
\subseteq
Meas(Y,\mf{E}_{x},\mu)
\bigcap
\mf{L}_{1}(Y,\mf{E}_{x},\mu),
\\
\mf{K}_{x}
=
\left\{
\sup_{(s,j)\in O}
q_{(s,j)}^{x}
\,\vert\,
O\in\p_{\omega}(Y\times J)
\right\},
\\
q_{(s,j)}^{x}:
\mf{T}_{x}
\ni
f_{x}\mapsto
\nu_{j}(f_{x}(s)),
\forall
s\in Y,
j\in J;
\end{cases}
\end{equation}
\item
$\Gamma(\zeta)
\subset
\left[
\prod_{x\in X}^{b}
\mf{T}_{x}
\right]_{ui}$;
\label{15492502st3}
\item
$\overset{\mu}{\blacklozenge}(\Gamma(\zeta))
\subseteq
\Gamma(\pi)$.
\label{15492502st4}
\end{enumerate}
Here we set
for all
$\A\subseteq
\prod_{x\in X}^{b}
\mf{T}_{x}$
\begin{equation}
\label{17180207}
\left[
\A
\right]_{ui}
\coloneqq
\left\{
H\in\A
\,\vert\,
(\forall j\in J)
\left(
\int_{Y}^{\bullet}
\sup_{x\in X}
\nu_{j}(H(x)(s))
\,d|\mu|(s)
<\infty
\right)
\right\}
\end{equation}
Finally
\emph{$\lr{\mf{V}}{\mf{Z},\ms{H}}$ are $\mu-$related}
if
\begin{enumerate}
\item
$\ms{H}=\{\ms{H}_{x}\}_{x\in X}$
such that
$\ms{H}_{x}\subseteq
\mc{L}(\mf{E}_{x})^{Y}$
for all $x\in X$,
\item
$\lr{\mf{V}}{\mf{Z}}$
are $\mu-$related
\item
\begin{equation}
\label{19090907}
\left(
\prod_{x\in X}\ms{H}_{x}\right)
\bigstar
\left(
\prod_{x\in X}\mf{E}_{x}\right)
\subseteq
\prod_{x\in X}\mf{T}_{x}.
\end{equation}
\end{enumerate}
\end{definition}
\begin{theorem}
[\textbf{GLT}]
\label{15101701}
Let 
$\lr{\mf{V}}{\mf{Z}}$
be
$\mu-$related
and let $x\in X$ such that 
its filter of neighbourhoods admits a countable basis. 
Thus
$$
\overset{\mu}{\blacklozenge}
\left(
\left[\Gamma_{\diamond}^{x}(\zeta)
\right]_{ui}
\right)
\subseteq
\Gamma_{\diamond}^{x}(\pi).
$$
\end{theorem}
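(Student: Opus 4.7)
The plan is to exploit the fact that the topology on each stalk $\mathfrak{T}_x$ is the pointwise-in-$s$ topology generated by the seminorms $q_{(s,j)}^x(f_x)=\nu_j(f_x(s))$, so that continuity in $\mathfrak{Z}$ at a point forces pointwise-in-$s$ convergence in $\mathfrak{E}$, and then to upgrade this to convergence of integrals via a uniform-integrability dominating argument.

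First, fix $H\in[\Gamma_{\diamond}^{x}(\zeta)]_{ui}$. By the definition of $\Gamma_{\diamond}^{x}(\zeta)$ in Def.~\ref{15012602}, there exists $\sigma\in\Gamma(\zeta)$ with $\sigma(x)=H(x)$. From the hypothesis that $\lr{\mf{V}}{\mf{Z}}$ are $\mu$-related, item $(4)$ of Def.~\ref{15492502} gives $\overset{\mu}{\blacklozenge}(\sigma)\in\Gamma(\pi)$, and from the evaluation at $x$ we obtain
\begin{equation*}
\overset{\mu}{\blacklozenge}(H)(x)=\int H(x)(s)\,d\mu(s)=\int \sigma(x)(s)\,d\mu(s)=\overset{\mu}{\blacklozenge}(\sigma)(x).
\end{equation*}
Hence to conclude $\overset{\mu}{\blacklozenge}(H)\in\Gamma_{\diamond}^{x}(\pi)$ it suffices, by the implication $(3)\Rightarrow(1)$ of Corollary~\ref{28111707} applied to $\mathfrak{V}$ with the reference section $\overset{\mu}{\blacklozenge}(\sigma)$, to verify that $\overset{\mu}{\blacklozenge}(H)$ is bounded and that
\begin{equation*}
\forall j\in J:\quad \lim_{y\to x}\nu_{j}\bigl(\overset{\mu}{\blacklozenge}(H)(y)-\overset{\mu}{\blacklozenge}(\sigma)(y)\bigr)=0.
\end{equation*}

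Second, for every $y\in X$ and $j\in J$, the standard estimate on Bochner-type integrals with values in a locally convex space gives
\begin{equation*}
\nu_{j}\bigl(\overset{\mu}{\blacklozenge}(H)(y)-\overset{\mu}{\blacklozenge}(\sigma)(y)\bigr)\le \int^{\bullet} \nu_{j}\bigl(H(y)(s)-\sigma(y)(s)\bigr)\,d|\mu|(s).
\end{equation*}
By the triangle inequality together with $H\in[\Gamma_{\diamond}^{x}(\zeta)]_{ui}$ and $\sigma\in\Gamma(\zeta)\subset[\prod_{x\in X}^{b}\mf{T}_{x}]_{ui}$, the function
\begin{equation*}
g_{j}(s)\doteqdot \sup_{y\in X}\nu_{j}\bigl(H(y)(s)\bigr)+\sup_{y\in X}\nu_{j}\bigl(\sigma(y)(s)\bigr)
\end{equation*}
is essentially $|\mu|$-integrable, by the very definition of $[\cdot]_{ui}$ in \eqref{17180207}, and dominates the integrand above uniformly in $y$. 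This simultaneously yields the boundedness of $\overset{\mu}{\blacklozenge}(H)$ by integrating $g_{j}$.

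Third, since $H\in\Gamma^{x}(\zeta)$ and the fundamental system of seminorms on the stalks $\mathfrak{T}_z$ is the family $\{q_{(s,j)}^{z}\}$ of pointwise evaluations, applying the implication $(1)\Rightarrow(3)$ of Corollary~\ref{28111707} to the bundle $\mathfrak{Z}$ with the reference section $\sigma$ yields, for every $s\in Y$ and $j\in J$,
\begin{equation*}
\lim_{y\to x}\nu_{j}\bigl(H(y)(s)-\sigma(y)(s)\bigr)=0,
\end{equation*}
i.e.\ pointwise-in-$s$ convergence of the nonnegative integrand to $0$.

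Fourth, the heart of the argument is to upgrade this pointwise convergence to convergence of the upper integral, using $g_{j}$ as a dominating function. For filtered convergence one invokes the dominated convergence principle for the filter of neighborhoods of $x$: it suffices to show that for every filter base, and in particular reducing to countable subfilters along sequences $y_{n}\to x$ when available, the scalar functions $s\mapsto \nu_{j}(H(y)(s)-\sigma(y)(s))$ are $|\mu|$-measurable (consequence of $\mathfrak{T}_z\subseteq Meas(Y,\mathfrak{E}_z,\mu)$ by \eqref{20242906}), are bounded by $g_{j}\in\mathfrak{L}^{1}(|\mu|)$, and go to $0$ pointwise. This is the main obstacle, because the Lebesgue dominated convergence theorem is classically stated for sequences; the proof must therefore either appeal to the filter-based version in Bourbaki's integration or reduce the limit along the neighborhood filter of $x$ to countable subfilters. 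Granting this, $\lim_{y\to x}\int^{\bullet}\nu_{j}(H(y)(s)-\sigma(y)(s))\,d|\mu|(s)=0$, which combined with the estimate of the second step yields the required $\nu_{j}$-limit and hence $\overset{\mu}{\blacklozenge}(H)\in\Gamma_{\diamond}^{x}(\pi)$.
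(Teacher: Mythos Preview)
Your proposal follows essentially the same approach as the paper's proof: pick a reference section $\sigma\in\Gamma(\zeta)$ through $H(x)$ using the $\diamond$ subscript, reduce the $\nu_j$-distance of the integrals to an upper integral of the pointwise $\nu_j$-distance, dominate by $g_j$ via the $[\cdot]_{ui}$ hypothesis on both $H$ and $\sigma$, obtain pointwise-in-$s$ convergence from the stalkwise seminorm structure of $\mathfrak{Z}$, and conclude by dominated convergence together with Corollary~\ref{28111707}.

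Two small points. First, the implication you invoke from Corollary~\ref{28111707} to extract the pointwise limit should be $(6)\Rightarrow(5)$ rather than ``$(1)\Rightarrow(3)$'': you already have both a section through $H(x)$ and continuity of $H$ at $x$, which is exactly condition $(6)$, and $(5)$ is the statement you need (valid for \emph{all} sections through the point, in particular your chosen $\sigma$). Second, and more substantively, the filter-versus-sequence issue you flag is handled in the paper not by a filter version of Lebesgue's theorem but by the standing hypothesis that $X$ is metrizable: one passes to an arbitrary sequence $z_n\to x$, applies the classical sequential Lebesgue theorem, and then uses metrizability to recover the full neighborhood-filter limit. Your ``granting this'' is thus not a genuine gap, but you should say explicitly that metrizability of $X$ is what closes it.
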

\begin{proof}
Let $x\in X$ and 
$F\in\left[\Gamma_{\diamond}^{x}
(\zeta)\right]_{ui}$
thus by Cor. \ref{28111707}
there exists 
$\eta\in\Gamma(\zeta)$
such that
for all $j\in J,s\in Y$
\begin{equation}
\label{12221702}
\begin{cases}
F(x)=\eta(x)
\\
\lim_{z\to x}
\nu_{j}(F(z)(s)-\eta(z)(s))
=0.
\end{cases}
\end{equation}
Fix $j\in J$ thus 
by
\cite[Prop.$6$, $No 2$, 
$\S 1$, Ch. $6$]{IntBourb}
for all $z\in X$
\begin{equation}
\label{14550907}
\nu_{j}
\left(
\int
(F(z)(s)-\eta(z)(s))
\,d\mu(s)
\right)
\leq
\int^{\bullet}
\nu_{j}(F(z)(s)-\eta(z)(s))\,
d|\mu|(s)
\end{equation}
Moreover
$\nu_{j}^{z}$
is continuous by 
definition of bundles
of $\Omega-$spaces,
while
$F(z)$
and
$\eta(z)$
are by construction
$\mu-$measurable,
hence
by
\cite[Thm. $1$; Cor. $3$, 
$n^{\circ}3$, 
$\S\,5$,Ch. $4$]{IntBourb} 
the map
$Y\ni s\mapsto
\nu_{j}(F(z)(s)-\eta(z)(s))$
is 
$\mu-$measurable
thus
$|\mu|-$measurable.
Moreover
by the hypothesis on
$F$
and
by 
Def. \ref{15492502}\eqref{15492502st3}
\begin{equation}
\label{15370907}
\int^{\bullet}
\nu_{j}(F(z)(s)-\eta(z)(s))\,
d|\mu|(s)
\leq
\int^{\bullet}
\left(
\sup_{x\in X}
\nu_{j}(F(x)(s))
+
\sup_{x\in X}
\nu_{j}(\eta(x)(s))
\right)
d|\mu|(s)
<\infty.
\end{equation}
Therefore
by
\cite[Prp..$9$, $No 3$, $\S 1$, 
Ch. $5$]{IntBourb}
the map
$Y\ni s\mapsto
\nu_{j}(F(z)(s)-\eta(z)(s))$
is 
$|\mu|-$ essentially 
integrable
hence by 
the fact that
$\int_{Y}^{\bullet}f\,d|\mu|
=\int_{Y}f\,d|\mu|$
for all $|\mu|-$essentially
integrable map $f$,
we have
by
\eqref{14550907}
\begin{equation}
\label{12541702}
\nu_{j}
\left(
\int
(F(z)(s)-\eta(z)(s))
\,d\mu(s)
\right)
\leq
\int
\nu_{j}(F(z)(s)-\eta(z)(s))
\,
d|\mu|(s).
\end{equation}
Let $\{z_{n}\}_{n}\subset X$
be
such that
$\lim_{n\in\N}z_{n}=x$
thus
by \eqref{12221702}
\begin{equation}
\label{12491702A}
\lim_{n\in\N}
\nu_{j}(F(z_{n})(s)-\eta(z_{n})(s))
=0.
\end{equation}
For all
$s\in Y$
\begin{equation*}
\nu_{j}(F(z)(s)-\eta(z)(s))\,
\leq
\sup_{x\in X}
\nu_{j}(F(x)(s))
+
\sup_{x\in X}
\nu_{j}(\eta(x)(s))
\end{equation*}
thus
by the hypothesis on
$F$,
by 
Def. \ref{15492502}\eqref{15492502st3},
by the fact that
$\int_{Y}^{\bullet}\leq\int_{Y}^{*}$,
by
\eqref{12491702A},
and
by
the Lebesgue Theorem
\cite[Th.$6$, $No 7$, $\S 3$, 
Ch. $4$]{IntBourb}
we have
\begin{equation}
\label{12531702}
\lim_{n\in\N}
\int
\nu_{j}(F(z_{n})(s)-\eta(z_{n})(s))\,
d|\mu|(s)
=0.
\end{equation}
Finally by
\eqref{12541702},
\eqref{12531702}
and the hypothesis on $x$
we obtain
$$
\lim_{z\to x}
\nu_{j}
\left(
\int
F(z)(s)
\,
d\mu(s)
-
\int
\eta(z)(s)
\,
d \mu(s)
\right)
=0,
$$
thus the statement follows
by 
Def. \ref{15492502}\eqref{15492502st4}
and
Cor.
\ref{28111707}.
\end{proof}
\begin{definition}
[\textbf{$\left(\Theta,\mc{E},\mu\right)-$structure}]
\label{17161902}
We say
that
$\lr{\mf{V},\mf{Q}}{X,Y}$
is
a
\emph
{
$\left(\Theta,\mc{E},\mu\right)-$
structure
}
if
\begin{enumerate}
\item
$\mc{E}\subseteq\Gamma(\pi)$;
\item
$\Theta\subseteq
\prod_{x\in X}
Bounded(\mf{E}_{x})$;
\item
$\forall B\in\Theta$
\begin{enumerate}
\item
$\ms{D}(B,\mc{E})
\ne\emptyset$;
\item
$\bigcup_{B\in\Theta}\mc{B}_{B}^{x}$
is total
in $\mf{E}_{x}$
for all $x\in X$;
\end{enumerate}
\item
$\mf{Q}=\lr{\lr{\mf{H}}{\delta}}{\xi,X,\mf{Y}}$
is a
bundle of $\Omega-$spaces
such that
for all $x\in X$
\begin{equation}
\label{22512906}
\boxed{
\begin{cases}
\mf{H}_{x}
\subseteq
\mf{L}_{1}\left(Y,\mc{L}_{S_{x}}(\mf{E}_{x}),\mu\right),
\\
\mf{Y}_{x}
=
\left\{
\sup_{(t,j,B)\in O}
P_{(t,j,B)}^{x}\up\mf{H}_{x}
\,\vert\,
O\in\p_{\omega}
\left(Y\times J\times\Theta\right)
\right\}
\\
P_{(t,j,B)}^{x}:
\mf{L}_{1}\left(Y,\mc{L}_{S_{x}}(\mf{E}_{x}),\mu\right)
\ni
F\mapsto
\sup_{v\in\ms{D}(B,\mc{E})}
\nu_{j}(F(t)v(x)),\,
\forall
t\in Y,
B\in\Theta,
j\in J.
\end{cases}
}
\end{equation}
\end{enumerate}
Here $S_{x}$, $\mc{B}_{B}^{x}$
and $\ms{D}(B,\mc{E})$
are defined in \eqref{11232712}.
Moreover
$\lr{\mf{V},\mf{Q}}{X,Y}$
is
an
\emph
{invariant
$\left(\Theta,\mc{E},\mu\right)-$
structure}
if it is
a
$\left(\Theta,\mc{E},\mu\right)-$
structure
such that
\begin{equation}
\label{18112502}
\left\{
F\in\prod_{z\in X}^{b}
\mf{H}_{z}
\,\vert\,
(\forall t\in Y)
(F_{t}
\bullet
\mc{E}(\Theta)
\subseteq
\Gamma(\pi))
\right\}
=
\Gamma(\xi).
\end{equation}
\end{definition}
\begin{definition}
\label{14350303}
Let 
$\mu_{\lambda}$
for all $\lambda>0$
be
defined as in Def. \ref{15062301},
let
$\lr{\mf{V},\mf{Q}}{X,\R^{+}}$
be
a
$\left(\Theta,\mc{E},\mu\right)-$
structure
and 
denote
$\mf{Q}
=
\lr{\lr{\mf{H}}{\delta}}{\xi,X,\mf{S}}$,
moreover let 
$x\in X$,
$\mc{O}\subseteq\Gamma(\xi)$
and
$\mc{D}\subseteq\Gamma(\pi)$.
Set 
\begin{equation*}
\ms{Lap}(\mf{V})(x)
\coloneqq
\bigcap_{\lambda>0}
\mf{L}_{1}(\R^{+},
\mc{L}_{S_{x}}(\mf{E}_{x});\mu_{\lambda}).
\end{equation*}
Assume that
\begin{equation}
\label{12480707}
\Gamma_{\mc{O}}^{x}(\xi)
\bigcap
\ms{Lap}(\mf{V})(x)
\neq
\emptyset
\end{equation}
We 
say that 
$\lr{\mf{V},\mf{Q}}{X,\R^{+}}$
has the
\emph
{weak-Laplace duality property
on $\mc{O}$
and $\mc{D}$
at $x$},
shortly 
$w-\ms{LD}_{x}(\mc{O},\mc{D})$
if 
$\forall\lambda>0$
\begin{equation*}
\blacksquare_{\mu_{\lambda}}
\left(
\Gamma_{\mc{O}}^{x}(\xi)
\bigcap
\ms{Lap}(\mf{V})(x),
\Gamma_{\mc{D}}^{x}(\pi)
\right)
\subseteq
\Gamma^{x}(\pi).
\end{equation*}
\end{definition}
\begin{definition}
\label{18072802}
Let
$\lr{\mf{V},\mf{W}}{X,Y}$
be
a
$\left(\Theta,\mc{E}\right)-$structure
and denote
$\mf{W}=\lr{\lr{\mf{M}}{\gamma}}{\rho,X,\mf{R}}$.
Assume that
for all $x\in X$
\begin{equation}
\label{22372906}
\mf{M}_{x}
\subseteq
\mf{L}_{1}\left(Y,\mc{L}_{S_{x}}(\mf{E}_{x}),\mu\right).
\end{equation}
Set 
$\ms{M}^{\mu}
\coloneqq
\bigl\{\lr{\ms{M}_{x}^{\mu}}{\mf{Y}_{x}}\bigr\}_{x\in X}$
where for all $x\in X$
\begin{equation*}
\begin{aligned}
\ms{M}_{x}^{\mu}
&\coloneqq
\ov{\left\{
\sigma(x)
\,\vert\,
\sigma\in
\Gamma(\rho)
\right\}}
\text{ closure in the space
$\mf{L}_{1}
\left(Y,\mc{L}_{S_{x}}
(\mf{E}_{x}),\mu\right)_{s}$};
\\
\mf{Y}_{x}
&\coloneqq
\{
\sup_{(t,j,B)\in O}
P_{(t,j,B)}^{x}
\up
\ms{M}_{x}^{\mu}
\,\vert\,
O\in\mc{P}_{\omega}(Y\times J
\times\Theta)
\}.
\end{aligned}
\end{equation*}
Notice that
$\ms{M}^{\mu}$
is a nice
family of Hlcs,
and that
$\Gamma(\rho)$
satisfies by construction
$FM(3)$ with respect to 
$\ms{M}^{\mu}$.
Moreover 
by
\eqref{22542906}
and
the fact that
$\{t\}\in Comp(Y)$
for all $t\in Y$
\begin{equation}
\label{22582906}
P_{(t,j,B)}^{x}
=q_{(\{t\},j,B)}^{x}.
\end{equation}
By
\cite[Cor.$1.6.(iii)$]{gie}
we deduce that
$\Gamma(\rho)$
satisfies
$FM(4)$ 
with respect to 
$\{\lr{\mf{M}_{x}}
{\mf{R}_{x}}\}_{x\in X}$.
Therefore
we obtain by
\eqref{15073006}
and
\eqref{22582906}
that
for all 
$t\in Y$, $j\in J$, $B\in\Theta$
and for all
$\sigma\in\Gamma(\rho)$
\begin{equation*}
X\ni x\mapsto P_{(t,j,B)}^{x}(\sigma(x))
\text{ is $u.s.c.$}
\end{equation*}
Moreover
the upper envelope 
of 
a finite set of 
$u.s.c.$ maps
is an 
$u.s.c.$ map,
see \cite[Thm.$4$,$\S 6.2.$,Ch.$4$]{BourGT},
therefore
for all
$O\in\mc{P}_{\omega}(Y\times J
\times\Theta)$
\begin{equation}
\label{16392906}
X\ni x\mapsto 
\sup_{(t,j,B)\in O}
P_{(t,j,B)}^{x}(\sigma(x))
\text{ is $u.s.c.$}
\end{equation}
Hence
$\Gamma(\rho)$
satisfies
$FM(4)$ 
with respect to 
$\ms{M}^{\mu}$.
Finally
by the boundedness 
of $\Gamma(\rho)$
by definition
and
by \eqref{22582906}
we have also that
for all
$\sigma\in\Gamma(\rho)$
and
$O\in\mc{P}_{\omega}(Comp(Y)\times J
\times\Theta)$
\begin{equation*}
\sup_{x\in X}
\sup_{(t,j,B)\in O}
P_{(t,j,B)}^{x}(\sigma(x))
<\infty.
\end{equation*}
Therefore
we can
construct
the 
bundle
generated
by the couple
$\lr{\ms{M}^{\mu}}{\Gamma(\rho)}$
(Def. \ref{17471910Ba})
\begin{equation*}
\mf{V}(\ms{M}^{\mu},\Gamma(\rho)).
\end{equation*}
Clearly 
$\lr{\mf{V},\mf{V}(\ms{M}^{\mu},\Gamma(\rho))}{X,Y}$
is a $\left(\Theta,\mc{E},\mu\right)-$structure
that we call the
\emph
{$\left(\Theta,\mc{E},\mu\right)-$structure
underlying
$\lr{\mf{V},\mf{W}}{X,Y}$}.
\end{definition}
\begin{definition}
\label{19421902}
Let
$\lr{\mf{V},\mf{Q}}{X,Y}$
be
a
$\left(\Theta,\mc{E},\mu\right)-$
structure
and
$A\subset\prod_{x\in X}\mf{H}_{x}$.
Define
$A_{peq}$
as
the set of
all pointwise
equicontinuous
elements
in 
$A$,
and
$A_{ceq}$
as
the set of
all compactly equicontinuous
elements
in 
$A$,
see Def. \ref{21031238}.
\end{definition}
\begin{remark}
\label{23222906}
Lemma \ref{15482712}
holds
by replacing
a
$\left(\Theta,\mc{E}\right)-$structure
$\lr{\mf{V},\mf{W}}{X,Y}$
with
a
$\left(\Theta,\mc{E},\mu\right)-$
structure
$\lr{\mf{V},\mf{Q}}{X,Y}$
and
$K\in Comp(Y)$
with 
$t\in Y$.
In what follows when referring
to
Lemma \ref{15482712}
for a
$\left(\Theta,\mc{E},\mu\right)-$
structure
we shall mean 
the corresponding result
with
the replacements
described here.
\end{remark}
\begin{lemma}
\label{12151902}
Let 
$\lr{\mf{V},\mf{Q}}{X,Y}$
be
a
$\left(\Theta,\mc{E},\mu\right)-$
structure
and
$\lr{\mf{V}}{\mf{Z},\ms{H}}$
be
$\mu-$related,
where
$\mf{Q}
\coloneqq
\lr{\lr{\mf{H}}{\gamma}}{\xi,X,\mf{Y}}$
and
$\ms{H}_{x}\coloneqq\mf{H}_{x}$
for all $x\in X$.
Thus 
$$
\Gamma(\xi)
\bigstar
\mc{E}(\Theta)
\subseteq
\Gamma(\zeta)
\Rightarrow
(\forall x\in X)
\left(
\Gamma_{\diamond}^{x}(\xi)_{peq}
\bigstar
\Gamma_{\mc{E}(\Theta)}^{x}(\pi)
\subseteq
\Gamma_{\diamond}^{x}(\zeta)\right).
$$
\end{lemma}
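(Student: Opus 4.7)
The plan is to establish $F \bigstar v \in \Gamma_{\diamond}^{x}(\zeta)$ by invoking Corollary \ref{28111707} for the bundle $\zeta$ with the selection $F \bigstar v$ and the global section $\sigma \bigstar w$ as witness, where $\sigma$ and $w$ are the section-witnesses supplied by the hypotheses on $F$ and $v$. Concretely, fix $x \in X$, $F \in \Gamma_{\diamond}^{x}(\xi)_{peq}$ and $v \in \Gamma_{\mc{E}(\Theta)}^{x}(\pi)$. By Definition \ref{15012602} I choose $\sigma \in \Gamma(\xi)$ with $\sigma(x) = F(x)$ and $w \in \mc{E}(\Theta) \subseteq \Gamma(\pi)$ with $w(x) = v(x)$, and since $\mc{E}(\Theta) = \bigcup_{B \in \Theta} \mb{D}(B, \mc{E})$ I fix $B \in \Theta$ with $w \in \mb{D}(B, \mc{E})$. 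The hypothesis $\Gamma(\xi) \bigstar \mc{E}(\Theta) \subseteq \Gamma(\zeta)$ yields $\sigma \bigstar w \in \Gamma(\zeta)$, the $\mu$-relatedness relation \eqref{19090907} (with $\mb{H}_{x} = \mf{H}_{x}$) gives $F \bigstar v \in \prod_{z \in X} \mf{T}_{z}$, and $(F \bigstar v)(x) = (\sigma \bigstar w)(x)$ holds by $F(x) = \sigma(x)$ and $v(x) = w(x)$.

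For the boundedness required for $F \bigstar v \in \Gamma^{x}(\zeta)$, fix a seminorm $q_{(s,j)}^{z}(f_{z}) = \nu_{j}(f_{z}(s))$ from $\mf{K}_{z}$ (see \eqref{20242906}). Pointwise equicontinuity of $F$ at $s$ produces $a > 0$ and $j_{1} \in J$, depending only on $s$ and $j$, with $\nu_{j}(F(z)(s) u_{z}) \leq a\,\nu_{j_{1}}(u_{z})$ for all $z \in X$ and $u_{z} \in \mf{E}_{z}$. Specializing to $u_{z} = v(z)$ and using that $v \in \Gamma^{x}(\pi) \subseteq \prod_{z}^{b} \mf{E}_{z}$ is bounded gives $\sup_{z} q_{(s,j)}^{z}((F \bigstar v)(z)) \leq a \sup_{z} \nu_{j_{1}}(v(z)) < \infty$; finite upper envelopes then cover all of $\mf{K}_{z}$.

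The core of the argument is the limit at each seminorm, governed by the decomposition
\begin{equation*}
\nu_{j}\bigl(F(z)(s)v(z) - \sigma(z)(s)w(z)\bigr) \leq \nu_{j}\bigl(F(z)(s)(v(z) - w(z))\bigr) + \nu_{j}\bigl((F(z)(s) - \sigma(z)(s))w(z)\bigr).
\end{equation*}
The first summand is dominated by $a\,\nu_{j_{1}}(v(z) - w(z))$ via the same pointwise equicontinuity; since $v \in \Gamma^{x}(\pi)$ is continuous at $x$ and $w \in \Gamma(\pi)$ satisfies $w(x) = v(x)$, condition $(6)$ of Corollary \ref{28111707} applies to $v$ with witness $w$, hence $(6) \Rightarrow (5)$ forces $\lim_{z \to x} \nu_{j_{1}}(v(z) - w(z)) = 0$. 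The second summand is bounded by $P_{(s,j,B)}^{z}(F(z) - \sigma(z))$ since $w(z) \in \mc{B}_{B}^{z}$; applying the $(\Theta, \mc{E}, \mu)$-analog of Lemma \ref{15482712} (Remark \ref{23222906}) to $F \in \Gamma_{\diamond}^{x}(\xi)$ with witness $\sigma$ drives this to zero as $z \to x$.

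Combining the two estimates, Corollary \ref{28111707} yields $F \bigstar v \in \Gamma^{x}(\zeta)$, and the equality $(F \bigstar v)(x) = (\sigma \bigstar w)(x)$ with $\sigma \bigstar w \in \Gamma(\zeta)$ upgrades this to $F \bigstar v \in \Gamma_{\diamond}^{x}(\zeta)$. The principal subtlety is that I need $w$ to be a \emph{global} section of $\pi$, not merely a selection continuous at $x$; without this, the first summand could not be controlled by Corollary \ref{28111707}. This is exactly what the parameter $\mc{E}(\Theta)$ in the subscript of $\Gamma_{\mc{E}(\Theta)}^{x}(\pi)$ enforces, via $\mc{E}(\Theta) \subseteq \mc{E} \subseteq \Gamma(\pi)$. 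Pointwise equicontinuity then plays its double role—bounding $F \bigstar v$ uniformly in $z$ and transporting the convergence of $v(z) - w(z)$ through the action of $F(z)(s)$.
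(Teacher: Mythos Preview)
Your proof is correct and follows essentially the same approach as the paper's: the same witness pair $(\sigma,w)$ (the paper calls your $v$ and $w$ by the swapped names $w$ and $v$), the same decomposition of $\nu_{j}\bigl(F(z)(s)v(z)-\sigma(z)(s)w(z)\bigr)$ into the two summands, the same use of pointwise equicontinuity to control the first and of Lemma~\ref{15482712} (via Remark~\ref{23222906}) to control the second, and the same appeal to Corollary~\ref{28111707} and \eqref{19090907} for the conclusion. Your explicit tracking of $B\in\Theta$ with $w\in\mb{D}(B,\mc{E})$ and the closing remarks on why globality of $w$ is essential are nice clarifications absent from the paper's terser write-up, but the underlying argument is the same.
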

\begin{proof}
Let 
$j\in J$,
$x\in X$ 
and
$w\in\Gamma_{\mc{E}(\Theta)}^{x}(\pi)$, 
so
there exists $v\in\mc{E}(\Theta)$
such that $v(x)=w(x)$ 
then
by Cor. \ref{28111707}
\begin{equation}
\label{14311902}
\lim_{z\to x}
\nu_{j}(w(z)-v(z))
=0.
\end{equation}
Moreover
let
$F\in\Gamma_{\diamond}^{x}(\xi)$,
so 
by
Lemma \ref{15482712}
$\exists\,\sigma\in\Gamma(\xi)$
such that $F(x)=\sigma(x)$ and 
for all $t\in Y$
\begin{equation}
\label{14301902}
\lim_{z\to x}
\nu_{j}\left(F(z)(t)v(z)-\sigma(z)(t)v(z)
\right)=0.
\end{equation}
Moreover 
$(\forall t\in Y)
(\exists\,M_{(t,j)}>0)
(\exists\,j_{1}\in J)
(\forall z\in X)$
\begin{alignat*}{1}
\nu_{j}\left((F\bigstar w)(z)(t)-
(\sigma\bigstar v)(z)(t)\right)
&=
\nu_{j}\left(F(z)(t)w(z)-
\sigma(z)(t)v(z)\right)
\\
&\leq
\nu_{j}\left(F(z)(t)(w(z)-v(z))\right)
+
\nu_{j}\left(F(z)(t)v(z)-\sigma(z)(t)v(z)
\right)
\\
&\leq
M_{(t,j)}
\nu_{j_{1}}(w(z)-v(z))
+
\nu_{j}\left(F(z)(t)v(z)-\sigma(z)(t)v(z)
\right).
\end{alignat*}
Therefore
by \eqref{14311902}
and
\eqref{14301902}
for all $t\in Y$
$$
\lim_{z\to x}
\nu_{j}\left((F\bigstar w)(z)(t)-
(\sigma\bigstar v)(z)(t)\right)
=0.
$$
Moreover 
$(\forall t\in Y)
(\exists\,M_{(t,j)}>0)
(\exists\,j_{1}\in J)$
\begin{equation}
\label{18590907}
\sup_{z\in X}
\nu_{j}((F\bigstar w)(z)(t))
\leq
M_{(t,j)}
\sup_{z\in X}
\nu_{j_{1}}(w(z))
<\infty.
\end{equation}
By the antecedent 
of the implication of the statement
we deduce that
$\sigma\bigstar v\in\Gamma(\zeta)$
hence
the statement 
follows
by 
Cor. \ref{28111707},
\eqref{20242906},
by the fact that
by \eqref{19090907}
$F\bigstar w\in\prod_{x\in X}\mf{T}_{x}$
and by
\eqref{18590907}.
\end{proof}
\begin{proposition}
\label{17441202bis}
Let
$\lr{\mf{V},\mf{W}}{X,Y}$
be
a
compatible 
$\left(\Theta,\mc{E}\right)-$structure.
Then for all $x\in X$
\begin{equation}
(\Gamma_{\diamond}^{x}(\rho)_{peq})_{t}
\bullet 
\Gamma_{\mc{E}(\Theta)}^{x}(\pi)
\subseteq
\Gamma_{\diamond}^{x}(\pi)
\end{equation}
\end{proposition}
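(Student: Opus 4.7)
The plan is to show the inclusion by producing, for a given $F\in\Gamma_{\diamond}^{x}(\rho)_{peq}$, $w\in\Gamma_{\mc{E}(\Theta)}^{x}(\pi)$ and $t\in Y$, an explicit section $\tau\in\Gamma(\pi)$ with $\tau(x)=F(x)(t)w(x)$, and then check that $\nu_{j}((F_{t}\bullet w)(z)-\tau(z))\to 0$ for every $j\in J$; by Corollary~\ref{28111707} this delivers $F_{t}\bullet w\in\Gamma_{\diamond}^{x}(\pi)$ once boundedness is in place.

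First I would unpack the hypotheses. Since $F\in\Gamma_{\diamond}^{x}(\rho)$, Definition~\ref{15012602} yields $\sigma\in\Gamma(\rho)$ with $\sigma(x)=F(x)$, and Lemma~\ref{15482712} gives, for every $K\in Comp(Y)$, $j\in J$ and $B\in\Theta$,
\begin{equation*}
\lim_{z\to x}\sup_{s\in K}\sup_{v\in\mb{D}(B,\mc{E})}
\nu_{j}\!\left(F(z)(s)v(z)-\sigma(z)(s)v(z)\right)=0.
\end{equation*}
Similarly $w\in\Gamma_{\mc{E}(\Theta)}^{x}(\pi)$ produces $u\in\mc{E}(\Theta)$ with $u(x)=w(x)$ and, by Corollary~\ref{28111707}, $\lim_{z\to x}\nu_{j}(w(z)-u(z))=0$ for all $j$; by definition of $\mc{E}(\Theta)$, there is $B\in\Theta$ with $u\in\mb{D}(B,\mc{E})$. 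The natural candidate is then
\begin{equation*}
\tau\doteqdot\sigma_{t}\bullet u,
\end{equation*}
which lies in $\Gamma(\pi)$ thanks to the compatibility hypothesis~\eqref{18011202}, and clearly satisfies $\tau(x)=\sigma(x)(t)u(x)=F(x)(t)w(x)$.

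Next I would estimate the error by the standard split
\begin{equation*}
F(z)(t)w(z)-\sigma(z)(t)u(z)
=F(z)(t)\bigl(w(z)-u(z)\bigr)+\bigl(F(z)(t)-\sigma(z)(t)\bigr)u(z).
\end{equation*}
The first summand is controlled by pointwise equicontinuity of $F$: Definition~\ref{21031238} (with $Y_{0}=\{t\}$) gives $a>0$ and $j_{1}\in J$, independent of $z$, such that $\nu_{j}(F(z)(t)(w(z)-u(z)))\leq a\,\nu_{j_{1}}(w(z)-u(z))\to 0$. The second summand is controlled by applying the displayed characterization from Lemma~\ref{15482712} to the compact set $K=\{t\}$ and to the index $B\in\Theta$ chosen above, which gives $\nu_{j}((F(z)(t)-\sigma(z)(t))u(z))\to 0$ since $u\in\mb{D}(B,\mc{E})$. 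Adding the two bounds yields the required $\nu_{j}((F_{t}\bullet w)(z)-\tau(z))\to 0$.

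Finally I would verify boundedness of $F_{t}\bullet w$: pointwise equicontinuity again gives $\nu_{j}(F(z)(t)w(z))\leq a\,\nu_{j_{1}}(w(z))$, and $w\in\prod_{z\in X}^{b}\mf{E}_{z}$ by the definition of $\Gamma^{x}(\pi)$, so $\sup_{z}\nu_{j}((F_{t}\bullet w)(z))<\infty$. Combined with the convergence above and $\tau\in\Gamma(\pi)$, Corollary~\ref{28111707} concludes $F_{t}\bullet w\in\Gamma_{\diamond}^{x}(\pi)$. The only mildly delicate point in the argument is the invocation of \emph{pointwise} equicontinuity at the single point $t$ — this is precisely the reason the hypothesis $peq$ is included rather than the weaker plain membership in $\Gamma_{\diamond}^{x}(\rho)$, since without a uniform bound $\nu_{j}(F(z)(t)\,\cdot\,)\leq a\nu_{j_{1}}(\,\cdot\,)$ one cannot transfer the continuity of $w-u$ at $x$ through the operator $F(z)(t)$.
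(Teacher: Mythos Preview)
Your proof is correct and follows exactly the same line as the paper's: the paper's own proof of this proposition simply invokes Lemma~\ref{12151902} (whose proof is precisely the argument you wrote out --- choose $\sigma\in\Gamma(\rho)$ and $u\in\mc{E}(\Theta)$ matching at $x$, set $\tau=\sigma_{t}\bullet u\in\Gamma(\pi)$ by compatibility, split $F(z)(t)w(z)-\sigma(z)(t)u(z)$ into the two summands, control the first via pointwise equicontinuity and the second via Lemma~\ref{15482712} with $K=\{t\}$, then check boundedness and conclude by Corollary~\ref{28111707}). So you have effectively reproduced the relevant computation rather than citing it.
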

\begin{proof}
Notice that 
$(F\bigstar v)(t)=F_{t}\bullet v$,
thus if we set $Y=\{pt\}$ 
the statement follows by
Lemma \ref{12151902}.
\end{proof}
\begin{corollary}
\label{15262502}
Let 
$\lr{\mf{V},\mf{Q}}{X,Y}$
be
a
$\left(\Theta,\mc{E},\mu\right)-$
structure
and
$\lr{\mf{V}}{\mf{Z},\ms{H}}$
be
$\mu-$related.
If $x\in X$ is such that its filter of neighbourhoods admits a countable basis, 
then
\begin{equation*}
\Gamma(\xi)
\bigstar
\mc{E}(\Theta)
\subseteq
\Gamma(\zeta)
\Rightarrow
\overset{\mu}{\blacklozenge}
\left(
\left[
\Gamma_{\diamond}^{x}(\xi)_{peq}
\bigstar
\Gamma_{\mc{E}(\Theta)}^{x}(\pi)
\right]_{ui}
\right)
\subseteq
\Gamma_{\diamond}^{x}(\pi).
\end{equation*}
Here
$\mf{Q}
\coloneqq
\lr{\lr{\mf{H}}{\gamma}}{\xi,X,\mf{Y}}$,
$\mf{Z}
\coloneqq
\lr{\lr{\mf{T}}{\delta}}{\zeta,X,\mf{K}}$
and
$\ms{H}_{x}\coloneqq\mf{H}_{x}$
for all $x\in X$.
\end{corollary}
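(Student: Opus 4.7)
The plan is to chain together Lemma \ref{12151902} and the bundle Lebesgue theorem (Theorem \ref{15101701}, GLT), exploiting the fact that both operate on subsets of $\prod_{x\in X}\mf{T}_{x}$ and that the filter $[\,\cdot\,]_{ui}$ is monotone with respect to inclusion of subsets of $\prod_{x\in X}^{b}\mf{T}_{x}$.

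First I would invoke Lemma \ref{12151902}: since $\lr{\mf{V},\mf{Q}}{X,Y}$ is a $\left(\Theta,\mc{E},\mu\right)-$structure, $\lr{\mf{V}}{\mf{Z},\mb{H}}$ is $\mu$-related with $\mb{H}_{x}=\mf{H}_{x}$, and the hypothesis $\Gamma(\xi)\bigstar\mc{E}(\Theta)\subseteq\Gamma(\zeta)$ is in force, the conclusion of that lemma gives, for every $x\in X$,
\begin{equation*}
\Gamma_{\diamond}^{x}(\xi)_{peq}\bigstar\Gamma_{\mc{E}(\Theta)}^{x}(\pi)\subseteq\Gamma_{\diamond}^{x}(\zeta).
\end{equation*}
Note that by \eqref{19090907} both sides sit inside $\prod_{x\in X}\mf{T}_{x}$, so the inclusion takes place in a common ambient space.

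Next I would apply $[\,\cdot\,]_{ui}$ to both sides. By its definition \eqref{17180207}, for any two subsets $\A\subseteq\B$ of $\prod_{x\in X}^{b}\mf{T}_{x}$ one has $[\A]_{ui}\subseteq[\B]_{ui}$, because the condition
$$\int_{Y}^{\bullet}\sup_{x\in X}\nu_{j}(H(x)(s))\,d|\mu|(s)<\infty$$
depends only on $H$ itself and not on the ambient family. Hence
\begin{equation*}
\bigl[\Gamma_{\diamond}^{x}(\xi)_{peq}\bigstar\Gamma_{\mc{E}(\Theta)}^{x}(\pi)\bigr]_{ui}\subseteq\bigl[\Gamma_{\diamond}^{x}(\zeta)\bigr]_{ui}.
\end{equation*}

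Finally I would apply the operator $\overset{\mu}{\blacklozenge}$ and invoke Theorem \ref{15101701}. Since $\lr{\mf{V}}{\mf{Z}}$ is $\mu$-related (inherited from $\lr{\mf{V}}{\mf{Z},\mb{H}}$), the GLT yields
\begin{equation*}
\overset{\mu}{\blacklozenge}\bigl(\bigl[\Gamma_{\diamond}^{x}(\zeta)\bigr]_{ui}\bigr)\subseteq\Gamma_{\diamond}^{x}(\pi),
\end{equation*}
and chaining this with the previous inclusion gives the claim. The only potentially delicate step is confirming that the inclusion passes through $[\,\cdot\,]_{ui}$, but this is immediate from the pointwise/monotone nature of definition \eqref{17180207}; no additional uniform bound needs to be reestablished because the $[\,\cdot\,]_{ui}$ condition is phrased on individual elements, not on the enveloping set. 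Consequently no step should present a real obstacle: the corollary is essentially the composition of the two headline results Lemma \ref{12151902} and Theorem \ref{15101701}.
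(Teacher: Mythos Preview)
Your proposal is correct and follows exactly the paper's approach: the paper's proof consists of the single line ``By Theorem \ref{15101701} and Lemma \ref{12151902}'', and you have simply spelled out the chaining of these two results together with the evident monotonicity of $[\,\cdot\,]_{ui}$.
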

\begin{proof}
By Thm. \ref{15101701}
and
Lemma \ref{12151902}.
\end{proof}
\begin{lemma}
\label{14452602}
Let
$\lr{\mf{V},\mf{Q}}{X,Y}$
be
an
invariant
$\left(\Theta,\mc{E},\mu\right)-$
structure
where
$\Theta$
defined in
\eqref{11121419b}.
Then
for all
$x\in X$
\begin{equation}
\label{15232602}
\left\{
H\in
\left[
\left(
\prod_{z\in X}^{b}
\mf{H}_{z}
\right)_{\diamond}^{x}
\right]_{peq}
\,\vert\,
(\forall t\in Y)
(H_{t}
\bullet
\mc{E}(\Theta)
\subseteq
\Gamma^{x}(\pi))
\right\}
\subseteq
\Gamma_{\diamond}^{x}(\xi).
\end{equation}
\end{lemma}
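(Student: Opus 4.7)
Fix $x\in X$ and let $H$ belong to the left-hand side of \eqref{15232602}; write $\mf{Q}=\lr{\lr{\mf{H}}{\gamma}}{\xi,X,\mf{Y}}$ and $\n=\{\nu_j\mid j\in J\}$. Because $H\in(\prod_{z\in X}^{b}\mf{H}_z)_{\diamond}^{x}$, there exists $F\in\Gamma(\xi)$ with $F(x)=H(x)$. Since $\lr{\mf{V},\mf{Q}}{X,Y}$ is invariant, the defining equality \eqref{18112502} forces $F_t\bullet w\in\Gamma(\pi)$ for every $t\in Y$ and every $w\in\mc{E}(\Theta)=\mc{E}$, the last equality following from the choice of $\Theta$ in \eqref{11121419b} together with Remark \ref{21500412b}.

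By the $(\Theta,\mc{E},\mu)$-analogue of Lemma \ref{15482712} (see Remark \ref{23222906}), to prove $H\in\Gamma^{x}(\xi)$ it is enough to verify, for every $j\in J$, $t\in Y$ and $w\in\mc{E}$, that
\begin{equation*}
\lim_{z\to x}\nu_j\bigl(H(z)(t)w(z)-F(z)(t)w(z)\bigr)=0,
\end{equation*}
the supremum over $\mb{D}(B_w,\mc{E})$ reducing to this single term by Remark \ref{21500412b}. Fix $j,t,w$. The hypothesis $H(\cdot)(t)\bullet w\in\Gamma^{x}(\pi)$ together with Corollary \ref{28111707} yields an open neighborhood $U\ni x$ and a local section $\sigma\in\Gamma_{U}(\pi)$ with $\sigma(x)=H(x)(t)w(x)$ and $\lim_{z\to x}\nu_j(H(z)(t)w(z)-\sigma(z))=0$. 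Since $F(\cdot)(t)\bullet w\in\Gamma(\pi)$ takes the same value at $x$, the selection $\sigma-F(\cdot)(t)\bullet w\up U$ is a continuous section of $\pi$ on $U$ vanishing at $x$; upper semicontinuity of $\nu_j$ along continuous sections (the $FM(4)$ property of Definition \ref{17471910s}, used exactly as in the proof of Lemma \ref{13001512b}) then gives
\begin{equation*}
\varlimsup_{z\to x}\nu_j\bigl(\sigma(z)-F(z)(t)w(z)\bigr)\le\nu_j(0)=0.
\end{equation*}
The triangle inequality delivers the required limit, and combined with $F\in\Gamma(\xi)$ witnessing the $\diamond$-property this yields $H\in\Gamma_{\diamond}^{x}(\xi)$.

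The one delicate point, in my view, is to match the hypotheses of the $(\Theta,\mc{E},\mu)$-version of Lemma \ref{15482712} (one global $F\in\Gamma(\xi)$, pointwise convergence in $t$, and reduction of the supremum to a singleton thanks to the choice of $\Theta$); after these identifications the argument is driven purely by the triangle inequality and the upper semicontinuity of the seminorms along continuous sections. The subscript $[\,\cdot\,]_{peq}$ in the statement does not enter the logical chain here, and is presumably retained to guarantee the applicability of this lemma inside the compatibility results of Section \ref{15491702B}, where pointwise equicontinuity is actually used.
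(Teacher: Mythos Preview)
Your proof is correct and follows essentially the same approach as the paper. The only difference is that the paper applies Corollary \ref{28111707} directly with the global section $F(\cdot)(t)\bullet w\in\Gamma(\pi)$ (which exists by invariance and agrees with $H(\cdot)(t)\bullet w$ at $x$), obtaining $\lim_{z\to x}\nu_j(H(z)(t)w(z)-F(z)(t)w(z))=0$ in one step via the equivalence $(5)\Leftrightarrow(6)$; your detour through an auxiliary local section $\sigma$ and the explicit upper-semicontinuity estimate is thus unnecessary, though harmless. Your observation that the $[\cdot]_{peq}$ hypothesis is not used in the argument is also in agreement with the paper's proof.
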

\begin{proof}
Let
$v\in\mc{E}(\Theta)$,
$t\in Y$ 
and
$H$ belong to the set in the left side
of \eqref{15232602}. 
Thus
by \eqref{18112502}
$\exists\,F\in\Gamma(\xi)$
such that
$F_{t}\bullet v\in\Gamma(\pi)$,
$F(x)=H(x)$
and
$H_{t}\bullet v\in\Gamma^{x}(\pi)$
by construction.
Then 
by 
Cor. \ref{28111707}
we obtain
for all $j\in J$ 
$$
\lim_{z\to x}
\nu_{j}(H(z)(t)v(z)-F(z)(t)v(z))=0.
$$
Therefore
the statement
follows
by
Lemma \ref{15482712}
and
\eqref{01592912}.
\end{proof}
\begin{remark}
\label{12012802}
Notice that Lemma \ref{14452602}
holds
if we replace
invariant
$\left(\Theta,\mc{E},\mu\right)-$
structure
with
invariant
$\left(\Theta,\mc{E}\right)-$
structure,
see Def. \ref{10282712},
and assume that
$Comp(Y)=\mc{P}_{\omega}(Y)$.
\end{remark}
By recalling Def. \ref{16420402} and Def. \ref{13361311biss}
we can state the following significant 
\begin{corollary}
\label{21152602}
Let $\mf{V}$ be a Banach bundle and set $\ms{E}\coloneqq\{\mf{E}_{z}\}_{z\in X}$.
Let $M>1$, $\beta\in\R$ and let $T\in\mc{G}(M,\beta,\ms{E})$ 
satisfy the property of separation of the spectrum.
Let $x\in X$ admitting a countable basis of its filter of neighbourhoods.
Let 
$\Gamma$ be a curve associated with $T$ and $(K,A,\phi)$ be a triplet associated with $\Gamma$ 
(Def. \ref{13361311biss})
such that
\begin{equation}
\label{19281311}
\begin{cases}
Re(\Gamma)
\subseteq
\R^{-},
\\
\beta>0
\Rightarrow
-\beta
\notin
Re(\Gamma).
\end{cases}
\end{equation}
Moreover let 
$\mf{Q}=\lr{\lr{\mf{H}}{\gamma_{1}}}{\xi,X,\mf{R}}$
and
$\mf{Z}=\lr{\lr{\mf{T}}{\gamma_{2}}}{\zeta,X,\mf{K}}$
be such that
\begin{enumerate}
\item 
$\lr{\mf{V},\mf{Q}}{X,\R^{+}}$ is an
invariant $\left(\Theta,\mc{E},\mu\right)-$structure
for all
$\mu\in\{\nu^{\phi},\eta_{s}^{\phi}\,\vert\, s\in K\}$,
with $\Theta$ determined by $\mc{E}$ by \eqref{11121419b};
\label{21152602hp1}
\item
$\lr{\mf{V}}{\mf{Z},\ms{H}}$ is $\mu-$related 
for all $\mu\in\{\nu^{\phi},\eta_{s}^{\phi}\,\vert\, s\in K\}$,
moreover $\ms{H}_{z}\coloneqq\mf{H}_{z}$, for all $z\in X$;
\label{21152602hp2}
\item
there exist $F,G\in\Gamma(\xi)$ such that 
$F(x)=\mc{W}_{T}(x)$
and
$G(x)=R^{\phi}(x)$;
\label{21152602hp3}
\item
for all $z\in X$
\begin{equation}
\label{11130307}
\cc{cs}{\R^{+},\mc{L}_{S_{z}}(\mf{E}_{z})}
\subseteq
\mf{H}_{z};
\end{equation}
\item
$\Gamma(\xi)
\bigstar
\mc{E}(\Theta)
\subseteq
\Gamma(\zeta)$.
\label{21152602hp4}
\end{enumerate}
Thus
\begin{equation}
\label{12042802}
\mc{W}_{T}\in
\Gamma^{x}(\xi)
\Rightarrow
P\bullet
\Gamma_{\mc{E}(\Theta)}^{x}(\pi)
\subseteq
\Gamma^{x}(\pi).
\end{equation}
Moreover let $\mf{D}=\lr{\lr{\mf{B}}{\gamma_{3}}}{\eta,X,\mf{L}}$ be
such that 
$\lr{\mf{V},\mf{D}}{X,\{pt\}}$ is an invariant $\left(\Theta,\mc{E}\right)-$structure.
If $\Pr\left(\mf{E}_{z}\right)\subset\mf{B}_{z}$
for all $z\in X$
and
there exists $N\in\Gamma(\eta)$ such that $N(x)=P(x)$,
then
\begin{equation}
\label{12102802}
\mc{W}_{T}
\in
\Gamma^{x}(\xi)
\Rightarrow
P
\in\Gamma^{x}(\eta).
\end{equation}
\end{corollary}
\begin{proof}
In this proof we denote
$R_{T}^{\phi}$ 
simply by $R^{\phi}$.
$R^{\phi}$ is $K-$supported
by construction,
moreover
the resolvent map $R^{\phi}(z)$ being analytic
is 
$\|\cdot\|_{B(\mf{E}_{z})}-$
continuous 
hence
continuous
as a map valued in
$\mc{L}_{S_{z}}(\mf{E}_{z})$ 
for any $z\in X$.
So
$$
R^{\phi}
\in\prod_{z\in X}
\cc{cs}{\R^{+},\mc{L}_{S_{z}}(\mf{E}_{z})},
$$
hence by \eqref{11130307}
follows 
\begin{equation}
\label{17500107}
R^{\phi}
\in
\prod_{z\in X}
\mf{H}_{z}.
\end{equation}
By \eqref{18150402}
for all $s\in K$, $z\in X$
and
$\forall v_{z}\in \mf{E}_{z}$
\begin{equation}
\label{17051311}
\begin{aligned}
\|
R(T(z),\phi(s))v_{z}
\|
&
\leq
\int_{\R^{+}}^{*}
e^{-|Re(\phi(s))|t}
\|e^{-tT(z)}v_{z}\|
dt
\\
&
\leq
M\|v_{z}\|
\int_{\R^{+}}^{*}
e^{\left(\beta-|Re(\phi(s))|\right) t}
dt
=
\frac{M\|v_{z}\|}{\beta-|Re(\phi(s))|},
\end{aligned}
\end{equation}
where
$\int_{\R^{+}}^{*}$
is the upper integral
on $\R^{+}$
with respect to the
Lebesgue measure.
We considered
in the first
inequality
\cite[Prop.
$6$,
$n^{\circ}$,
$\S 1$,
Ch. $6$]{IntBourb},
in the second
one
the 
inequality
\cite[$(1.37)$,$n^{\circ}4$,$\S 1$, Ch. $9$]{kato},
finally
in the equality
the Laplace transform
of the 
map
$\exp(\beta t)$.
Therefore 
by
\eqref{17500107}
and
\eqref{17051311}
$$
R^{\phi}
\in\left(
\prod_{z\in X}
\mf{H}_{z}
\right)_{peq}.
$$
Thus
by 
\eqref{17051311},
\eqref{11121419b}
and
\eqref{22512906}
\begin{equation}
\label{10312802}
R^{\phi}
\in\left(
\prod_{z\in X}^{b}
\mf{H}_{z}
\right)_{peq}.
\end{equation}
By 
\eqref{17500107} and \eqref{19090907}
we have that
$R^{\phi}\bigstar v
\in
\prod_{z\in X}
\mf{T}_{z}$
for all 
$v\in\prod_{z\in X}^{b}\mf{E}_{z}$.
By
hypothesis
\eqref{19281311}
we deduce that
$\frac{1}
{\beta-|Re(\phi(s))|}$
is 
defined
on
$K$,
hence
continuous
and integrable
in
it,
thus
by
\eqref{17051311}
\begin{equation}
\label{17590207}
R^{\phi}\bigstar v
\in
\left[
\prod_{z\in X}
\mf{T}_{z}
\right]_{ui}.
\end{equation}
By the continuity
of
$\frac{1}
{\beta-|Re(\phi(s))|}$
on $K$
we deduce 
that
the
map
$
\frac{|\frac{d\phi}{ds}(s)|}
{\beta-|Re(\phi(s))|}
$
is
integrable
in 
$K$.
Hence
by
\eqref{17051311}
and
\eqref{17551311biss}
\begin{equation}
\label{17483006}
\sup_{z\in X}
\|P(z)\|_{B(\mf{E}_{z})}
\leq
D
\coloneqq
\frac{1}{2\pi i}
\int_{K}
\frac{M\left|\frac{d\phi}{ds}(s)\right|}
{\beta-|Re(\phi(s))|}
\,ds.
\end{equation}
Therefore
for all 
$v\in\mc{E}$
by considering
that
$\mc{E}\subset\prod_{z\in X}^{b}\mf{E}_{z}$
$$
\sup_{z\in X}
\|P(z)v(z)\|_{B(\mf{E}_{z})}
\leq
D\sup_{z\in X}
\|v(z)\|_{\mf{E}_{z}}
<\infty.
$$
Thus
\begin{equation}
\label{15181303}
P\in\prod_{z\in X}^{b}\mf{B}_{z}.
\end{equation}
Let $z\in X$
and $v\in\Gamma_{\mc{E}(\Theta)}^{z}(\pi)$.
By \eqref{18150402}
for all $s\in K$
\begin{equation}
\label{17542502}
(R^{\phi}\bigstar v)(z)(s)
=
\ro{\eta_{s}^{\phi}}
(\mc{W}_{T}\bigstar v)(z).
\end{equation}
Moreover by \eqref{17551311biss}
\begin{equation}
\label{17562502}
P\bullet v
=
\ro{\nu^{\phi}}
(R^{\phi}\bigstar v).
\end{equation}
Notice that
$(R^{\phi}\bigstar v)(z)(s)
=
\left(R^{\phi}(\cdot)(s)\bullet v\right)(z)$
so by \eqref{17542502}
for all $s\in K$
\begin{equation}
\label{18022502}
R^{\phi}(\cdot)(s)\bullet v
=
\ro{\eta_{s}^{\phi}}
(\mc{W}_{T}\bigstar v).
\end{equation}
If 
$\mc{W}_{T}\in\Gamma^{x}(\xi)$
then by
\cite[Ch. $9$, $\S1$, $n^{\circ}4$,$(1.37)$]{kato}
and hypothesis $(3)$
follows that
$\mc{W}_{T}
\in
\Gamma_{\diamond}^{x}(\xi)_{peq}$.
Therefore for all $w\in\Gamma_{\mc{E}(\Theta)}^{x}(\pi)$
by using \cite[Ch. $9$, $\S1$, $n^{\circ}4$,$(1.37)$]{kato}
we can apply 
Cor. \ref{15262502}
to $\mc{W}_{T}\bigstar w$
and then since 
\eqref{18022502}
we obtain
for all $s\in K$ 
\begin{equation}
\label{13540303}
R^{\phi}(\cdot)(s)
\bullet w
\in\Gamma^{x}(\pi).
\end{equation}
By construction
$\mc{E}(\Theta)
\subseteq\Gamma(\pi)$
so 
$\mc{E}(\Theta)
\subseteq
\Gamma_{\mc{E}(\Theta)}^{x}(\pi)$,
hence
\begin{equation}
\label{18042502}
R^{\phi}(\cdot)(s)
\bullet 
\mc{E}(\Theta)
\subseteq
\Gamma^{x}(\pi).
\end{equation}
Moreover by \eqref{10312802} and hypothesis $(3)$
follows that
$$
R^{\phi}
\in
\left[
\left(
\prod_{z\in X}^{b}
\mf{H}_{z}
\right)_{\diamond}^{x}
\right]_{peq}.
$$
Hence
by 
Lemma \ref{14452602}
and
\eqref{18042502}
\begin{equation}
\label{18132502}
R^{\phi}
\in\Gamma_{\diamond}^{x}(\xi)_{peq}.
\end{equation}
Finally
\eqref{12042802}
follows
by
\eqref{18132502},
\eqref{17562502},
\eqref{17590207}
and
Cor.
\ref{15262502}.
Next since
\eqref{15181303},
\eqref{17483006}
and the hypothesis that 
there exists $N\in\Gamma(\eta)$ such that $N(x)=P(x)$,
we obtain that
\begin{equation*}
P\in
\left[
\left(
\prod_{z\in X}^{b}
\mf{B}_{z}
\right)_{\diamond}^{x}
\right]_{peq}.
\end{equation*}
Thus
\eqref{12102802}
follows by 
\eqref{12042802},
Rmk.
\ref{12012802}
and by
$\mc{E}(\Theta)
\subseteq
\Gamma_{\mc{E}(\Theta)}^{x}(\pi)$.
\end{proof}
\begin{remark}
\label{12500303}
Prp. \ref{18390901} 
can be used 
in order to verify part in
hypothesis $(1)$ of Cor. \ref{21152602}.
\end{remark}
The following general result shall permit to apply 
Cor. \ref{21152602} to 
Thm. \ref{17301812b}.
\begin{proposition}
\label{23332802}
Let
$\lr{\mf{V},\mf{W}}{X,Y}$
be
a
$\left(\Theta,\mc{E}\right)-$structure
and let us denote
$\mf{W}=\lr{\lr{\mf{M}}{\delta}}{\rho,X,\mf{R}}$.
Assume that
\eqref{22372906}
holds
for all $x\in X$
and
let
$\lr{\mf{V},\mf{V}
(\ms{M}^{\mu},\Gamma(\rho))}{X,Y}$
be
the
$\left(\Theta,\mc{E},\mu\right)-$
structure
underlying
$\lr{\mf{V},\mf{W}}{X,Y}$.
Then for all $x\in X$
\begin{equation}
\label{17350303}
\Gamma_{\diamond}^{x}(\rho)
\subseteq
\Gamma_{\Gamma(\rho)}^{x}
(\pi_{\ms{M}^{\mu}}),
\end{equation}
inclusion to be considered modulo canonical isomorphism.
\end{proposition}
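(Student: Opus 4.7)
I proceed in two stages: first establish that any selection of $\rho$ induces a genuine selection of $\pi_{\mb{M}^{\mu}}$ under the canonical identification, then transfer continuity at $x$ via the comparison between the two seminorm systems.

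First, I would unpack the canonical isomorphism. By construction $\Gamma(\rho)$ satisfies $FM(3)-FM(4)$ with respect to $\mb{M}^{\mu}$, so Remark \ref{17150312} embeds $\Gamma(\rho)$ canonically as a linear subspace of $\Gamma(\pi_{\mb{M}^{\mu}})$ via $F\mapsto \widetilde{F}$ with $\widetilde{F}(z)=F(z)\in\mb{M}_{z}^{\mu}$. The inclusion is legitimate because $\{\sigma(z)\mid\sigma\in\Gamma(\rho)\}\subseteq\mb{M}_{z}^{\mu}$ by definition. Next, for an arbitrary $\mc{U}\in\Gamma_{\diamond}^{x}(\rho)$ I have to ensure that the pointwise values $\mc{U}(z)$ actually lie in $\mb{M}_{z}^{\mu}$ for all $z$. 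This follows from $FM(3)$ applied to $\mf{W}$: $\{\sigma(z)\mid\sigma\in\Gamma(\rho)\}$ is dense in $\mf{M}_{z}$ with respect to $\mf{R}_{z}$, and since each $P_{(t,j,B)}^{z}=q_{(\{t\},j,B)}^{z}$ is bounded above by the corresponding compact-convergence seminorm, the $\mf{R}_{z}$-topology is finer than the pointwise topology inherited from $\mf{L}_{1}(Y,\mc{L}_{S_{z}}(\mf{E}_{z}),\mu)_{s}$. Hence density in the finer topology gives $\mf{M}_{z}\subseteq\mb{M}_{z}^{\mu}$, and $\mc{U}$ determines a well-defined selection $\widetilde{\mc{U}}\in\prod_{z\in X}\mb{M}_{z}^{\mu}$.

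Second, I would transfer boundedness and continuity at $x$. Boundedness of $\widetilde{\mc{U}}$ with respect to $\{\mf{Y}_{z}\}_{z\in X}$ is immediate from boundedness of $\mc{U}$ with respect to $\{\mf{R}_{z}\}_{z\in X}$, because $P_{(t,j,B)}^{z}=q_{(\{t\},j,B)}^{z}$ and finite joins over $O\in\p_{\omega}(Y\times J\times\Theta)$ are dominated by finite joins over $\p_{\omega}(Comp(Y)\times J\times\Theta)$. For continuity at $x$, since $\mc{U}\in\Gamma_{\diamond}^{x}(\rho)$ there exists $F\in\Gamma(\rho)$ with $F(x)=\mc{U}(x)$, and Lemma \ref{15482712} applied in the $(\Theta,\mc{E})$-structure $\lr{\mf{V},\mf{W}}{X,Y}$ yields
\begin{equation*}
\lim_{z\to x}\sup_{t\in K}\sup_{v\in\mb{D}(B,\mc{E})}\nu_{j}\bigl(\mc{U}(z)(t)v(z)-F(z)(t)v(z)\bigr)=0
\end{equation*}
for every $K\in Comp(Y)$, $j\in J$, $B\in\Theta$. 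Specialising to the compact singleton $K=\{t\}$ gives the corresponding pointwise estimate, which, in view of $P_{(t,j,B)}^{z}=q_{(\{t\},j,B)}^{z}$, is exactly the characterization (obtained from Lemma \ref{15482712} in the variant of Remark \ref{23222906}) of continuity at $x$ of $\widetilde{\mc{U}}$ in the underlying $(\Theta,\mc{E},\mu)$-structure, with witness the canonical image $\widetilde{F}\in\Gamma(\pi_{\mb{M}^{\mu}})$.

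Combining these steps I conclude $\widetilde{\mc{U}}\in\Gamma^{x}(\pi_{\mb{M}^{\mu}})$ together with $\widetilde{\mc{U}}(x)=\widetilde{F}(x)$, i.e.\ $\widetilde{\mc{U}}\in\Gamma_{\Gamma(\rho)}^{x}(\pi_{\mb{M}^{\mu}})$, which is the claimed inclusion modulo the canonical isomorphism. The main bookkeeping obstacle is keeping the two canonical identifications straight (the inclusion $\mf{M}_{z}\hookrightarrow\mb{M}_{z}^{\mu}$ stalkwise and the map $\Gamma(\rho)\hookrightarrow\Gamma(\pi_{\mb{M}^{\mu}})$ globally) and verifying that both are compatible with evaluation at $x$; no additional analytic input beyond $FM(3)$ for $\mf{W}$, the identity $P_{(t,j,B)}^{z}=q_{(\{t\},j,B)}^{z}$, and Lemma \ref{15482712} is required.
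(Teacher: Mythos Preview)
Your proof is correct and follows essentially the same approach as the paper: the paper's own proof simply cites Remark \ref{17150312} for $\Gamma(\rho)\subseteq\Gamma(\pi_{\mb{M}^{\mu}})$, then invokes Lemma \ref{15482712} together with \eqref{22582906} (i.e.\ $P_{(t,j,B)}^{z}=q_{(\{t\},j,B)}^{z}$), which is exactly your argument with the stalkwise inclusion $\mf{M}_{z}\subseteq\mb{M}_{z}^{\mu}$ and the boundedness transfer spelled out in detail.
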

\begin{proof}
By construction it results that
$\Gamma(\rho)
\subseteq
\Gamma(\pi_{\ms{M}^{\mu}})$
modulo
the canonical
isomorphism,
thus the statement follows since 
Lemma \ref{15482712} 
and \eqref{22582906}.
\end{proof}
\begin{definition}
\label{14460503}
We call 
$\mf{X}=\lr{\mf{V},x_{\infty}}{\mc{U}_{0}}$ 
a quasi-appropriate set of contractions (isometries)
if the following holds.
$\mf{V}=\lr{\lr{\mf{E}}{\tau}}{\pi,X,\|\cdot\|}$
is a 
Banach bundle
where $X$ is a completely regular space, 
$x_{\infty}\in X$
such that 
its filter of neighbourhoods admits a countable basis
and 
$\mc{U}_{0}
\in\prod_{x\in X_{0}}
\cc{}{\R^{+},B_{s}(\mf{E}_{x})}$
such that
$\mc{U}_{0}(x)$
is 
a
$C_{0}-$semigroup
of contractions
(isometries)
on
$\mf{E}_{x}$
for all 
$x\in X_{0}\coloneqq X-\{x_{\infty}\}$.
Moreover let $T_{x}$ be the infinitesimal generator of the semigroup
$\mc{U}_{0}(x)$ for any $x\in X_{0}$,
let
$\lr{\lr{\mf{E}(\ms{E}^{\oplus})}
{\tau(\ms{E}^{\oplus},\mc{E}^{\oplus})}}
{\pi_{\ms{E}^{\oplus}},X,\mf{n}^{\oplus}}$
be the bundle direct sum of the family $\{\mf{V},\mf{V}\}$\footnote{well-set since
$\mf{B}$ is full by the Dupre' Thm.} 
and set
\begin{equation}
\label{16290303}
\begin{cases}
\mc{T}_{0}\text{ is the map on $X_{0}$ such that} 
\\
\mc{T}_{0}(x)
\coloneqq
Graph(T_{x}),\,
x\in X_{0},
\\
\Phi
\coloneqq
\{
\phi\in\Gamma^{x_{\infty}}
(\pi_{\ms{E}^{\oplus}})
\,\vert\,
(\forall x\in X_{0})
(\phi(x)\in\mc{T}_{0}(x))
\},
\\
\mc{E}
\coloneqq
\{
v
\in
\Gamma(\pi)
\,\vert\,
(\exists\,\phi\in\Phi)
(v(x_{\infty})=\phi_{1}(x_{\infty}))\},
\\
B_{v}:X\ni x\mapsto\{v(x)\},\,
\forall v\in\prod_{x\in X}\mf{E}_{x},
\\
\Theta
\coloneqq
\left\{
B_{w}
\,\vert\,
w\in\mc{E}
\right\},
\\
\mc{T}\text{ is the map on $X$ extending $\mc{T}_{0}$ and such that} 
\\
\mc{T}(x_{\infty})
\coloneqq
\{\phi(x_{\infty})\,\vert\,\phi\in\Phi\},
\\
D(T_{x_{\infty}})
\coloneqq
\Pr_{1}^{x_{\infty}}(\mc{T}(x_{\infty}))
=
\{\phi_{1}(x_{\infty})
\,\vert\,\phi\in\Phi\}.
\end{cases}
\end{equation}
We call $\mf{X}=\lr{\mf{V},x_{\infty}}{\mc{U}_{0}}$ 
an appropriate set of contractions (isometries)
if it is a quasi-appropriate set of contractions (isometries)
and all the following holds.
\begin{enumerate}
\item
$D(T_{x_{\infty}})$
is dense in 
$\mf{E}_{x_{\infty}}$,
\item
$\{v(x)\,\vert\, v\in\mc{E}\}$
is dense in $\mf{E}_{x}$
for all $x\in X_{0}$;
\end{enumerate}
hence according to Thm. \ref{17301812b} what follows
\begin{equation*}
T_{x_{\infty}}:
D(T_{x_{\infty}})
\ni
\phi_{1}(x_{\infty})
\mapsto
\phi_{2}(x_{\infty}),
\end{equation*}
defines a linear operator.
We require that 
$\exists\lambda_{0}>0$
($\exists\lambda_{0}>0,
\lambda_{1}<0$)
such that
the range
$\mc{R}(\lambda_{0}-T_{x_{\infty}})$
is dense in $\mf{E}_{x_{\infty}}$,
(the ranges
$\mc{R}(\lambda_{0}-T_{x_{\infty}})$
and
$\mc{R}(\lambda_{1}-T_{x_{\infty}})$
are dense in $\mf{E}_{x_{\infty}}$).
Thus according to Thm. \ref{17301812b}
$T_{x_{\infty}}$ is the infinitesimal generator
of a $C_{0}-$semigroup of contractions
(isometries)
on $\mf{E}_{x_{\infty}}$.
Therefore we can define 
\emph{section of semigroups associated with $\mf{X}$} 
the following map
\begin{equation*}
\mc{U}\in\prod_{x\in X}
\ms{U}_{\|\cdot\|_{B(\mf{E}_{x})}}
(\mc{L}_{S_{x}}(\mf{E}_{x})),
\end{equation*}
($\mc{U}\in\prod_{x\in X}
\ms{U}_{is}
(\mc{L}_{S_{x}}(\mf{E}_{x}))$)
such that
$\mc{U}(x)$
is the $C_{0}-$semigroup of contractions
(isometries)
on $\mf{E}_{x}$
whose
infinitesimal generator
is $T_{x}$
for all $x\in X$.
Finally set
\begin{equation*}
\ms{T}:X\ni x
\mapsto 
-T_{x}\in
Cld(\mf{E}_{x}).
\end{equation*}
We require that $\ms{T}$ satisfies the property of separation of the spectrum
and that there exists a curve $\Gamma$ associated with $\ms{T}$
such that
\begin{equation*}
Re(\Gamma)\subseteq\R^{-}.
\end{equation*}
We call $\ms{T}$ \emph{section of generators associated with $\mf{X}$.}
Finally for any curve $\Gamma$ associated with $\ms{T}$ such that $Re(\Gamma)\subseteq\R^{-}$
we define 
\emph{section of projectors associated with $\mf{X}$ and $\Gamma$} 
the map $\mc{P}\in\prod_{x\in X}\Pr(\mf{E}_{x})$
such that for all $x\in X$
\begin{equation*}
\mc{P}(x)
\coloneqq
-
\frac{1}{2\pi i}
\int_{\Gamma}
R(-T_{x};\zeta)\,
d\zeta
\in B(\mf{E}_{x}).
\end{equation*}
Here we recall that
$R(-T_{x};\cdot):
P(-T_{x})
\ni \zeta
\mapsto
(-T_{x}-\zeta)^{-1}
\in B(\mf{E}_{x})$
is the resolvent
map
of
$-T_{x}$
and
$P(-T_{x})$
is its resolvent
set,
while
the integration
is with respect
to the 
norm topology
on $B(\mf{E}_{x})$.
\end{definition}
\begin{theorem}
[\textbf{MAIN}$\ms{2}$]
\label{13020103}
Let 
$\mf{X}=\lr{\mf{V},x_{\infty}}{\mc{U}_{0}}$ 
be a quasi-appropriate set of contractions (isometries),
let us denote 
$\mf{V}=\lr{\lr{\mf{E}}{\tau}}{\pi,X,\|\cdot\|}$
and use the notation in \eqref{16290303}.
Assume that $\{v(x_{\infty})\,\vert\, v\in\mc{E}\}$ is dense in $\mf{E}_{x_{\infty}}$.
Then $Dom(T_{x_{\infty}})$ is dense in $\mf{E}_{x_{\infty}}$.
Next assume that $\mf{X}$ satisfies all the remaining requests in order to be 
an appropriate set of contractions (isometries).
Let $\mc{U}$ be the section of semigroups associated with $\mf{X}$,
$\Gamma$ be a curve associated with the section of generators associated with $\mf{X}$
such that $Re(\Gamma)\subseteq\R^{-}$,
$(K,A,\upphi)$ be a triplet associated with $\Gamma$
and $\mc{P}$ be the section of projectors associated with $\mf{X}$ and $\Gamma$.
Let $\mf{n}$ denote the Lebesgue measure on $\R^{+}$.
We assume that there exist 
$\mf{W}=\lr{\lr{\mf{M}}{\delta}}{\rho,X,\mf{R}}$ and 
$\mf{Z}=\lr{\lr{\mf{T}}{\gamma_{2}}}{\zeta,X,\mf{K}}$
with the following properties.
\begin{enumerate}
\item
$\lr{\mf{V},\mf{W}}{X,\R^{+}}$ is a $\left(\Theta,\mc{E}\right)-$structure;
\label{13020103hp1}
\item
for all $x\in X$
\begin{equation*}
\cc{cs}{\R^{+},\mc{L}_{S_{x}}(\mf{E}_{x})}
\subseteq
\mf{M}_{x}
\subseteq
\mf{L}_{1}(\R^{+},
\mc{L}_{S_{x}}(\mf{E}_{x}),\mf{n});
\end{equation*}
\label{13020103hp2}
\item
$\lr{\mf{V},\mf{V}(\ms{M}^{\mf{n}},\Gamma(\rho))}{X,\R^{+}}$\footnote{Def. \ref{18072802}}
is 
invariant
and
$\lr{\mf{V}}{\mf{Z},\ms{M}^{\mf{n}}}$
is
$\mf{n}-$related
such that 
$\mf{L}^{\infty}(\R^{+},\mf{n})\blacktriangleright\Gamma(\zeta)\subseteq\Gamma(\zeta)$;
\label{13020103hp3}
\item
$\Gamma(\rho)
\bigstar
\mc{E}(\Theta)
\subseteq
\Gamma(\zeta)$;
\label{13020103hp4}
\item
$\ms{U}_{\|\cdot\|_{B(\mf{E}_{x})}}
(\mc{L}_{S_{x}}(\mf{E}_{x}))
\subseteq
\mf{M}_{x}$
($\ms{U}_{is}
(\mc{L}_{S_{x}}(\mf{E}_{x}))
\subseteq
\mf{M}_{x}$),
for all $x\in X$;
\label{13020103hp5}
\item
$\exists\,F\in\Gamma(\rho)$
such that
$F(x_{\infty})=\mc{U}(x_{\infty})$
and
\begin{description}
\item[i]
$\lr{\mf{V},\mf{W}}{X,\R^{+}}$
has
the
$\ms{LD}_{x_{\infty}}(\{F\},\mc{E})$
\textbf{or}
it
has
the 
$\ms{LD}(\{F\},\mc{E})$;
\item[ii]
for any $v\in\mc{E}$ there exists $\phi\in\Phi$ such that 
$v(x_{\infty})=\phi_{1}(x_{\infty})$
and
$(\forall\{z_{n}\}_{n\in\N}\subset X
\,\vert\,
\lim_{n\in\N}z_{n}=x_{\infty})$
we have
that
$\{
\mc{U}(z_{n})(\cdot)\phi_{1}(z_{n})
-
F(z_{n})(\cdot)v(z_{n})
\}_{n\in\N}$
is a bounded equicontinuous sequence.
\end{description}
\label{13020103hp6}
\end{enumerate}
Thus we can state what follows.
\begin{enumerate}
\item
If $\exists\,G\in\Gamma(\rho)$
such that
$G(x_{\infty})=R^{\upphi}(x_{\infty})$,
then
$\mc{P}
\bullet
\Gamma_{\mc{E}(\Theta)}^{x_{\infty}}(\pi)
\subseteq
\Gamma^{x_{\infty}}(\pi)$.
\label{13020103st1}
\item
Let $\mf{D}=\lr{\lr{\mf{B}}{\gamma_{3}}}{\eta,X,\mf{L}}$ be
such that $\lr{\mf{V},\mf{D}}{X,\{pt\}}$ is an invariant
$\left(\Theta,\mc{E}\right)-$structure.
If $\Pr\left(\mf{E}_{x}\right)\subset\mf{B}_{x}$ for all $x\in X$
and if there exists $N\in\Gamma(\eta)$ such that $N(x_{\infty})=\mc{P}(x_{\infty})$,
then 
\begin{equation}
\label{19000107}
\mc{P}\in\Gamma^{x_{\infty}}(\eta),
\end{equation}
and
\begin{equation}
\label{12102802bis}
\boxed{
\{\lr{\mc{T}}{x_{\infty},\Phi}\}
\in
\Delta\lr{\mf{V},\mf{D}}{\Theta,\mc{E}}.
}
\end{equation}
\end{enumerate}
\end{theorem}
\begin{proof}
$\mf{V}$ is full since
the Dupre' Thm. \cite[Cor. $2.10$]{gie}.
So by Prop. \ref{19492307} and the density assumption
follows that $Dom(T_{x_{\infty}})$ is dense in $\mf{E}_{x_{\infty}}$.
Since \cite[2.2]{gie} we deduce that 
the set of all bounded continuous sections of any bundle of $\Omega-$spaces 
over a completely regular space satisfies the property $FM(3)$.
Therefore $\mf{M}_{x}\subset\ms{M}_{x}^{\mf{n}}$ for all $x\in X$,
since the immersion $\lr{\mf{M}_{x}}{\mf{R}_{x}}\hookrightarrow 
\mf{L}_{1}\left(Y,\mc{L}_{S_{x}}(\mf{E}_{x}),\mf{n}\right)_{s}$
is continuous.
Thus
\begin{equation}
\label{20273006}
\cc{cs}{\R^{+},\mc{L}_{S_{x}}(\mf{E}_{x})}
\subset
\ms{M}_{x}^{\mf{n}}.
\end{equation}
Now since $Re(\Gamma)\subseteq\R^{-}$ and since $d\upphi/ds$ is continuous and then bounded on $K$,
we deduce by hypothesis \eqref{13020103hp3}
that 
\begin{equation}
\label{05272109}
\begin{aligned}
&\text{
$\lr{\mf{V},\mf{V}(\ms{M}^{\mf{n}},\Gamma(\rho))}{X,\R^{+}}$
is an invariant $\left(\Theta,\mc{E},\mu\right)-$structure and}
\\
&
\text{
$\lr{\mf{V}}{\mf{Z},\ms{M}^{\mf{n}}}$
is
$\mu-$related,}
\,\forall\mu\in\{\nu^{\upphi},\eta_{s}^{\upphi}\,\vert\,s\in K\}.
\end{aligned}
\end{equation}
In particular \eqref{18470109} holds, so we can apply 
Thm. 
\ref{17301812b}
to obtain 
$\mc{U}\in\Gamma^{x_{\infty}}(\rho)$
and in virtue of hypothesis \eqref{13020103hp6}
that
$\mc{U}\in\Gamma_{\diamond}^{x_{\infty}}(\rho)$.
Thus by Prp. \ref{23332802} we have
\begin{equation}
\label{20180207}
\mc{U}
\in
\Gamma_{\diamond}^{x_{\infty}}
(\pi_{\ms{M}^{\mf{n}}}).
\end{equation}
Now for the position $\mf{Q}=\mf{V}(\ms{M}^{\mf{n}},\Gamma(\rho))$
the hypotheses \eqref{21152602hp1} and \eqref{21152602hp2} 
of Cor. \ref{21152602} are satisfied
since \eqref{05272109}.
Moreover $F,G\in\Gamma(\pi_{\ms{M}^{\mf{n}}})$
indeed
$\Gamma(\rho)\subseteq\Gamma(\pi_{\ms{M}^{\mf{n}}})$
modulo the canonical isomorphism,
so hypothesis \eqref{21152602hp3} of Cor. \ref{21152602} is satisfied.
Hence statement \eqref{13020103st1} follows by \eqref{20180207} and \eqref{12042802},
while \eqref{19000107} follows by \eqref{20180207} and \eqref{12102802}.
Next
$\lr{\mc{T}}{x_{\infty},\Phi}
\in
\ms{Gr}(\mf{V},\mf{V})$
since 
Thm. \ref{17301812b},
while 
$\mc{P}(x)T_{x}\subseteq T_{x}\mc{P}(x)$ for all $x\in X$
since the resolvent map of any operator commutes with its operator
see for example \cite[$\S$ 6.1. Ch. 3]{kato},
thus
\eqref{12102802bis}
follows by 
\eqref{19000107}. 
\end{proof}
\begin{remark}
By \eqref{02022912} follows that \eqref{19000107}
is equivalent to say that for all $v\in\mc{E}$
\begin{equation*}
\lim_{z\to x_{\infty}}\left\|\left(\mc{P}(z)-N(z)\right)v(z)\right\|=0.
\end{equation*}
\end{remark}
\section{
Kurtz Bundle Construction
}
\label{17572301}
In this section we construct a special
bundle $\mf{E}$
of Banach space
such that
for it the
Main 
Thm. \ref{17301812b}
reduces to 
the \cite[Th. $2.1.$]{kurtz}
showing in this way that
(a particular case) of the
construction of Kurtz
falls into the general setting
of bundle of $\Omega-$spaces.
\begin{notation}
In this section we shall use the notation
of \cite{kurtz} 
with the additional
specification of denoting with
$\|\cdot\|_{n}$
the norm in the Banach space $L_{n}$.
Moreover we denote by
$X$ 
the Alexandrov (one-point) compactification
of the locally compact space $\N$ with the
discrete topology.
Here we recall some definitions given in
\cite{kurtz}.
$\lr{L}{\|\cdot\|}$ is a Banach space and
$\{\lr{L_{n}}{\|\cdot\|_{n}}\}_{n\in\N}$ 
is a sequence of Banach 
spaces, moreover 
$\{P_{n}\in B(L,L_{n})\}_{n\in\N}$
is a sequence of bounded linear operators
such that $\forall f\in L$
\begin{equation}
\label{10572601}
\lim_{n\to\infty}
\|P_{n}f\|_{n}=\|f\|.
\end{equation}
Given an element 
$f\in L$ and
a sequence
$\{f_{n}\}_{n\in\N}$ such that
$f_{n}\in L_{n}$ for all $n\in\N$
we set
\begin{equation}
\label{11012601}
\lim_{n\to\infty}f_{n}
\cu
f\overset{def}{\Leftrightarrow}
\lim_{n\to\infty}
\|f_{n}-P_{n}f\|_{n}=0.
\end{equation}
In addition to the requirements
of \cite{kurtz}
we assume also that
\begin{equation}
\label{11212601}
(\forall n\in\N)
(\ov{P_{n}(L)}=L_{n})
\end{equation}
We shall set here
$L_{\infty}\coloneqq L$,
$\|\cdot\|
\coloneqq
\|\cdot\|_{\infty}$,
where
$\|\cdot\|$
is the norm on $L$.
Finally 
for all $Z$
we recall that
$B_{s}(Z)$
is the locally convex space
of all linear bounded operators
on $Z$ with the strong operator
topology. 
\end{notation}
\begin{lemma}
\label{unique}
Let
$f,g\in L$ 
and
$\{f_{n}\}_{n\in\N}$ 
such that
$f_{n}\in L_{n}$ for all $n\in\N$.
Then
$
(\lim_{n\to\infty}f_{n}
\cu
f)
\wedge
(\lim_{n\to\infty}f_{n}
\cu
g)
\Rightarrow
f=g
$
\end{lemma}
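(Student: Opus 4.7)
The plan is to exploit the triangle inequality in each $L_n$ together with the norm-convergence hypothesis \eqref{10572601} to force $\|f-g\|=0$.

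First I would unpack the definition \eqref{11012601} of the symbol $\cu$: the two hypotheses say
\begin{equation*}
\lim_{n\to\infty}\|f_n-P_n f\|_n=0\quad\text{and}\quad\lim_{n\to\infty}\|f_n-P_n g\|_n=0.
\end{equation*}
Next, by the triangle inequality applied in the Banach space $L_n$ and the linearity of $P_n$, I would bound
\begin{equation*}
\|P_n(f-g)\|_n=\|P_n f-P_n g\|_n\le\|f_n-P_n f\|_n+\|f_n-P_n g\|_n,
\end{equation*}
so the right-hand side tends to $0$ and therefore $\lim_{n\to\infty}\|P_n(f-g)\|_n=0$.

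Finally, I would invoke the hypothesis \eqref{10572601} with the vector $h\doteqdot f-g\in L$, which gives $\lim_{n\to\infty}\|P_n(f-g)\|_n=\|f-g\|$. Combined with the previous limit this yields $\|f-g\|=0$, whence $f=g$ since $\|\cdot\|$ is a norm on $L$. There is no genuine obstacle here; the argument is a direct triangle-inequality plus the defining property \eqref{10572601} of the approximating family $\{P_n\}$, and it does not use the additional density assumption \eqref{11212601}.
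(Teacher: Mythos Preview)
Your proof is correct and follows essentially the same approach as the paper: use the triangle inequality in $L_n$ to get $\lim_{n\to\infty}\|P_n(f-g)\|_n=0$, then invoke \eqref{10572601} to conclude $\|f-g\|=0$. The paper's proof is just a terser version of what you wrote.
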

\begin{proof}
Let
$(\lim_{n\to\infty}f_{n}
\cu
f)$
and
$(\lim_{n\to\infty}f_{n}
\cu
g)$
thus
$$
\lim_{n\in\N}
\|
P_{n}(f-g)
\|
\leq
\lim_{n\in\N}
\|P_{n}f-f_{n}\|
+
\lim_{n\in\N}
\|P_{n}g-f_{n}\|
=0,
$$
so the statement follows by
\eqref{10572601}.
\end{proof}
\begin{definition}
\label{10452601}
Set
$$
\begin{cases}
\ms{L}\coloneqq
\{
\lr{L_{x}}{\|\cdot\|_{x}}
\}_{x\in X},
\\
\mc{E}(L)\coloneqq
\{\sigma^{f}\,\vert\, f\in L\},
\end{cases}
$$
where
$\sigma^{f}\in\prod_{x\in X}L_{x}$
such that
$\sigma^{f}(n)\coloneqq P_{n}f$
for all $n\in\N$
and
$\sigma^{f}(\infty)\coloneqq f$.
\end{definition}
\begin{definition}
\label{11322601}
By \eqref{10572601}
the sequence
$\{\|P_{n}f\|_{n}\}_{n\in\N}$
is bounded for all $f\in L$ so
$\sigma^{f}\in\prod_{x\in X}^{b}L_{x}$.
Moreover by \eqref{10572601}
$\mc{E}(L)$ satisfies $FM(4)$,
finally by the request
\eqref{11212601}
it satisfies also
$FM(3)$.
Therefore 
we can 
define the
\emph{Kurtz bundle}
the following
bundle
$$
\mf{V}(\ms{L},\mc{E}(L))
$$
generated
by
the couple
$\lr{\ms{L}}{\mc{E}(L)}$,
see
in Def. \ref{17471910Ba}.
\end{definition}
\begin{remark}
\label{16022601}
By Rmk. \ref{17150312} we have that
\begin{equation}
\label{16022601b}
\mc{E}(L)
\subseteq
\Gamma(\pi_{\ms{L}})
\text{ modulo the canonical isomorphism.}
\end{equation}
Finally
by applying the principle 
of uniform boundedness,
\cite[Th. $1.29$, $No 3$, Ch.$3$]{kato},
we deduce that
the sequence 
$\{\|P_{n}\|_{B(L,L_{n})}\}_{n\in\N}$
is bounded.
\end{remark}
\begin{definition}
\label{18151602}
Fix
$
\mc{U}_{0}
\in\prod_{n\in\N}
\cc{}{\R^{+},B_{s}(L_{n})}
$
such that
$\mc{U}_{0}(x)$
is a
$(C_{0})-$semigroup
of isometries
on
$L_{n}$
for all $n\in\N$.
Denote
by
$T_{n}$
the infinitesimal generator
of the semigroup
$\mc{U}_{0}(n)$
for any $n\in\N$.
Let us take 
the positions
\eqref{15482601},
where
$\lr{\lr{\mf{E}(\ms{E}^{\oplus})}
{\tau(\ms{E}^{\oplus},\mc{E}^{\oplus})}}
{\pi_{\ms{E}^{\oplus}},X,\mf{n}^{\oplus}}$
is
the 
bundle direct sum of the
family
$\{\mf{V}(\ms{L},\mc{E}(L)),
\mf{V}(\ms{L},\mc{E}(L))\}$.
In addition
we maintain the Notation
\ref{15411512b}
where $\mf{V}$ has to be 
considered 
the Kurtz bundle
and 
$x_{\infty}\coloneqq\infty$,
thus
$
\mc{T}
\in
\prod_{x\in X}
Graph(L_{x}
\times L_{x})
$
so that
$
\mc{T}\up X-\{\infty\}
\coloneqq
\mc{T}_{0}
$
and
$$
\mc{T}(\infty)
\coloneqq
\{\phi(\infty)\,\vert\,\phi\in\Phi\},
$$
and
$
D(T_{\infty})
\coloneqq
\Pr_{1}^{\infty}(\mc{T}(\infty))
=
\{\phi_{1}(\infty)
\,\vert\,\phi\in\Phi\}.
$
Finally
$\mc{S}
\coloneqq
\{S_{x}\}_{x\in X}$
where
$(\forall B\in\Theta)(\forall x\in X)$
\begin{equation}
\label{11232712bis}
\begin{cases}
\ms{D}(B,\mc{E})
\coloneqq
\mc{E}
\cap
\left(\prod_{x\in X}B_{x}\right)
\\
\mc{B}_{B}^{x}
\coloneqq
\{v(x)\,\vert\, v\in\ms{D}(B,\mc{E})\}
\}
\\
S_{x}
\coloneqq
\{\mc{B}_{B}^{x}\,\vert\, B\in\Theta\}.
\end{cases}
\end{equation}
\end{definition}
\begin{proposition}
\label{11422601}
Let 
$\ov{f}\in\prod_{x\in X}L_{x}$
Thus
$$
\lim_{n\to\infty}\ov{f}(n)\cu\ov{f}(\infty)
\Leftrightarrow
\ov{f}\in\Gamma^{\infty}(\pi_{\ms{L}}).
$$
\end{proposition}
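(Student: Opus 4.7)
The plan is to reduce both directions of the equivalence to Corollary \ref{28111707} by exhibiting, for a given $\ov{f}$, a global section of the Kurtz bundle that passes through $\ov{f}(\infty)$ at the point $\infty\in X$. The natural choice is $\sigma^{\ov{f}(\infty)}\in\mc{E}(L)\simeq\Gamma(\pi_{\mb{L}})$ from Definition \ref{10452601} and Remark \ref{16022601}; since every $v\in L=L_{\infty}$ arises this way, the bundle is full, so I can use the full-bundle form of Corollary \ref{28111707} with $U=X$.

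First I would record the topological observation that in $X=\N\cup\{\infty\}$ (the one-point compactification of discrete $\N$) the neighbourhoods of $\infty$ are exactly the sets $\{\infty\}\cup(\N\setminus F)$ with $F$ a finite subset of $\N$. Consequently, for any real-valued function $g$ on $X$, $\lim_{y\to\infty,\,y\in X}g(y)=0$ iff $\lim_{n\to\infty,\,n\in\N}g(n)=0$. With the section $\sigma=\sigma^{\ov{f}(\infty)}$, which satisfies $\sigma(n)=P_{n}\ov{f}(\infty)$ for $n\in\N$ and $\sigma(\infty)=\ov{f}(\infty)$, one has the translation
\[
\lim_{y\to\infty}\|\ov{f}(y)-\sigma^{\ov{f}(\infty)}(y)\|_{y}=0
\quad\Longleftrightarrow\quad
\lim_{n\to\infty}\|\ov{f}(n)-P_{n}\ov{f}(\infty)\|_{n}=0,
\]
i.e.\ $\lim_{n}\ov{f}(n)\cu\ov{f}(\infty)$.

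For the direction $(\Rightarrow)$, assume $\ov{f}\in\Gamma^{\infty}(\pi_{\mb{L}})$. The full-bundle part of Corollary \ref{28111707} (implication $(1)\Rightarrow(5)$, with $U=X$) applied to the section $\sigma^{\ov{f}(\infty)}$ yields the above limit, hence $\lim_{n}\ov{f}(n)\cu\ov{f}(\infty)$. For the direction $(\Leftarrow)$, assume $\lim_{n}\ov{f}(n)\cu\ov{f}(\infty)$. The translation above gives the seminorm limit condition needed for hypothesis $(3)$ of Corollary \ref{28111707}; the section $\sigma^{\ov{f}(\infty)}\in\Gamma(\pi_{\mb{L}})$ furnishes the required section through $\ov{f}(\infty)$ at $\infty$. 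Before invoking the corollary I must verify the boundedness requirement implicit in $\Gamma^{\infty}(\pi_{\mb{L}})$: from the assumed limit and \eqref{10572601} one gets $\sup_{n}\|P_{n}\ov{f}(\infty)\|_{n}<\infty$ (since this sequence converges to $\|\ov{f}(\infty)\|$), whence $\sup_{n}\|\ov{f}(n)\|_{n}<\infty$, and together with $\|\ov{f}(\infty)\|<\infty$ this gives $\ov{f}\in\prod_{x\in X}^{b}L_{x}$. Corollary \ref{28111707} (implication $(3)\Rightarrow(1)$, the full-bundle version) then yields $\ov{f}\in\Gamma^{\infty}(\pi_{\mb{L}})$.

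There is no substantive obstacle; the only point requiring care is the interplay between the bundle notion of continuity at $\infty$ (which in the full-bundle setting is controlled by one global section through $\ov{f}(\infty)$) and the concrete Kurtz notion $\cu$, both of which coincide once one observes that the canonical section $\sigma^{\ov{f}(\infty)}$ evaluated at $n\in\N$ is exactly $P_{n}\ov{f}(\infty)$, and that sequential convergence at $\infty$ in the Alexandrov topology on $\N\cup\{\infty\}$ is just ordinary convergence of $n\to\infty$ in $\N$.
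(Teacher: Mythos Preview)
Your proof is correct and follows essentially the same approach as the paper: both directions are reduced to Corollary~\ref{28111707} via the global section $\sigma^{\ov{f}(\infty)}\in\mc{E}(L)\simeq\Gamma(\pi_{\mb{L}})$. The only minor tactical differences are that the paper invokes the general fullness theorem for Banach bundles over locally compact spaces (Fell--Doran) rather than exhibiting the section directly, and it derives boundedness of $\ov{f}$ from upper semicontinuity of the norm on the compact $X$ rather than from your more elementary triangle-inequality argument.
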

\begin{proof}
By \eqref{16022601b} 
and
implication $(3)\Rightarrow(1)$
of Cor. \ref{28111707}
we have that
$\lim_{n\to\infty}\ov{f}(n)\cu\ov{f}(\infty)$
implies that
$$
\ov{f}
\text{
is continuous
at $\infty$,}
$$
indeed 
$\sigma^{\ov{f}(\infty)}\in
\Gamma(\pi_{\ms{L}})$
modulo isomorphism.
By the upper semicontinuity
of $\|\cdot\|:\mf{E}\to\R^{+}$,
due to the
construction of the
bundle
$\mf{V}(\ms{L},\mc{E}(L))$
and to
\cite[$1.6.(ii)$]{kurtz},
and by the fact that
the composition of 
any u.s.c. map with any continuous one
at a point is an u.s.c. map 
in the same point,
we deduce that
$\|\cdot\|\circ\ov{f}$
is u.s.c. at $\infty$.
Thus
$\sup_{x\in X}\|\ov{f}(x)\|_{x}<\infty$,
indeed
we applied
to the 
u.s.c. map
$\|\cdot\|\circ\ov{f}$
the fact that 
$X$ is compact (so quasi compact),
$-\|\cdot\|\circ\ov{f}$
is l.s.c,
the
\cite[Th. $3$, $\S 6.2.$, Ch. $4$]{BourGT}
and
\cite[form.$(2)$, $\S 5.4.$, Ch. $4$]{BourGT}.
Therefore
$$
\ov{f}\in\prod_{x\in X}^{b}L_{x}.
$$
Then
$\ov{f}\in\Gamma^{\infty}(\pi_{\ms{L}})$.
The remaining implication
follows 
by Cor. \ref{28111707}
and
by the fact that
$\mf{V}(\ms{L},\mc{E}(L))$
is full
since $X$ is compact so completely regular 
and since the Dupre' theorem see for example
\cite[Cor. 2.10]{gie}.
\end{proof}
\begin{proposition}
\label{15512601}
We have 
$$
\Gamma^{\infty}(\pi_{\mathbf{L}^{\oplus}})
=
\left\{
\sigma_{1}\oplus\sigma_{2}
\,\vert\,
\sigma_{i}\in\prod_{x\in X}
L_{x},
\lim_{n\to\infty}\sigma_{i}(n)
\cu
\sigma(\infty),
i=1,2
\right\}.
$$
Here,
we used the
Convention
\ref{16392601}
and
set
$(\sigma_{1}\oplus\sigma_{2})(x)
\coloneqq
\sigma_{1}(x)\oplus\sigma_{2}(x)$.
\end{proposition}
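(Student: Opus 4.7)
The plan is to reduce Proposition \ref{15512601} to the conjunction of two earlier results: Corollary \ref{17571212}, which characterises continuity of a selection of a finite bundle direct sum by continuity of its component projections, and Proposition \ref{11422601}, which identifies continuity at $\infty$ for the Kurtz bundle $\mf{V}(\mb{L},\mc{E}(L))$ with the Kurtz-style convergence $\cu$.

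First I would fix $\sigma_{1},\sigma_{2}\in\prod_{x\in X}L_{x}$ and, using Convention \ref{16392601}, identify $\sigma\doteq\sigma_{1}\oplus\sigma_{2}$ with an element of $\prod_{x\in X}\mb{E}_{x}^{\oplus}$, where here $\mf{V}_{1}=\mf{V}_{2}=\mf{V}(\mb{L},\mc{E}(L))$. By Remark \ref{22550110} (explicitly, Theorem \ref{16322110} applied with $n=2$), the stalk topology on $\mb{E}_{x}^{\oplus}=L_{x}\oplus L_{x}$ is the product topology and a fundamental seminorm is $\hat{\mu}_{\rho}^{x}(\sigma(x))=\|\sigma_{1}(x)\|_{x}+\|\sigma_{2}(x)\|_{x}$; hence $\sigma\in\prod_{x\in X}^{b}\mb{E}_{x}^{\oplus}$ iff $\sigma_{i}\in\prod_{x\in X}^{b}L_{x}$ for $i=1,2$. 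In particular, if $\lim_{n}\sigma_{i}(n)\cu\sigma_{i}(\infty)$ for $i=1,2$, then Proposition \ref{11422601} gives $\sigma_{i}\in\Gamma^{\infty}(\pi_{\mb{L}})$, so $\sigma$ is automatically bounded.

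Second, I would apply Corollary \ref{17571212} at the point $x=\infty$: since the component projections $z\mapsto\Pr_{i}^{z}\sigma(z)$ are literally $\sigma_{i}$, the corollary yields
\begin{equation*}
\sigma\in\Gamma^{\infty}(\pi_{\mb{E}^{\oplus}})
\ \Longleftrightarrow\
\sigma_{i}\in\Gamma^{\infty}(\pi_{\mb{L}})\ \text{for }i=1,2.
\end{equation*}
Then, by Proposition \ref{11422601} applied to each component,
\begin{equation*}
\sigma_{i}\in\Gamma^{\infty}(\pi_{\mb{L}})
\ \Longleftrightarrow\
\lim_{n\to\infty}\sigma_{i}(n)\cu\sigma_{i}(\infty),
\end{equation*}
which chained with the previous equivalence yields both inclusions of the stated equality.

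There is no real obstacle: the one point requiring care is matching the bounded\-ness condition implicit in $\Gamma^{\infty}$ with the product-of-norms on the stalks of $\mf{V}(\mb{L},\mc{E}(L))\oplus\mf{V}(\mb{L},\mc{E}(L))$, and this is settled by Theorem \ref{16322110}. Corollary \ref{17571212} already assumes $f\in\mf{E}(\mb{E}^{\oplus})^{X}$ (not just bounded), and its conclusion is purely about continuity at $\infty$; boundedness on each side is delivered by Proposition \ref{11422601} on the right and by the additive form of $\hat{\mu}_{\rho}^{x}$ on the left, so the two characterisations fit together cleanly.
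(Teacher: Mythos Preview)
Your proposal is correct and follows essentially the same route as the paper's own proof: the paper simply invokes Convention \ref{16392601} and Corollary \ref{17571212} to reduce continuity of $\sigma_{1}\oplus\sigma_{2}$ at $\infty$ to continuity of each $\sigma_{i}$ at $\infty$, and then cites Proposition \ref{11422601}. Your write-up is more detailed (in particular you make the boundedness bookkeeping explicit via Theorem \ref{16322110}), but the argument is the same.
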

\begin{proof}
By
Convention
\ref{16392601}
and
Cor.
\ref{17571212}
$\sigma_{1}\oplus\sigma_{2}$
is continuous at $\infty$
if and only if
$\sigma_{i}$
is continuous at $\infty$
for all $i=1,2$.
Thus the statement
by
Prp.
\ref{11422601}.
\end{proof}
\begin{proposition}
\label{18542601}
Let
$
\mc{U}_{0}
\in\prod_{n\in\N}
\cc{}{\R^{+},B_{s}(L_{n})}
$
be
such that
$\mc{U}_{0}(x)$
is a
$(C_{0})-$semigroup
of contractions
on
$L_{n}$
for all $n\in\N$.
Moreover
let us denote
by
$T_{n}$
the infinitesimal generator
of the semigroup
$\mc{U}_{0}(n)$
for any $n\in\N$.
Thus with the positions
\eqref{15482601}
where
$\mf{V}$
is
the
Kurtz
bundle
we have
\begin{equation}
\label{18452801}
\begin{cases}
\Phi
=
\left\{
\sigma_{1}\oplus\sigma_{2}
\,\vert\,
(\forall i\in\{1,2\})
(\sigma_{i}\in\prod_{x\in X}
L_{x})(1-2)
\right\}
\\
(1)
\lim_{n\to\infty}\sigma_{i}(n)
\cu
\sigma_{i}(\infty)
\\
(2)
(\forall n\in\N)
(\sigma_{1}(n),\sigma_{2}(n))
\in
Graph(T_{n}),
\end{cases}
\end{equation}
and
\begin{equation}
\label{17472801}
\begin{cases}
\mc{E}
=
\left\{
\sigma^{\sigma_{1}(\infty)}
\,\vert\,
\sigma_{1}\in\prod_{x\in X}
L_{x}
(1-2-3)
\right\}
\\
(1)
\lim_{n\to\infty}\sigma_{1}(n)
\cu
\sigma_{1}(\infty)
\\
(2)
(\forall n\in\N)
(\sigma_{1}(n)
\in
Dom(T_{n}))
\\
(3)
(\exists\,f\in L_{\infty})
(\lim_{n\to\infty}T_{n}
\sigma_{1}(n)
\cu
f).
\end{cases}
\end{equation}
Moreover
$\exists\,!\,f$ 
satisfying $(3)$ in
\eqref{17472801}
and
$(\forall\sigma_{1}\in\mc{E})
((\sigma_{1},\sigma_{2})\in\Phi)$,
where
$\sigma_{2}\in\prod_{x\in X}L_{x}$
such that
$(\forall n\in\N)
(\sigma_{2}(n)\coloneqq T_{n}\sigma_{1}(n))$
and
$\sigma_{2}(\infty)
\coloneqq f$.
\end{proposition}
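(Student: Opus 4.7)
The plan is to decode the definitions \eqref{15482601} stepwise using the two main structural tools already established: Corollary \ref{17571212} (continuity in a direct sum bundle equals componentwise continuity) and Proposition \ref{11422601} (continuity at $\infty$ in the Kurtz bundle equals $\cu$-convergence). The uniqueness assertion then falls out of Lemma \ref{unique}, and the final construction of $(\sigma_1,\sigma_2)\in\Phi$ is a direct reassembly of the pieces.

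First, I would establish the characterization \eqref{18452801} of $\Phi$. Given $\phi\in\Gamma^{\infty}(\pi_{\mb{E}^{\oplus}})$, by Convention \ref{16392601} I identify $\phi$ with a pair $\sigma_{1}\oplus\sigma_{2}\in\prod_{x\in X}(L_{x}\oplus L_{x})$, where $\sigma_{i}(x)=\Pr_{i}^{x}(\phi(x))$. By Corollary \ref{17571212}, the continuity of $\phi$ at $\infty$ in the direct sum bundle is equivalent to the continuity at $\infty$ of each $\sigma_{i}$ as a map into the Kurtz bundle space; by Proposition \ref{11422601} this is in turn equivalent to $\lim_{n\to\infty}\sigma_{i}(n)\cu\sigma_{i}(\infty)$, which is condition $(1)$ of \eqref{18452801}. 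The graph condition $\phi(n)\in\mc{T}_{0}(n)=Graph(T_{n})$ for all $n\in\N$ is literally condition $(2)$. Conversely, any $\sigma_{1}\oplus\sigma_{2}$ meeting $(1)$ and $(2)$ assembles into a $\phi\in\Phi$ by the same two equivalences.

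Next, I would derive the description \eqref{17472801} of $\mc{E}$. By \eqref{16022601b} every $v\in\Gamma(\pi_{\mb{L}})$ is of the form $\sigma^{f}$ for a unique $f\in L$, namely $f=v(\infty)$. Hence $v\in\mc{E}$ iff $v=\sigma^{f}$ for some $f=\phi_{1}(\infty)$ with $\phi\in\Phi$. Using the characterization of $\Phi$ just obtained, the existence of such a $\phi$ is equivalent to the existence of a sequence $\sigma_{1}\in\prod_{x\in X}L_{x}$ satisfying: $\lim_{n\to\infty}\sigma_{1}(n)\cu\sigma_{1}(\infty)$ (from $(1)$ applied to $\sigma_{1}$), $\sigma_{1}(n)\in Dom(T_{n})$ for all $n$ (the graph condition forces $\sigma_{1}(n)$ to lie in the domain), and the existence of some $f\in L_{\infty}$ with $\lim_{n\to\infty}T_{n}\sigma_{1}(n)\cu f$ (from $(1)$ applied to the second component $\sigma_{2}$, whose value at $n$ is forced to be $T_{n}\sigma_{1}(n)$). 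These are precisely conditions $(1)$--$(3)$ in \eqref{17472801}.

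Finally, the uniqueness of $f$ in $(3)$ is a direct application of Lemma \ref{unique} to the sequence $\{T_{n}\sigma_{1}(n)\}_{n\in\N}$: if two elements $f,g\in L$ both satisfied $\cu$-convergence to $\{T_{n}\sigma_{1}(n)\}$ they would coincide. With $f$ unique, defining $\sigma_{2}(n)\doteqdot T_{n}\sigma_{1}(n)$ and $\sigma_{2}(\infty)\doteqdot f$ produces a pair $(\sigma_{1},\sigma_{2})$ satisfying conditions $(1)$ and $(2)$ of \eqref{18452801} by construction, and the first part of the proof then yields $(\sigma_{1},\sigma_{2})\in\Phi$. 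No serious obstacle is anticipated: the only place requiring care is the bookkeeping in Step 1 to confirm that the graph condition at finite $n$ together with componentwise continuity at $\infty$ exactly reproduces both clauses $(1)$ and $(2)$ of \eqref{18452801} without any hidden constraint at $\infty$ (the asymptotic graph $\mc{T}(\infty)$ being defined precisely by \eqref{19350512bis} so as to impose no such constraint).
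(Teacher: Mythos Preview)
Your proof is correct and follows essentially the same route as the paper: the paper cites Proposition \ref{15512601} (whose proof is precisely your Corollary \ref{17571212} plus Proposition \ref{11422601}) for the characterization of $\Phi$, and then Lemma \ref{unique} for the uniqueness of $f$. You have simply unpacked the citation to Proposition \ref{15512601} into its constituent pieces, so the arguments coincide.
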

\begin{proof}
The first sentence 
follows
by Prp.
\ref{15512601},
while
the second 
comes
by the first one
and Lemma \ref{unique}.
\end{proof}
\begin{assumptions}
We assume 
$\exists\,
\{I_{n}\in B(L_{n},L)\}_{n\in\N}$
such that
\begin{equation}
\label{15052601}
\begin{cases}
\sup_{n\in\N}\|I_{n}\|_{B(L_{n},L)}
<\infty,
\\
(\forall f\in L)
(\forall n\in\N)
(I_{n}\circ P_{n}=Id).
\end{cases}
\end{equation}
Moreover
we assume that
\begin{equation}
\label{11442701}
\varlimsup_{n\to\infty}
\|P_{n}\|\leq 1.
\end{equation}
In addition we assume that
$
(\forall g\in L)
(\exists\,
\sigma_{1}\in\prod_{x\in X}
L_{x})
$
such that
\begin{equation}
\label{17202701}
\begin{cases}
(1)
\lim_{n\to\infty}
\sigma_{1}(n)
\cu
\sigma_{1}(\infty)
\\
(2)
(\forall n\in\N)
(\sigma_{1}(n)
\in
Dom(T_{n}))
\\
(3)
(\exists\,f\in L_{\infty})
(\lim_{n\to\infty}T_{n}
\sigma_{1}(n)
\cu
f)
\\
(4)
g=\sigma_{1}(\infty).
\end{cases}
\end{equation}
Set
\begin{equation}
\label{16582801}
\mf{U}
\coloneqq
\left\{
F\in
\cc{}{\R^{+},B_{s}(L)}
\,\vert\,
(\forall s\in\R^{+})
(\forall v\in L)
(\|F(s)v\|=\|v\|)
\right\}.
\end{equation}
\end{assumptions}
In the following definition
we shall give the data for constructing
a bundle
$\mf{W}$
such that
$\lr{\mf{V}(\ms{L},\mc{E}(L)),
\mf{W}}{X,\R^{+}}$
would
be
a
$\left(\Theta,\mc{E}\right)-$structure.
\begin{definition}
\label{13352701}
Set 
$P_{\infty}\coloneqq 
I_{\infty}
\coloneqq
Id:L\to L$,
moreover 
$\forall U\in\mf{U}$
set
$F_{U}\in\prod_{x\in X}
\cc{c}{\R^{+},\mc{L}_{S_{x}}(L_{x})}$
such that
$\forall x\in X$
$$
\begin{cases}
F_{U}(x)
\coloneqq
P_{x}\circ 
U(\cdot)
\circ I_{x},
\\
P_{x}\circ 
U(\cdot)
\circ I_{x}:
\R^{+}\ni s
\mapsto
P_{x}\circ 
U(s)
\circ I_{x}
\in B(L_{x}).
\end{cases}
$$
Now we can define
$\forall x\in X$
$$
\ms{M}_{x}
\coloneqq
\textrm{span}
\left
\{
F_{U}(x)
\,\vert\,
U\in\mf{U}
\right
\}.
$$
$\ms{M}_{x}$
has to be considered as
Hlcs with the topology
induced by that on
$\cc{c}{\R^{+},\mc{L}_{S_{x}}(L_{x})}$.
\footnote{
$\cc{c}{\R^{+},\mc{L}_{S_{x}}(L_{x})}$
is Hausdorff
for all $x\in X$
by 
the fact that
$\bigcup_{B\in\Theta}\mc{B}_{B}^{x}=L_{x}$,
see later Prop.
\ref{16332701}.}
Moreover set
$$
\mc{M}
\coloneqq
\textrm{span}
\left
\{
F_{U}
\,\vert\,
U\in\mf{U}
\right
\}.
$$
\end{definition}
\begin{theorem}
\label{14282701}
$\ms{M}_{x}$ 
as Hlcs 
is well-defined
for any $x\in X$,
moreover
$\mc{M}\subset
\prod_{x\in X}^{b}
\ms{M}_{x}$
and
$\ms{M}_{x}=
\{F(x)\,\vert\, F\in\mc{M}\}$.
Finally
$\mc{M}$ satisfies
$FM(3)-FM(4)$
with respect to 
$\ms{M}$.
\end{theorem}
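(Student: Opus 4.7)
The plan is to verify the four assertions in sequence, with the bulk of the work concentrated in the upper semicontinuity condition $FM(4)$. First I would identify the seminorms at stake: with $\Theta = \{B_v \mid v \in \mc{E}\}$ from \eqref{11121419b}, the sets in $S_x$ according to \eqref{11232712bis} are the singletons $\{v(x)\}$, so the $f.s.s.$ on $\mc{L}_{S_x}(L_x)$ consists of the seminorms $p_v^x(A) = \|A v(x)\|_x$, and hence the seminorms on $\cc{c}{\R^+,\mc{L}_{S_x}(L_x)}$ restricted to $\mb{M}_x$ have the form
\begin{equation*}
q_{(K,v)}^x(f) = \sup_{t \in K} \|f(t)\,v(x)\|_x, \qquad K\in Comp(\R^+),\ v\in\mc{E}.
\end{equation*}

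Next I would check that $\mb{M}_x$ is Hausdorff by showing these seminorms separate points, which reduces to totality of $\{v(x)\mid v\in\mc{E}\}$ in $L_x$: at $x=\infty$ this set equals $L$ by \eqref{17202701}, while at $x=n$ the retraction identity $I_n P_n = Id$ combined with \eqref{11212601} and the density hypotheses on the generators $T_n$ delivers the needed totality. To check that $\mc{M}\subset\prod_{x\in X}^b\mb{M}_x$, compute for a generator $F_U$ and seminorm $q_{(K,v)}^x$:
\begin{equation*}
q_{(K,v)}^x(F_U(x)) = \sup_{t\in K} \|P_x U(t) I_x v(x)\|_x \leq \|P_x\|\,\|I_x\|\,\|v(x)\|_x,
\end{equation*}
where $\|U(t)\|=1$ comes from \eqref{16582801}. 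Taking the supremum in $x$ is then finite by Remark \ref{16022601} (uniform bound on $\{\|P_n\|\}$), \eqref{15052601} (uniform bound on $\{\|I_n\|\}$), and $v\in\Gamma(\pi)$. Linearity extends the bound to all of $\mc{M}$. The identity $\mb{M}_x = \{F(x)\mid F\in\mc{M}\}$ is immediate from the definitions of $\mc{M}$ and $\mb{M}_x$ as spans; this delivers $FM(3)$ without work.

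The main step is $FM(4)$. Since $\N\subset X$ is open and discrete in the Alexandrov topology, upper semicontinuity at every $n\in\N$ is automatic, and only the point $\infty$ is at issue. Given $F=\sum_i \lambda_i F_{U_i}\in\mc{M}$, set $G(t) = \sum_i\lambda_i U_i(t)$; by strong continuity of each $U_i$ the map $t\mapsto G(t)f$ is continuous for every $f\in L$. For $v\in\mc{E}$ with $v(\infty)=f$ one has $v(n)=P_n f$, and the key algebraic identity $I_n P_n = Id$ from \eqref{15052601} yields
\begin{equation*}
q_{(K,v)}^n(F(n)) = \sup_{t\in K}\|P_n G(t) I_n P_n f\|_n = \sup_{t\in K}\|P_n G(t) f\|_n \leq \|P_n\|\,\sup_{t\in K}\|G(t)f\|.
\end{equation*}
Passing to $\varlimsup_{n\to\infty}$ and invoking assumption \eqref{11442701} that $\varlimsup_n\|P_n\|\leq 1$ gives
\begin{equation*}
\varlimsup_{n\to\infty} q_{(K,v)}^n(F(n)) \leq \sup_{t\in K}\|G(t)f\| = q_{(K,v)}^\infty(F(\infty)),
\end{equation*}
which is precisely upper semicontinuity at $\infty$.

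The main obstacle is this $FM(4)$ computation, and it is exactly here that the two structural hypotheses of the Kurtz-style construction play their essential role: the retraction $I_n P_n=Id$ converts the $x$-dependent integrand into a single fixed trajectory in $L$, and \eqref{11442701} supplies the asymptotic contractivity of $P_n$ needed to close the inequality. A secondary but less substantial difficulty is verifying that $\{v(x)\mid v\in\mc{E}\}$ is total in each $L_x$, which is needed for the Hausdorff property but follows routinely from the assumptions collected in \eqref{11212601}, \eqref{17202701}, and the density hypotheses on the generators.
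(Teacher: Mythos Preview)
Your proof is correct and follows the same overall scheme as the paper's. Two points of comparison are worth noting.

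First, for the clause ``$\mb{M}_{x}$ as Hlcs is well-defined,'' the paper reads this as the requirement that each $F_{U}(x)$ actually lies in $\cc{c}{\R^{+},\mc{L}_{S_{x}}(L_{x})}$, and verifies the strong continuity of $s\mapsto P_{x}U(s)I_{x}$ directly. You instead address the Hausdorff property via totality of $\{v(x)\mid v\in\mc{E}\}$; the paper defers that to the later Proposition~\ref{16332701}. Your totality argument at $x=n$ does not need $I_{n}P_{n}=Id$ or any hypothesis on the generators $T_{n}$: since every $v\in\mc{E}\subseteq\Gamma(\pi_{\mb{L}})\simeq\mc{E}(L)$ has the form $\sigma^{g}$, one gets $\{v(n)\mid v\in\mc{E}\}=P_{n}(L)$ directly from \eqref{17202701}, and density follows from \eqref{11212601} alone. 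You should also record the continuity of $s\mapsto F_{U}(x)(s)$, though you implicitly invoke it later.

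Second, your $FM(4)$ argument is cleaner and more complete than the paper's. The paper proves upper semicontinuity only for a single generator $F_{U}$, relying on the isometry property of $U$ in two places; strictly speaking this does not immediately extend to linear combinations, since $x\mapsto q_{(K,v)}^{x}(F(x))$ is not linear in $F$. Your observation that every $F=\sum_{i}\lambda_{i}F_{U_{i}}\in\mc{M}$ still factors as $F(x)=P_{x}\,G(\cdot)\,I_{x}$ with $G=\sum_{i}\lambda_{i}U_{i}$, together with the direct estimate $\|P_{n}G(t)I_{n}P_{n}f\|_{n}=\|P_{n}G(t)f\|_{n}\leq\|P_{n}\|\,\|G(t)f\|$ and \eqref{11442701}, handles all of $\mc{M}$ at once without appealing to the isometry property. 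This is the right way to close the argument.
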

\begin{proof}
By Rmk.
\ref{21500412b}
we have that
$\cc{c}{\R^{+},B_{s}(L_{x})}
\subset
\cc{c}{\R^{+},\mc{L}_{S_{x}}(L_{x})}$
hence 
for the first sentence of the statement
it is sufficient 
to show
that
$
P_{x}
\circ 
U(\cdot)
\circ I_{x}
\in
\cc{c}{\R^{+},B_{s}(L_{x})}$
for any $U\in\mf{U}$.
For $x=\infty$ is trivial so 
let $n\in\N$ and $f_{n}\in L_{n}$
thus for all $s\in\R^{+}$
and all net $\{s_{\alpha}\}_{\alpha\in D}$
in $\R^{+}$
converging at $s$
we have 
$$
\lim_{\alpha\in D}
\|
P_{n}
\circ 
U(s_{\alpha})
\circ 
I_{n}
(f_{n})
-
P_{n}
\circ 
U(s)
\circ 
I_{n}
(f_{n})
\|_{n}
=
\lim_{\alpha\in D}
\|
P_{n}
(U(s_{\alpha})-U(s))
I_{n}f_{n}
\|_{n}
=0,
$$
where
we used the fact that
$U$ is strongly continuous 
and $P_{n}$ is norm 
continuous by construction.
Thus the first sentence of the statement
follows.
Let $v\in\mc{E}$ and
$U\in\mf{U}$
thus $\forall K\in Comp(\R^{+})$
\begin{alignat*}{1}
\sup_{n\in\N}
\sup_{s\in K}
\|
P_{n}
U(s)
I_{n}
v(n)
\|_{n}
&\leq
M
\sup_{n\in\N}
\sup_{s\in K}
\|
U(s)
I_{n}
v(n)
\|_{\infty}
\\
&=
M
\sup_{n\in\N}
\|
I_{n}
v(n)
\|_{\infty}
\\
&\leq
M
\sup_{n\in\N}
\|
I_{n}
\|
\sup_{n\in\N}
\|
v(n)
\|_{\infty}<\infty.
\end{alignat*}
Here
$M\coloneqq\sup_{n\in\N}\|P_{n}\|$,
in the second
one the hypothesis that 
$U(s)$ is an 
isometry
for all $s\in\R^{+}$, in the final
inequality 
we considered
\eqref{15052601},
$\mc{E}\subset\prod_{x\in X}^{b}L_{x}$
and that
$M<\infty$ by
Rmk. \ref{16022601}.
Therefore
by Rmk.
\ref{21500412b}
$\mc{M}\subset
\prod_{x\in X}^{b}
\ms{M}_{x}$.
The equality
$\ms{M}_{x}=
\{F(x)\,\vert\, F\in\mc{M}\}$
comes by construction,
in particular
$\mc{M}$ 
satisfies
the $FM(3)$ with respect to the $\ms{M}$.
$\forall v\in\mc{E}$
\begin{alignat*}{2}
\varlimsup_{n\to\infty}
\sup_{s\in K}
\|
P_{n}U(s)I_{n}
v(n)
\|_{n}
&\leq
\varlimsup_{n\to\infty}
\left(\|
P_{n}
\|
\sup_{s\in K}
\|
U(s)I_{n}
v(n)
\|_{n}
\right),
&
\text{\cite[Prop. $11$, $\S 5.6.$
Ch. $4$]{BourGT}}
\\
&\leq
\varlimsup_{n\to\infty}
\|P_{n}\|
\varlimsup_{n\to\infty}
\sup_{s\in K}
\|U(s)I_{n}v(n)\|_{n},
&
\text{\cite[Prop. $13$, $\S 5.7.$
Ch. $4$]{BourGT}}
\\
&\leq
\varlimsup_{n\to\infty}
\|I_{n}v(n)\|_{\infty},
&
\text{
\eqref{11442701},
\eqref{16582801}}
\\
&=
\varlimsup_{n\to\infty}
\|
I_{n}P_{n}f
\|_{\infty},
&
\text{
$v\in\mc{E}\subset\Gamma(\pi)
\simeq\mc{E}(L)$
}
\\
&=
\|f\|_{\infty},
&
\text{
\eqref{15052601}
}
\\
&=
\|v(\infty)\|_{\infty}.
\end{alignat*}
Thus by considering that $U$ is a map
of isometries
we have
$$
\varlimsup_{n\to\infty}
\sup_{s\in K}
\|
P_{n}U(s)I_{n}
v(n)
\|_{n}
\leq
\sup_{s\in K}
\|
P_{\infty}
U(s)I_{\infty}
v(\infty)
\|_{\infty}.
$$
Hence
by 
\cite[Prop. $3$, $\S 7.1.$
Ch. $4$]{BourGT}
and
\cite[$(13)$, $\S 5.6.$
Ch. $4$]{BourGT}
we deduce that
$$
X\ni x\mapsto
\sup_{s\in K}
\|
P_{x}U(s)I_{x}
v(x)
\|_{x}
\text{ is $u.s.c.$ at $\infty$},
$$
therefore it is $u.s.c.$ on $X$
because of it is continuous in each
point in $\N$ due to the fact that
the topology induced on
$\N$ by that on $X$ is the discrete topology.
So 
$\mc{M}$ 
satisfies
the $FM(4)$ with respect to the $\ms{M}$.
\end{proof}
\begin{definition}
\label{16232701}
Thm. \ref{14282701}
allows us to
construct a bundle of $\Omega-$space
namely
the bundle
$\mf{V}(\ms{M},\mc{M})$
generated by the couple
$\lr{\ms{M}}{\mc{M}}$,
see Def \ref{17471910Ba}.
\end{definition}
\begin{remark}
\label{16282701}
By Rmk. \ref{17150312} we have
\begin{equation}
\label{14222801}
\mc{M}
\subseteq
\Gamma(\pi_{\ms{M}})
\text{ modulo the canonical isomorphism.}
\end{equation}
Hence
by 
$\ms{M}_{x}=
\{F(x)\,\vert\, F\in\mc{M}\}$
we have that
$\mf{V}(\ms{M},\mc{M})$
is full.
\end{remark}
\begin{proposition}
\label{16332701}
We have that
$\ov{\bigcup_{B\in\Theta}\mc{B}_{B}^{x}}
=L_{x}$
for all $x\in X$
moreover
$\lr{\mf{V}(\ms{L},\mc{E}(L)),
\mf{V}(\ms{M},\mc{M})}{X,\R^{+}}$
is
a
$\left(\Theta,\mc{E}\right)-$structure.
\end{proposition}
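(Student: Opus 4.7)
The plan is to verify the two assertions in sequence, since the totality statement is itself one of the five clauses required for the $(\Theta,\mc{E})$-structure.

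First I would establish the density claim $\ov{\bigcup_{B\in\Theta}\mc{B}_{B}^{x}}=L_{x}$ for every $x\in X$. By Remark \ref{21500412b} applied to the present choice of $\Theta$ from \eqref{15482601}, one has $\mc{B}_{B_{w}}^{x}=\{w(x)\}$ for each $w\in\mc{E}$, so the set in question is simply $\{v(x)\mid v\in\mc{E}\}$. By the characterization of $\mc{E}$ in \eqref{17472801} (Prop. \ref{18542601}), an element $v\in\mc{E}$ has the form $v=\sigma^{\sigma_{1}(\infty)}$ where $\sigma_{1}$ satisfies conditions $(1)$–$(3)$ of \eqref{17472801}, and the set of admissible $\sigma_{1}(\infty)$ equals all of $L$ by the standing assumption \eqref{17202701}. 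Consequently $\{v(\infty)\mid v\in\mc{E}\}=L=L_{\infty}$, giving density (indeed equality) at $x=\infty$. For $x=n\in\N$, since $v(n)=P_{n}\sigma_{1}(\infty)$ by definition of $\sigma^{f}$ in Def. \ref{10452601}, the set $\{v(n)\mid v\in\mc{E}\}$ coincides with $P_{n}(L)$, whose closure is $L_{n}$ by the standing hypothesis \eqref{11212601}.

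Next I would verify the five clauses of Def. \ref{10282712} for $\lr{\mf{V}(\mb{L},\mc{E}(L)),\mf{V}(\mb{M},\mc{M})}{X,\R^{+}}$. Clauses (1) and (5) are immediate: $\mf{V}(\mb{L},\mc{E}(L))$ is a bundle of $\Omega$-spaces by construction (Def. \ref{11322601}) and $\mf{V}(\mb{M},\mc{M})$ is a bundle of $\Omega$-spaces by Theorem \ref{14282701} together with Def. \ref{16232701}. Clause (2), $\mc{E}\subseteq\Gamma(\pi_{\mb{L}})$, is built into the definition \eqref{15482601} of $\mc{E}$. For clause (3), each $B_{w}\in\Theta$ sends $x\mapsto\{w(x)\}$, a singleton and therefore bounded in $L_{x}$. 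Clause (4a), $\mb{D}(B_{w},\mc{E})\ne\emptyset$, holds because by Remark \ref{21500412b} one has $\mb{D}(B_{w},\mc{E})=\{w\}$; clause (4b) is precisely the density statement established in the first step.

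For the map pre-bundle requirement embedded in clause (5), I would check the hypotheses of Def. \ref{17471910A} relative to $\lr{X,\R^{+}}{\mb{L},\mc{S}}$ where $\mc{S}=\{S_{x}\}_{x\in X}$ with $S_{x}=\{\{w(x)\}\mid w\in\mc{E}\}$ as dictated by \eqref{11232712bis}. That $\lr{X}{\mb{L},\mc{S}}$ is a map system follows from the density just shown. The inclusion $\mb{M}_{x}\subseteq\cc{c}{\R^{+},\mc{L}_{S_{x}}(L_{x})}$ and the form of the fundamental system $\mf{R}_{x}$ match those demanded by Def. \ref{17471910A} by virtue of Def. \ref{13352701} and Theorem \ref{14282701}, which also supplies the niceness of $\{\lr{\mb{M}_{x}}{\mf{R}_{x}}\}_{x\in X}$.

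I do not anticipate a genuine obstacle: the proof reduces to a careful bookkeeping exercise once one identifies that the standing assumption \eqref{17202701} is exactly what is needed to extract, from any target $g\in L$, an element of $\mc{E}$ whose value at $\infty$ is $g$, and that \eqref{11212601} then propagates this to each finite stalk through the formula $v(n)=P_{n}v(\infty)$. The only subtlety worth flagging is that the elements $v(n)$ for $v\in\mc{E}$ are governed by the projections $P_{n}$ of $v(\infty)$ rather than by the sequences $\sigma_{1}(n)$ appearing in the definition \eqref{17472801} of $\mc{E}$ — a distinction which makes the density at finite $n$ hinge on \eqref{11212601} alone, independently of the more delicate conditions on $T_{n}\sigma_{1}(n)$.
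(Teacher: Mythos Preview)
Your proposal is correct and follows essentially the same approach as the paper's proof, which tersely cites assumptions \eqref{17202701}, \eqref{11212601}, Proposition \ref{18542601}, and Remark \ref{21500412b} for the density statement and then appeals to the construction of $\mc{M}$ and $\mb{M}$ for the remaining clauses. Your version simply unpacks these citations explicitly, and your closing remark about $v(n)=P_{n}v(\infty)$ versus $\sigma_{1}(n)$ is a correct and useful clarification that the paper leaves implicit.
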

\begin{proof}
By assumptions
\eqref{17202701},
\eqref{11212601},
Prp.
\ref{18542601}
and Rmk.
\ref{21500412b}
we obtain
that
$\ov{\bigcup_{B\in\Theta}\mc{B}_{B}^{x}}
=
L_{x}$
for all $x\in X$.
The remaining
requests for the second sentence
of the statement
come by the construction
of $\mc{M}$ and $\ms{M}$.
\end{proof}
\begin{corollary}
\label{13560202}
If
$D(T_{x_{\infty}})$
is dense in $\mf{E}_{x_{\infty}}$,
and
$\exists\lambda_{0}>0,
\lambda_{1}<0$
such that
the ranges
$\mc{R}(\lambda_{0}-T_{x_{\infty}})$
and
$\mc{R}(\lambda_{1}-T_{x_{\infty}})$
are dense in $\mf{E}_{x_{\infty}}$),
then
$\lr{\mc{T}}{\infty,\Phi}
\in
\ms{Gr}(\mf{V}(\ms{L},\mc{E}(L)),\mf{V}(\ms{L},\mc{E}(L)))$
and
the following
$$
T_{\infty}:D(T_{\infty})
\ni\phi_{1}(\infty)
\mapsto
\phi_{2}(\infty)
$$
is a well-defined operator
which 
is the generator
of
a $C_{0}-$semigroup
of 
isometries
on $\mf{E}_{\infty}$.
\end{corollary}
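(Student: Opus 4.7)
The proof reduces to verifying that the data assembled in Definitions \ref{10452601}, \ref{11322601}, \ref{18151602} satisfy the hypotheses of the first part of Theorem \ref{17301812b}, and then invoking that theorem. More precisely, my plan is as follows.

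First, I would identify the Kurtz bundle $\mathfrak{V}(\mathbf{L},\mathcal{E}(L))$ built in Definition \ref{11322601} with the Banach bundle $\mathfrak{V}$ required by Theorem \ref{17301812b}. Each stalk $L_{x}$ is a Banach space by hypothesis, and $\mathcal{E}(L)$ satisfies $FM(3)$-$FM(4)$ with respect to $\mathbf{L}$ exactly as observed in Definition \ref{11322601} (using \eqref{11212601} for $FM(3)$ and \eqref{10572601} for $FM(4)$). Hence $\mathfrak{V}(\mathbf{L},\mathcal{E}(L))$ is a bundle of $\Omega$-spaces whose norm $\|\cdot\|$ is upper semicontinuous on the bundle space, i.e.\ a Banach bundle in the sense used by Theorem \ref{17301812b}. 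Next, I match $x_{\infty}=\infty$ and observe that the family $\mathcal{U}_{0}\in\prod_{n\in\mathbb{N}}\mathcal{C}(\mathbb{R}^{+},B_{s}(L_{n}))$ of Definition \ref{18151602} consists by assumption of $(C_{0})$-semigroups of isometries, so the isometric branch of the hypotheses of Theorem \ref{17301812b} is in force. Finally, the definitions of $\mathcal{T}_{0},\Phi,\mathcal{E},\Theta$ in Definition \ref{18151602} coincide with the positions \eqref{15482601} used in Theorem \ref{17301812b}, and the operator $T_{\infty}:D(T_{\infty})\ni\phi_{1}(\infty)\mapsto\phi_{2}(\infty)$ appearing in the corollary is literally \eqref{172212312b}.

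With this dictionary in place, the two density assumptions in the corollary (density of $D(T_{x_{\infty}})$ and of the ranges $\mathcal{R}(\lambda_{0}-T_{x_{\infty}})$, $\mathcal{R}(\lambda_{1}-T_{x_{\infty}})$ for some $\lambda_{0}>0$, $\lambda_{1}<0$) are exactly the hypotheses of the isometric branch of the first half of Theorem \ref{17301812b}. Applying that theorem yields both conclusions at once: on the one hand $\langle\mathcal{T},\infty,\Phi\rangle\in Graph(\mathfrak{V}(\mathbf{L},\mathcal{E}(L)),\mathfrak{V}(\mathbf{L},\mathcal{E}(L)))$ (which in Theorem \ref{17301812b} is itself proved via Lemma \ref{13001512b} together with the contraction estimates of Lemmas \ref{161172901}--\ref{17552901}), and on the other hand $T_{\infty}$ is the generator of a $C_{0}$-semigroup of isometries on $\mathfrak{E}_{\infty}=L$ (which in Theorem \ref{17301812b} is obtained from \cite[Corollary $3.1.19.$]{bra} after showing $\rho_{0}(T_{\infty})=\mathbb{R}-\{0\}$ by openness-closedness in $\mathbb{R}-\{0\}$).

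There is essentially no obstacle beyond bookkeeping: every ingredient is already assembled in the preceding definitions and in Theorem \ref{17301812b}. The only point requiring a brief verbal check is that the construction in Definition \ref{18151602} really matches the pattern \eqref{15482601} in every coordinate (in particular that the direct-sum bundle used to define $\Phi$ is $\mathfrak{V}(\mathbf{L},\mathcal{E}(L))\oplus\mathfrak{V}(\mathbf{L},\mathcal{E}(L))$, as enforced in Definition \ref{18151602}), so that the $\Phi$ and $\mathcal{T}$ appearing in the corollary are the same $\Phi,\mathcal{T}$ to which Theorem \ref{17301812b} applies. Once this identification is made explicit, the corollary is immediate.
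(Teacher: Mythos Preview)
Your proposal is correct and follows essentially the same route as the paper: identify the Kurtz bundle with the Banach bundle $\mathfrak{V}$ of Theorem \ref{17301812b}, match the isometric-semigroup data of Definition \ref{18151602} with the standing hypotheses, observe that the density assumptions of the corollary are precisely those of the isometric branch of the first clause of Theorem \ref{17301812b}, and read off both conclusions from that first clause. The paper's proof additionally cites Propositions \ref{16332701} and \ref{19272701}, but those concern the $(\Theta,\mathcal{E})$-structure and the integrability condition \eqref{18470109}, which only enter in the \emph{second} half of Theorem \ref{17301812b}; for the first half (which is all this corollary uses) they are not needed, so your omission of them is harmless.
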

\begin{proof}
By
Prp.s
\ref{16332701}
and
\ref{19272701}
we have that
the first part of hypotheses
of Thm. 
\ref{17301812b}
is satisfied
so the 
statement 
by the first sentence of the statement
of
Thm. 
\ref{17301812b}.
\end{proof}
\begin{definition}
\label{16002801}
Let us denote by
$\mc{U}_{\infty}$
the 
$C_{0}-$semigroup
of isometries
on $L_{\infty}$.
Moreover
set
$\mc{U}\in
\prod_{x\in X}\ms{U}_{is}(B_{s}(L_{x}))$
such that
$\mc{U}\up\N=\mc{U}_{0}$
and
$\mc{U}(\infty)
=
\mc{U}_{\infty}$.
\end{definition}
\begin{theorem}
\label{17332801}
$(\exists\,F\in\Gamma(\pi_{\ms{M}}))
(F(\infty)=\mc{U}(\infty))$
such that
$(\forall v\in\mc{E})
(\exists\,\phi\in\Phi)$
s.t.
$\phi_{1}(x_{\infty})
=
v(x_{\infty})$
and
$(\forall\{z_{n}\}_{n\in\N}\subset X
\,\vert\,
\lim_{n\in\N}z_{n}=x_{\infty})$
we have
that
$\{
\mc{U}(z_{n})(\cdot)\phi_{1}(z_{n})
-
F(z_{n})(\cdot)v(z_{n})
\}_{n\in\N}$
is a 
bounded equicontinuous
sequence.
Moreover we can choose
$F$
such that
$F=
F_{\mc{U}_{\infty}}$.
\end{theorem}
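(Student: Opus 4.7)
The plan is to verify that the explicit choice $F\doteqdot F_{\mc{U}_\infty}$ provided in the statement works. Since Corollary \ref{13560202} (together with the assumptions of the preceding Theorem) yields that $\mc{U}_\infty\in\mf{U}$ is a $C_0$-semigroup of isometries on $L$, we have $F_{\mc{U}_\infty}\in\mc{M}$ by Definition \ref{13352701}. Because $X$ is compact, Remark \ref{16282701} gives $\mc{M}\simeq\Gamma(\pi_{\mb{M}})$, so $F\in\Gamma(\pi_{\mb{M}})$. Moreover, $F(\infty)=P_\infty\,\mc{U}_\infty(\cdot)\,I_\infty=\mc{U}_\infty=\mc{U}(\infty)$ by the convention $P_\infty=I_\infty=\mathrm{Id}$, confirming the boundary condition.

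Given $v\in\mc{E}$, the definition in \eqref{15482601} furnishes $\phi\in\Phi$ with $\phi_1(\infty)=v(\infty)$. The crucial simplification comes from identifying $\mc{E}\subseteq\Gamma(\pi)\simeq\mc{E}(L)$: we have $v=\sigma^{v(\infty)}$, so $v(n)=P_n v(\infty)=P_n\phi_1(\infty)$ for $n\in\N$. By \eqref{15052601} this forces
\[
I_{z_n} v(z_n)=I_{z_n}P_{z_n}\phi_1(\infty)=\phi_1(\infty)\quad\text{for every }z_n\in X,
\]
so $F(z_n)(s)v(z_n)=P_{z_n}\mc{U}_\infty(s)\phi_1(\infty)$. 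Thus, setting $g_n(s)\doteqdot\mc{U}(z_n)(s)\phi_1(z_n)-F(z_n)(s)v(z_n)$, we obtain the clean representation $g_n(s)=\mc{U}(z_n)(s)\phi_1(z_n)-P_{z_n}\mc{U}_\infty(s)\phi_1(\infty)$.

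For boundedness of $\{g_n\}$: since each $\mc{U}(z_n)(s)$ is an isometry and $\sup_n\|P_{z_n}\|<\infty$ by Remark \ref{16022601}, we get $\|g_n(s)\|\leq\|\phi_1(z_n)\|+\|P_{z_n}\|\,\|\phi_1(\infty)\|$, which is uniformly bounded in $(n,s)$ because $\phi_1\in\Gamma^\infty(\pi_{\mb{L}})\subset\prod^b_xL_x$. For equicontinuity at any $s_0\in\R^+$, the key step is the uniform Lipschitz estimate: because $\phi_1(z_n)\in\mathrm{Dom}(T_{z_n})$ (from the graph inclusion defining $\Phi$) and $\phi_1(\infty)\in\mathrm{Dom}(T_\infty)$ (from Corollary \ref{13560202}), the standard semigroup calculus gives
\[
\bigl\|(\mc{U}(z_n)(s)-\mc{U}(z_n)(s_0))\phi_1(z_n)\bigr\|\leq|s-s_0|\,\|T_{z_n}\phi_1(z_n)\|=|s-s_0|\,\|\phi_2(z_n)\|,
\]
and analogously $\|(\mc{U}_\infty(s)-\mc{U}_\infty(s_0))\phi_1(\infty)\|\leq|s-s_0|\,\|\phi_2(\infty)\|$. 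Combining,
\[
\|g_n(s)-g_n(s_0)\|\leq|s-s_0|\bigl(\|\phi_2(z_n)\|+\|P_{z_n}\|\,\|\phi_2(\infty)\|\bigr).
\]
Since $\phi_2\in\Gamma^\infty(\pi_{\mb{L}})$ is likewise bounded and $\sup_n\|P_{z_n}\|<\infty$, the right-hand side is $\leq C|s-s_0|$ with $C$ independent of $n$, yielding equicontinuity on $\R^+$.

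The only delicate point is the uniform control of the modulus of continuity of the heterogeneous semigroups $\mc{U}(z_n)$ on the moving vectors $\phi_1(z_n)$; this is precisely what is resolved by the graph section structure of $\Phi$, since it forces $\phi_1(z_n)$ to lie in $\mathrm{Dom}(T_{z_n})$ with $T_{z_n}\phi_1(z_n)=\phi_2(z_n)$ and $\phi_2\in\Gamma^\infty(\pi_{\mb{L}})$ bounded. All other estimates are immediate from the isometry property of the semigroups and the uniform boundedness of $\{P_n\}$ and $\{I_n\}$ provided by \eqref{15052601} and Remark \ref{16022601}.
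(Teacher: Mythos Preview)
Your proposal is correct and follows essentially the same route as the paper: choose $F=F_{\mc{U}_\infty}$, use $I_n\circ P_n=\mathrm{Id}$ from \eqref{15052601} to reduce $F(z_n)(s)v(z_n)$ to $P_{z_n}\mc{U}_\infty(s)\phi_1(\infty)$, and then exploit that $\phi_1,\phi_2\in\Gamma^\infty(\pi_{\mb{L}})$ are bounded selections. The only difference is that where the paper defers the final boundedness/equicontinuity step to ``the same argumentation used in proof of \cite[Th.~$1.2$]{kurtz}'', you write out the explicit Lipschitz estimate $\|g_n(s)-g_n(s_0)\|\leq|s-s_0|\bigl(\|\phi_2(z_n)\|+\|P_{z_n}\|\,\|\phi_2(\infty)\|\bigr)$ directly.
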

\begin{proof}
By 
Prop.
\ref{18542601}
and 
\eqref{14222801}
the statement
is equivalent
to show
that
$
\forall
\sigma_{1}\in\prod_{x\in X}
L_{x}$
satisfying
$(1-2-3)$
of 
\eqref{17472801}
and 
$(\forall\{z_{n}\}_{n\in\N}\subset X
\,\vert\,
\lim_{n\in\N}z_{n}=\infty)$
we have
that
\begin{equation}
\label{17592801}
\{
\mc{U}(z_{n})(\cdot)\sigma_{1}(z_{n})
-
F_{\mc{U}_{\infty}}
(z_{n})(\cdot)
\sigma^{\sigma_{1}(\infty)}(z_{n})
\}_{n\in\N}
\end{equation}
is a 
bounded equicontinuous
sequence.
Moreover by 
the second assumption
\eqref{15052601}
and
\eqref{17592801}
\begin{equation}
\label{18022801}
\{
\mc{U}(z_{n})(\cdot)
\sigma_{1}(z_{n})
-
P_{z_{n}}
\mc{U}_{\infty}
(z_{n})(\cdot)
\sigma_{1}(\infty)
\}_{n\in\N}
\end{equation}
is a 
bounded equicontinuous
sequence.
Set
$\sigma_{2}\in\prod_{x\in X}L_{x}$
such that
$\sigma_{2}(x)\coloneqq T_{x}\sigma_{1}(x)$,
for all $x\in X$,
thus
$$
\sigma_{i}\in\Gamma^{\infty}(\pi_{\ms{L}}),
$$
for all $i=1,2$, indeed
for
$i=1$ follows by 
$(1)$ of
\eqref{17472801}
and 
Prop.
\ref{11422601},
while for
$i=2$ 
follows by 
construction of
of $T_{\infty}$,
the second sentence of Prop.
\ref{18542601},
the fact that
by construction
$\Phi\subseteq\Gamma(\pi_{\ms{E}^{\oplus}})$,
see \eqref{15482601},
and finally by Cor.
\ref{17571212}.
Therefore 
in particular
$\sigma_{i}$ is continuous 
at $\infty$.
Thus
by considering
that
$\sigma^{\sigma_{i}(\infty)}
\in\Gamma(\pi_{\ms{L}})$
modulo isomorphism
by
\eqref{16022601b}
and that $\mf{V}(\ms{L},\mc{E}(L))$ is full
we deduce 
by Prop. \ref{28111555}
$$
\lim_{n\in\N}
\|
\sigma_{i}(z_{n})
-
\sigma^{\sigma_{i}(\infty)}
(\pi\circ\sigma_{i}(z_{n}))
\|_{\pi\circ\sigma_{i}(z_{n})}
=0.
$$
Then by considering that
$\pi\circ\sigma_{i}=Id$
because of $\sigma_{i}$ is a section,
we have
\begin{equation}
\label{19172801}
\lim_{n\in\N}
\|
\sigma_{i}(z_{n})
-
P_{z_{n}}\sigma_{i}(\infty)
\|_{z_{n}}
=0.
\end{equation}
The statement now
follows
by \eqref{19172801},
\eqref{18022801}
and
by using the same argumentation
used in proof of \cite[Th. $1.2$]{kurtz}
for proving a similar result.
\end{proof}
\begin{proposition}
\label{19272701}
With the notation
of Def. \ref{15062301}
we have that
\begin{equation*}
\ms{M}_{x}
\subset
\bigcap_{\lambda>0}
\mf{L}_{1}
(\R^{+},\mc{L}_{S_{x}}(L_{x});
\mu_{\lambda}),
\end{equation*}
and
\eqref{18470109}
holds.
\end{proposition}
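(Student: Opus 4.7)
The plan is to reduce everything to a fiberwise Bochner-type integrability check for the generators $F_U(x)$ of $\mb{M}_x$, exploiting the fact that $\mf{U}$ consists of contraction (indeed isometry) semigroup--like families.

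First I would reduce the statement to showing, for each fixed $x\in X$, each $U\in\mf{U}$ and each $\lambda>0$, that
\[
F_U(x)\in\mf{L}_1\!\left(\R^+,\mc{L}_{S_x}(L_x);\mu_\lambda\right).
\]
Since $\mb{M}_x=\textrm{span}\{F_U(x)\mid U\in\mf{U}\}$ by Definition \ref{13352701}, and $\mf{L}_1(\R^+,\mc{L}_{S_x}(L_x);\mu_\lambda)$ is a linear space closed under finite linear combinations, the desired inclusion
$\mb{M}_x\subset\bigcap_{\lambda>0}\mf{L}_1(\R^+,\mc{L}_{S_x}(L_x);\mu_\lambda)$
follows once the generating elements are handled. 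Under the stalk isomorphism of Remark \ref{17150312}\,(2a), this is exactly \eqref{18470109} for $\mf{M}=\mf{M}(\mb{M},\mc{M})$, hence the second sentence of the statement.

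Next I would establish the two regularity properties needed to invoke \cite[Corollary $2.6.$]{SilInt} (cited in Proposition \ref{18390901}): continuity into $\mc{L}_{S_x}(L_x)$ and a uniform $B(L_x)$-norm bound. For continuity: by the proof of Theorem \ref{14282701} we already have $F_U(x)\in\cc{c}{\R^+,B_s(L_x)}$; since the topology of $\mc{L}_{S_x}(L_x)$ is generated by the seminorms $p_v(T)=\|Tv(x)\|_x$ with $v\in\mc{E}$ (recall $S_x=\{\{v(x)\}\mid v\in\mc{E}\}$ by \eqref{11232712bis} and Remark \ref{21500412b}), this topology is coarser than that of $B_s(L_x)$, so continuity transfers. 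For the norm bound:
\[
\sup_{s\in\R^+}\|F_U(x)(s)\|_{B(L_x)}
=\sup_{s\in\R^+}\|P_x\circ U(s)\circ I_x\|_{B(L_x)}
\le\|P_x\|\,\|I_x\|,
\]
since each $U(s)$ is an isometry on $L$ by \eqref{16582801}. Hence
\[
\int_{\R^+}^{*}e^{-\lambda s}\|F_U(x)(s)\|_{B(L_x)}\,ds
\le\frac{\|P_x\|\,\|I_x\|}{\lambda}<\infty,
\]
which is precisely the hypothesis of Proposition \ref{18390901} (respectively the cited \cite[Cor.~$2.6.$]{SilInt}). Applying that result yields $F_U(x)\in\mf{L}_1(\R^+,\mc{L}_{S_x}(L_x);\mu_\lambda)$.

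The only subtle point — and the main obstacle — is that Proposition \ref{18390901} as stated requires reflexivity of the stalks $\mf{E}_x$, which is not available here ($L_x$ are arbitrary Banach spaces). To circumvent this I would argue directly, imitating the argument behind \cite[Cor.~$2.6.$]{SilInt}: for each $v\in\mc{E}$ the map $s\mapsto p_v(F_U(x)(s))=\|P_xU(s)I_xv(x)\|_x$ is continuous and dominated by $\|P_x\|\,\|I_xv(x)\|\in\R^+$, so $e^{-\lambda s}p_v(F_U(x)(s))$ is Lebesgue integrable. Define a candidate integral $T_\lambda\in B(L_x)$ by the strong integral
\[
T_\lambda w\doteqdot\int_0^\infty e^{-\lambda s}F_U(x)(s)w\,ds,\qquad w\in L_x,
\]
which converges absolutely in $L_x$ with $\|T_\lambda\|_{B(L_x)}\le\|P_x\|\,\|I_x\|/\lambda$. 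Then for every $v\in\mc{E}$,
\[
p_v(T_\lambda)=\|T_\lambda v(x)\|_x\le\int_0^\infty e^{-\lambda s}p_v(F_U(x)(s))\,ds,
\]
so the scalar-integrability of $F_U(x)$ against continuous linear functionals on $\mc{L}_{S_x}(L_x)$ follows from the evaluation functionals at $v(x)$, which generate the continuous dual by the density clause in (4b) of Definition \ref{10282712}, proved in Proposition \ref{16332701}. This identifies $T_\lambda$ as the integral of $F_U(x)$ in $\mc{L}_{S_x}(L_x)$ in the sense of \cite[Ch.~$6$]{IntBourb}, completing the proof.
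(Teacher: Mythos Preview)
Your proof follows the same route as the paper's, which consists of the single line ``By Proposition \ref{18390901}'', but you go further and do something the paper does not: you actually notice that Proposition \ref{18390901} carries an explicit reflexivity hypothesis on the fibers $\mf{E}_x$, which in the Kurtz setting (arbitrary Banach spaces $L_n$, $L$) is neither assumed nor verified. Your reduction to the generators $F_U(x)$, the continuity argument $B_s(L_x)\to\mc{L}_{S_x}(L_x)$, and the norm estimate $\sup_s\|F_U(x)(s)\|_{B(L_x)}\le\|P_x\|\,\|I_x\|$ are exactly the verifications that feed into Proposition \ref{18390901}; the paper simply takes the citation for granted, while you spell out the inputs and then supply a reflexivity-free endgame.

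One point in your final paragraph deserves tightening. The claim that the evaluation functionals ``generate the continuous dual by the density clause in (4b)'' misattributes the reason. Density of $\{v(x):v\in\mc{E}\}$ in $L_x$ is irrelevant for describing $(\mc{L}_{S_x}(L_x))^*$; what matters is that any $\phi$ continuous for the $S_x$-topology satisfies an estimate $|\phi(T)|\le C\max_{i\le n}\|Tv_i(x)\|_x$ for finitely many $v_i\in\mc{E}$, hence factors through $T\mapsto(Tv_1(x),\dots,Tv_n(x))\in L_x^n$ and, by Hahn--Banach, is of the form $\phi(T)=\sum_i\psi_i(Tv_i(x))$ with $\psi_i\in L_x^*$. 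For such $\phi$ one then checks directly, via the Bochner integral defining $T_\lambda v_i(x)$ and the continuity of each $\psi_i$, that $\phi(T_\lambda)=\int e^{-\lambda s}\phi(F_U(x)(s))\,ds$. With this correction your argument is complete and in fact more honest than the paper's own proof, which silently invokes a result whose reflexivity hypothesis is not available here.
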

\begin{proof}
By Prp.
\ref{18390901}.
\end{proof}
\begin{theorem}
\label{13512801}
$
\lr{\mf{V}(\ms{L},\mc{E}(L)),
\mf{V}(\ms{M},\mc{M})}{X,\R^{+}}$
has the
full Laplace duality property,
moreover 
$\forall U\in\ms{U}_{1,\|\cdot\|}(B_{s}(L))$,
$\forall\lambda>0$
and 
$\forall f\in L$
we have that
$$
\mf{L}(F_{U})(\cdot)(\lambda)
\sigma^{f}(\cdot)
=
\sigma^{(\lambda-T_{U})^{-1}f}.
$$
Here
$T_{U}$ is the infinitesimal generator
of the semigroup $U$.
\end{theorem}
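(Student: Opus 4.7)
The plan is to prove the identity displayed in the ``moreover'' part first; the full Laplace duality property then follows immediately from it together with linearity and the canonical identifications $\Gamma(\pi_{\mb{M}})\simeq\mc{M}$ (Rmk.~\ref{16282701}) and $\Gamma(\pi_{\mb{L}})\simeq\mc{E}(L)$ (Rmk.~\ref{16022601}). Indeed $\mc{M}=\mathrm{span}\{F_{U}\mid U\in\mf{U}\}$ by Def.~\ref{13352701} and $\mc{E}(L)=\{\sigma^{f}\mid f\in L\}$ is linear, so both $\mf{L}$ and $\bullet$ being linear, one only needs to verify the formula on generators; its right-hand side belongs manifestly to $\mc{E}(L)\simeq\Gamma(\pi)$, yielding $\mb{LD}$.

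First I would fix $U\in\mf{U}$, $\lambda>0$, $f\in L$ and carry out the computation stalkwise. By Prop.~\ref{19272701} each $F_{U}(x)$ lies in $\mf{L}_{1}(\R^{+},\mc{L}_{S_{x}}(L_{x}),\mu_{\lambda})$, and by Prop.~\ref{16332701} the set $\bigcup_{B\in\Theta}\mc{B}_{B}^{x}$ is total in $L_{x}$, so the hypotheses of Prop.~\ref{21120901} are met. Applied to $w_{x}=\sigma^{f}(x)=P_{x}f$, that proposition permits the exchange
\begin{equation*}
\mf{L}(F_{U})(x)(\lambda)\,\sigma^{f}(x)
=\int_{0}^{\infty}e^{-\lambda s}\,F_{U}(x)(s)\,P_{x}f\,ds
=\int_{0}^{\infty}e^{-\lambda s}\,P_{x}\,U(s)\,I_{x}P_{x}f\,ds.
\end{equation*}

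Second, I would collapse $I_{x}\circ P_{x}$ to the identity. For $x=n\in\N$ this is exactly \eqref{15052601}, and for $x=\infty$ it is built in via the conventions $P_{\infty}=I_{\infty}=Id$ of Def.~\ref{13352701}. Pulling $P_{x}$ outside the integral (legitimate because $P_{x}\in B(L,L_{x})$ is bounded, so it commutes with the norm-convergent Bochner integral against $e^{-\lambda s}U(s)f$) gives
\begin{equation*}
\mf{L}(F_{U})(x)(\lambda)\,\sigma^{f}(x)
=P_{x}\int_{0}^{\infty}e^{-\lambda s}U(s)f\,ds.
\end{equation*}
The Hille--Yosida integral representation (formula $(1.3)$ of \cite{kurtz}, already used in the proof of Thm.~\ref{17301812b}) applies because $U$ is a $C_{0}$-semigroup of isometries on the Banach space $L$ and $\lambda>0$ lies in the resolvent set of $T_{U}$; hence $\int_{0}^{\infty}e^{-\lambda s}U(s)f\,ds=(\lambda-T_{U})^{-1}f$. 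Substituting gives $\mf{L}(F_{U})(x)(\lambda)\sigma^{f}(x)=P_{x}(\lambda-T_{U})^{-1}f=\sigma^{(\lambda-T_{U})^{-1}f}(x)$ for every $x\in X$, which is precisely the ``moreover'' identity.

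Finally, to close out the full Laplace duality property, I would observe that any $F\in\Gamma(\pi_{\mb{M}})$ corresponds under \eqref{14222801} to a finite linear combination $F=\sum_{k}c_{k}F_{U_{k}}$ with $U_{k}\in\mf{U}$, and any $v\in\Gamma(\pi)$ corresponds to some $\sigma^{f}$; then by linearity of $\mf{L}$ and of $\bullet$ the preceding calculation yields $\mf{L}(F)(\cdot)(\lambda)\bullet v=\sum_{k}c_{k}\sigma^{(\lambda-T_{U_{k}})^{-1}f}\in\mc{E}(L)\simeq\Gamma(\pi)$. The principal technical obstacle is the scalar/vector Fubini-type exchange in the first step; everything else is algebraic and uses only the defining identity $I_{n}P_{n}=Id$ together with the classical Hille--Yosida formula.
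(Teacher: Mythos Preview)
Your argument is correct and follows essentially the same route as the paper: both compute stalkwise $\int_{0}^{\infty}e^{-\lambda s}P_{x}U(s)I_{x}\sigma^{f}(x)\,ds$, collapse $I_{x}P_{x}$ to the identity via \eqref{15052601}, pull $P_{x}$ through the integral by continuity, and then invoke the Hille--Yosida resolvent formula; the full Laplace duality then follows from \eqref{16022601b} and \eqref{14222801} by linearity. The only minor remark is that your appeal to Prop.~\ref{16332701} for the hypothesis of Prop.~\ref{21120901} is slightly imprecise: what you need is not that $\bigcup_{B\in\Theta}\mc{B}_{B}^{x}$ is total but that $\sigma^{f}(x)$ actually lies in it, which holds because assumption \eqref{17202701} forces $\sigma^{f}\in\mc{E}$.
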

\begin{proof}
Let $f\in L$ and $U\in\mf{U}$
thus for all $x\in X$ and $\lambda>0$
we have
\begin{alignat}{2}
\label{14242801}
\int_{0}^{\infty}
e^{-\lambda s}
P_{x}U(s)I_{x}\sigma^{f}(x)\,
ds
&=
\int_{0}^{\infty}
e^{-\lambda s}
P_{x}U(s)f\,ds
\notag
\\
&=
P_{x}
\int_{0}^{\infty}
e^{-\lambda s}
U(s)f\,ds,
\end{alignat}
where the first equality
follows
by the second assumption
\ref{15052601},
while the second one by the linearity
and continuity of $P_{x}$
and by 
\cite[Prop.$1$, $No 1$, $\S 1$, Ch. $6$]
{IntBourb}.
Thus the first sentence of the statement
by \eqref{16022601b}
and
\eqref{14222801}.
The second sentence of the statement
folllows by the \eqref{14242801}
and by Hille-Yosida Thm. , see
\cite[Th. $1.2.$]{kurtz}.
\end{proof}
\begin{corollary}
\label{12560202}
Let us assume the hypotheses of Cor.
\ref{13560202}. 
Then
$(\forall g\in L)
(\forall K\in Comp(\R^{+}))$
\begin{equation}
\label{16180202}
\lim_{z\to\infty}
\sup_{s\in K}
\left\|
\left(\mc{U}(z)(s)\circ P_{z}
-
P_{z}\circ\mc{U}_{\infty}(s)\right)
g
\right\|=0.
\end{equation}
Moreover
\begin{equation}
\label{02512912pbis}
\mc{U}
\in\Gamma^{\infty}(\rho).
\end{equation}
In particular
\begin{equation}
\label{02512912bis}
\{\lr{\mc{T}}{\infty,\Phi}\}
\in
\Delta_{\Theta}\lr{\mf{V}(\ms{L},\mc{E}),
\mf{V}(\ms{M},\mc{M})}
{\mc{E},X,\R^{+}}.
\end{equation}
\end{corollary}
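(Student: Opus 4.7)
The strategy is to verify every hypothesis of Theorem \ref{17301812b} (MAIN$\mb{1}$) in the Kurtz bundle setting and read off the three conclusions by unpacking the definitions of $\mc{E}$ and $F_{\mc{U}_{\infty}}$. Concretely, I would take $\mf{V}\doteqdot\mf{V}(\mb{L},\mc{E}(L))$, $\mf{W}\doteqdot\mf{V}(\mb{M},\mc{M})$, $x_{\infty}\doteqdot\infty$, and $F\doteqdot F_{\mc{U}_{\infty}}\in\mc{M}\simeq\Gamma(\pi_{\mb{M}})$ (the last isomorphism by \eqref{14222801}). Note that since $P_{\infty}=I_{\infty}=Id$ we immediately have $F(\infty)=\mc{U}_{\infty}=\mc{U}(\infty)$, as required.

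The verification of the hypotheses of Theorem \ref{17301812b} is a checklist assembled from earlier results in this section. The first sentence of Theorem \ref{17301812b} (existence of $\lr{\mc{T}}{\infty,\Phi}\in Graph(\mf{V},\mf{V})$ with $T_{\infty}$ generating a $C_{0}$--semigroup of isometries on $L_{\infty}$) is exactly Corollary \ref{13560202}. The metrizability hypothesis (iii) holds because $X$ is the Alexandroff compactification of countable discrete $\N$. Density of $\{v(x)\mid v\in\mc{E}\}$ in each $\mf{E}_x=L_x$, the fact that $\lr{\mf{V},\mf{W}}{X,\R^{+}}$ is a $\left(\Theta,\mc{E}\right)$-structure, and the inclusion $\mb{U}_{is}(\mc{L}_{S_{z}}(L_{z}))\subseteq\mf{M}_{z}$ come from Proposition \ref{16332701} (plus the construction in Definition \ref{13352701}, noting $F_{U}(\infty)=U$ for every $U\in\mf{U}$). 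Integrability condition \eqref{18470109} is supplied by Proposition \ref{19272701}. Hypothesis (i) (the full Laplace duality property applied to $F=F_{\mc{U}_{\infty}}$) is Theorem \ref{13512801}, which in particular gives $\mf{L}(F)(z)(\lambda)\sigma^{f}(z)=\sigma^{(\lambda-T_{\mc{U}_{\infty}})^{-1}f}(z)\in\Gamma(\pi_{\mb{L}})$. Hypothesis (ii) (bounded equicontinuity of the difference sequence) is precisely Theorem \ref{17332801}.

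Once Theorem \ref{17301812b} applies, its three conclusions \eqref{02502912}, \eqref{02512912p}, \eqref{02512912} translate directly into the three claims of this corollary. Indeed, by assumption \eqref{17202701} and Proposition \ref{18542601}, the set $\mc{E}$ coincides with $\{\sigma^{g}\mid g\in L\}$, so every $g\in L$ yields a test section $v=\sigma^{g}\in\mc{E}$ with $v(z)=P_{z}g$ (using $P_{\infty}=Id$). Evaluating $F(z)(s)v(z)=P_{z}\circ\mc{U}_{\infty}(s)\circ I_{z}\circ P_{z}\,g=P_{z}\mc{U}_{\infty}(s)g$, where the last equality uses the crucial relation $I_{n}\circ P_{n}=Id$ from \eqref{15052601}, rewrites \eqref{02502912} as
\begin{equation*}
\lim_{z\to\infty}\sup_{s\in K}\bigl\|\bigl(\mc{U}(z)(s)\circ P_{z}-P_{z}\circ\mc{U}_{\infty}(s)\bigr)g\bigr\|_{z}=0,
\end{equation*}
which is \eqref{16180202}. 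Then \eqref{02512912pbis} is just \eqref{02512912p} and \eqref{02512912bis} is \eqref{02512912} specialized to this setting.

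The only non-routine step is the identification $F_{\mc{U}_{\infty}}(z)(s)\sigma^{g}(z)=P_{z}\mc{U}_{\infty}(s)g$; the inequalities and limits otherwise follow verbatim from Theorem \ref{17301812b}. The conceptual content is therefore entirely absorbed into showing that the Kurtz bundle genuinely realizes a $(\Theta,\mc{E})$-structure satisfying the full Laplace duality plus bounded equicontinuity at $\infty$, which has been done piecewise in Propositions \ref{16332701}, \ref{19272701} and Theorems \ref{13512801}, \ref{17332801}.
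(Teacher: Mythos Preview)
Your proposal is correct and follows essentially the same route as the paper: both verify the hypotheses of Theorem \ref{17301812b} by invoking Proposition \ref{19272701} for \eqref{18470109}, Theorem \ref{13512801} for the Laplace duality property (i), Theorem \ref{17332801} for the bounded equicontinuity (ii), and the metrizability of the one-point compactification of $\N$ for (iii), and then unpack \eqref{02502912} via the identification $\mc{E}=\{\sigma^{g}\mid g\in L\}$ together with $I_{n}\circ P_{n}=Id$ to obtain \eqref{16180202}. The only cosmetic difference is that you spell out the intermediate identity $F_{\mc{U}_{\infty}}(z)(s)\sigma^{g}(z)=P_{z}\mc{U}_{\infty}(s)g$ a bit more explicitly than the paper does.
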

\begin{proof}
By
Prp.
\ref{19272701}
follows
\eqref{18470109},
hypothesis 
$(i)$
of
Thm. 
\ref{17301812b}
follows
by 
Thm. 
\ref{13512801},
$(ii)$
by 
Thm. 
\eqref{17332801},
finally
$(iii)$
follows
by
\cite[Coroll. of Prop.$16$, $\S2.9$, Ch. $9$]
{BourGT}
and by
the fact that
$\{\{n\}\,\vert\, n\in\N\}$
is a base 
for the topology
on $\N$.
Thus
by
Thm. 
\ref{17301812b}
we obtain
\eqref{02512912pbis},
\eqref{02512912bis}
and
$(\forall v\in\mc{E})
(\forall K\in Comp(\R^{+}))$
\begin{equation}
\label{16170202}
\lim_{z\to\infty}
\sup_{s\in K}
\left\|
\mc{U}(z)(s)v(z)
-
F(z)(s)v(z)
\right\|=0,
\end{equation}
where
$F$ is any map
of which
in 
Thm. \ref{17332801}.
Now by Thm. 
\ref{17332801}
we can take
in the previous equation
$F=F_{\mc{U}_{\infty}}$,
moreover
by 
\eqref{17472801}
and assumption
\eqref{17202701}
we have
$$
\mc{E}=\{\sigma^{g}\,\vert\, g\in L\},
$$
therefore by 
\eqref{15052601}
$\forall s\in\R^{+}$,
$\forall z\in X$
and
$\forall g\in L$
$$
F_{\mc{U}_{\infty}}(z)
\sigma^{g}(z)
=
(P_{z}\circ\mc{U}_{\infty}(s))
g.
$$
Hence
by
\eqref{16170202}
follows
\eqref{16180202}.
\end{proof}

\section*{Acknowledgments}
I am grateful to Professor Victor I. Burenkov 
for many useful discussions during my postdoctoral position held in Padova
under his supervision.
I would like to thank the Editorial Board of ``Eurasian Mathematical Journal''
for the publication of the entire work in three parts. 


\end{document}